\documentclass[12pt]{article}
\title{Edge limits of  truncated 
circular
beta ensembles}
\date{}
\author{Yun Li and Benedek Valk\'o}

\oddsidemargin 0in \topmargin 0in \headheight 0in \headsep 0in
\textheight 9in \textwidth 6.5in

\usepackage{units}
\usepackage{amsmath, amsthm, amssymb,stackengine,bbm}
\usepackage{subcaption}
\usepackage{graphicx}
\usepackage{amsmath, enumerate, bm}
\usepackage{color}
\usepackage{multirow}
\newcommand{\xdownarrow}[1]{%
  {\left\downarrow\vbox to #1{}\right.\kern-\nulldelimiterspace}
}

\usepackage{hyperref}
    \newtheorem{theorem}{Theorem}
    \newtheorem{lemma}[theorem]{Lemma}
    \newtheorem{proposition}[theorem]{Proposition}
    \newtheorem{corollary}[theorem]{Corollary}
    \newtheorem{claim}[theorem]{Claim}
    \newtheorem{fact}[theorem]{Fact}
    \newtheorem{assumption}[theorem]{Assumption}

\theoremstyle{definition} 
    \newtheorem{definition}[theorem]{Definition}
    \newtheorem{remark}[theorem]{Remark}

\newcommand{\eps}{\varepsilon}
\newcommand{\Z}{{\mathbb Z}}
\newcommand{\R}{{\mathbb R}}
\newcommand{\D}{{\mathbb D}}

\newcommand{\C}{{\mathbb C}}
\newcommand{\HH}{{\mathbb H}}

\newcommand{\diag}{\operatorname{diag}}

\newcommand{\cA}{{\mathcal A}}

\newcommand{\cL}{{\mathcal L}}
\newcommand{\cU}{{\mathcal U}}
\newcommand{\cE}{{\mathcal E}}
\newcommand{\cM}{{\mathcal M}}
\newcommand{\cH}{{\mathcal H}}
\newcommand{\cC}{{\mathcal C}}
\newcommand{\cT}{{\mathcal T}}
\newcommand{\cP}{{\mathcal P}}

\newcommand{\cI}{{\mathcal I}}

\newcommand{\Sineop}{\mathtt{Sine}_{\beta}}
\newcommand{\Sineb}{\operatorname{Sine}_{\beta}}

\newcommand{\HPop}{\mathtt{HP}_{\beta,\delta}}

\newcommand{\Bessop}{\mathtt{Bess}_{\beta,a}}


\newcommand{\CJb}{\operatorname{CJ}}
\newcommand{\ROb}{\operatorname{RO}}

\newcommand{\Circ}{\mathsf{Circ}}
\newcommand{\CJ}{\mathsf{CJ}}
\newcommand{\RO}{\mathsf{RO}}

\newcommand{\rCJ}{\mathsf{CJ}^{[r]}}
\newcommand{\rRO}{\mathsf{RO}^{[r]}}

\newcommand{\Circop}{\mathtt{Circ}}
\newcommand{\CJop}{\mathtt{CJ}}
\newcommand{\ROop}{\mathtt{RO}}

\newcommand{\Dirop}{\mathtt{Dir}}
\newcommand{\ttr}{\mathfrak t}
\newcommand{\tr}{\operatorname{tr}}

\newcommand{\res}{\mathtt{r\,}}

\newcommand{\spec}{{\text{spec}}}
\newcommand{\tl}{\tilde}
\newcommand{\wtl}{\widetilde}

\newcommand{\uu}{\mathfrak{u}}
\newcommand{\mat}[4]{\left( \begin{array}{cc}
		#1 & #2  \\
		#3 & #4  \\
	\end{array} \right)}
\newcommand{\bin}[2]{\left (
	\begin{array} {c}
		#1 \\
		#2
	\end{array}
	\right )}

\newcommand{\bzeta}{\boldsymbol{\zeta}}

\newcommand{\ed}{\stackrel{d}{=}}

\newcommand{\trunc}[1]{#1^{\textstyle{\ulcorner}}}

\newcommand{\tCircc}{\mathsf{Circ}^{\textstyle{\ulcorner}}}

\newcommand{\rev}[1]{\wtl{#1}}

\newcommand{\aff}[1]{\overset{\text{\tiny${\hookleftarrow}$}}{#1}}

\begin{document}
\maketitle

\begin{abstract}We study the scaling limit
of the rank-one truncation of various  beta ensemble generalizations of classical unitary/orthogonal random matrices: the circular beta ensemble, the real orthogonal beta ensemble, and the circular Jacobi beta ensemble.  We derive the scaling limit of the normalized characteristic polynomials and the point process limit of the eigenvalues near the point 1. We also treat multiplicative rank one perturbations of our models. Our approach relies on a representation of  truncated beta ensembles given by Killip-Kozhan \cite{KK}, together with the random operator framework developed in \cite{BVBV_op, BVBV_19,BVBV_szeta} to study scaling limits of beta ensembles.
\end{abstract}

\section{Introduction}

For the classical  unitary and orthogonal random matrix ensembles the point process scaling limit of the eigenvalues is well understood. The eigenvalues are on the unit circle, and 
if one scales the eigenangles appropriately, one obtains a point process limit on the real line. More recently the scaling limit of the (normalized) characteristic polynomials of these classical ensembles has been derived and characterized as well, these limits lead to random entire functions where the zero set is given by the point process limit of the eigenvalues.  

If we remove the first row and first column of a unitary (or orthogonal) matrix then the resulting matrix  has eigenvalues within the unit disk.  It is natural to ask what one can say about the limits of the eigenvalues and the characteristic polynomial if one studies the truncated random matrices, and what connections can be shown between the limit objects of the original and the truncated models. Our main goal is to study these questions for beta-generalizations of classical random orthogonal and unitary ensembles.  We will also consider similar questions for multiplicative rank one perturbations of these models. Non-normal perturbations of classical ensembles have a rich history, see e.g.~the surveys \cite{FyodorovSommers2003} and \cite{Forrester_2023} and the references within.

\subsection{Haar unitary matrices and their truncations}

To start with a concrete example, we first consider the case of Haar unitary matrices. 
Let $M_n$ be an $n\times n$ uniformly chosen  unitary matrix. With probability one $M_n$ has $n$ distinct eigenvalues $e^{i \theta_k}, 1\le k\le n$, all on the unit circle. The joint eigenvalue density is given by 
\begin{equation}\label{eq:cue}
\frac{1}{Z_{n}}\prod_{1\le j<k\le n}|e^{i\theta_j}-e^{i\theta_k}|^2, \qquad \theta_j\in [-\pi,\pi),
\end{equation}
where $Z_n$ is an explicit normalizing constant (see e.g.~\cite{ForBook}). The distribution given by \eqref{eq:cue} is called the size $n$ \emph{circular unitary ensemble}. Because of the appearance of the squared Vandermonde determinant in the probability density, this ensemble is \emph{determinantal} (\cite{AGZ,HKPV}), all finite dimensional marginal densities can be expressed via determinants built from a fixed kernel function. (We will provide more detail on the results discussed within this section in the Appendix.) The point process scaling limits of finite determinantal ensembles can be derived by studying the corresponding scaling limits of the determinantal kernel. It is a classical result due to Gaudin, Mehta, Dyson \cite{AGZ, mehta} that if we scale the eigenangles of $M_n$ by $n$ then we get a translation invariant determinantal point process in the limit. We call this point process  the $\operatorname{Sine}_2$ process.

In a more recent result, Chhaibi, Najnudel and Nikeghbali \cite{CNN} studied the scaling limit of the (normalized) characteristic polynomial 
\[
p_n(z):=\frac{\det(I_n-z M_n^{-1})}{\det(I_n- M_n^{-1})}=\prod_{j=1}^n \frac{1-z e^{-i \theta_j}}{1- e^{-i \theta_j}}
\]
of the circular unitary ensemble.  They showed that under the  scaling of the Gaudin-Mehta-Dyson theorem  one obtains a random entire function $\boldsymbol{\zeta}$ (named the \emph{stochastic zeta function}) with zero set given by the $\operatorname{Sine}_2$ process.

For a square matrix $M$ we denote by $\trunc{M}$ the matrix obtained by removing the first row and column from $M$. Note that we can write $\trunc{M}$ as $\Pi^{\dag} M \Pi$ where $\Pi$ is the appropriate projection matrix, and $^{\dag}$ denotes the transpose.

Now consider the truncated version of a uniformly chosen $(n+1)\times(n+1)$ unitary matrix, i.e.~$\trunc{M_{n+1}}$.  With probability one this matrix  has eigenvalues in the open unit disk $\D=\{z\in \C: |z|<1\}$. The obtained random matrix has been studied in the physics literature because of its connection to chaotic scattering problems (see \cite{FyodorovSavin, KhoruzhenkoSommers} for further discussion and references). 
In \cite{ZS} {\. Z}yczkowski and Sommers proved   that the joint eigenvalue density   of $\trunc{M_{n+1}}$ (with respect to the Lebesgue measure in the unit disk) is given by
\begin{equation}\label{eq:tcue}
\frac{1}{\pi^n}\prod_{1\le j<k\le n}|z_j-z_k|^2 , \qquad z_j\in \D.
\end{equation}
We call this distribution the \emph{truncated circular unitary ensemble}. (Note that the \cite{ZS} provides a description for the eigenvalue distribution for general rank-$k$ truncation as well.) The squared Vandermonde term in \eqref{eq:tcue} indicates that this is also a determinantal point process.  By studying the determinantal kernel one can show that 
 the point process limit of the eigenvalues of $\trunc{M_{n+1}}$ \emph{without any additional scaling} leads to a determinantal point process limit in $\D$. 
We may call the resulting process the \emph{bulk scaling limit} of the truncated circular unitary ensemble.

 It is natural to ask if this point process can be connected to the zeroes of a `nice' random analytic function, since it is the scaling limit of the zeros of the characteristic polynomial of $\trunc{M_{n+1}}$. In Peres-Vir\'ag \cite{PV} it was shown that this is indeed the case, the bulk scaling limit of the eigenvalues of $\trunc{M_{n+1}}$  has the same distribution as the zero set of the so-called Gaussian analytic function.  

One can treat the eigenvalues of $\trunc{M_{n+1}}$  as a perturbation of the original eigenvalues of $M_{n+1}$. Because of this, it is natural to study the behavior of the eigenvalues of the truncated matrix under the scaling 
\begin{align}\label{eq:edgescaling}
    z\mapsto - n i\log z,
\end{align} 
since this corresponds to the scaling $e^{i \theta}\mapsto n \theta$ that takes the original (unit length) eigenvalues to the $\operatorname{Sine}_2$ process.\footnote{Throughout the paper we are considering the  branch of logarithm that is defined on $\C\setminus (-\infty,0]$ and satisfies $\log(1)=0$.} See Figure \ref{fig:tcue} for an illustration.
It was shown in \cite{ABKN} that under this scaling the kernel of the truncated circular unitary ensemble (and hence the ensemble itself) indeed has a limit. The limiting point process is determinantal, and it  is supported in the open upper half plane $\HH=\{z\in \C: \Im z>0\}$.
We call this the \emph{(hard) edge scaling limit} 
of the truncated model, since we zoom in near $z=1$. 

The point process obtained as the edge limit of the truncated circular ensembles limit process in \cite{ABKN}  appeared before  in \cite{FyodorovKhoruzhenko} and \cite{FyodorovSommers} as the point process limit of the rank-one additive anti-Hermitian perturbation for the Gaussian unitary ensemble under the appropriate scaling.

\begin{figure}[h]
	\begin{subfigure}[t]{0.38\linewidth}
		\includegraphics[width=\linewidth]{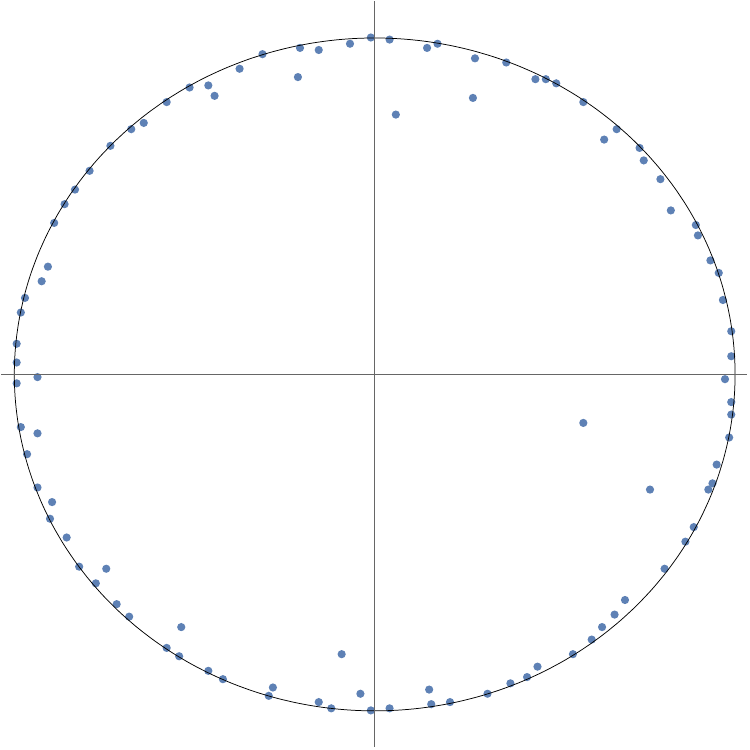}
	\end{subfigure}
	\hfill
	\begin{subfigure}[t]{0.55\linewidth}
		\includegraphics[width=\linewidth]{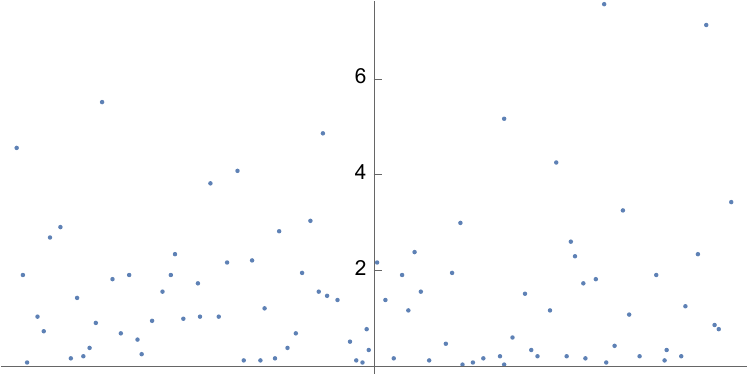}
	\end{subfigure}
	\caption{The picture on the left shows the eigenvalues of a truncated uniformly chosen $100\times 100$ unitary matrix.  The picture on the right shows the same eigenvalues under the edge scaling \eqref{eq:edgescaling}.}\label{fig:tcue}
\end{figure}

It is  natural to ask if one can connect the edge limit of the truncated circular ensemble to the zero set of a random analytic function, and whether one can characterize this random function in a natural way. We answer this question in the affirmative in our main result, Theorem \ref{thm:main} below. We provide a scaling limit for the normalized characteristic polynomial of the truncated model under the edge scaling, and describe the limiting random entire function. In fact, our goal is to study this and related questions in a more general setting: for the beta-generalizations of the circular unitary and other random unitary and orthogonal ensembles.

\subsection{CMV matrices, beta ensembles, and their truncations}

 The size $n$ circular beta ensemble with $\beta>0$ is the distribution of $n$ points $\{e^{i\theta_j}, 1\le j\le n\}$ on the unit circle with  joint probability density given by 
\begin{equation}\label{eq:cbe}
\frac{1}{Z_{n,\beta}}\prod_{j<k\le n}|e^{i\theta_j}-e^{i\theta_k}|^\beta, \qquad \theta_j\in [-\pi,\pi).
\end{equation}
Here $Z_{n,\beta}$ is  an explicit normalizing constant, see \cite{ForBook}. 
When $\beta=2$ we get the circular unitary ensemble. The cases when $\beta = 1$ and $4$ correspond to symmetric/self-dual random unitary matrices, but for general $\beta>0$  
there is no known invariant random matrix ensemble with the appropriate joint eigenvalue distribution. Note however that \eqref{eq:cbe} has a natural interpretation as the Gibbs measure corresponding to a log-gas of $n$-points restricted to the unit circle and interacting via a logarithmic potential. 

In \cite{KillipNenciu}  Killip and Nenciu (motivated by the results of \cite{DE}) constructed a family of sparse random unitary matrix models $\{\Circ_{n,\beta},n\ge 1\}$ with joint eigenvalue distribution given by \eqref{eq:cbe}.
Their construction is based on the theory of  orthogonal polynomials on the unit circle. 
We provide here a quick overview of their approach, the precise statements will be reviewed in Section \ref{sec:cmv}.

Suppose that  $\mu$ is a discrete probability  measure on the unit circle $\partial \D$ with a finite support of $n$ points. 
The probability measure $\mu$ can be encoded with its system of monic orthogonal polynomials. These polynomials satisfy the so-called Szeg\H{o} recursion, which can be parameterized with a finite collection of complex numbers $\alpha_0, \dots, \alpha_{n-1}$, called the Verblunsky coefficients. In \cite{CMV} Cantero, Moral, and Velasquez provided  a construction for a `canonical' sparse (five-diagonal) $n\times n$ unitary matrix (called the \emph{CMV matrix})
\[
\cC=\cC(\alpha_0,\dots, \alpha_{n-1})
\]
in terms of the Verblunski coefficients, so that the spectral measure  of $\cC$ with respect to the unit vector $\mathbf{e}_1=(1,0,\dots,0)^{\dag}$ is exactly $\mu$. 
Moreover, if the probability measure $\mu$ is the spectral measure of an $n\times n$ unitary matrix $U$ with respect to $\mathbf{e}_1 $ then the CMV matrix $\cC$ corresponding to $\mu$ is unitary equivalent to $U$.  
Note that the CMV matrix  is the analogue of the tridiagonal (Jacobi) matrix constructed from the coefficients of the three-term recursion of the orthogonal polynomials of a finitely supported probability measure on $\R$.

Let $M_n$ be an $n\times n$ Haar unitary matrix, 
and consider its spectral measure $\mu_n$ with respect to $\mathbf{e}_1$. This is a (random) probability measure with  support given by the circular unitary ensemble \eqref{eq:cue}. Using  unitary invariance one can show that the joint distribution of the weights of $\mu_n$ is given by a particular Dirichlet distribution, and that the weights are independent of the support of $\mu_n$. Moreover, the Verblunsky coefficients of $\mu_n$ are independent random variables, and their distributions can be computed explicitly. This motivated Killip and Nenciu in \cite{KillipNenciu} to study the random probability measure $\mu_{n,\beta}^{\textup{KN}}$
with support given by the circular beta ensemble \eqref{eq:cbe} and weights  
chosen independently from a particular ($\beta$-dependent) Dirichlet distribution. \cite{KillipNenciu} showed that the Verblunsky coefficients of $\mu_{n,\beta}^{\textup{KN}}$ are still independent, with explicitly given distributions. The corresponding CMV matrix $\Circ_{n,\beta}:=\cC$  provides a natural sparse random unitary matrix with spectrum given by the circular beta ensemble \eqref{eq:cbe}. For $\beta=2$ this matrix is unitary equivalent to the Haar unitary matrix  $M_n$, and their spectral measures with respect to $\mathbf{e}_1$ have the same distribution. 

In \cite{KK} Killip and Kozhan  studied how removing the first row and column changes the spectrum of  classical random unitary and orthogonal matrices.
An important observation of \cite{KK} (which is crucial for our paper as well) is the following: if $U$ is an $n\times n$ unitary matrix then the truncated matrix $\trunc{U}$ is unitary equivalent to the truncated version of the CMV matrix $\cC$ corresponding to  $U$, which in turn is unitary equivalent to an $(n-1)\times (n-1)$ CMV matrix built from a simple transformation of the Verblunsky coefficients of $U$. This means that if we know the Verblunsky coefficients of $U$ then we can construct a sparse matrix whose spectrum is the same as that of $\trunc{U}$. 

This observation allowed \cite{KK} to provide a sparse matrix model with spectrum distributed as \eqref{eq:tcue}. Their approach also allowed them to study the  matrix models $\Circ_{n,\beta}$ of \cite{KillipNenciu} with the first row and column removed.  They proved that the  joint eigenvalue density of the truncated matrix  $\Circ_{n+1,\beta}^{\textstyle{\ulcorner}} $  
is given by
\begin{equation}\label{eq:tcbe}
\frac{\beta^{n}}{(2\pi)^{n}}\prod_{1\le j,k\le n}(1-z_j\bar z_k)^{\frac{\beta}{2}-1}\prod_{j<k\le n}|z_j-z_k|^2, \qquad z_j\in \D.
\end{equation}
We call the resulting distribution the size $n$ \emph{truncated circular beta ensemble}. Note that for $\beta=2$ we recover \eqref{eq:tcue}.
(We remark that \cite{KK} also provided a log-gas interpretation for \eqref{eq:tcbe}.)
Our goal is to study this ensemble (together with some other related models) under the edge scaling \eqref{eq:edgescaling}.

The approach of Killip and Nenciu \cite{KillipNenciu} can be extended to provide random matrix representations of beta-generalizations of other random unitary and orthogonal ensembles where the joint distribution of the Verblunsky coefficients can be described explicitly.  The results of Killip and Kozhan \cite{KK} then provide a natural random matrix representation of the \emph{truncated} version of these beta ensembles. Our main results provide descriptions of the edge scaling limits of these truncated ensembles.

\subsection{Scaling limits of circular beta ensembles and their truncations}

Using the Killip-Nenciu representation Killip and Stoiciu in \cite{KS} showed that under the scaling \eqref{eq:edgescaling} the circular beta ensemble  has a point process limit. They characterized the limiting point process  via  its counting function using a system of stochastic differential equations. This limit process was later  shown to be the same as the $\Sineb$ process, the bulk scaling limit of the Gaussian beta ensemble (\cite{Nakano, BVBV_op}). Note that $\Sineb$ is not determinantal for general $\beta$, in fact there is no known description for its joint intensity functions in the general case.

In a series of papers \cite{BVBV_op, BVBV_19, BVBV_szeta}  Valk\'o and Vir\'ag developed a
framework to study the scaling limits of   beta ensembles using Dirac-type differential operators (see Section \ref{sec:Dirac} for a more detailed discussion).  
 \cite{BVBV_op} showed that the spectra of unitary CMV matrices and some of their point process limits (including the $\Sineb$ process) can be represented as the eigenvalues of random Dirac-type differential operators. A Dirac-type differential operator can be parametrized by a path in the upper half plane $\HH:=\{z: \Im z>0\}$ together with two boundary points in $\partial \HH=\R \cup \{\infty\}$. In the case of a unitary CMV matrix these parameters can be built from the Verblunsky coefficients. \cite{BVBV_19} showed how this representation can be used to prove operator level convergence of the circular beta ensemble to the $\Sineb$ process. The path parameter of the random differential operator corresponding to $\Circ_{n,\beta}$ is a random walk,  which under the appropriate scaling converges to a time-changed hyperbolic Brownian motion. This process is the path parameter of the random Dirac operator corresponding to the $\Sineb$ process.

\cite{BVBV_szeta} developed a framework to study scaling limits of normalized characteristic polynomials of beta ensembles.   In particular, \cite{BVBV_szeta} proved that the normalized and scaled characteristic polynomials of the $\Circ_{n,\beta}$ converge to a random entire function $\boldsymbol{\zeta}_\beta$ with zero set given by $\Sineb$. (For $\beta=2$ this random entire function is the stochastic zeta function introduced in \cite{CNN}.) The random function $\bzeta_\beta$ is characterized via various equivalent ways, in particular as the solution of the following random shooting problem.

\begin{theorem}[\cite{BVBV_szeta}]\label{thm:szeta}
    Let $b_1, b_2$ be independent two-sided standard Brownian motion, and $q$ an independent standard Cauchy random variable. 
Consider the unique strong solution  $\cH_\beta:(-\infty,0]\times\C\to\C^2$  of the stochastic differential equation 
\begin{equation}\label{eq:cH}
d\cH_\beta=\begin{pmatrix}
0 &-db_1 \\ 0 & db_2
\end{pmatrix}\cH_\beta-z\frac{\beta}{8}e^{\frac{\beta}{4}u}\begin{pmatrix}
0 & -1\\1&0
\end{pmatrix}\cH_\beta du,\quad u\le 0,
\end{equation}
subject to the initial condition $\lim_{u\to-\infty}\sup_{|z|<1}|\cH_\beta(u,z)-\binom{1}{0}| =0$. Then $\bzeta_\beta$ has the same distribution as the random function $\cH_\beta(0,z)^{\dag}\binom{1}{-q}$. 
\end{theorem}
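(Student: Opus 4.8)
The plan is to obtain $\bzeta_\beta$ as the edge scaling limit of the normalized characteristic polynomial of $\Circ_{n,\beta}$ and to identify that limit with the solution of the shooting problem \eqref{eq:cH}. Since $\Circ_{n,\beta}$ is a CMV matrix built from the Killip--Nenciu Verblunsky coefficients $\alpha_0,\dots,\alpha_{n-1}$, I would first express its characteristic polynomial through the Szeg\H{o} recursion. The monic orthogonal polynomials $\Phi_k$ and their reversed counterparts $\Phi_k^*$ evolve by a two-term recursion driven by the $\alpha_k$, which can be written as a product of $z$-dependent transfer matrices acting on the deterministic initial vector $\binom{1}{1}$; the characteristic polynomial $\det(z-\Circ_{n,\beta})$ equals, up to an explicit scalar, the combination $z\Phi_{n-1}-\bar\alpha_{n-1}\Phi_{n-1}^*$. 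Thus the normalized characteristic polynomial evaluated at the edge-scaled spectral parameter is a fixed linear functional of the transfer-matrix evolution, with the terminal functional determined by the last coefficient $\alpha_{n-1}$.

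Next I would carry out the edge scaling \eqref{eq:edgescaling}. Writing the spectral parameter as the inverse of the map $z\mapsto -ni\log z$ and introducing a continuous time $u\in(-\infty,0]$ by a logarithmic reparametrization of the index (sending the start of the recursion to $u\to-\infty$ and the terminal index to $u=0$), each transfer matrix decomposes into a deterministic $z$-dependent piece and a fluctuating piece. Under the Killip--Nenciu law the rescaled real and imaginary parts of the Verblunsky coefficients behave like increments of two independent Brownian motions, and after a suitable change of basis this produces the triangular noise term $\begin{pmatrix}0&-db_1\\0&db_2\end{pmatrix}$, while the deterministic piece contributes the rotation drift $-z\tfrac{\beta}{8}e^{\frac{\beta}{4}u}\begin{pmatrix}0&-1\\1&0\end{pmatrix}\,du$. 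A diffusion approximation then identifies the limit of the (basis-changed) transfer-matrix evolution as the unique strong solution $\cH_\beta$ of \eqref{eq:cH}. In the same coordinates the normalized initial vector becomes $\binom{1}{0}$, and the initial condition $\cH_\beta(u,z)\to\binom{1}{0}$ as $u\to-\infty$ reflects that the $z$-dependent drift is negligible there, since $e^{\frac{\beta}{4}u}\to 0$.

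It remains to treat the terminal reading at $u=0$. For the finite ensemble the boundary functional is the $(z,-\bar\alpha_{n-1})$-combination above, in which $\alpha_{n-1}$ lies on the unit circle; after the edge scaling this functional converges, through the appropriate Cayley transform, to the vector $\binom{1}{-q}$, where $q$ is a standard Cauchy random variable independent of the bulk Brownian motions $b_1,b_2$. Projecting the limiting evolution against this boundary vector yields $\cH_\beta(0,z)^{\dag}\binom{1}{-q}$, and hence $\bzeta_\beta\ed\cH_\beta(0,z)^{\dag}\binom{1}{-q}$.

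The main obstacle is the joint control required to turn these heuristics into a genuine limit of random analytic functions: one must show that the transfer-matrix product converges to $\cH_\beta$ not merely for fixed $z$ but uniformly for $z$ in compact subsets of $\C$, with enough tightness to upgrade the convergence to the level of entire functions, all while matching the $u\to-\infty$ initial condition over an unbounded time horizon. Establishing this uniform-in-$z$ diffusion approximation, together with the requisite a priori bounds on the solutions of \eqref{eq:cH}, is precisely the technical heart supplied by the operator framework of \cite{BVBV_op, BVBV_19, BVBV_szeta}.
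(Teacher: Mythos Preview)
This theorem is quoted from \cite{BVBV_szeta}; the present paper does not give its own proof but recalls the mechanism later through Definition~\ref{def:Sineop_rev} and Propositions~\ref{prop:Sineop_equiv}--\ref{prop:Sineop_equiv_2}. Your outline is correct and matches that mechanism: the Szeg\H{o} recursion is recast (via Proposition~\ref{prop:unitary_ODE}) as the canonical system of a Dirac operator, the ``change of basis'' you describe is precisely the transformation $\cH_\beta(u,z)=\bigl(\begin{smallmatrix}1&-\mathsf{x}_u\\0&\mathsf{y}_u\end{smallmatrix}\bigr)H_\beta(e^{\beta u/4},z)$ of \eqref{eq:cH_beta}, and the Cauchy boundary value arises exactly as you say. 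The one organizational difference is that \cite{BVBV_szeta} does not run a raw diffusion approximation on transfer matrices; instead it proves Hilbert--Schmidt convergence of the resolvents and convergence of integral traces (Proposition~\ref{prop:Sineb_coupling}), from which uniform-on-compacts convergence of secular functions follows via the estimates in Proposition~\ref{prop:H_comparison} and Remark~\ref{rmk:Dir_comp}. You correctly flag this as the technical heart in your last paragraph.
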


Our main result gives the edge scaling limit of the  truncated circular beta ensemble \eqref{eq:tcbe} together with the scaling limit of its normalized characteristic polynomial. This result also provides a connection to the limit objects of the original  circular beta ensemble. 
\begin{theorem}\label{thm:main}
    Under the edge-scaling \eqref{eq:edgescaling} the truncated circular beta ensemble converges to a point process $\mathcal{X}_\beta$, which has the same distribution as the zero set of the random entire function $\cE_\beta=\cH_\beta(0,\cdot)^{\dag}\binom{1}{-i}$ defined via \eqref{eq:cH}. Moreover, $\cE_\beta$ is the scaling limit of the normalized characteristic polynomials of the truncated circular beta ensemble. 
\end{theorem}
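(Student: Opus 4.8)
The plan is to reduce everything to the random-operator framework of \cite{BVBV_op, BVBV_19, BVBV_szeta} by way of the Killip--Kozhan representation, and to isolate the single structural difference between the truncated and untruncated models: the truncation replaces the random real boundary point (the Cauchy variable $q$) appearing in Theorem \ref{thm:szeta} by the fixed complex boundary point $i \in \HH$, while leaving the path parameter of the associated Dirac-type operator essentially unchanged. This is exactly the analytic analogue of the determinantal fact that the truncated CUE edge limit coincides with the anti-Hermitian rank-one perturbation limit of the GUE.

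First I would make the Killip--Kozhan reduction explicit. By \cite{KK}, the truncation $\Circ_{n+1,\beta}^{\textstyle{\ulcorner}}$ is unitarily equivalent to an $n\times n$ CMV-type matrix built from the Verblunsky coefficients $\alpha_0,\dots,\alpha_{n-1}$ of $\Circ_{n+1,\beta}$, the only change being that the terminal unimodular coefficient is transformed into a coefficient lying strictly inside $\D$; the resulting matrix is a contraction with spectrum in the open disk, while its bulk coefficients carry exactly the Killip--Nenciu laws of the circular beta ensemble. I would then feed this matrix into the CMV-to-Dirac dictionary of \cite{BVBV_op}: the path parameter of the associated operator is the same random walk that, under \eqref{eq:edgescaling}, converges to the time-changed hyperbolic Brownian motion governing \eqref{eq:cH}, and only the right boundary condition is altered --- from a point on $\partial\HH$ to an interior point of $\HH$.

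Next I would track the boundary condition through the scaling. In the untruncated case the unimodular terminal coefficient, being uniform on $\partial\D$, produces under the edge map a Cauchy-distributed real boundary point, which is the $q$ of Theorem \ref{thm:szeta}. For the truncated model the transformed terminal coefficient is a concentrating point inside $\D$, and the key computation is to show that under \eqref{eq:edgescaling} it converges to $i$, so the limiting boundary vector is $\binom{1}{-i}$ rather than $\binom{1}{-q}$. With the path convergence of \cite{BVBV_19} and this boundary convergence in hand, the machinery of \cite{BVBV_szeta} applies: the normalized, edge-scaled characteristic polynomial of the truncated model is the transfer-matrix solution of the discrete analogue of \eqref{eq:cH} paired with the boundary vector, and passing to the limit yields $\cH_\beta(0,\cdot)^{\dag}\binom{1}{-i}=\cE_\beta$ locally uniformly as random analytic functions. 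Convergence of the zero sets, hence the point process convergence to $\mathcal{X}_\beta$, then follows by Hurwitz's theorem, which is the natural route since the ensemble is not determinantal for general $\beta$.

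I expect the main obstacle to be that the framework of \cite{BVBV_szeta} is designed for self-adjoint operators with a real boundary point, where the eigenvalues lie on $\R$ (the unit circle before scaling); here the truncated operator is a genuine contraction, its spectrum sits in the open upper half plane, and the relevant evaluation is at the complex point $i$. The technical heart is therefore to establish the transfer-matrix and characteristic-polynomial convergence with estimates uniform over a complex neighborhood large enough to accommodate the complex boundary pairing, together with the tightness needed to rule out escape of zeros, all without the self-adjointness that underpins the a priori bounds of \cite{BVBV_szeta}. Pinning the limiting boundary point down precisely as $i$ --- and not merely as some point of $\HH$ --- is the other place where a genuine, if shorter, computation is required.
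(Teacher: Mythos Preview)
Your high-level picture is right --- Killip--Kozhan reduction, CMV-to-Dirac dictionary, path convergence to hyperbolic Brownian motion, then Hurwitz --- and your identification of $\binom{1}{-i}$ versus $\binom{1}{-q}$ as the sole structural change is exactly the point of the theorem. But the route you sketch to get there differs from the paper's in a way that matters, and the obstacle you flag as the ``technical heart'' is in fact completely bypassed.

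You propose to build a Dirac-type operator for the truncated (sub-unitary) matrix directly, with a complex interior boundary point that you would then show converges to $i$. The paper does not do this. Instead it stays entirely inside the self-adjoint framework by an affine pull-back trick (Proposition \ref{prop:general1}). One works with the \emph{reversed} unitary measure $\rev\mu_n$ and its full path $\rev b_0,\dots,\rev b_n$, then applies the disk isometry $\cA_{\rev b_{n-1},\D}$ to the path. This sends $\rev b_{n-1}$ to $0$ (equivalently to $i$ in $\HH$), so that the matrix $\aff X_{n-1}$ is the identity, and the formula \eqref{eq:Phi_H} collapses to
\[
\rev\Psi_{n-1}(e^{iz/n}) = e^{iz(n-1)/(2n)}\,\aff H_n\bigl(\tfrac{n-1}{n},z\bigr)^{\dag}\binom{1}{-i}
\]
\emph{exactly}, for every finite $n$. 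There is no ``concentrating point converging to $i$''; the $\binom{1}{-i}$ is put there by hand via the isometry, and the operator $\aff\tau_n$ remains a bona fide self-adjoint Dirac operator with a real boundary point $\aff b_n\in\partial\D$.

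This buys you everything you were worried about losing. The pulled-back operator $\aff\tau_n$ is orthogonally equivalent to $\Circop_{n,\beta}$ under the \emph{same} transformation that relates $\tau_\beta$ to $\Sineop$ (Proposition \ref{prop:circ_equiv}), so the coupling and resolvent/trace estimates of \cite{BVBV_19,BVBV_szeta} transfer verbatim to $\aff\tau_n$ versus $\tau_\beta$. One then controls $|\aff H_n(\tfrac{n-1}{n},z)-H_\beta(\tfrac{n-1}{n},z)|$ using the comparison bound of Proposition \ref{prop:H_comparison}, which already holds for all $z\in\C$, not just real $z$; no extension of the framework to contractions or complex boundary conditions is needed. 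The remaining errors $|H_\beta(\tfrac{n-1}{n},z)-H_\beta(1,z)|$ and $|e^{-iz/(2n)}-1|$ are $O(n^{-1})$ by ODE continuity.

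In short: your anticipated main obstacle dissolves once you realize the truncated characteristic polynomial is an evaluation of the canonical system of a \emph{unitary} model at an interior time $(n-1)/n$ with the fixed vector $\binom{1}{-i}$, rather than the secular function of a non-self-adjoint operator. The ``genuine computation'' pinning down $i$ is not a limit computation at all, but a one-line consequence of choosing the right affine chart.
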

Theorem \ref{thm:main} is proved in Section \ref{sub:truncated_circ}.  In fact, we will show that there is a coupling of the finite ensembles and the limiting object so that the stated limits  hold with probability one with effective (random) error bounds (see Proposition \ref{prop:tcirc_func}). 
The proof of the theorem uses the random operator framework to analyze the scaling limit of the normalized characteristic polynomial of truncated CMV matrices.

Theorem \ref{thm:main} provides a connection between the scaling limits of the full and the truncated circular beta ensemble that is new even in the classical $\beta=2$ case. 
It shows that the scaling limit of the characteristic polynomials of the circular beta ensemble can be obtained from the corresponding limit of the truncated model and an independent Cauchy random variable. Both $\bzeta_\beta$ and $\cE_\beta$ are random entire functions, and hence they are determined by their restriction to $\R$. By Theorems \ref{thm:szeta} and \ref{thm:main} we have the following equality in distribution:
\[
\{\bzeta_\beta(s): s\in \R\}\ed \{\Re \cE_\beta(s)+q \, \Im \cE_\beta(s) : s\in \R  \},
\]
where $q$ is a Cauchy random variable independent of $\cE_\beta$. \medskip

If $M$ is a square matrix then the spectrum of $\trunc{M}$ can also be studied by considering the spectrum of the rank one multiplicative perturbation
\[
M\cdot \diag(0,1,1,\dots,1)
\] 
instead. (Of course, this also adds an additional zero eigenvalue.)
This motivates the study of general rank one  multiplicative perturbations. For $r\in \R$  define  
\begin{align}\label{def:mult_pert}
 M^{[r]}:=M\cdot \diag(r,1,1,\dots,1).
\end{align}
If $M$ is a Haar unitary matrix  then the distribution of $M^{[r]}$ has been studied in 
\cite{Fyodorov2001}. (See also \cite{FyodorovKhoruzhenko} and \cite{FyodorovSommers} for related results on  rank-one additive anti-Hermitian perturbations for the Gaussian unitary ensemble.) Note that because of the various symmetries of the model, we may assume $r\in [0,1]$. 

Following the definition of the truncated circular beta ensemble, it is natural to define  the appropriate rank-one multiplicative perturbation of the circular beta ensemble as the spectrum of $\Circ_{n,\beta}^{[r]}$. In \cite{KK} Killip and Kozhan derived the joint eigenvalue distribution of $\Circ_{n,\beta}^{[r]}$. Moreover, they showed that if $\cC$ is a unitary CMV matrix then  the spectrum of $\cC^{[r]}$ is the same as a certain explicitly determined CMV matrix.  Using their results we are able to extend the results of Theorem \ref{thm:main}.

\begin{theorem}\label{thm:trunc_circ}
Fix $r\in [0,1]$. Consider the random function $\cH_\beta$ defined via \eqref{eq:cH}, and let $q$ be a standard Cauchy random variable independent of $b_1, b_2$ appearing in \eqref{eq:cH}. Under the edge-scaling \eqref{eq:edgescaling} the eigenvalues of $\Circ_{n,\beta}^{[r]}$ converge to a point process $\mathcal{X}_{r,\beta}$, which has the same distribution as the zero set of the random entire function $\cE_{r,\beta}=\cH_\beta(0,\cdot)^{\dag}\binom{1}{-c_r}$ 
with
\begin{align}\label{eq:cr}
c_r=\frac{q+i \frac{1-r}{1+r}}{1-i q \frac{1-r}{1+r}}. 
\end{align}
Moreover, $\cE_{r,\beta}$ is the limit of the normalized characteristic polynomials of $\Circ_{n,\beta}^{[r]}$ under the same scaling.
\end{theorem}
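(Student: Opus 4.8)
The plan is to reduce Theorem \ref{thm:trunc_circ} to the already-established Theorem \ref{thm:main} by identifying the effect of the rank-one multiplicative perturbation at the level of the Killip--Kozhan CMV representation, and then tracking that effect through the random operator limit. First I would invoke the result of \cite{KK} that expresses the spectrum of $\Circ_{n,\beta}^{[r]}$ as the spectrum of an explicit CMV matrix obtained from $\Circ_{n,\beta}$ by modifying only the boundary data: whereas the truncation $\trunc{M}$ corresponds to the choice $r=0$ and amounts to prescribing a particular boundary Verblunsky parameter on the unit circle, the general perturbation $\diag(r,1,\dots,1)$ replaces it by a parameter depending on $r$. The key point is that the bulk of the Verblunsky coefficients — the ones driving the random walk that scales to the time-changed hyperbolic Brownian motion, hence producing the path parameter of $\cH_\beta$ in \eqref{eq:cH} — is unchanged; only the single boundary point in $\partial\HH = \R\cup\{\infty\}$ that enters the shooting problem is altered.

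The main structural step is therefore to determine how this modified boundary parameter transforms the terminal linear functional applied to $\cH_\beta(0,\cdot)$. In the $r=0$ truncated case, Theorem \ref{thm:main} gives the functional $\binom{1}{-i}^{\dag}$ applied to $\cH_\beta(0,z)$, whereas the full circular ensemble (Theorem \ref{thm:szeta}) uses $\binom{1}{-q}$ with $q$ Cauchy; the boundary point $\infty$ versus $i$ versus the Cauchy-random direction all correspond to different terminal conditions in the same shooting problem. I would compute the boundary value produced by the $r$-dependent Verblunsky parameter of \cite{KK} under the edge scaling \eqref{eq:edgescaling}, and show that in the limit it induces the terminal direction $\binom{1}{-c_r}$ with $c_r$ given by \eqref{eq:cr}. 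Concretely, I expect $c_r$ to arise as a Möbius (linear fractional) transformation of the Cauchy variable $q$, namely the map $w\mapsto (w+i\tfrac{1-r}{1+r})/(1-iw\tfrac{1-r}{1+r})$, which is precisely the rotation of $\partial\HH$ that a multiplicative phase-like perturbation on the boundary Verblunsky coefficient should induce; verifying that the scaling limit of the discrete boundary transformation is exactly this Möbius map is the heart of the argument.

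With the terminal direction identified, the convergence itself would follow the same route as Theorem \ref{thm:main}: I would apply the operator-level and characteristic-polynomial convergence machinery of \cite{BVBV_op, BVBV_19, BVBV_szeta}, exactly as in the proof of Theorem \ref{thm:main} referenced in Section \ref{sub:truncated_circ} (via the analogue of Proposition \ref{prop:tcirc_func}), but with the boundary condition replaced by the $r$-dependent one. Since the path parameter of the limiting operator and the bulk Verblunsky coefficients coincide with the $r=0$ case, the convergence of the normalized characteristic polynomials of $\Circ_{n,\beta}^{[r]}$ to $\cE_{r,\beta}=\cH_\beta(0,\cdot)^{\dag}\binom{1}{-c_r}$, and hence of the eigenvalue point process to its zero set $\mathcal{X}_{r,\beta}$, should be obtained with the same effective random error bounds by citing the corresponding general statement rather than reproving it. One subtlety worth checking is that $q$ and $c_r$ are both genuine directions in $\partial\HH$ and that $c_r$ has the correct (Cauchy-type) law induced by the Möbius image of $q$, so that the endpoints $r=0$ and $r=1$ recover $c_0=i$ (matching Theorem \ref{thm:main}) and $c_1=q$ (matching the full ensemble of Theorem \ref{thm:szeta}), respectively.

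\medskip

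The hard part will be the precise asymptotic analysis of the boundary transformation of \cite{KK} under the edge scaling: I must show that the single modified Verblunsky parameter, together with the reflection/rotation encoded by $\diag(r,1,\dots,1)$, converges to exactly the Möbius map producing \eqref{eq:cr}, and that this convergence is compatible with — and does not perturb — the already-proven limit of the remaining coefficients. Establishing that the boundary data and the bulk data decouple in the limit, so that one may graft the new terminal condition onto the existing operator limit without redoing the full convergence argument, is the step I expect to require the most care.
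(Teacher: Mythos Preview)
Your proposal is correct and follows essentially the same approach as the paper: use the Killip--Kozhan representation to see that the perturbation affects only the terminal boundary datum, then graft the new terminal direction onto the bulk convergence already established for Theorem \ref{thm:main}. The paper packages the algebraic step as Proposition \ref{prop:general3}, which uses one application of the modified Szeg\H{o} recursion \eqref{eq:Szego2} to write $\Psi_n^{[r]}$ as a linear combination of $\rev\Psi_{n-1}$ and $\rev\Psi_{n-1}^*$ with coefficients depending only on $r\rev\gamma_{n-1}$; since $\rev\Psi_{n-1}\to\cE_\beta$ and $\rev\Psi_{n-1}^*\to\overline{\cE_\beta(\bar z)}$ are already known from Proposition \ref{prop:tcirc_func}, the limit drops out and $c_r=i\tfrac{1-r\gamma}{1+r\gamma}$ with $\gamma=\cU(q)$, which reduces to \eqref{eq:cr}.

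One point where you overestimate the difficulty: the ``precise asymptotic analysis of the boundary transformation'' you flag as the hard part is in fact trivial in the paper's coupling. Proposition \ref{prop:Sineb_coupling} couples the boundary conditions \emph{exactly} ($\uu_1^{(n)}=\uu_1$ for all $n$), so $\rev\gamma_{n-1}$ is not merely convergent but constant in $n$ under the coupling; there is no asymptotic to analyze, only the finite algebraic identity from the Szeg\H{o} step. All the analytic work is already absorbed in the bulk convergence of Theorem \ref{thm:main}, exactly as you anticipated.
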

Note that $c_r=q$ for $r=1$ and $c_r=i$ for $r=0$, so this result gives an interpolation between the scaling limits of the unperturbed and the truncated circular beta ensemble.

Our approach extends to other matrix models as well. We provide versions of Theorems \ref{thm:main} and \ref{thm:trunc_circ} for the \emph{real orthogonal beta ensemble} and the \emph{circular Jacobi beta ensemble}. 

The real orthogonal beta ensemble was introduced in \cite{KillipNenciu} (see also \cite{KK}) as a generalization of the joint eigenvalue distributions of a certain class of the classical compact random matrix models. (See Section \ref{sub:finite} for more detail.) The operator level limit of the real orthogonal beta ensemble in the  hard-edge limit was derived in \cite{LV}.   The real orthogonal beta ensemble can be naturally transformed into another classical model, the (real) Jacobi beta ensemble, whose edge scaling limits were studied in \cite{HM2012}. The truncated version of the real orthogonal beta ensemble was introduced in \cite{KK}, where the authors constructed a sparse matrix model and derived the joint eigenvalue distribution. In Theorem \ref{thm:RO_main} and Corollary \ref{cor:RO_perturb} of Section \ref{sub:truncated_RO}, we will  establish the scaling limit of the truncated (and perturbed) real orthogonal beta ensemble, together with the scaling limit of its normalized characteristic polynomial.

The circular Jacobi beta ensemble is a one-parameter extension  of the circular beta ensemble. For a complex parameter $\delta$ with $\Re \delta>-1/2$ it is given by the joint density function 
\begin{align}
\frac{1}{Z_{n,\beta,\delta}^{\CJb}}\prod_{j<k\le n} \left|e^{i \theta_j}-e^{i \theta_k}\right|^\beta  \prod_{k=1}^n (1-e^{- i \theta_k})^{ \delta}(1-e^{ i \theta_k})^{ \bar\delta} \label{eq:CJ_pdf}
\end{align}
with respect to the uniform measure on the unit circle. For $\delta=0$ this is just the circular beta ensemble. When $\delta=\beta \tfrac{k}{2}$ with a positive integer $k$ then this model can  be viewed as the circular beta ensemble conditioned to have $k$ particles at $e^{i \theta}=1$. (See Section \ref{sub:finite} for additional details.)
In \cite{BNR2009} the authors constructed a family of unitary matrix models whose eigenvalues are distributed according to \eqref{eq:CJ_pdf}. Following the Killip-Nenciu approach they studied a random probability measure where the support is given by  the circular Jacobi beta ensemble, and the weights are given by an independently chosen beta-dependent Dirichlet distribution. \cite{BNR2009} showed that although the Verblunsky coefficients for this measure are usually not independent, a modified version of these coefficients are in fact independent, and their distributions can be described explicitly. \cite{LV} used this representation to derive the point process and operator level scaling limit of the circular Jacobi beta ensemble. We build on these results together with the ideas of \cite{KK} to define the truncated (and the perturbed) version of the circular Jacobi beta ensemble (see Section \ref{sub:CJ_finite}). We show that the joint density of the truncated circular beta ensemble is given by a constant multiple of 
\[
\prod_{j,k=1}^n(1-z_j\bar z_k)^{\frac{\beta}{2}-1}\prod_{j<k}|z_k-z_j|^2\prod_{j=1}^n\left((1-z_j)^{\bar\delta}(1-\bar z_j)^\delta\right),
\]
generalizing  \eqref{eq:tcbe}. We then derive the point process scaling limit of these new models, together with the scaling limit of their normalized characteristic polynomial. See Theorem \ref{thm:CJ_main} and Corollary \ref{cor:CJ_perturb} of Section \ref{sub:CJ_limit} for the precise statements.\medskip

\subsection{Outline of the paper}

\noindent In Section \ref{sec:Dirac} we review the required background for the considered Dirac-type differential operators. In Section \ref{sec:cmv} we give an overview of  how finitely supported probability measures on the unit circle can be represented with CMV matrices and Dirac-type differential operators. Section \ref{sec:finite} describes the considered finite beta ensembles and their operator level limits, and summarizes the known results on scaling limits of characteristic polynomials of these models.  
In Section \ref{sec:general} we provide general results regarding the convergence of the eigenvalues truncated and perturbed CMV matrices. Section \ref{sec:trCbE_trRbE} provides the proofs for Theorems \ref{thm:main} and \ref{thm:trunc_circ} on the scaling limits of the truncated and the perturbed circular beta ensemble, and proves the corresponding results for the real orthogonal beta ensemble. Section \ref{sec:trCJbE} proves our results on the truncated and the perturbed circular Jacobi beta ensemble. Section \ref{sec:app} is an appendix that contains a more detailed discussion of the determinantal point processes mentioned in the Introduction, together with a few open problems. 
\bigskip

\noindent {\bf Acknowledgments.}  We thank B\'alint Vir\'ag  for helpful comments and references. 
B.V~was partially supported by  the University of Wisconsin – Madison Office of the Vice Chancellor for Research and Graduate Education with funding from the Wisconsin Alumni Research Foundation and by the National Science Foundation award DMS-2246435. Y.L.~is partially supported by the Shuimu Tsinghua Scholar Program and China International Postdoctoral Exchange Fellowship Program No.YJ20220279.

\section{Dirac-type differential operators}

\label{sec:Dirac}

This section provides a brief overview of Dirac-type operators. A more detailed discussion can be found in \cite{BVBV_op}, \cite{BVBV_szeta}, and \cite{Weidmann}.

\subsection{Basics of Dirac operators}\label{sub:Dirac_basics}

Let $\cI$ be  $[0,1)$ or $(0,1]$. Suppose $x+iy:\cI\mapsto\HH=\{z\in\C:\Im z>0\}$ is a locally bounded measurable function, we define 
\begin{equation}\label{def:R}
R=\frac{X^{\dag} X}{2\det X},\qquad X=\begin{pmatrix}
1 & -x\\0&y
\end{pmatrix}
\end{equation}
to be the positive definite matrix-valued function that encodes $x+iy$. We consider  differential operators of the form 
\begin{equation}\label{def:Dirop}
\tau: f \to R^{-1}(t)Jf', \qquad f:\cI\mapsto \R^2, \qquad J=\begin{pmatrix}0&-1\\1&0\end{pmatrix}.
\end{equation}
We call  $\tau$ a \emph{Dirac-type operator}, $x+iy$ the \emph{generating path} of $\tau$, and $R(t)$ the \emph{weight function} of $\tau$.

The boundary conditions for $\tau$ at $t=0,1$ are given by two non-zero, non-parallel vectors $\uu_0,\uu_1\in\R^2$. We  assume that these vectors are normalized and satisfy certain integrability conditions with respect to the  weight function.
\begin{assumption}\label{assumption:u} We assume that
	\begin{equation}\label{eq:u_normalization}
	\uu_0^{\dag} J \uu_1=1,
	\end{equation}
	\begin{equation}\label{HS_finite}
	\int_0^{1} \int_0^t \mathfrak  u_0^{\dag} R(s) \mathfrak  u_0   \, \mathfrak  u_1^{\dag} R(t) \mathfrak  u_1 ds dt<\infty,
	\end{equation}
and
	\begin{align*}
	\int_0^1  \|R(s) \uu_1\| ds<\infty  \text { if $\cI=[0,1)$, \quad and\quad } \int_0^1  \|\uu_0^{\dag}R(s)\| ds<\infty \text{ if $\cI=(0,1]$.}
	\end{align*}
\end{assumption}
Let $L^2_R$ denote the $L^2$ space of functions $f:\cI\mapsto \R^2$ with the $L^2$-norm 
\[
\|f\|_R^2:=\int_\cI f(s)^{\dag} R(s)f(s) ds.
\]
Under Assumption \ref{assumption:u}, the operator $\tau$ defined according to \eqref{def:R} and \eqref{def:Dirop}   is self-adjoint on the following domain: 
\begin{equation}\label{Dir:domain}
\mathrm{dom}(\tau)=\{v\in L^2_R\cap\text{AC}:  \tau v\in L^2_R, \lim_{s\to 0}v(s)^{\dag} J\uu_0=0,\lim_{s\to 1} v(s)^{\dag}J\uu_1=0\}.
\end{equation}
Here $\text{AC}(\cI)$ is the set of absolutely continuous real functions on $\cI$. 

Throughout the paper, we use the notations $\Dirop(R,\uu_0,\uu_1)$ or $\Dirop(x+i y, \uu_0, \uu_1)$ interchangeably for the operator $\tau$ defined via \eqref{def:Dirop} and \eqref{def:R} on the domain \eqref{Dir:domain}. 
We can identify a nonzero vector  in $\R^2$ with a boundary point in $\partial \HH=\R\cup\{\infty\}$ using the projection operator
\begin{equation}\label{eq:P}
\mathcal{P}\binom{x_1}{x_2}=\begin{cases}
x_1/x_2 &\mbox{if $x_2\ne 0$,}\\
\infty &\mbox{if $x_2=0$.}
\end{cases}
\end{equation}
Using this identification the boundary conditions can be parametrized by two points in $\partial \HH$.

Under Assumption \ref{assumption:u}, the operator $\tau$ is invertible, and $\tau^{-1}$ is a Hilbert-Schmidt integral operator.
\begin{proposition}\label{prop:inv_HS}
	Suppose that $\tau={\tt Dir}(R,\uu_0,\uu_1)$ satisfies Assumption \ref{assumption:u}. Then 
	\begin{equation*}
		\|\tau^{-1}\|_{\textup{HS}}^2= 2	\int_0^{1} \int_0^t \mathfrak  u_0  ^{\dag} R(s) \mathfrak  u_0   \, \mathfrak  u_1^{\dag} R(t) \mathfrak  u_1 ds dt<\infty, 
	\end{equation*}
	and $\tau^{-1}$ is a Hilbert-Schmidt integral operator on $L^2_R$  given by 
	\begin{equation}\label{eq:HS_kernel}
	\tau^{-1}f(s) = \int_\cI K_{\tau^{-1}}(s,t)f(t)dt,\quad K_{\tau^{-1}}(s,t) = \big( \uu_0\uu_1^{\dag}1_{s<t}+\uu_1\uu_0^{\dag}1_{s\geq t}\big) R(t).
	\end{equation} 
	
	Consider the conjugated operator $X\tau X^{-1}$, where $X$ is defined as in \eqref{eq:P}. We denote the inverse of this operator by 
 $\res\tau:=\left(X\tau X^{-1}\right)^{-1}$, this is an integral operator on $L^2(\cI)$ 
	with kernel
	\begin{equation}\label{Dir:inverse}
	K_{\res \tau}(s,t) = \tfrac12\big( \mathfrak{a}_0(s)\mathfrak{a}_1(t)^{\dag}\mathbf{1}_{s<t}+\mathfrak{a}_1(s)\mathfrak{a}_0(t)^{\dag}\mathbf{1}_{s\geq t}\big), \qquad \mathfrak{a}_j(s)=\frac{X(s)\uu_j}{\sqrt{\det X}}, \quad j=0,1.
	\end{equation}
	The conjugated operator $X\tau X^{-1}$ has the same spectrum as the operator $\tau$, and $\|\res\tau\|_{\textup{HS}} = \|\tau^{-1}\|_{\textup{HS}}$.
\end{proposition}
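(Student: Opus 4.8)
The plan is to construct $\tau^{-1}$ explicitly by solving the first-order ODE $\tau v=f$, verify that it agrees with the Green's kernel in \eqref{eq:HS_kernel}, and then read off its Hilbert--Schmidt norm after transferring to an unweighted $L^2$ space.

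First I would solve $\tau v=f$. By \eqref{def:Dirop} this reads $R^{-1}Jv'=f$, equivalently $v'=-JRf$ since $J^{-1}=-J$. I would then check directly that $v(s)=\int_\cI K_{\tau^{-1}}(s,t)f(t)\,dt$ with the kernel in \eqref{eq:HS_kernel} solves this: differentiating the two pieces $\uu_0\uu_1^{\dag}\int_s^1 Rf$ and $\uu_1\uu_0^{\dag}\int_0^s Rf$ gives $v'=(\uu_1\uu_0^{\dag}-\uu_0\uu_1^{\dag})Rf$, and the matrix identity $\uu_1\uu_0^{\dag}-\uu_0\uu_1^{\dag}=-J$ is exactly the normalization $\uu_0^{\dag}J\uu_1=1$ of \eqref{eq:u_normalization} (any $2\times 2$ antisymmetric matrix is a scalar multiple of $J$, the scalar being $\uu_0^{\dag}J\uu_1$). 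Next I would verify the boundary conditions in \eqref{Dir:domain}: as $s\to0$ the second term drops and $v(s)$ tends to a scalar multiple of $\uu_0$, so $v(s)^{\dag}J\uu_0\to0$ because $\uu_0^{\dag}J\uu_0=0$; symmetrically at $s\to1$. Injectivity of $\tau$ is immediate, since $\tau v=0$ forces $v$ constant and the two non-parallel boundary vectors force that constant to vanish. Together with the self-adjointness recorded before the statement and the integrability of Assumption \ref{assumption:u} (placing $v$ in $L^2_R\cap\mathrm{AC}$ with $\tau v\in L^2_R$), this identifies $\tau^{-1}$ with the integral operator \eqref{eq:HS_kernel}.

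For the Hilbert--Schmidt norm the weighted inner product of $L^2_R$ must be removed, which is where the conjugation enters. I would introduce $Uf=Xf/\sqrt{2\det X}$ and check, using $f^{\dag}Rf=\|Xf\|^2/(2\det X)$ from \eqref{def:R}, that $U$ is a unitary from $L^2_R$ onto $L^2(\cI)$; this is the precise realization of the conjugated operator, with $\res\tau=U\tau^{-1}U^{-1}$. It then acts on the unweighted space, has the same spectrum as $\tau$, and satisfies $\|\res\tau\|_{\textup{HS}}=\|\tau^{-1}\|_{\textup{HS}}$ since conjugation by a unitary preserves the Hilbert--Schmidt norm. Computing its kernel by sandwiching $K_{\tau^{-1}}$ between $X(s)/\sqrt{2\det X(s)}$ and $\sqrt{2\det X(t)}\,X(t)^{-1}$, and using $R(t)X(t)^{-1}=X(t)^{\dag}/(2\det X(t))$, the scalar factors combine to produce the symmetric rank-one kernel \eqref{Dir:inverse} with $\mathfrak a_j=X\uu_j/\sqrt{\det X}$.

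Finally I would evaluate $\|\res\tau\|_{\textup{HS}}^2=\int_\cI\int_\cI\|K_{\res\tau}(s,t)\|^2\,ds\,dt$ as the integral of the squared Frobenius norm of the matrix kernel. Using $\|ab^{\dag}\|^2=\|a\|^2\|b\|^2$ for rank-one matrices together with $\|\mathfrak a_j(s)\|^2=\uu_j^{\dag}(X^{\dag}X/\det X)\uu_j=2\uu_j^{\dag}R(s)\uu_j$, the two half-plane contributions become equal after relabeling $s\leftrightarrow t$, yielding $2\int_0^1\int_0^t \uu_0^{\dag}R(s)\uu_0\,\uu_1^{\dag}R(t)\uu_1\,ds\,dt$, which is finite by \eqref{HS_finite}. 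I expect the only genuinely delicate points to be the functional-analytic bookkeeping---confirming $v\in\mathrm{dom}(\tau)$ and that $U$ is a bijective isometry under the mere local boundedness of $x+iy$---and the care needed in tracking the scalar measure-change factor $\sqrt{2\det X}$, which is exactly what reconciles the non-symmetric kernel \eqref{eq:HS_kernel} on $L^2_R$ with the symmetric kernel \eqref{Dir:inverse} on $L^2(\cI)$.
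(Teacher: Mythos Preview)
The paper does not actually give a proof of this proposition: Section~\ref{sec:Dirac} is explicitly a review, and Proposition~\ref{prop:inv_HS} is stated with attribution to \cite{BVBV_op}, \cite{BVBV_szeta}, and \cite{Weidmann}, then immediately used. So there is no ``paper's own proof'' to compare against; your task reduces to whether your argument is correct, and it is.

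Your direct verification is the natural route and the computations check out: the identity $\uu_1\uu_0^{\dag}-\uu_0\uu_1^{\dag}=-J$ is exactly the content of the normalization \eqref{eq:u_normalization}; the unitary $U=X/\sqrt{2\det X}$ from $L^2_R$ to $L^2(\cI)$ is the precise meaning of the paper's informal ``conjugated operator $X\tau X^{-1}$'' (which, read literally, would land in a weighted space $L^2_{I/(2\det X)}$ rather than $L^2(\cI)$); and the kernel and Hilbert--Schmidt norm computations are correct line by line. One small imprecision: when you verify the boundary condition at $s\to 0$, it is not quite that ``the second term drops and $v(s)$ tends to a scalar multiple of $\uu_0$''. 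Rather, $v(s)^{\dag}J\uu_0=-\int_0^s f^{\dag}R\uu_0$, and this vanishes by Cauchy--Schwarz in $L^2_R$ together with local boundedness of $R$ (or, at the singular endpoint, the integrability built into Assumption~\ref{assumption:u}). This is exactly the ``functional-analytic bookkeeping'' you flag at the end, so you are aware of it; just make the mechanism explicit rather than asserting $v(s)\to c\,\uu_0$.
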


Proposition \ref{prop:inv_HS} and the fact that $\tau$ is self-adjoint implies that $\tau$ has a discrete spectrum, with countably many eigenvalues that are all nonzero real numbers, and can only accumulate near $\pm \infty$. We label the eigenvalues of $\tau$ in increasing order by $\lambda_k, k\in \Z$ so that $\lambda_{-1}<0<\lambda_0$.
 
Let us remark that if $\tau={\tt Dir}(R,\uu_0,\uu_1)$ satisfies Assumptions \ref{assumption:u}, then for any $\sigma\in(0,1)$, the operator $\tau$ restricted in $\cI\cap [0,\sigma]$ is well-defined with boundary conditions $\uu_0,\uu_1$ at endpoints $t=0,\sigma$ and the restricted weight function $R|_{t\in\cI\cap [0,\sigma]}$. We denote the restricted operator and its resolvent as $\tau_{\sigma},\res \tau_{\sigma}$ respectively.

\subsection{Canonical systems and the secular functions}\label{sub:Dirac_secular}

In order to study scaling limits of characteristic polynomials, \cite{BVBV_szeta} introduced the secular function and structure function of a Dirac-type operator. We briefly review the required definitions and results.

Suppose that $\cI=(0,1]$ and consider the eigenvalue equation of a Dirac operator $\tau H=zH$ as a canonical system, i.e.,
	\begin{equation*}
J\frac{d}{dt}H(t,z)= zR(t)H(t,z).
\end{equation*}
\cite{BVBV_szeta} showed that this  system has a unique solution under our assumptions if we set the initial condition to be $\uu_0$. 
\begin{proposition} \label{prop:canonical_H}
	Suppose that $\cI=(0,1]$ and $\tau={\tt Dir}(R,\uu_0,\uu_1)$ satisfies Assumptions \ref{assumption:u}. Then there exists a unique vector valued function $H:\cI\times \C\mapsto\C^2$  so that for every $z\in\C$, the function $H(\cdot,z)$ solves the following ordinary differential equation
	\begin{equation}\label{eq:canonical}
	J\frac{d}{dt}H(t,z)= zR(t)H(t,z),\qquad t\in\cI,\qquad  \lim_{t\to 0}H(t,z) = \uu_0.
	\end{equation}
	Write $H=\binom{A}{B}$. For any $t\in\cI$, the function $H(t,z)$ satisfies $\|H(t,z)\|\ge 0$, and its components $A(t,\cdot),B(t,\cdot)$ are analytic functions such that $A(t,x),B(t,x)\in\R$ for $x\in\R$. 
\end{proposition}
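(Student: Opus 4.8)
The plan is to establish existence and uniqueness of the solution to the canonical system \eqref{eq:canonical} by a standard Volterra integral-equation argument, and then to read off the analyticity and reality properties from the explicit series representation that this argument produces. First I would rewrite the initial value problem in integral form: since $J^{-1}=-J$, the equation $J\frac{d}{dt}H= zR(t)H$ with $\lim_{t\to 0}H(t,z)=\uu_0$ is equivalent to
\begin{equation*}
H(t,z)=\uu_0 - z\int_0^t J R(s) H(s,z)\, ds.
\end{equation*}
The key integrability input is the condition $\int_0^1 \|\uu_0^{\dag} R(s)\|\, ds<\infty$ from Assumption \ref{assumption:u} (the relevant one since $\cI=(0,1]$), which controls the kernel near the singular endpoint $t=0$ and guarantees that the Volterra operator is well-defined on bounded measurable functions.

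Next I would solve the integral equation by Picard iteration, setting $H_0\equiv\uu_0$ and $H_{n+1}(t,z)=\uu_0 - z\int_0^t JR(s)H_n(s,z)\,ds$. The main work is an a priori bound: writing $m(t)=\int_0^t \|R(s)\|\,ds$ (or a suitably weighted version adapted to the endpoint behavior), one shows inductively that the successive differences satisfy an estimate of the form $\|H_{n+1}(t,z)-H_n(t,z)\|\le \frac{(|z|\,m(t))^{n+1}}{(n+1)!}C$, so that the series $\sum_n (H_{n+1}-H_n)$ converges absolutely and uniformly on compact subsets of $\cI\times\C$. This yields a solution $H(t,z)$, and the same contraction estimate gives uniqueness. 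I expect the main obstacle to be handling the singular endpoint $t=0$ cleanly: because $R$ need only satisfy the weighted integrability condition rather than being bounded near $0$, the naive Grönwall bound using $\sup\|R\|$ fails, and one must instead iterate against the finite measure $\uu_0^{\dag}R(s)\,ds$ (and track how the second component enters), so the inductive estimate has to be set up with the right weight to close.

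Finally, the analyticity and reality claims follow from the structure of this construction. Each Picard iterate $H_n(t,z)$ is, for fixed $t$, a polynomial in $z$ of degree $n$ whose coefficients are real two-vectors (since $\uu_0\in\R^2$, $J$ is real, and $R(s)$ is real-valued by \eqref{def:R}); the uniform convergence of the series on compacts then shows that $A(t,\cdot)$ and $B(t,\cdot)$ are locally uniform limits of polynomials, hence entire in $z$, and that they take real values for $z=x\in\R$ because each approximating polynomial does. The bound $\|H(t,z)\|\ge 0$ is immediate from the definition of the norm. I would close by noting that uniqueness of the strong solution of \eqref{eq:canonical} is equivalent to uniqueness for the Volterra equation, which the contraction estimate already provides, completing the proof of Proposition \ref{prop:canonical_H}.
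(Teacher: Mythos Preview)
The paper does not give its own proof of this proposition; it is quoted from \cite{BVBV_szeta} (see the sentence immediately preceding the statement). So there is no in-paper argument to compare against.

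Your approach is the standard one and is correct in outline: rewrite as a Volterra integral equation, solve by Picard iteration, and read off analyticity and reality from the resulting power series in $z$ with real vector coefficients. You have also correctly identified the one genuine issue, namely that $\int_0^t\|R(s)\|\,ds$ need not be finite near $s=0$ (for example, if $y(s)\to\infty$ as $s\to 0$ then $\|R(s)\|\sim y(s)/2$ blows up while $\|R(s)\uu_0\|$ may still be integrable). The way to close the iteration is indeed to decompose each iterate in the basis $\uu_0,\uu_1$ and track the two coefficients separately: writing $H_n(t,z)-\uu_0 = a_n(t,z)\uu_0 + b_n(t,z)\uu_1$, one finds that the $\uu_1$-component always appears multiplied by $\int_0^s \uu_0^{\dag}R(r)\uu_0\,dr$, so the relevant quantities that must be finite are exactly $\int_0^1 \uu_0^{\dag}R(s)\uu_0\,ds$, $\int_0^1 \uu_0^{\dag}R(s)\uu_1\,ds$, and the Hilbert--Schmidt integral \eqref{HS_finite}, all of which are controlled by Assumption \ref{assumption:u}. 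This is precisely the structure encoded in the resolvent kernel \eqref{Dir:inverse} and the bound \eqref{eq:H_cont}. With that bookkeeping in place your sketch becomes a complete proof.
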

\begin{definition}\label{def:RAF}
	Under the assumptions of Proposition \ref{prop:canonical_H}, we define the secular function of $\tau$ as 
 	\begin{equation}\label{def:secular}
 	\zeta_{\tau}(\cdot) =  H(1,\cdot)^{\dag} J\uu_1,
 	\end{equation}
 	and the structure function of $\tau$ as 
 	\begin{equation}\label{def:structure}
 	\cE_\tau(\cdot) = H(1,\cdot)^{\dag} \binom{1}{-i} = A(1,\cdot)-iB(1,\cdot). 
 	\end{equation}
\end{definition}
We define the integral trace of $\res \tau$ as the  integral of the trace of the kernel $K_{\res \tau}$, and denote it by $\mathfrak{t}_\tau$:
\begin{align}
\mathfrak{t}_\tau=\int_0^{1} \tr K_{\ttr_\tau}(s,s) ds=\tfrac12 \int_0^{1} \mathfrak{a}_0(s)^{\dag} \mathfrak{a}_1(s) ds=\int_0^1 \uu_0^{\dag} R(s) \uu_1 ds.
\label{eq:int_tr}
\end{align} 
It was proved in \cite{BVBV_szeta} that the secular function $\zeta_\tau$ can be represented as 
\begin{align}
\zeta_\tau(z)=e^{-z{\mathfrak t}_\tau}{\det}_2(I-z \,\res \tau) =e^{-\tfrac{z}{2} \int_0^{1} \mathfrak{a}_0(s)^{\dag} \mathfrak{a}_1(s) ds} \prod_k (1-z/ \,\lambda_k)e^{z/ \lambda_k},\label{eq:zeta}
\end{align}
where ${\det}_2$ is the second regularized determinant, see \cite{Simon_det}. Note that the integral trace is finite under Assumption \ref{assumption:u}, and the secular function $\zeta_\tau$ is an entire function with zero set given by $\spec(\tau)$, the spectrum of $\tau$. We refer to \cite{BVBV_szeta} for additional details. The secular function of $\tau$ can be viewed as a generalization of the normalized characteristic polynomial of a  matrix.

The next proposition provides comparisons for the solutions of two canonical systems of the form \eqref{eq:canonical}. It also shows that $H(t,\cdot)$ is continuous on compacts in $z\in \C$ at $t=0$.

\begin{proposition}[Proposition 12 of \cite{BVBV_palm}]\label{prop:H_comparison}
	Suppose that $\cI=(0,1]$ and $\tau=\Dirop(R,\uu_0,\uu_1)$,
 $\tilde \tau=\Dirop(\tl R,\uu_0,\tilde\uu_1)$ satisfy Assumption \ref{assumption:u}. Let $H,\tl H$ be the solutions of the corresponding canonical systems \eqref{eq:canonical}, and define $\mathfrak{a}_0, \tilde{\mathfrak{a}}_0$ according to \eqref{Dir:inverse}. Recall that $\tau_t$ is the operator $\tau$ restricted to $(0,t]$. Then there is an absolute constant $c>1$ depending only on  $\uu_0$ so that for all $t\in \cI$, $z\in \C$ we have
	\begin{align}\notag
	&    |H(t,z)-\tilde H(t,z)|\le \left(c^{|z|\left(|\ttr_{\tau_t}-\tilde \ttr_{\tau_t}|+\|\res \tau_t-\res \tilde\tau_t\|+\sqrt{\int_{0}^t|\mathfrak{a}_0(s)-\tilde{\mathfrak{a}}_0(s)|^2 ds \int_{0}^t(|\mathfrak{a}_0(s)|^2+|\tilde{\mathfrak{a}}_0(s)|^2)ds}\right)}-1\right)\\&\hskip150pt \times
	c^{\left(|z|(|\ttr_{\tau_t}|+|\ttr_{\tilde\tau_t}|+\|\res \tau_t\|+\|\res \tilde\tau_t\|+\int_0^t \left(|\mathfrak{a}_0(s)|^2+|\tilde{\mathfrak{a}}_0(s)|^2 \right)ds)+1\right)^2}.\label{eq:H1_cont}
	\end{align}
	Moreover, we have for all $t\in \cI$, $z\in \C$ 
	\begin{align} \label{eq:H_cont}
	|H(t,z)-\uu_0|\le \left(c^{|z|(|\ttr_{\tau_t}|+\|\res \tau_t\|+\int_0^t |\mathfrak{a}_0(s)|^2 ds)}-1\right) c^{\left(|z|(|\ttr_{\tau_t}|+\|\res \tau_t\|+\int_0^t |\mathfrak{a}_0(s)|^2 ds)+1\right)^2}.
	\end{align}
\end{proposition}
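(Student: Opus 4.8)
The plan is to view the canonical system \eqref{eq:canonical} as a Volterra integral equation and to control it through its Picard (Neumann) series, after a change of basis that converts the matrix weight into three scalar quadratic forms. Using $J^{-1}=-J$ I would first rewrite \eqref{eq:canonical} as
\[
H(t,z)=\uu_0-z\int_0^t JR(s)H(s,z)\,ds ,
\]
and record the identity that makes everything tractable: since $J^{\dag}J=I$, $R^{\dag}=R$, and $\{\uu_0,\uu_1\}$ is a basis of $\R^2$ with $\uu_0^{\dag}J\uu_1=1$ (so that $v=(v^{\dag}J\uu_1)\uu_0-(v^{\dag}J\uu_0)\uu_1$ for every $v$), one gets
\[
JR(s)v=\big(v^{\dag}R(s)\uu_1\big)\uu_0-\big(v^{\dag}R(s)\uu_0\big)\uu_1 .
\]
Writing $H=\alpha\uu_0+\beta\uu_1$ this turns the system into the scalar problem $\binom{\alpha}{\beta}'=-zM(s)\binom{\alpha}{\beta}$ with $\binom{\alpha}{\beta}(0)=\binom10$ and $M=\big(\begin{smallmatrix}p&b\\-a&-p\end{smallmatrix}\big)$, where $p=\uu_0^{\dag}R\uu_1$, $a=\uu_0^{\dag}R\uu_0=\tfrac12|\mathfrak{a}_0|^2$, $b=\uu_1^{\dag}R\uu_1=\tfrac12|\mathfrak{a}_1|^2$. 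These three entries are exactly the building blocks of the bound: $\int_0^t p=\ttr_{\tau_t}$, $\int_0^t a=\tfrac12\int_0^t|\mathfrak{a}_0|^2$, and, by Proposition \ref{prop:inv_HS}, the ordered double integral $\int_0^t\!\int_0^s a(r)b(s)\,dr\,ds=\tfrac12\|\res\tau_t\|^2$.

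From the Picard expansion $H(t,z)-\uu_0=\sum_{n\ge1}(-z)^n\int_{0<s_1<\cdots<s_n<t}M(s_n)\cdots M(s_1)\binom10\,ds$ I would bound each product by reading it as a sum over paths in the two-state chain $\{1,2\}$ started at state $1$: the entries $a,b$ are the transitions $1\to2$ and $2\to1$, while $\pm p$ are self-loops. The main obstacle is that $b=\uu_1^{\dag}R\uu_1$ need not be integrable on its own, so it cannot be estimated factor by factor. The observation that resolves this is that, because the chain starts in state $1$, the $a$- and $b$-transitions alternate along every path and begin with an $a$; hence each $b(s_j)$ can be paired with a strictly earlier $a(s_i)$, $s_i<s_j$, and upon integration over the time-ordered simplex each pair contributes a factor bounded by $\tfrac12\|\res\tau_t\|^2$. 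The self-loops $p$ are controlled by the pointwise Cauchy-Schwarz bound $|p|\le\sqrt{ab}$ (from positive-definiteness of $R$) together with the same pairing, while their net contribution is organized through $\ttr_{\tau_t}$ rather than $\int|p|$. A Gr\"onwall-type summation of the series then yields \eqref{eq:H_cont}: the prefactor $c^{(\cdots)}-1$ records the removal of the $n=0$ term $\uu_0$, the linear exponent collects $\ttr_{\tau_t}$, $\int_0^t|\mathfrak{a}_0|^2$ and $\|\res\tau_t\|$, and the quadratic exponent in the second factor is where the combinatorics of the $a$--$b$ pairings, each weighted by $\tfrac12\|\res\tau_t\|^2$, are absorbed.

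For the comparison estimate \eqref{eq:H1_cont} I would expand both $H$ and $\tilde H$ and subtract, writing $M(s_n)\cdots M(s_1)-\tilde M(s_n)\cdots\tilde M(s_1)$ as a telescoping sum in which exactly one factor is replaced by $M(s_i)-\tilde M(s_i)$, with $M$'s on one side and $\tilde M$'s on the other. The entries of $M-\tilde M$ are $p-\tilde p$, $a-\tilde a=\tfrac12\big(|\mathfrak{a}_0|^2-|\tilde{\mathfrak{a}}_0|^2\big)$, and $b-\tilde b$; integrating against the surrounding (already controlled) factors, their contributions are bounded by $|\ttr_{\tau_t}-\tilde\ttr_{\tau_t}|$, by $\sqrt{\int_0^t|\mathfrak{a}_0-\tilde{\mathfrak{a}}_0|^2\int_0^t(|\mathfrak{a}_0|^2+|\tilde{\mathfrak{a}}_0|^2)}$ (via the polarization $|\mathfrak{a}_0|^2-|\tilde{\mathfrak{a}}_0|^2=(\mathfrak{a}_0-\tilde{\mathfrak{a}}_0)^{\dag}(\mathfrak{a}_0+\tilde{\mathfrak{a}}_0)$ and Cauchy-Schwarz, matching the square-root term in \eqref{eq:H1_cont}), and by $\|\res\tau_t-\res\tilde\tau_t\|$, which again absorbs the only non-integrable entry $b-\tilde b$ through pairing. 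Summing the telescoped series with the single-factor bounds of the previous step produces the stated product of $(c^{|z|(\cdots)}-1)$ with the squared-exponent term. The delicate part throughout is exactly this bookkeeping: keeping the non-integrable $\uu_1^{\dag}R\uu_1$ terms paired so that only the net quantities $\ttr$, $\int|\mathfrak{a}_0|^2$ and $\|\res\tau\|$ (and their differences) appear, rather than cruder integrals such as $\int|p|$ or $\int b$. Finally, the asserted continuity of $H(t,\cdot)$ at $t=0$ on compact $z$-sets is immediate from \eqref{eq:H_cont}, since $\ttr_{\tau_t}$, $\|\res\tau_t\|$ and $\int_0^t|\mathfrak{a}_0|^2$ all tend to $0$ as $t\to0$ under Assumption \ref{assumption:u}.
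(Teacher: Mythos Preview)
The present paper does not contain a proof of this proposition: it is quoted from \cite{BVBV_palm} (as Proposition 12 there) and used as a black box. So there is no ``paper's own proof'' to compare against here.

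That said, your outline is the right one and matches the method developed in \cite{BVBV_szeta,BVBV_palm}. The reduction to the scalar $2\times2$ system via the $\{\uu_0,\uu_1\}$ basis, the identification of the three entries $p=\uu_0^{\dag}R\uu_1$, $a=\tfrac12|\mathfrak a_0|^2$, $b=\tfrac12|\mathfrak a_1|^2$ with the trace, the $\mathfrak a_0$-integral and (after pairing) the Hilbert--Schmidt norm, and the telescoping for the comparison estimate are exactly the mechanism behind \eqref{eq:H1_cont}--\eqref{eq:H_cont}. The only place where your sketch is thin is the passage from the path expansion to the specific shape $(c^{|z|(\cdots)}-1)\,c^{(|z|(\cdots)+1)^2}$: in the cited proof this comes from bounding the $n$th Picard term by a polynomial of degree $n$ in the three controlled quantities and then summing, with the square in the exponent arising because each $a$--$b$ pair contributes a factor $\|\res\tau_t\|^2$ rather than $\|\res\tau_t\|$; spelling out that combinatorial count is the remaining work. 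Your observation that the non-integrable $b$ can only be handled through the ordered pairing with a preceding $a$ is precisely the point of the argument.
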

 Proposition \ref{prop:H_comparison} together with the Hoffman-Wielandt inequality in infinite dimensions (see e.g.~\cite{BhatiaElsner}) provide similar comparisons  for the secular functions and the spectra of two Dirac operators.

\begin{remark}\label{rmk:Dir_comp}
 Let $\tau_1 $, $\tau_2$ be two Dirac operators on $\cI$ satisfying Assumption \ref{assumption:u}. Denote by $\lambda_{k,i}, \zeta_i, \res\tau_i, \mathfrak t_{\tau_i}$ the eigenvalues,  secular function, resolvent and integral trace of $\tau_i$.  Then we have
\begin{align}\label{eq:ev_comp}
\sum_k \left|\lambda_{k,1}^{-1}-\lambda_{k,2}^{-1}\right|^2\le& \|\res{\tau_1}-\res{\tau_2}\|_{\textup{HS}}^2,
\end{align}
and there is an absolute constant $c>1$ so that for all $z\in \C$
\begin{align}\label{eq:zeta_comp}
|\zeta_1(z)-\zeta_2(z)|\le& \Big(c^{|z| |\mathfrak t_{\tau_1}-\mathfrak t_{\tau_2}|}-1+|z|\big\|\res \tau_1-\res\tau_2\big\|\Big)
c^{|z|^2(\|\res{\tau_1}\|^2+\|\res{\tau_2}\|^2)+|z|(|\mathfrak{t}_{\tau_1}|+|\mathfrak{t}_{\tau_2}|)+1}.
\end{align}
\end{remark}

\subsubsection*{Transformations of Dirac-type operators}

We finish this section with a short discussion on simple transformations of Dirac-type  operators. First note that the two cases $\cI=(0,1]$ and $\cI=[0,1)$ can be connected by a time reversal transformation $\rho$ on functions defined on $(0,1]$ or $[0,1)$ by $\rho f(t)=f(1-t)$. Let $\mathfrak{r}: \HH\to \HH$ be the reflection with respect to the imaginary axis defined by $\mathfrak{r}(x+i y)=-x +i y$.
The next statement provides a description of the effect of the composition of the time reversal $\rho$ and the reflection $\mathfrak{r}$, see \cite{BVBV_szeta} or \cite{BVBV_palm}.

\begin{lemma}[\cite{BVBV_szeta}]\label{lem:timerev}
Suppose that the Dirac operator $\tau=\Dirop(R,\uu_0, \uu_1)$ satisfies Assumption \ref{assumption:u}. Set $S=\diag(1,-1)$, then the operator $\rho^{-1}S \tau S \rho$ also satisfies Assumption  \ref{assumption:u} with boundary conditions $S\uu_1, S\uu_0$,  weight function $\rho SRS$, and generating path $\mathfrak{r} \rho z$. The operators $\tau$ and $\rho^{-1}S \tau S \rho$ are orthogonally equivalent  in the respective $L^2$ spaces,  and they have the same integral traces and secular functions. 
\end{lemma}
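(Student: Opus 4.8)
The plan is to verify directly that the transformed operator $\rho^{-1} S \tau S \rho$ is again a Dirac operator of the stated form, and then check that the claimed invariants (orthogonal equivalence, integral trace, secular function) follow. First I would unwind the definition: for $f$ on $[0,1)$, we have $(\rho^{-1} S \tau S \rho) f = \rho^{-1} S R^{-1} J (S \rho f)'$. Using $\rho f(t) = f(1-t)$ so that $(\rho f)'(t) = -f'(1-t)$, and tracking the conjugation by $S = \diag(1,-1)$, the key algebraic facts are $SJS = -J$ and $S X S$ corresponds to the reflection $\mathfrak r$ of the generating path, since $X = \left(\begin{smallmatrix} 1 & -x \\ 0 & y\end{smallmatrix}\right)$ and reflecting $x \mapsto -x$ sends $X$ to $SXS^{-1}$ up to the bookkeeping in \eqref{def:R}. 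Combining these, I expect the computation to show that $\rho^{-1} S \tau S \rho$ acts as $g \mapsto \tilde R^{-1} J g'$ with $\tilde R = \rho S R S$, which is exactly the weight function $R$ for the reflected, time-reversed generating path $\mathfrak r \rho z$. The $-1$ sign from $(\rho f)'$ and the $-1$ from $SJS = -J$ must cancel, which is the sign check one has to do carefully.

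Next I would handle the boundary conditions and the domain. The time reversal $\rho$ swaps the roles of the two endpoints $t=0$ and $t=1$, so the boundary vector $\uu_0$ (at $t=0$) becomes a condition at $t=1$ after reversal, and conjugation by $S$ turns it into $S\uu_0$; symmetrically $\uu_1$ becomes $S\uu_1$ at $t=0$. This gives boundary conditions $S\uu_1$ at the left endpoint and $S\uu_0$ at the right endpoint, as claimed. I would then confirm that Assumption \ref{assumption:u} is preserved: the normalization \eqref{eq:u_normalization} transforms correctly because $(S\uu_1)^{\dag} J (S\uu_0) = \uu_1^{\dag} (SJS) \uu_0 = -\uu_1^{\dag} J \uu_0 = \uu_0^{\dag} J \uu_1 = 1$ using antisymmetry of $J$, and the integrability conditions \eqref{HS_finite} are invariant since $\rho$ is just a change of variables $t \mapsto 1-t$ and $S$ is orthogonal, so the quadratic forms $\uu^{\dag} R \uu$ are unchanged. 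The orthogonal equivalence in the respective $L^2$ spaces follows because $S$ is orthogonal and $\rho$ is an isometry between $L^2_R$ and $L^2_{\rho S R S}$ (the change of variables preserves the $R$-weighted inner product once $R$ is transported along).

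For the integral trace and secular function I would argue that these are intrinsic spectral/determinantal quantities that are preserved under orthogonal equivalence. The integral trace $\mathfrak t_\tau = \int_0^1 \uu_0^{\dag} R(s) \uu_1\, ds$ transforms under $s \mapsto 1-s$ into $\int_0^1 (S\uu_1)^{\dag} (\rho S R S)(s) (S\uu_0)\, ds$, and expanding $(S\uu_1)^{\dag} S R S (S\uu_0) = \uu_1^{\dag} R \uu_0 = \uu_0^{\dag} R \uu_1$ (since $R$ is symmetric) shows the integrand matches, so $\mathfrak t$ is unchanged. For the secular functions, since $\rho^{-1} S \tau S \rho$ is orthogonally equivalent to $\tau$ they have identical spectra, hence identical eigenvalue products in the representation \eqref{eq:zeta}; combined with the equality of integral traces this forces $\zeta$ to coincide. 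Alternatively one can transport the canonical system solution $H$ along the reversal and read off \eqref{def:secular} directly.

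The main obstacle I anticipate is the sign and coordinate bookkeeping in the first paragraph: correctly composing the derivative flip from $\rho$, the sign from $SJS = -J$, and verifying that conjugation by $S$ realizes precisely the reflection $\mathfrak r$ rather than some other M\"obius-type action on the generating path. In particular one must check that $S R S$, with $R$ built from $X$ as in \eqref{def:R}, equals the weight function of the reflected path $\mathfrak r(x+iy) = -x + iy$; this amounts to verifying $S X^{\dag} X S = \tilde X^{\dag} \tilde X$ with $\tilde X = \left(\begin{smallmatrix} 1 & x \\ 0 & y \end{smallmatrix}\right)$ and that $\det$ is unchanged, which is a short but sign-sensitive matrix identity. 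Once that identification is pinned down the rest is routine change-of-variables and orthogonality arguments.
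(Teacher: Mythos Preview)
Your proposal is correct. Note, however, that the paper does not actually prove this lemma: it is stated with a citation to \cite{BVBV_szeta} and used as a black box, so there is no in-paper proof to compare against. Your direct verification is the natural approach and the computations you outline are sound; in particular the identities $SJS=-J$, the sign cancellation between $(\rho f)'$ and $SJS$, the check that $SRS$ equals the weight of the reflected path (via $(XS)^{\dag}(XS)=\tilde X^{\dag}\tilde X$ with $\tilde X=\left(\begin{smallmatrix}1&x\\0&y\end{smallmatrix}\right)$), and the boundary-condition normalization $(S\uu_1)^{\dag}J(S\uu_0)=1$ all go through as you describe. The argument for equality of secular functions via \eqref{eq:zeta} (same spectrum plus same integral trace) is clean and sufficient.
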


Consider the projection operator $\mathcal{P}$ defined in \eqref{eq:P}. It can be naturally generalized to the projection operator on  non-zero two-dimensional complex vectors with $\mathcal{P}\binom{z_1}{z_2}=z_1/z_2$ given $z_2\neq 0$. In this way, a $2\times 2$ non-singular matrix $A$ can be identified with a linear fractional  transformation  via $z\to \mathcal P A \binom{z}{1}$. When $A$ is real, the corresponding linear fractional transformation is an isometry of $\HH$. The next lemma describes the effect of a hyperbolic isometry on a Dirac operator.

\begin{lemma}[\cite{BVBV_szeta}]\label{lem:rotation}
	Let $Q$ be a $2\times 2$ orthogonal matrix with determinant 1. Let $\mathcal Q: \overline \HH\to \overline \HH$ be the corresponding linear isometry of $\overline \HH$ mapping $z\in \overline \HH$ to the ratio of the entries of $Q  \binom{z}{1}$. 	Suppose that the Dirac operator $\tau=\Dirop(R,\uu_0, \uu_1)$ satisfies Assumption \ref{assumption:u}. Then the operator $Q \tau Q^{-1}$ also satisfies Assumption \ref{assumption:u}, with boundary conditions $\mathcal Q \uu_0, \mathcal Q \uu_1$ and generating path $\mathcal Q(x+i y)$. The two operators are orthogonally equivalent, they have the same integral traces and secular functions. 
\end{lemma}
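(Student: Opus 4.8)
The plan is to conjugate the canonical system by the constant orthogonal matrix $Q$ and track how each ingredient of the $\Dirop$ data transforms. First I would observe that $Q\tau Q^{-1}$ is the map $f\mapsto Q R^{-1} J (Q^{-1}f)'$, and since $Q$ is constant we have $(Q^{-1}f)'=Q^{-1}f'$, so this equals $f\mapsto (Q R^{-1}Q^{-1})(Q J Q^{-1}) f'$. The key algebraic facts are that $Q$ has determinant $1$ and is orthogonal, so $Q^{-1}=Q^{\dag}$ and $QJQ^{\dag}=J$ (every $SO(2)$ matrix commutes with $J$). Hence $Q J Q^{-1}=J$, and the conjugated operator is again of Dirac form with weight function $\tilde R:=Q R Q^{\dag}$, i.e.~$Q\tau Q^{-1}=\Dirop(\tilde R,\cdot,\cdot)$ with the claimed boundary data. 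The next step is to identify $\tilde R$ with the weight function encoded by the generating path $\mathcal Q(x+iy)$: writing $X$ as in \eqref{def:R}, I would check that $QX(x+iy)$ and $\tilde X:=X(\mathcal Q(x+iy))$ differ by a right multiplication by a lower-triangular factor with positive determinant, so that $\tilde R = \tilde X^{\dag}\tilde X/(2\det \tilde X)$ agrees with $Q R Q^{\dag}$. This is the standard fact that $R$ depends on $x+iy$ only through the point in $\HH$ it represents and is covariant under $SO(2)$ acting by Möbius transformations, which is exactly the isometry $\mathcal Q$.

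Once the operator identification is in place, I would verify that Assumption \ref{assumption:u} is preserved. Since $\mathcal Q\uu_0,\mathcal Q\uu_1$ here means the linear action $Q\uu_0,Q\uu_1$ (up to the scalar identification under $\mathcal P$, which I would fix by choosing representatives so that the normalization is exact), the normalization \eqref{eq:u_normalization} follows from $(Q\uu_0)^{\dag}J(Q\uu_1)=\uu_0^{\dag}Q^{\dag}JQ\uu_1=\uu_0^{\dag}J\uu_1=1$, again using $Q^{\dag}JQ=J$. The integrability conditions \eqref{HS_finite} and the endpoint bounds transform identically because each integrand is of the form $\uu_i^{\dag}R\uu_j$, and $(Q\uu_i)^{\dag}(QRQ^{\dag})(Q\uu_j)=\uu_i^{\dag}R\uu_j$; the relevant integrals are therefore literally unchanged.

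For the orthogonal equivalence and the equality of integral traces and secular functions, I would exhibit the explicit unitary intertwiner. The map $f\mapsto Qf$ is an isometry from $L^2_R$ to $L^2_{\tilde R}$ precisely because $\|Qf\|_{\tilde R}^2=\int (Qf)^{\dag}\tilde R(Qf)=\int f^{\dag}R f=\|f\|_R^2$, and it conjugates $\tau$ to $Q\tau Q^{-1}$ by construction, giving orthogonal equivalence (and in particular equality of spectra). The integral trace \eqref{eq:int_tr} is invariant by the same $\uu_i^{\dag}R\uu_j$ computation. For the secular function, I would note that if $H$ solves \eqref{eq:canonical} for $\tau$ then $QH$ solves it for the conjugated system with initial condition $Q\uu_0$, since $J\frac{d}{dt}(QH)=QJ\frac{d}{dt}H=Q(zRH)=z\tilde R(QH)$ using $QJ=JQ$ and $\tilde R Q=QR$; then $\zeta_{Q\tau Q^{-1}}=(QH(1,\cdot))^{\dag}J(Q\uu_1)=H(1,\cdot)^{\dag}Q^{\dag}JQ\uu_1=H(1,\cdot)^{\dag}J\uu_1=\zeta_\tau$. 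The main obstacle I anticipate is purely bookkeeping rather than conceptual: reconciling the \emph{linear} action $\uu\mapsto Q\uu$ used throughout the intertwining computations with the \emph{projective} action $\mathcal Q$ stated in the lemma, i.e.~justifying that rescaling the boundary vectors to restore the normalization \eqref{eq:u_normalization} does not alter the operator or its associated functions. I would handle this by checking that $\Dirop$ and all the invariants depend on $\uu_0,\uu_1$ only through the normalized pair, so the freedom in choosing representatives is harmless.
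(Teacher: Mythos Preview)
The paper does not give its own proof of this lemma; it is quoted from \cite{BVBV_szeta}. Your argument is the standard one and is correct: conjugate by the constant $Q$, use $QJQ^{\dag}=J$ to see that the new operator is again of Dirac form with weight $QRQ^{\dag}$, then identify this weight with the one coming from the path $\mathcal Q(x+iy)$ via a QR/Iwasawa decomposition; the invariance of the normalization, the integrability conditions, the integral trace, and the secular function all follow from $(Q\uu_i)^{\dag}(QRQ^{\dag})(Q\uu_j)=\uu_i^{\dag}R\uu_j$ and $Q^{\dag}JQ=J$.

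Two minor corrections. First, the matrix you should decompose is $XQ^{\dag}$, not $QX$: from $QRQ^{\dag}=\frac{(XQ^{\dag})^{\dag}(XQ^{\dag})}{2\det(XQ^{\dag})}$ one writes $XQ^{\dag}=K\tilde X$ with $K\in SO(2)$ and $\tilde X$ upper triangular, and then $\tilde X$ (after normalizing the $(1,1)$ entry) is exactly $X(\mathcal Q(x+iy))$ because as a M\"obius map $XQ^{\dag}=XQ^{-1}$ sends $\mathcal Q(x+iy)$ to $i$ and $K$ fixes $i$. Second, your anticipated obstacle about rescaling the boundary vectors does not arise: for $Q\in SO(2)$ the identity $(Q\uu_0)^{\dag}J(Q\uu_1)=\uu_0^{\dag}J\uu_1=1$ holds exactly, so $Q\uu_0,Q\uu_1$ are already correctly normalized and the projective notation $\mathcal Q\uu_i$ in the statement is just shorthand for $Q\uu_i$.
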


\section{Finitely supported measures on the unit circle}\label{sec:cmv}

In this section we provide an overview of the CMV construction for  finitely supported probability measures on the unit circle, and review how  Dirac-type operators can be used to study them.

\subsection{CMV matrices}\label{sub:CMV}

We briefly review some of the needed facts from the theory of orthogonal polynomials on the unit circle, together with some properties of CMV matrices. See \cite{OPUC1} for a comprehensive treatment of the subject, or \cite{SimonOPUC1foot} for a shorter summary. 

Let $\nu$ be a probability measure on $\partial \D$ with a support of $n$ distinct points $e^{i\lambda_j}, 1\le j \le n$. Let $\Phi_k, k=0, \dots, n$ be the Gram-Schmidt orthogonalization of the polynomials $1,z, \cdots, z^n$ with respect to $\nu$. Then $\Phi_k, k=0, \dots, n-1$ are the monic orthogonal polynomials with respect to $\nu$, and $\Phi_n(z):=\prod_{j=1}^n(z-e^{i \lambda_j})$. Together with the reversed polynomials  
\begin{align*}
\Phi_k^*(z):=z^k \overline{\Phi_k(1/\bar z)},
\end{align*}
these polynomials satisfy the famous Szeg\H o recursion (see e.g.~Section 1.5, vol.~1 of \cite{OPUC1}):
\begin{align}\label{eq:Szego1}
\binom{\Phi_{k+1}}{\Phi_{k+1}^*}=
\mat{1}{-\bar \alpha_k}{-\alpha_k}{1} \mat{z}{0}{0}{1} \binom{\Phi_{k}}{\Phi_{k}^*}, \qquad \binom{\Phi_0}{\Phi_0^*}
=\binom{1}{1},\qquad 0\le k\le n-1.
\end{align}
The constants $\alpha_k$, $0\le k\le n-1$ are called the Verblunsky coefficients, they satisfy $|\alpha_k|<1$ for $0\le k\le n-2$ and $|\alpha_{n-1}|=1$. The map between the probability measures supported on $n$ points on $\partial \D$ and the possible Verblunsky coefficients $(\alpha_0, \dots, \alpha_{n-1})\in \D^{n-1}\times \partial \D$ is invertible, and both the map and its inverse are continuous. (See e.g.~Theorem 1.7.11 of \cite{OPUC1}.)

The next definition introduces the CMV matrix.

 \begin{definition}\label{def:CMV}
For fixed $n\ge 1$, let $\{\alpha_k,0\le k\le n-1\}$ be a sequence of complex coefficients with  $|\alpha_k|\le 1$. Define
\[
\Xi_k = \begin{pmatrix}
\bar\alpha_k & \rho_k\\ \rho_k& -\alpha_k
\end{pmatrix},\qquad \rho_k=\sqrt{1-|\alpha_k|^2},\qquad 0\le k\le n-2
\]
and set $\Xi_{-1}=(1)$ and $\Xi_{n-1}=(\bar\alpha_{n-1})$ to be $1\times 1$ matrices.

For $n\ge 2$ we define the \emph{CMV matrix} corresponding to $\{\alpha_k,0\le k\le n-1\}$ as
\begin{align}\label{eq:CMV}
	\cC(\alpha_0,\cdots,\alpha_{n-1}) :=\cL\cM,
\end{align}
where $\cL, \cM$ are $n\times n$ block-diagonal matrices 
\begin{align*}
\cL=\operatorname{diag}\left(\Xi_0,\Xi_2\cdots,\Xi_{2\lfloor\tfrac{n-1}{2}\rfloor}\right),\qquad \cM=\operatorname{diag}\left(\Xi_{-1},\Xi_1\cdots,\Xi_{2\lfloor\tfrac{n}{2}\rfloor-1}\right).
\end{align*}
For $n=1$  we define the $1\times 1$ CMV matrix as  $\cC(\alpha_0)=(\bar \alpha_0)$.
 \end{definition}

Note that $\cC(\alpha_0,\dots, \alpha_{n-1})$ is unitary if and only if $|\alpha_{n-1}|=1$. The following proposition provides a crucial link between CMV matrices and orthogonal polynomials of a discrete probability measure on $\partial \D$.

\begin{proposition}[\cite{KK,OPUC1}]\label{prop:CMV1}
Suppose that $\nu$ is a probability measure  with a support of $n$ distinct points on $\partial \D$, and let $\alpha_k, 0\le k\le n-1$ be its sequence of Verblunsky coefficients. Then 
 for any $1\le k\le n$ we have 
    \begin{equation}\label{eq:CMV_characteristic}
    \det(zI_k-\mathcal{C}(\alpha_0,\dots,\alpha_{k-1})) = \Phi_k(z).
\end{equation}
\end{proposition}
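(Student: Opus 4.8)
Write $C_k:=\cC(\alpha_0,\dots,\alpha_{k-1})$ and $P_k(z):=\det(zI_k-C_k)$. The plan is to prove \eqref{eq:CMV_characteristic}, i.e.\ that $P_k=\Phi_k$, by induction on $k$, reducing everything to the Szeg\H{o} recursion \eqref{eq:Szego1}. The base case $k=1$ is immediate: by Definition \ref{def:CMV} we have $C_1=(\bar\alpha_0)$, so $P_1(z)=z-\bar\alpha_0=\Phi_1(z)$. For the inductive step the key structural fact is that the five-diagonal (staircase) shape of the CMV matrix lets one evaluate $P_k$ through a transfer-matrix recursion, in which the matrix appearing at stage $j$ is precisely the Szeg\H{o} matrix $\left(\begin{smallmatrix} z & -\bar\alpha_j\\ -\alpha_j z & 1\end{smallmatrix}\right)$ encoding \eqref{eq:Szego1}. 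Concretely I would expand $\det(zI_k-C_k)$ along its last row and column, where the boundary block is the $1\times1$ matrix $\Xi_{k-1}=(\bar\alpha_{k-1})$; this isolates the factors $z$ and $\bar\alpha_{k-1}$ multiplying the degree $k-1$ data and should reproduce exactly $\Phi_k=z\Phi_{k-1}-\bar\alpha_{k-1}\Phi_{k-1}^*$.

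To make the recursion close one cannot follow $P_k$ alone, since \eqref{eq:Szego1} couples $\Phi_k$ to the reversed polynomial $\Phi_k^*$. I would therefore track the pair $(P_k,P_k^*)$, where $P_k^*$ is a companion determinant built from the same truncated data (and which will be identified a posteriori with the reversal $z^k\,\overline{P_k(1/\bar z)}$), and show that $(P_k,P_k^*)$ satisfies the coupled system \eqref{eq:Szego1} with initial data $(P_0,P_0^*)=(1,1)$. Since the solution of \eqref{eq:Szego1} with prescribed initial data is unique, this forces $P_k=\Phi_k$ and $P_k^*=\Phi_k^*$ for every $k$. In carrying out the cofactor/Schur-complement expansion one repeatedly invokes the defining relation $\rho_j^2=1-|\alpha_j|^2$ to collapse the cross terms generated by the off-diagonal entries $\rho_j$ of the blocks $\Xi_j$: for instance, expanding the $3\times3$ case along the first column produces a contribution $\alpha_0\bar\alpha_2(|\alpha_1|^2+\rho_1^2)=\alpha_0\bar\alpha_2$, which is exactly the coefficient predicted by \eqref{eq:Szego1}.

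The main obstacle is the bookkeeping caused by the parity-dependent placement of the boundary block: by Definition \ref{def:CMV} the last coefficient $\alpha_{k-1}$ sits inside $\cL$ when $k$ is odd and inside $\cM$ when $k$ is even, so a naive single-step induction alters the global $\cL\cM$ pattern and the two sizes do not line up directly. I would resolve this either by advancing the induction two coefficients at a time, keeping the parity and hence the block pattern fixed, or by writing $zI_k-C_k$ as the matrix $zI_{k-1}-C_{k-1}$ bordered by one extra row and column (using that $C_{k-1}$ is the top-left corner of $C_k$) and carefully expanding the resulting bordered five-diagonal determinant, paying attention to the two $1\times1$ boundary blocks $\Xi_{-1}=(1)$ and $\Xi_{k-1}=(\bar\alpha_{k-1})$. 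As a consistency check, the top case $k=n$ admits an independent argument: there $|\alpha_{n-1}|=1$, so $C_n=\cC(\alpha_0,\dots,\alpha_{n-1})$ is unitary with spectrum equal to the support $\{e^{i\lambda_j}\}$ of $\nu$, and since $P_n$ is monic of degree $n$ it must equal $\prod_{j=1}^n(z-e^{i\lambda_j})=\Phi_n$. The truncated cases $k<n$ genuinely require the recursion above, which is where the bulk of the work lies; the identity is classical and is also recorded in \cite{OPUC1} and \cite{KK}.
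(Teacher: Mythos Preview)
The paper does not provide its own proof of this proposition; it simply states the identity and cites \cite{KK,OPUC1} as the source. So there is no argument in the paper to compare against.

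Your outline is a reasonable sketch of the standard direct proof. The structural observation that $C_{k-1}$ sits as the top-left $(k-1)\times(k-1)$ block of $C_k$ is correct (the $1\times1$ boundary block $(\bar\alpha_{k-2})$ in $C_{k-1}$ is exactly the top-left entry of the full $2\times2$ block $\Xi_{k-2}$ in $C_k$, and the block-diagonal shapes of $\cL,\cM$ ensure the product restricts properly), and your $3\times3$ check is accurate. The one place your write-up is genuinely incomplete is the definition of the companion quantity $P_k^*$: you say it is ``a companion determinant built from the same truncated data'' but never write down which determinant. For the induction to close you must exhibit $P_k^*$ concretely---e.g.\ as the characteristic polynomial of the bottom-right $k\times k$ block of a suitably extended CMV matrix, or (more simply) just \emph{define} $P_k^*(z):=z^k\,\overline{P_k(1/\bar z)}$ and verify algebraically that the pair $(P_k,P_k^*)$ obeys \eqref{eq:Szego1}. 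The latter route avoids having to find a determinantal interpretation of $P_k^*$ at all: once you have shown $P_k=z P_{k-1}-\bar\alpha_{k-1}P_{k-1}^*$ by cofactor expansion, the companion relation $P_k^*=-\alpha_{k-1}z P_{k-1}+P_{k-1}^*$ follows automatically by applying the $*$-operation, since $*$ is an involution on degree-$k$ polynomials. With that adjustment your induction closes cleanly and the parity bookkeeping you describe is exactly what is needed.
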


Suppose  that $U$ is an $n\times n$ unitary matrix for which  $\mathbf{e}_1=(1,0,\dots,0)^{\dag}$ is cyclic. The spectral measure of $U$ with respect to $\mathbf{e}_1$ is  given by the following discrete probability measure:
\begin{equation}\label{eq:spectral_measure}
\mu=\sum_{k=1}^n \delta_{\lambda_k} \cdot \left|\langle \mathbf{e}_1,  \mathbf{v}_k\rangle\right|^2.
\end{equation}
Here $\lambda_k,1\le k\le n$ are the eigenvalues of $U$, and $\mathbf{v}_k, 1\le k\le n$ are the corresponding unit length (right) eigenvectors. 
The following proposition summarizes how a unitary matrix and the CMV matrix of its spectral measure are connected.

\begin{proposition}[\cite{CMV}]\label{prop:CMV2}
Suppose that $U$ is an $n\times n$ unitary matrix for which $\mathbf{e}_1$ is cyclic. Let the spectral measure of $U$ with respect to $\mathbf{e}_1$ be $\nu$, and denote the Verblunsky coefficients of $\nu$ by $\alpha_k, 0\le k\le n-1$. Then there is a unitary matrix $V$ with $V\mathbf{e}_1=\mathbf{e}_1$ and $U=V \cC(\alpha_0,\dots, \alpha_{n-1}) V^{-1}$. In particular, $U$ and 
$\cC$ are unitary equivalent, and they have the same spectral measure with respect to $\mathbf{e}_1$.
\end{proposition}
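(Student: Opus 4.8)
The plan is to realize both $U$ and the CMV matrix $\cC(\alpha_0,\dots,\alpha_{n-1})$ as two coordinate representations of the \emph{same} abstract operator, namely multiplication by $z$ on $L^2(\nu)$, and then read off the intertwining unitary $V$ from the change of basis. Since $\mathbf{e}_1$ is cyclic, the vectors $\{p(U)\mathbf{e}_1: p \text{ a polynomial}\}$ exhaust $\C^n$, and I would first build the spectral isomorphism $W\colon \C^n\to L^2(\nu)$ determined by $W\big(p(U)\mathbf{e}_1\big)=p$. The defining property of the spectral measure, $\langle \mathbf{e}_1, f(U)\mathbf{e}_1\rangle=\int f\,d\nu$, together with $p(U)^{*}=\bar p(U^{-1})$ (the adjoint, which on $\partial\D$ represents the function $\overline{p(z)}$), shows that $\|p(U)\mathbf{e}_1\|^2=\int|p|^2\,d\nu$, so $W$ is a well-defined surjective isometry, hence unitary. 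By construction $W\mathbf{e}_1=1$ and $WU=M_zW$, where $M_z$ denotes multiplication by $z$ on $L^2(\nu)$; equivalently $U=W^{-1}M_zW$.

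Next I would invoke the content of the CMV construction from the theory of orthogonal polynomials on the unit circle (the cited result of \cite{CMV}). Applying the CMV ordering of the Gram--Schmidt procedure to the Laurent monomials with respect to $\nu$ yields an orthonormal basis $\chi_0,\dots,\chi_{n-1}$ of the $n$-dimensional space $L^2(\nu)$ with $\chi_0\equiv 1$, and the matrix of $M_z$ in this basis is exactly the five-diagonal matrix $\cC(\alpha_0,\dots,\alpha_{n-1})=\cL\cM$ of Definition \ref{def:CMV}. In other words, if $\Phi\colon\C^n\to L^2(\nu)$ is the unitary sending the standard basis vector $\mathbf{e}_{k+1}$ to $\chi_k$, then $\Phi^{-1}M_z\Phi=\cC(\alpha_0,\dots,\alpha_{n-1})$.

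With these two identifications in hand, the proof concludes by setting $V:=W^{-1}\Phi$, which is a unitary matrix on $\C^n$. A direct computation gives
\[
V\,\cC(\alpha_0,\dots,\alpha_{n-1})\,V^{-1}=W^{-1}\Phi\,(\Phi^{-1}M_z\Phi)\,\Phi^{-1}W=W^{-1}M_zW=U,
\]
and, since $\chi_0\equiv 1$ and $W\mathbf{e}_1=1$,
\[
V\mathbf{e}_1=W^{-1}\Phi\,\mathbf{e}_1=W^{-1}\chi_0=W^{-1}(1)=\mathbf{e}_1 .
\]
Because $V$ is unitary and fixes $\mathbf{e}_1$, conjugation by $V$ preserves the spectral measure with respect to $\mathbf{e}_1$, which yields the final assertion.

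I expect the only genuinely nontrivial input to be the identification in the second step, that the combinatorially defined product $\cL\cM$ is the matrix of multiplication by $z$ in the CMV basis; this is precisely the theorem underlying the CMV construction, and is the reason I would cite \cite{CMV} rather than reprove it. Everything else---the cyclicity argument producing $W$, and the bookkeeping of the change of basis---is routine once the abstract operator $M_z$ is identified on both sides.
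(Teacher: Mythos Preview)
The paper does not supply a proof of this proposition; it is stated as background and attributed to \cite{CMV}. Your sketch is correct and is precisely the standard argument behind the cited result: build the spectral isomorphism $W$ from cyclicity, invoke the CMV theorem that $M_z$ in the CMV orthonormal basis of $L^2(\nu)$ has matrix $\cC(\alpha_0,\dots,\alpha_{n-1})$, and set $V=W^{-1}\Phi$. The only substantive step you defer---that $\cL\cM$ is the matrix of $M_z$ in the CMV basis---is exactly what \cite{CMV} proves, so your citation there is appropriate and matches the paper's treatment.
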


The next definition introduces a simple transformation on a collection of Verblunsky coefficients and on the corresponding CMV matrix. 

\begin{definition}\label{def:rev_Verblunsky}
    Suppose that $(\alpha_0, \dots, \alpha_{n-1})\in \D^{n-1} \times \partial \D$ is the sequence of Verblunsky coefficients of the discrete probability measure $\nu$ on $\partial \D$. Let $\cC$ denote the CMV matrix corresponding to these Verblunsky coefficients. 
    We define the `reversed' version of the Verblunsky coefficients as 
    \begin{align}\label{eq:rev_Verblunsky}
       (\rev \alpha_0,\rev  \alpha_1,\dots, \rev \alpha_{n-1}):= (-\alpha_{n-1} \bar \alpha_{n-2}, -\alpha_{n-1} \bar \alpha_{n-3}, \dots, -\alpha_{n-1}\bar \alpha_0, \alpha_{n-1}). 
    \end{align}
We denote by $\rev  \nu$ and $\rev  \cC$ the probability measure and CMV matrix corresponding to the sequence  $\rev \alpha_k,0\le k\le n-1$, and call these the reversed version of $\mu$ and $\cC$, respectively. 
\end{definition}
Note that since $|\alpha_{n-1}|=1$, the reversal operation is an involution. This operation appeared  in \cite{KillipNenciu} where it was shown that  $\rev  \cC$ is unitary equivalent to $\cC$ (if $n$ is even), and to $\cC^{\dag}$ (if $n$ is odd). This implies that $\cC$ and $\rev  \cC$ have the same eigenvalues, or equivalently, $\mu$ and $\rev  \mu$ have the same support. 
In fact, the arguments of Proposition B.2 of \cite{KillipNenciu} also imply that $\rev  \mu$ is the spectral measure of $\cC$ with respect to the unit vector $\mathbf{e}_n=(0,0,\dots,0,1)^{\dag}$.

The following results were proved in \cite{KK}. They provide  key ingredients for our study of truncated and perturbed unitary matrices. 

\begin{proposition}[\cite{KK}]\label{prop:CMV3}
	Consider the same setup as in Proposition \ref{prop:CMV2}. 
	Then if $n\ge 2$ then the truncated matrix $\trunc{U}$ is unitary equivalent to $\trunc{\cC}$, which in turn is unitary equivalent to 
	\begin{align}\label{eq:CMV_trunc}
	&\cC(\rev \alpha_0, \dots, \rev \alpha_{n-2}), \quad \text{if $n$ is even, and}\quad
	\cC(\rev \alpha_0, \dots, \rev \alpha_{n-2})^{\dag}, \quad \text{if $n$ is odd.} 
	\end{align}
	For $r\in [0,1]$ the perturbed matrix $U^{[r]}$ (as defined in \eqref{def:mult_pert}) is unitary equivalent to 
	\begin{align}\label{eq:CMV_pert}
	&\cC(\rev \alpha_0, \dots, \rev \alpha_{n-2},r\rev \alpha_{n-1}),\quad \text{if $n$ is even, and}\quad \cC(\rev \alpha_0, \dots, \rev \alpha_{n-2},r\rev \alpha_{n-1})^{\dag},\quad \text{if $n$ is odd.}
	\end{align}
\end{proposition}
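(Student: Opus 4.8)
The plan is to reduce both statements about $U$ to the corresponding statements about the CMV matrix $\cC$, and then to identify the truncated and perturbed CMV matrices using the reversal operation together with the nested structure of finite CMV matrices. By Proposition \ref{prop:CMV2} we may write $U=V\cC V^{-1}$ with $V$ unitary and $V\mathbf{e}_1=\mathbf{e}_1$. Since $V$ is unitary and fixes $\mathbf{e}_1$, it preserves $\{\mathbf{e}_1\}^\perp=\operatorname{span}(\mathbf{e}_2,\dots,\mathbf{e}_n)$, so in block form $V=\diag(1,\tilde V)$ with $\tilde V$ an $(n-1)\times(n-1)$ unitary. Writing the truncation as $\trunc{M}=PMP^{\dag}$, where $P$ is the coordinate projection onto the last $n-1$ entries, the identities $PV=\tilde V P$ and $V^{-1}P^{\dag}=P^{\dag}\tilde V^{-1}$ give $\trunc{U}=\tilde V\,\trunc{\cC}\,\tilde V^{-1}$, so $\trunc{U}$ and $\trunc{\cC}$ are unitarily equivalent. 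For the perturbation, write $\diag(r,1,\dots,1)=I-(1-r)\mathbf{e}_1\mathbf{e}_1^{\dag}$; since $V\mathbf{e}_1=\mathbf{e}_1$ (hence also $V^{\dag}\mathbf{e}_1=\mathbf{e}_1$) this matrix commutes with $V$, and therefore $U^{[r]}=V\cC^{[r]}V^{-1}$. Thus it suffices to identify $\trunc{\cC}$ and $\cC^{[r]}$.

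The next ingredient is the nested structure of finite CMV matrices, which follows directly from the block-diagonal form of $\cL$ and $\cM$ in Definition \ref{def:CMV}: the coefficient $\alpha_{m-1}$ enters only through the final block $\Xi_{m-1}=(\bar\alpha_{m-1})$, so deleting the last row and column of $\cC(\alpha_0,\dots,\alpha_{m-1})$ returns $\cC(\alpha_0,\dots,\alpha_{m-2})$, and, when this final block sits in $\cM$ (which happens for the relevant parity), right-multiplication by $\diag(1,\dots,1,r)$ replaces it by $(r\bar\alpha_{m-1})$ and hence scales the last Verblunsky coefficient by $r$. In particular, deleting the last row and column of $\rev\cC=\cC(\rev\alpha_0,\dots,\rev\alpha_{n-1})$ produces exactly $\cC(\rev\alpha_0,\dots,\rev\alpha_{n-2})$, and scaling its last coefficient by $r$ produces $\cC(\rev\alpha_0,\dots,\rev\alpha_{n-2},r\rev\alpha_{n-1})$; these are precisely the matrices appearing in the statement.

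It remains to transport the operation performed at $\mathbf{e}_1$ on $\cC$ to the corresponding operation at the last coordinate on $\rev\cC$. Here I would use the reversal symmetry recorded before the proposition: $\rev\mu$ is the spectral measure of $\cC$ with respect to $\mathbf{e}_n$, and $\rev\cC$ is unitarily equivalent to $\cC$ for $n$ even and to $\cC^{\dag}$ for $n$ odd. Using the explicit Killip--Nenciu reversal map $W$ (a flip of the CMV basis composed with phase conjugations), which sends the $\mathbf{e}_1$-direction to the $\mathbf{e}_n$-direction, conjugation by $W$ carries the compression of $\cC$ to $\{\mathbf{e}_1\}^\perp$ onto the compression of $\rev\cC$ (resp.\ $\rev\cC^{\dag}$) to $\{\mathbf{e}_n\}^\perp$, and carries $\cC^{[r]}=\cC\,\diag(r,1,\dots,1)$ onto $\rev\cC\,\diag(1,\dots,1,r)$ (resp.\ its adjoint), because $W\mathbf{e}_1\mathbf{e}_1^{\dag}W^{\dag}=\mathbf{e}_n\mathbf{e}_n^{\dag}$. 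Since compressing or perturbing $A^{\dag}$ at a coordinate is the adjoint of compressing or perturbing $A$ there, combining this with the nesting and scaling computations of the previous paragraph yields $\cC(\rev\alpha_0,\dots,\rev\alpha_{n-2})$ and $\cC(\rev\alpha_0,\dots,\rev\alpha_{n-2},r\rev\alpha_{n-1})$ for $n$ even, and their adjoints for $n$ odd, as claimed.

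The main obstacle is the bookkeeping in this last step. One must verify that the reversal unitary genuinely swaps $\mathbf{e}_1$ and $\mathbf{e}_n$: the stated facts directly give only that $W$ sends $\mathbf{e}_n$ to the cyclic vector $\mathbf{e}_1$ of $\rev\cC$, and promoting this to an action on $\mathbf{e}_1$ seems to require unwinding the explicit form of the map rather than only its recorded consequences. Moreover one must track the parity carefully, since it controls both whether the final block of $\cC$ lies in $\cL$ or in $\cM$ (and thus whether the perturbation/scaling computation applies on the left or the right) and whether the reversal produces $\cC$ or $\cC^{\dag}$. These two sources of parity are exactly what conspire to give the uniform adjoint in the odd case, and confirming that they align is the delicate point of the argument.
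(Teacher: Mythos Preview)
The paper does not give its own proof of this proposition; it is quoted verbatim as a result of Killip--Kozhan \cite{KK} (introduced by ``The following results were proved in \cite{KK}''), so there is no argument in the paper to compare against. Your outline is in fact the route taken in \cite{KK}: reduce from $U$ to its CMV form using $V\mathbf e_1=\mathbf e_1$, use the reversal unitary of \cite{KillipNenciu} to move the distinguished direction from $\mathbf e_1$ to $\mathbf e_n$, and then exploit the nesting $\cC(\alpha_0,\dots,\alpha_{m-1})\mapsto\cC(\alpha_0,\dots,\alpha_{m-2})$ when one deletes the last row and column, together with the observation that right-multiplication by $\diag(1,\dots,1,r)$ simply rescales the final Verblunsky coefficient when the terminal $1\times1$ block $\Xi_{m-1}$ sits in $\cM$.

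You have correctly located the only genuine gap. The facts recorded in the paper before the proposition (that $\rev\cC$ is unitarily equivalent to $\cC$ or $\cC^{\dag}$, and that $\rev\mu$ is the spectral measure of $\cC$ at $\mathbf e_n$) do not by themselves force the intertwining unitary to carry $\mathbf e_1$ to $\mathbf e_n$; one has to go back to the explicit construction of the reversal map $W$ in Proposition B.2 of \cite{KillipNenciu}, where $W$ is a signed reversal permutation composed with phases, and read off directly that $W\mathbf e_1$ is a unimodular multiple of $\mathbf e_n$. Once that is checked, your parity bookkeeping is exactly right: for $n$ even the terminal block of $\rev\cC$ lies in $\cM$ and the scaling/deletion act as stated, while for $n$ odd the reversal produces $\cC^{\dag}$ and the terminal block lies in $\cL$, so one passes through the adjoint (using that $(\cL\cM)^{\dag}=\cM\cL$ with the roles of $\cL,\cM$ swapped) to get the same conclusion with the ${}^{\dag}$ on the right-hand side. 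With that one verification supplied, your argument is complete and matches \cite{KK}.
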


\subsection{Connection to Dirac-type operators}
\label{sub:finite_Dir}

In this section, we show how the Dirac operator framework introduced in \cite{BVBV_op,BVBV_19,BVBV_szeta}  can be used  to study finitely supported probability measures on $\partial \D$.

The following definition constructs a Dirac-type differential operator corresponding to a finitely supported probability measure on $\partial \D$. 

\begin{definition}\label{def:path}
Suppose that $\mu$ is a probability measure on $\partial\D$ supported on $n$ distinct points $\{e^{i\lambda_j},1\le j\le n\}$, and let $\alpha_k,0\le k\le n-1$ be the corresponding Verblunsky coefficients. For $0\le k\le n$ we define
	\begin{align}\label{eq:b_alpha}
	b_0=0,\quad b_k=\mathcal{P}\begin{pmatrix}
	1 & \bar \alpha_0 \\ \alpha_0 & 1
	\end{pmatrix}\cdots \begin{pmatrix}
	1 & \bar \alpha_{k-1} \\ \alpha_{k-1} & 1
	\end{pmatrix}\begin{pmatrix}
	0\\1
	\end{pmatrix}\quad \text{for $1\le k\le n$}.
	\end{align}
	Set $z_k=\mathcal{U}^{-1}(b_k)$, where $\mathcal{U}$ is the Cayley transform mapping $\overline \HH$ to $\overline \D$ defined via
	\begin{equation}\label{eq:U}
	\mathcal{U} (z) =\mathcal{P}U\begin{pmatrix}
	z\\1
	\end{pmatrix}= \frac{z-i}{z+i},\quad \quad U=\begin{pmatrix}
	1 &-i\\ 1&i
	\end{pmatrix}.
	\end{equation}
  We call  $b_k,0\le k\le n$ and $z_k,0\le k\le n$ the (discrete) \emph{path parameters} of $\mu$ in $\D$ and in $\HH$, respectively. 
We say that $z:[0,1)\to \HH$ defined via 
 $z(t)=(x+iy)(t):=z_{\lfloor nt\rfloor}$
 is the \emph{generating path associated to $\mu$}. We set  $\uu_0=\binom{1}{0}$, $\uu_1=\binom{-z_n}{-1}$, and  we call $\tau=\Dirop(z(\cdot),\uu_0,\uu_1)$ the Dirac-type operator corresponding to $\mu$.
 \end{definition}

We may call $\tau$ the Dirac-type operator corresponding to the Verblunsky coefficients $\alpha_k,0\le k\le n-1$, or the path  $b_0,\dots,b_n$. We may use the notation $\tau=\Dirop(b_{-1},b_0,\dots,b_n)$ with $b_{-1}=\mathcal{P}U^{-1}\uu_0=1$ to emphasize the path dependence. We call $b_{-1}$ and $b_n$ the \emph{left}  and \emph{right} \emph{boundary point} of $\tau$, respectively.  

As the next proposition shows, the Dirac-type operator corresponding to $\mu$ encodes the support of $\mu$ in the spectrum. 
 \begin{proposition}
     [\cite{BVBV_szeta,BVBV_palm}]
	\label{prop:unitary_repr_1}
	Suppose that $\mu$ is a probability measure on $\partial\D$ supported on $n$ distinct points $e^{i\lambda_j},1\le j\le n$, with $\mu(\{1\})= 0$. 
Then the Dirac-type operator $\tau$ corresponding to $\mu$ satisfies Assumption \ref{assumption:u} with $\cI=[0,1)$, and its  spectrum is given by
 \[\spec(\tau)=n\Lambda_n+2\pi n\Z,\qquad \Lambda_n=\{\lambda_1,\dots,\lambda_n\}.
 \]
 \end{proposition}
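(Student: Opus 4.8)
Since the generating path $z(\cdot)$ is a step function with $n$ constant pieces, the plan is to turn the eigenvalue problem $\tau H=\lambda H$ into a finite product of constant-coefficient transfer matrices and to identify that product with the Szeg\H o transfer matrices of \eqref{eq:Szego1} evaluated at the spectral parameter $z=e^{i\lambda/n}$. This reduces the boundary value problem on $[0,1)$ to the scalar condition $\Phi_n(e^{i\lambda/n})=0$, and since Proposition \ref{prop:CMV1} gives $\Phi_n(z)=\prod_{j=1}^n(z-e^{i\lambda_j})$, the zero set of this condition is exactly $n\Lambda_n+2\pi n\Z$.

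\textbf{Verifying Assumption \ref{assumption:u}.} First I would dispose of the structural hypotheses. Because $|\alpha_k|<1$ for $0\le k\le n-2$, the path points $b_0,\dots,b_{n-1}$ lie in the open disk $\D$, so their images $z_0,\dots,z_{n-1}$ under $\mathcal U^{-1}$ lie in $\HH$; hence $z(\cdot)$ is a bounded step function and the weight $R$ of \eqref{def:R} is a bounded, positive-definite matrix-valued step function on $[0,1)$. Boundedness immediately yields \eqref{HS_finite} and $\int_0^1\|R(s)\uu_1\|\,ds<\infty$, while \eqref{eq:u_normalization} is the direct computation $\uu_0^{\dag}J\uu_1=(1,0)\binom{1}{-z_n}=1$, valid whenever $z_n$ is finite. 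The sole role of the hypothesis $\mu(\{1\})=0$ is to guarantee $b_n\neq 1$, equivalently $z_n=\mathcal U^{-1}(b_n)\in\R$: were $z_n=\infty$ the vector $\uu_1$ would degenerate to $\binom{1}{0}$, parallel to $\uu_0$. The correspondence $b_n=1\Leftrightarrow\mu(\{1\})>0$ (equivalently a support point at $e^{i\cdot 0}=1$, equivalently $0\in\spec(\tau)$) is part of the construction in Definition \ref{def:path}; granting it, $\uu_1=\binom{-z_n}{-1}$ is a genuine finite vector automatically non-parallel to $\uu_0$, so Proposition \ref{prop:inv_HS} applies and $\tau$ is self-adjoint with discrete spectrum.

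\textbf{The transfer-matrix identity.} Next I would solve the canonical system $J\frac{d}{dt}H=\lambda R(t)H$ piecewise. On $[k/n,(k+1)/n)$ the weight equals the constant $R_k=\tfrac{X_k^{\dag}X_k}{2\det X_k}$ with $z_k=x_k+iy_k$, so $H$ evolves by $M_k(\lambda)=\exp(-\tfrac{\lambda}{n}JR_k)$. A short computation gives $\tr(JR_k)=0$ and $\det(JR_k)=\tfrac14$, so $JR_k$ has eigenvalues $\pm\tfrac i2$ with eigenvectors $\binom{z_k}{1}$ and $\binom{\bar z_k}{1}$, whence
\[
M_k(\lambda)=P_k\,\diag\!\big(e^{-i\lambda/(2n)},\,e^{i\lambda/(2n)}\big)\,P_k^{-1},\qquad P_k=\begin{pmatrix} z_k & \bar z_k\\ 1 & 1\end{pmatrix}.
\]
Conjugating by the Cayley matrix $U$ of \eqref{eq:U} turns $M_k(\lambda)$ into the disk isometry fixing $b_k$ and rotating by the multiplier $e^{i\lambda/n}$; this is precisely the orbit that the path \eqref{eq:b_alpha} is built to track. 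Matching it with the Szeg\H o step $S_k(e^{i\lambda/n})=\begin{pmatrix}1&-\bar\alpha_k\\-\alpha_k&1\end{pmatrix}\diag(e^{i\lambda/n},1)$, up to the scalar $\rho_k$ and the conjugation by $\diag(1,-1)$ relating the signs in \eqref{eq:b_alpha} and \eqref{eq:Szego1}, is the computation carried out in \cite{BVBV_op}. Accumulating over $k=0,\dots,n-1$ and comparing the $t=1$ boundary vector $\uu_1$ (which encodes $b_n$) with the Szeg\H o endpoint, one obtains that the boundary function $H(1,\lambda)^{\dag}J\uu_1$—playing the role of the secular function—equals a nowhere-vanishing entire factor times $\Phi_n(e^{i\lambda/n})$.

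\textbf{Conclusion.} A value $\lambda$ lies in $\spec(\tau)$ iff the solution of $JH'=\lambda RH$ with $H(0)=\uu_0$ (the $t=0$ condition in \eqref{Dir:domain} forces $H(0)\parallel\uu_0$) also satisfies $H(1,\lambda)^{\dag}J\uu_1=0$, i.e.\ iff $\Phi_n(e^{i\lambda/n})=0$. By Proposition \ref{prop:CMV1} the zeros of $\Phi_n$ are the $n$ distinct support points $e^{i\lambda_j}$, and $e^{i\lambda/n}\in\{e^{i\lambda_j}\}$ iff $\lambda\in n\Lambda_n+2\pi n\Z$; simplicity of these zeros matches simplicity of the eigenvalues. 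This gives $\spec(\tau)=n\Lambda_n+2\pi n\Z$. (Since $\cI=[0,1)$ rather than $(0,1]$, one either runs this directly, as above, or first invokes the time-reversal Lemma \ref{lem:timerev}; the spectrum is unchanged.) The main obstacle is the transfer-matrix identity of the third step: carefully bookkeeping the Cayley conjugation, the $\rho_k$ normalizations, and the endpoint vector so that the accumulated product is exactly the Szeg\H o product with no spurious zeros introduced by the prefactor. This is, however, precisely the correspondence established in \cite{BVBV_op}, so in the present argument it can largely be cited rather than re-derived.
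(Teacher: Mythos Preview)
The paper does not actually prove this proposition; it is quoted from \cite{BVBV_szeta,BVBV_palm}, and the supporting identity \eqref{eq:Phi_H} between the canonical system and the Szeg\H{o} polynomials is likewise imported as Proposition \ref{prop:unitary_ODE}. Your sketch follows exactly that route: verify Assumption \ref{assumption:u} from boundedness of the step-function weight, solve $JH'=\lambda R H$ piecewise by diagonalizing $JR_k$, identify the accumulated transfer matrix with the Szeg\H{o} product so that the boundary condition $H(1,\lambda)^{\dag}J\uu_1=0$ becomes $\Phi_n(e^{i\lambda/n})=0$, and read off the spectrum. The eigenvalue/eigenvector computation for $JR_k$ is correct, and your handling of the role of $\mu(\{1\})=0$ (finiteness of $z_n$, non-parallel $\uu_0,\uu_1$) is right. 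The one step you flag as ``the main obstacle'' --- matching the constant-coefficient evolution to the Szeg\H{o} step after Cayley conjugation --- is precisely the content of \eqref{eq:Phi_H}, so you are citing the same correspondence the paper cites.
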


In \cite{BVBV_op, BVBV_palm} it was observed that the Dirac operator representation for $\mu$ can be simplified using the modified Verblunsky coefficients. These are defined in terms of the Verblunsky coefficients via the recursion 
\begin{align}\label{eq:modifiedV}
\gamma_k=\bar \alpha_k \prod_{j=0}^{k-1} \frac{1-\bar \gamma_j}{1-\gamma_j}, \qquad 0\le k\le n-1.
\end{align}
Note that the modified Verblunsky coefficients satisfy $|\gamma_k|=|\alpha_k|$. Denote by $\cT$ the mapping from the Verblunsky coefficients to the modified ones. This mapping is invertible, in fact for any $k\ge 1$ it provides a one-to-one correspondence between the first  $k$ Verblunsky coefficients and the first $k$ modified Verblunsky coefficients. 
We will use the notation $\cT_k$ for this restricted map.

The modified Verblunsky coefficients of a discrete probability measure $\mu$ are connected to its 
 normalized orthogonal polynomials (with normalization at $1$). Let $\Phi_k$ be the monic orthogonal polynomials of $\mu$, with $\Phi^*_k$ being the reversed polynomials. We set
\begin{equation}\label{eq:normalized_orthogonal}
    \Psi_k(z)=\frac{\Phi_k(z)}{\Phi_k(1)},\qquad \Psi_k^*(z)=\frac{\Phi_k^*(z)}{\Phi_k^*(1)}. 
\end{equation}
These are always well defined for $0\le k\le n-1$, and $\Psi_n$ is defined as long as $\mu(\{1\})=0$. 
The polynomials $\Psi, \Psi^*$ satisfy the following  modified version of the Szeg\H{o} recursion \eqref{eq:Szego1}:
\begin{align}\label{eq:Szego2}
\binom{\Psi_{k+1}}{\Psi_{k+1}^*}=
\mat{\frac{1}{1-\gamma_k}}{-\frac{\gamma_k}{1-\gamma_k}}{-\frac{\bar \gamma_k}{1-\bar \gamma_k}}{\frac{1}{1-\bar \gamma_k}}\mat{z}{0}{0}{1} \binom{\Psi_{k}}{\Psi_{k}^*}, \quad \binom{\Psi_0}{\Psi_0^*}
=\binom{1}{1},\quad 0\le k\le n-1.
\end{align}
The matrices appearing in this recursion correspond to affine transformations. 

Affine transformations of $\HH$ can be parametrized by the elements of $\HH$ as follows. For  $z=x+iy\in\HH$ we define the matrix $A_z$ and the corresponding linear fractional transformation $\mathcal{A}_{z,\HH}:\overline \HH\mapsto\overline \HH$ as follows: 
\begin{equation}\label{e:A1def}
A_{x+i y, \HH}= 
\mat{1}{-x}{0}{y}, \qquad \mathcal A_{x+i y, \HH}(w)= \mathcal{P} A_{x+i y, \HH} \binom{w}{1}.
\end{equation}
Note that $x+i y$ is the pre-image of $i$ under $\mathcal A_{x+i y, \HH}$. 

The transformations $\cA_{z,\HH}$ are isometries of the half-plane model of the hyperbolic plane. The corresponding transformations on the unit-disk  model of the hyperbolic plane can be obtained by  conjugating with the Cayley transform. For $\gamma \in \D$ we set
\begin{align}\label{eq:A_equiv}
A_{\gamma, \D}=U A_{\cU^{-1}(\gamma),\HH} U^{-1}, \qquad \mathcal A_{\gamma, \D}=\mathcal U\circ\mathcal A_{\cU^{-1}(\gamma), \HH}\circ\mathcal U^{-1},
\end{align}
which leads to 
\begin{equation}\label{eq:Adef}
A_{\gamma, \D}= 
\mat{\frac{1}{1-\gamma}}
{\frac{\gamma}{\gamma-1}}
{\frac{\bar \gamma}{\bar \gamma-1}}
{\frac{1}{1-\bar \gamma}},\qquad
\mathcal A_{\gamma, \D}(z)= \mathcal{P} A_{\gamma,\D} \binom{z}{1}. 
\end{equation}
Note that the matrix coefficient in \eqref{eq:Szego2} is exactly $A_{\gamma_k, \D}$, and $\cA_{\gamma,\D}$ maps $\gamma$ to $0$.

The following proposition shows that the generating path of a discrete probability measure $\mu$ can be expressed via a simple (affine) recursion using the modified Verblunsky coefficients.
Define $w_k,v_k\in \mathbb R$ from the modified Verblunsky coefficients as
\begin{align}\label{def:wv}
v_k+iw_k:=\frac{2i\gamma_k}{1-\gamma_k}=\cU^{-1}(\gamma_k)-i.
\end{align}

\begin{proposition}[\cite{BVBV_szeta,BVBV_palm}]
	\label{prop:unitary_repr_2}
	Suppose that $\mu$ is a probability measure on $\partial\D$ supported on $n$ distinct points $e^{i\lambda_j},1\le j\le n$, and $\mu(\{1\})=0$. Let $\gamma_k, 0\le k\le n-1$ be the modified Verblunsky coefficients of $\mu$. Let 
	$b_k, z_k, 0\le k\le n$ be the path parameters of $\mu$ defined as in Definition \ref{def:path}. Then the following identities hold for $0\le k\le n-1$:
	\begin{align}\label{eq:z_rec}
	z_{k+1}&=z_k+(v_k+i w_k) \Im z_k, 
	\end{align}
	with $v_k,w_k$ defined in \eqref{def:wv}, and 
	\begin{equation}
	b_{k+1}=\frac{b_k+\gamma_k\frac{1-b_k}{1-\bar b_k}}{1+\bar b_k \gamma_k \frac{
			1-b_k}{1-\bar b_k}}\label{eq:b_rec}.
	\end{equation}
	We also have
	\begin{equation}
	\label{eq:bk_cA}
	b_k=\cA_{\gamma_0,\D}^{-1}\circ \cdots\circ  \cA_{\gamma_{k-1},\D}^{-1}(0).
	\end{equation}
\end{proposition}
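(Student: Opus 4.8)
The final statement to prove is Proposition \ref{prop:unitary_repr_2}, which gives three identities relating the path parameters $b_k, z_k$ to the modified Verblunsky coefficients $\gamma_k$: the affine recursion \eqref{eq:z_rec} in $\HH$, the recursion \eqref{eq:b_rec} in $\D$, and the closed form \eqref{eq:bk_cA} for $b_k$ as a composition of inverse affine maps applied to $0$.

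\textbf{Overall plan.} The three identities are not independent: \eqref{eq:bk_cA} and \eqref{eq:z_rec} are essentially the $\D$- and $\HH$-side expressions of the same recursion, conjugated by the Cayley transform $\cU$, and \eqref{eq:b_rec} is just the explicit componentwise form of \eqref{eq:bk_cA}. So the plan is to prove one recursion carefully and then transport it to the other model by the conjugation relations \eqref{eq:A_equiv}. First I would establish \eqref{eq:bk_cA} by relating the definition of $b_k$ in \eqref{eq:b_alpha} (a product of the \emph{un}modified Verblunsky matrices applied to $\binom{0}{1}$) to the \emph{modified} Szeg\H{o} recursion \eqref{eq:Szego2}, whose transfer matrices are exactly the $A_{\gamma_k,\D}$ of \eqref{eq:Adef}. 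The key algebraic observation is that the map $\cT$ of \eqref{eq:modifiedV} is precisely designed so that the product of the unmodified matrices $\binom{\;1\;\;\bar\alpha_j}{\alpha_j\;\;1}$ equals, up to the diagonal rescalings built into \eqref{eq:normalized_orthogonal}, the product of the $A_{\gamma_j,\D}$. Since linear fractional transformations are insensitive to scalar multiples of the matrix, the projective action $\cP$ collapses these rescalings, and the product of unmodified matrices applied to $\binom{0}{1}$ projects to the same point as $\cA_{\gamma_0,\D}\cdots$ would.

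\textbf{Key steps in order.} I would proceed as follows. (i) Recall that $\cA_{\gamma,\D}$ maps $\gamma \mapsto 0$ (noted right after \eqref{eq:Adef}), so $\cA_{\gamma_{k-1},\D}^{-1}(0)=\gamma_{k-1}$ and more generally $\cA_{\gamma_0,\D}^{-1}\circ\cdots\circ\cA_{\gamma_{k-1},\D}^{-1}(0)$ satisfies a one-step recursion: if $b_k$ denotes this composition applied to $0$, then $b_{k+1}=\cA_{\gamma_0,\D}^{-1}\circ\cdots\circ\cA_{\gamma_k,\D}^{-1}(0)$, and peeling off the outermost map gives an affine (disk-isometry) relation between $b_{k+1}$ and $b_k$. (ii) Compute this relation explicitly using \eqref{eq:Adef}; the resulting formula is \eqref{eq:b_rec}, which I would verify by a direct substitution of the entries of $A_{\gamma_k,\D}$ into $\cP A_{\gamma_k,\D}^{-1}\binom{b_k}{1}$. (iii) Identify the recursion \eqref{eq:bk_cA} so defined with the original $b_k$ of \eqref{eq:b_alpha}: this is where I must check that passing from $\alpha_k$ to $\gamma_k$ via \eqref{eq:modifiedV} intertwines the two products, which amounts to verifying that the diagonal correction factors $\prod(1-\bar\gamma_j)/(1-\gamma_j)$ in \eqref{eq:modifiedV} are exactly the scalars absorbed by the projective action. (iv) Finally, transport \eqref{eq:b_rec} to $\HH$ via the Cayley conjugation \eqref{eq:A_equiv}: since $A_{\gamma,\D}=UA_{\cU^{-1}(\gamma),\HH}U^{-1}$ and $z_k=\cU^{-1}(b_k)$, the $\D$-recursion becomes the $\HH$-recursion $z_{k+1}=\cA_{\cU^{-1}(\gamma_k),\HH}^{-1}(z_k)$; unwinding the affine map $A_{\cdot,\HH}$ from \eqref{e:A1def} and using the definition \eqref{def:wv} of $v_k+iw_k=\cU^{-1}(\gamma_k)-i$ yields exactly \eqref{eq:z_rec}.

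\textbf{Main obstacle.} The genuinely nontrivial step is (iii): reconciling the product of \emph{unmodified} Verblunsky matrices in \eqref{eq:b_alpha} with the product of \emph{modified} matrices $A_{\gamma_k,\D}$. The matrices $\binom{\;1\;\;\bar\alpha_k}{\alpha_k\;\;1}$ and $A_{\gamma_k,\D}$ are genuinely different (the former has determinant $1-|\alpha_k|^2=\rho_k^2$ and the latter has a more complicated normalization), so the identification of their projective actions relies crucially on the precise form of \eqref{eq:modifiedV}. I would handle this by proving, by induction on $k$, that the partial products $\binom{\;1\;\;\bar\alpha_0}{\alpha_0\;\;1}\cdots\binom{\;1\;\;\bar\alpha_{k-1}}{\alpha_{k-1}\;\;1}$ and $A_{\gamma_{k-1},\D}^{-1}\cdots A_{\gamma_0,\D}^{-1}$ agree up to left and right multiplication by diagonal matrices whose entries are the cumulative products appearing in \eqref{eq:modifiedV}; since $\cP$ is invariant under scalar (and the relevant diagonal) factors, the two products send $\binom{0}{1}$ to the same boundary point. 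This is the point where the definition of the modified coefficients is used in an essential way rather than formally, and I expect the bookkeeping of the diagonal factors to be the crux of the argument. The remaining steps (i), (ii), (iv) are routine linear-fractional computations.
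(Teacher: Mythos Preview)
The paper does not give its own proof of this proposition; it is quoted from \cite{BVBV_szeta,BVBV_palm} and stated without argument. So there is nothing to compare against, and I can only assess your outline on its own merits.

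Your overall architecture is right, but there is a genuine slip in steps (ii) and (iv) that would derail the computation. You write that the recursion coming from \eqref{eq:bk_cA} is $b_{k+1}=\cP A_{\gamma_k,\D}^{-1}\binom{b_k}{1}$, i.e.\ $b_{k+1}=\cA_{\gamma_k,\D}^{-1}(b_k)$. That would be correct if the composition in \eqref{eq:bk_cA} added the new factor on the \emph{outside}; but it adds $\cA_{\gamma_k,\D}^{-1}$ on the \emph{inside} (rightmost, applied first). Writing $F_k=\cA_{\gamma_0,\D}^{-1}\circ\cdots\circ\cA_{\gamma_{k-1},\D}^{-1}$, one has $b_k=F_k(0)$ and $b_{k+1}=F_k(\cA_{\gamma_k,\D}^{-1}(0))=F_k(\gamma_k)$, which is not a priori a function of $b_k$ and $\gamma_k$ alone. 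The missing observation is that each $\cA_{\gamma,\D}$ fixes $1$, so $F_k$ is itself a disk isometry fixing $1$; such maps are parametrized by their value at $0$, hence $F_k=\cA_{b_k,\D}^{-1}$. This gives $b_{k+1}=\cA_{b_k,\D}^{-1}(\gamma_k)$, and substituting into \eqref{eq:Adef} now produces exactly \eqref{eq:b_rec}. The same correction applies in step (iv): the $\HH$-recursion is $z_{k+1}=\cA_{z_k,\HH}^{-1}(\cU^{-1}(\gamma_k))=x_k+y_k\cU^{-1}(\gamma_k)$, which with \eqref{def:wv} is \eqref{eq:z_rec}; your version $z_{k+1}=\cA_{\cU^{-1}(\gamma_k),\HH}^{-1}(z_k)$ has the roles swapped and does not give the right formula.

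Your step (iii), reconciling \eqref{eq:b_alpha} with \eqref{eq:bk_cA} via the diagonal factors in \eqref{eq:modifiedV}, is correctly identified as the substantive point and your inductive plan for it is sound.
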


The normalized orthogonal polynomials of $\mu$ can be expressed using the canonical system \eqref{eq:canonical} of the Dirac operator $\tau$ corresponding to $\mu$.

\begin{proposition}[\cite{BVBV_szeta,BVBV_palm}]	\label{prop:unitary_ODE}	Under the same setup as in Proposition  \ref{prop:unitary_repr_1}, consider the solution to the canonical system \eqref{eq:canonical}
	\[
	\tau H = \lambda H,\qquad H(0,\lambda)= \binom{1}{0},\qquad (t,\lambda)\in[0,1]\times \C.
	\]
	Then the normalized orthogonal polynomials $\Psi_k, \Psi_k^*$ of $\mu$ (defined via \eqref{eq:normalized_orthogonal}) satisfy
	\begin{equation}\label{eq:Phi_H}
	\binom{\Psi_k(e^{i\lambda /n})}{\Psi_k^*(e^{i\lambda /n})} = e^{i\lambda k/(2n)}\mat{1}{-z_k}{1}{-\bar z_k}H(k/n,\lambda),\qquad 0\le k\le n.
	\end{equation}
\end{proposition}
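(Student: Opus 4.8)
The plan is to prove \eqref{eq:Phi_H} by induction on $k$, reducing it to a single one-step transfer-matrix identity and then verifying that identity directly; the crux will be that matching the $\cos$-coefficient of this identity reproduces exactly the path recursion \eqref{eq:z_rec}.

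First I would record the structure of the canonical system. Since the generating path is piecewise constant, equal to $z_k=x_k+iy_k$ on $[k/n,(k+1)/n)$, the weight function $R$ from \eqref{def:R} is the constant matrix $R_k=X_k^{\dag}X_k/(2\det X_k)$ on that interval, with $X_k=\mat{1}{-x_k}{0}{y_k}$. Writing the eigenvalue equation \eqref{eq:canonical} as $H'=\lambda J^{-1}RH$, the solution propagates across the interval by the transfer matrix $M_k(\lambda)=\exp(\tfrac{\lambda}{n}J^{-1}R_k)$, so that $H((k+1)/n,\lambda)=M_k(\lambda)H(k/n,\lambda)$ (the composition is legitimate because $R$ is bounded and $H$ is absolutely continuous, hence continuous across the break points). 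Introduce $T_k:=\mat{1}{-z_k}{1}{-\bar z_k}$ and note $T_k=UX_k$ with $U$ as in \eqref{eq:U}. The base case $k=0$ is immediate: the first column of $T_0$ is $\binom{1}{1}$, $H(0,\lambda)=\binom{1}{0}$, and $\binom{\Psi_0}{\Psi_0^*}=\binom{1}{1}$ from \eqref{eq:Szego2}. For the inductive step I would combine the hypothesis with the modified Szeg\H{o} recursion \eqref{eq:Szego2}: it suffices to establish the matrix identity
\[
e^{i\lambda/(2n)}\,T_{k+1}\,M_k(\lambda)\,T_k^{-1}=A_{\gamma_k,\D}\mat{e^{i\lambda/n}}{0}{0}{1},
\]
because applying both sides to $H(k/n,\lambda)$ then advances \eqref{eq:Phi_H} from $k$ to $k+1$.

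To prove this identity I would first put $M_k$ in closed form. The matrix $J^{-1}R_k$ has zero trace and determinant $\tfrac14$, so by Cayley--Hamilton $(J^{-1}R_k)^2=-\tfrac14 I$, whence $M_k(\lambda)=\cos\theta\,I+2\sin\theta\,J^{-1}R_k$ with $\theta=\lambda/(2n)$. Using $T_k=UX_k$, the relation $A_{\gamma_k,\D}=U A_{\cU^{-1}(\gamma_k),\HH}U^{-1}$ from \eqref{eq:A_equiv}, and the elementary conjugation $U^{-1}\mat{e^{i\theta}}{0}{0}{e^{-i\theta}}U=\mat{\cos\theta}{\sin\theta}{-\sin\theta}{\cos\theta}$ (a disk-rotation becomes an elliptic half-plane isometry fixing $i$), the displayed identity becomes
\[
X_{k+1}\big(\cos\theta\,I+2\sin\theta\,J^{-1}R_k\big)X_k^{-1}=A_{\cU^{-1}(\gamma_k),\HH}\mat{\cos\theta}{\sin\theta}{-\sin\theta}{\cos\theta}.
\]
Because $\cos\theta$ and $\sin\theta$ are linearly independent in $\lambda$, I would match the two coefficients separately. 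The $\cos\theta$-part is $X_{k+1}X_k^{-1}=A_{\cU^{-1}(\gamma_k),\HH}=\mat{1}{-v_k}{0}{1+w_k}$, which is precisely the affine step \eqref{eq:z_rec}--\eqref{def:wv} (it encodes $x_{k+1}-x_k=v_ky_k$ and $y_{k+1}=(1+w_k)y_k$). The $\sin\theta$-part is $2X_{k+1}J^{-1}R_kX_k^{-1}=\mat{v_k}{1}{-(1+w_k)}{0}$, which follows from the same recursion after the short computation $2J^{-1}R_kX_k^{-1}=\mat{-x_k/y_k}{1}{-1/y_k}{0}$ and one further multiplication by $X_{k+1}$. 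This closes the induction.

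The conceptual skeleton is short, so the main obstacle is bookkeeping in this single-step identity rather than any genuine difficulty: one must compute the matrix exponential $M_k$ exactly and, crucially, track the scalar prefactor $e^{i\lambda/(2n)}$ so that the identity holds as an equality of matrices and not merely projectively. A useful consistency check is that the determinants of the two sides agree, which forces $y_{k+1}/y_k=(1-|\gamma_k|^2)/|1-\gamma_k|^2$; this is exactly what \eqref{def:wv} gives, confirming that the $y$-scaling built into the half-plane affine step is compatible with the normalization by $\Phi_k(1)$ and $\Phi_k^*(1)$ in \eqref{eq:normalized_orthogonal}.
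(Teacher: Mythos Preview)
The paper does not give its own proof of this proposition; it is quoted from \cite{BVBV_szeta,BVBV_palm} and stated without argument. Your induction argument is correct: the transfer matrix $M_k(\lambda)=\cos\theta\,I+2\sin\theta\,J^{-1}R_k$ is computed correctly (since $J^{-1}R_k$ has trace $0$ and determinant $\tfrac14$), the conjugation $U^{-1}\operatorname{diag}(e^{i\theta},e^{-i\theta})U$ is the rotation matrix as claimed, and both the $\cos\theta$- and $\sin\theta$-matchings reduce to the affine recursion \eqref{eq:z_rec}, which you verify. The determinant consistency check is a nice sanity check but not logically needed once the two coefficient matchings are established. Your approach---reducing everything to a single one-step transfer-matrix identity and splitting by $\cos\theta$, $\sin\theta$---is the natural direct route for a statement of this type.
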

Note that Proposition \ref{prop:unitary_ODE} applied with $k=n$ gives
\begin{align}\label{eq:char_pol}
e^{-i \lambda/2} \Psi_n(e^{i \lambda/n})=\binom{1}{-z_n}^\dag H(1,\lambda)=H(1,\lambda)^{\dag} J\binom{1}{z_n}.  
\end{align}
Recall that $\Psi_n(\cdot)$ is just the normalized characteristic polynomial corresponding to the support of $\mu$:
\[
\Psi_n(z)=\frac{\Phi_n(z)}{\Phi_n(1)}=\prod_{j=1}^n \frac{z-e^{i \lambda_j}}{1-e^{i\lambda_j}}
\]
Hence \eqref{def:secular} and \eqref{eq:char_pol} imply that the scaled and normalized characteristic polynomial of $\mu$ is the same as the secular function of the Dirac-type operator corresponding to $\mu$.

\section{beta ensembles from  classical unitary and orthogonal random matrices}
\label{sec:finite}

This section collects the matrix models for various beta-generalizations of unitary and orthogonal random matrices, along with their Dirac operator representations and operator limits. 

\subsection{Finite ensembles and their Dirac operator representations}\label{sub:finite}
\subsubsection*{Circular beta ensemble}

Recall the definition of the size $n$ circular beta ensemble with density function \eqref{eq:cbe} and the spectral measure with respect to $\mathbf{e}_1$ \eqref{eq:spectral_measure}.  
Recall also that for $a_1,\dots, a_n>0$ the Dirichlet$(a_1,\dots, a_n)$ distribution is a probability measure on the set 
\[\{(x_1,x_2,\dots,x_n)\in[0,1]^n:\sum_{i=1}^n x_i=1\}\]
with joint probability density function \begin{align*}
    \frac{\Gamma(\sum_{i=1}^n a_i)}{\prod_{i=1}^n\Gamma(a_i)}\prod_{i=1}^n x_i^{a_i-1}. 
\end{align*}
We  define the size $n$ Killip-Nenciu measure as a random probability measure on $\partial \D$ as
\begin{equation}\label{eq:KillipNenciu}
\mu^{\textup{KN}}_{n,\beta} =\sum_{j=1}^n \pi_j\delta_{e^{i\lambda_j}},
\end{equation}
where the support is distributed as the size $n$ circular beta ensemble, and the weights $\pi_j, 1\le j\le n$ are
chosen according to Dirichlet$(\beta/2,\dots,\beta/2)$ distribution, independently of the support.
The joint distribution of the Verblunsky coefficients with respect to $\mu_{n,\beta}^{\textup{KN}}$ was computed in \cite{KillipNenciu}.

 \begin{definition}\label{def:Theta}
	For  $a>0$ we denote by $\Theta(a+1)$ the distribution on $\D$ that has probability density function
	\begin{align}
	\tfrac{a}{2\pi}(1-|z|^2)^{a/2-1}.
	\end{align}
	The definition is extended for  $a=0$ as follows:  $\Theta(1)$ is the uniform distribution on $\partial \D$.
\end{definition}

\begin{proposition}[\cite{KillipNenciu}]\label{prop:circ_Verblunsky}
For a fixed $n$ and $\beta>0$, the sequence of Verblunsky coefficients $\alpha_k,0\le k\le n-1$ of $\mu_{n,\beta}^{\textup{KN}}$ are independent, and  
$\alpha_k\sim \Theta(\beta(n-k-1)+1)$ for $0\le k\le n-1$.  
\end{proposition}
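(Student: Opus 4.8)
The plan is to compute the joint density of the Verblunsky coefficients $(\alpha_0,\dots,\alpha_{n-1})$ directly, by starting from the known joint density of the support points $\{e^{i\lambda_j}\}$ and weights $\{\pi_j\}$ of $\mu_{n,\beta}^{\textup{KN}}$ and pushing it forward through the change of variables that sends a finitely supported measure on $\partial\D$ to its Verblunsky coefficients. Since the support is distributed as the circular beta ensemble \eqref{eq:cbe} and the weights are an independent Dirichlet$(\beta/2,\dots,\beta/2)$ vector, the joint density of $(\lambda_1,\dots,\lambda_n,\pi_1,\dots,\pi_{n-1})$ is the product
\begin{equation*}
\frac{1}{Z_{n,\beta}}\prod_{j<k}|e^{i\lambda_j}-e^{i\lambda_k}|^{\beta}\;\cdot\;\frac{\Gamma(n\beta/2)}{\Gamma(\beta/2)^n}\prod_{j=1}^n \pi_j^{\beta/2-1},
\end{equation*}
with $\pi_n=1-\sum_{j<n}\pi_j$, supported on the simplex. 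The goal is to show that under the coordinate change $(\lambda_j,\pi_j)\mapsto(\alpha_0,\dots,\alpha_{n-1})$ this density factorizes into the claimed product of $\Theta(\beta(n-k-1)+1)$ densities.

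The key technical input is an explicit formula for the Jacobian of the map from the measure data $(\lambda_j,\pi_j)$ to the Verblunsky coefficients $(\alpha_0,\dots,\alpha_{n-1})$. First I would invoke the standard bijection (Theorem 1.7.11 of \cite{OPUC1}, cited after \eqref{eq:Szego1}) guaranteeing the map is a smooth bijection between measures on $n$ points and $\D^{n-1}\times\partial\D$, so the change of variables is legitimate. The crucial identity to establish is the relation between the Vandermonde-times-weights volume element and the Verblunsky volume element; the cleanest route is to use the two classical formulas from orthogonal-polynomial theory expressing $\prod_{j<k}|e^{i\lambda_j}-e^{i\lambda_k}|^2$ and $\prod_j\pi_j$ in terms of the $\alpha_k$, namely
\begin{equation*}
\prod_{j<k}|e^{i\lambda_j}-e^{i\lambda_k}|^2=\prod_{k=0}^{n-2}(1-|\alpha_k|^2)^{-(k+1)}\cdot(\text{explicit factor}),\qquad \prod_{j=1}^n\pi_j=\prod_{k=0}^{n-2}(1-|\alpha_k|^2)^{\,n-1-k},
\end{equation*}
together with the Jacobian factor $\prod_{k=0}^{n-2}(1-|\alpha_k|^2)$. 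Substituting these into the source density and collecting the powers of $(1-|\alpha_k|^2)$ for each $k$ yields an exponent of $\beta(n-k-1)/2-1$ on $(1-|\alpha_k|^2)$, which is precisely the density of $\Theta(\beta(n-k-1)+1)$ from Definition \ref{def:Theta} (using $a=\beta(n-k-1)$). The factorization into a product over $k$ simultaneously proves independence and identifies each marginal.

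I expect the main obstacle to be the bookkeeping in the Jacobian and the two product formulas: getting the exponents of $(1-|\alpha_k|^2)$ to line up exactly requires care with the normalization conventions (monic versus normalized orthogonal polynomials, the role of $|\alpha_{n-1}|=1$, and how $\pi_n$ is treated on the simplex). Rather than rederiving these determinant formulas from scratch, I would lean on the computation already carried out in \cite{KillipNenciu}, whose Lemmas establish exactly the needed Jacobian and the relation of the weights and Vandermonde to $\prod(1-|\alpha_k|^2)$; the proof then reduces to quoting these formulas and performing the exponent arithmetic. An alternative, more structural approach—which avoids the Jacobian entirely—is the inductive/coupling argument also due to \cite{KillipNenciu}: one peels off $\alpha_{n-1}$ (distributed uniformly on $\partial\D$, i.e.\ $\Theta(1)$, by rotation invariance) and shows the conditional law of the remaining measure corresponds to the size $n-1$ problem with $\beta$ shifted appropriately, so that $\alpha_{n-2}\sim\Theta(\beta+1)$, and so on; induction then delivers both independence and the stated marginals. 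I would present the direct density computation as the primary argument and remark that the result is exactly Proposition B.1/B.2 of \cite{KillipNenciu}.
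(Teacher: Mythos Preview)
The paper does not prove this proposition; it simply quotes it from \cite{KillipNenciu} and uses it as a black box. Your proposal is therefore not being compared against a proof in the present paper but against the original Killip--Nenciu argument, and the outline you give---push forward the (eigenvalue, weight) density through the Verblunsky parametrization using the OPUC Jacobian and the product identities for $\prod_j\pi_j$ and the Vandermonde in terms of $\prod_k(1-|\alpha_k|^2)$---is exactly their strategy. So in spirit your plan is correct and matches the source.

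That said, the specific formulas you wrote are not quite right. The identity $\prod_{j<k}|e^{i\lambda_j}-e^{i\lambda_k}|^2=\prod_{k=0}^{n-2}(1-|\alpha_k|^2)^{-(k+1)}\cdot(\text{explicit factor})$ has the exponents inverted: the correct relation (see e.g.\ \cite{KillipNenciu}, Lemma 4.1, or \cite{OPUC1}) is that the combination $|\Delta|^2\prod_j\pi_j$ equals $\prod_{k=0}^{n-2}(1-|\alpha_k|^2)^{n-1-k}$ up to the Jacobian, not a product with negative powers. Likewise the ``inductive peeling'' you sketch at the end is not how Killip--Nenciu proceed, and the claim that conditioning on $\alpha_{n-1}$ reduces to the size $n-1$ problem ``with $\beta$ shifted'' is not accurate---$\beta$ stays fixed, it is the index $n$ that drops. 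These are fixable bookkeeping issues, and since you already plan to quote the relevant lemmas from \cite{KillipNenciu} rather than rederive them, the argument would go through once the exponents are corrected. For a self-contained write-up it would be cleaner to state the single Jacobian identity from \cite{KillipNenciu} (their Proposition 4.2) and do the exponent arithmetic once, rather than splitting into separate Vandermonde and weight formulas.
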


\begin{definition}\label{def:Circop}
    For fixed $n$ and $\beta>0$, we denote by  $\Circ_{n,\beta}$
    and
    $\Circop_{n,\beta}$ the CMV matrix and Dirac-type operator corresponding to $\mu_{n,\beta}^{\textup{KN}}$.  
\end{definition}

\subsubsection*{Real orthogonal beta ensemble}

The \emph{real orthogonal beta ensemble} is a family of distributions supported on conjugated pairs of points on the unit circle indexed by  real parameters $\beta>0,a>-1,b>-1$. 
If we parametrize the size $2n$ real orthogonal beta ensemble as $\{\pm e^{i \theta_1}, \dots, \pm e^{i \theta_n}\}$ with $\theta_j\in (0,\pi)$ then the joint density for $(\theta_1, \dots, \theta_n)$ is proportional to
\begin{align}
   \prod_{j<k\le n} |\cos(\theta_j)-\cos(\theta_k)|^\beta \times\prod_{k=1}^n |1-\cos(\theta_k)|^{\frac{\beta}{2}(a+1)-1/2} |1+\cos(\theta_k)|^{\frac{\beta}{2}(b+1)-1/2}. \label{eq:PDF_ortho}
\end{align}
The ensemble was introduced in \cite{KillipNenciu}, it can be viewed as a generalization of joint eigenvalue distributions of some classical orthogonal ensembles. For example when $\beta=2$, $a=b=\frac{1}{\beta}-1$, \eqref{eq:PDF_ortho} describes the joint eigenvalue distribution of a $2n\times 2n$ special orthogonal matrix chosen uniformly with respect to the Haar measure on $\mathbb{SO}(2n)$.

Consider the probability measure 
\begin{equation}\label{eq:RO_measure}
    \mu^{\ROb}_{2n,\beta,a,b} = \sum_{j=1}^n \frac12 \pi_j(\delta_{e^{i\theta_j}}+\delta_{e^{-i\theta_j}})
\end{equation}
on the unit circle, where the support is distributed as a $2n$ real orthogonal beta ensemble, and the weights $(\pi_1,\dots,\pi_n)$ is an independent random vector that has a Dirichlet$(\beta/2,\dots,\beta/2)$ distribution. The joint distribution of the Verblunsky coefficients of $\mu_{2n,\beta,a,b}^{\ROb}$ was derived in \cite{KK}.

\begin{definition}\label{def:Beta*}
	For $s,t>0$ let $\tl{\mathrm{B}}(s,t)$ denote the scaled (and flipped) beta distribution on $(-1,1)$ that has probability density function 
	\begin{align*}
	\tfrac{2^{1-s-t}\Gamma(s+t)}{\Gamma(s)\Gamma(t)}(1-x)^{s-1}(1+x)^{t-1}.
	\end{align*} 
\end{definition}

\begin{proposition}[Theorem 2 of \cite{KillipNenciu}, Proposition 4.5 in \cite{KK}] \label{prop:RO_Verblunsky}
	For given $\beta>0$, $a, b>-1$ 
	and fixed $n$, the Verblunsky coefficients $\alpha_k,0\le k \le 2n-1$ of the random probability measure $\mu^{\ROb}_{2n,\beta,a,b}$ are independent with $\alpha_{2n-1}=-1$, and
 for $0\le k\le 2n-2$
	\begin{align*}
	\alpha_{k}\sim \begin{cases}
	\mathrm{\tl B}\big(
	\tfrac{\beta}{4}(2n-k+2a)
	, \tfrac{\beta}{4}(2n-k+2b)
	\big),\quad &\text{if $k$ is even,}\\
	\mathrm{\tl B}\big(\tfrac{\beta}{4}(2n-k+2a+2b+1),
	\frac{\beta}{4}(2n-k-1)
	\big),\quad &\text{if $k$ is odd.}
	\end{cases}
	\end{align*}
\end{proposition}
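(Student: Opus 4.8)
The statement to prove is Proposition~\ref{prop:RO_Verblunsky}, which identifies the joint distribution of the Verblunsky coefficients of the random measure $\mu^{\ROb}_{2n,\beta,a,b}$. The plan is to reduce this to the known computation of \cite{KillipNenciu} and \cite{KK}.

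First I would recall that the Verblunsky coefficients depend only on the support of the measure together with its weights, via the continuous and invertible map between finitely supported probability measures on $\partial \D$ and sequences $(\alpha_0,\dots,\alpha_{2n-1})\in \D^{2n-1}\times \partial\D$. The key structural feature here is that the support of $\mu^{\ROb}_{2n,\beta,a,b}$ is symmetric under conjugation $z\mapsto \bar z$ (equivalently $\theta\mapsto-\theta$), and the conjugate pairs carry equal weight $\tfrac12\pi_j$. This reflection symmetry forces the measure to be \emph{real} (invariant under complex conjugation), which in turn forces all Verblunsky coefficients to be real, and pins down the last coefficient. In particular, since $1$ is always an endpoint situation and the support consists of $n$ conjugate pairs, the measure is supported on $2n$ points closed under conjugation; the standard theory of real orthogonal polynomials on the unit circle (OPUC with real Verblunsky coefficients) gives $\alpha_k\in(-1,1)$ for $0\le k\le 2n-2$ and $|\alpha_{2n-1}|=1$, and here the symmetry forces $\alpha_{2n-1}=-1$.

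Next I would carry out the change of variables. The density \eqref{eq:PDF_ortho} together with the independent Dirichlet$(\beta/2,\dots,\beta/2)$ weights can be pushed forward through the support-and-weights to Verblunsky map. This is exactly the computation performed in Theorem~2 of \cite{KillipNenciu} for the case this reduces to a real orthogonal measure, and generalized in Proposition~4.5 of \cite{KK} to the full $(a,b)$-parameter family. The strategy is to invoke a Jacobian computation: one expresses the joint law of $(\theta_1,\dots,\theta_n,\pi_1,\dots,\pi_{n-1})$ in terms of $(\alpha_0,\dots,\alpha_{2n-2})$, using the known form of the Jacobian of the map from measure data to Verblunsky coefficients (the so-called Killip--Nenciu Jacobian, which factorizes into products of $\rho_k=\sqrt{1-\alpha_k^2}$ to explicit powers). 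The cosine structure in \eqref{eq:PDF_ortho}, namely the appearance of $\cos(\theta_j)$ and the factors $|1\mp\cos\theta_k|$, corresponds precisely to the Geronimus relations tying the even- and odd-indexed coefficients to the Jacobi-type data on $[-1,1]$, which is why the even and odd cases in the statement carry the asymmetric parameter shifts involving $a$, $b$. After substitution the density factorizes as a product over $k$, and each factor is recognized as the density of a scaled beta law $\tl{\mathrm B}(\cdot,\cdot)$ from Definition~\ref{def:Beta*}, yielding independence and the stated parameters.

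The main obstacle will be the Jacobian bookkeeping and the correct matching of the even/odd index parameters, rather than any conceptual difficulty: one must track how the sine-type Vandermonde factors $\prod_{j<k}|\cos\theta_j-\cos\theta_k|^\beta$ and the boundary weights transform under the Szeg\H{o} recursion, and confirm that the resulting exponents produce exactly $\tfrac{\beta}{4}(2n-k+2a)$ and the three companion expressions. Since this is precisely the content of the cited results, I would not reprove the Jacobian identity from scratch; instead I would state that it follows by specializing the Killip--Nenciu change-of-variables formula and citing Proposition~4.5 of \cite{KK}, checking only that our normalization of $\mu^{\ROb}_{2n,\beta,a,b}$ in \eqref{eq:RO_measure} matches theirs and that the Dirichlet parameters $\beta/2$ feed through correctly. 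The verification that $\alpha_{2n-1}=-1$ deterministically is the one genuinely new bookkeeping point and follows from the reflection symmetry together with $\Phi_{2n}(z)=\prod(z-e^{i\theta_j})(z-e^{-i\theta_j})$ having real coefficients and $\Phi_{2n}(0)=-\overline{\alpha_{2n-1}}\prod_{k=0}^{2n-2}(\text{positive})$, forcing the sign.
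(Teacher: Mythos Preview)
The paper does not prove this proposition; it is stated with attribution to Theorem~2 of \cite{KillipNenciu} and Proposition~4.5 of \cite{KK} and used as a black box. Your proposal correctly identifies the ingredients of the cited proofs (reflection symmetry forcing real Verblunsky coefficients, the Killip--Nenciu Jacobian, the Geronimus relations linking even/odd coefficients to Jacobi data), so in that sense it is an accurate sketch of what happens in those references---but there is no in-paper proof to compare against, and no new argument is expected here.

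One small correction: your justification that $\alpha_{2n-1}=-1$ via $\Phi_{2n}(0)$ is slightly garbled. The clean argument is that $\Phi_{2n}(z)=\prod_j(z-e^{i\theta_j})(z-e^{-i\theta_j})$ has constant term $1$, and the Szeg\H{o} recursion gives $\Phi_{2n}(0)=(-1)^{2n}\prod_{k=0}^{2n-1}\bar\alpha_k=\prod_k\alpha_k$ (all real), while also $\Phi_{2n}^*(0)=1$; matching $\Phi_{2n}=z\Phi_{2n-1}-\bar\alpha_{2n-1}\Phi_{2n-1}^*$ at $z=0$ and using that $\Phi_{2n-1}^*(0)=1$ then gives $\alpha_{2n-1}=-1$ directly.
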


\begin{definition}\label{def:ROop}
    For fixed $n\ge 1$, $a,b>-1$ and $\beta>0$, we define $\RO_{2n,\beta,a,b}$ and $\ROop_{2n,\beta,a,b}$ as the CMV matrix model and the Dirac-type operator corresponding to $\mu_{2n,\beta,a,b}^{\ROb}$, respectively. 
\end{definition}

\subsubsection*{Circular Jacobi beta ensemble}

The circular Jacobi beta ensemble can be viewed as a one-parameter generalization of the circular beta ensemble.
 Let $\delta$ be a complex parameter such that $\Re\delta>-1/2$. The size $n$ circular Jacobi beta ensemble is defined as the probability measure on the $n$ distinct points $\{e^{i \theta_1}, \dots, e^{i \theta_n}\}$ with $\theta_j\in [-\pi,\pi)$, where the joint density function of the angles $\theta_j$ is given by \eqref{eq:CJ_pdf}.
For $\beta=2$  the distribution was studied by Hua \cite{Hua} and Pickrell \cite{Pickrell}, this special case is sometimes called the Hua-Pickrell measure. 

Consider the random probability measure
\begin{equation}\label{eq:CJ_measure}
\mu^{\CJb}_{n,\beta,\delta} =\sum_{j=1}^n\pi_j\delta_{e^{i\theta_j}}
\end{equation}
on the unit circle, where the support is distributed as the size $n$ circular Jacobi beta ensemble,  the weights are  Dirichlet$(\beta/2,\dots,\beta/2)$ distributed and are independent of the support.  In \cite{BNR2009} the authors showed that the modified Verblunsky coefficients $\gamma_k,0\le k\le n-1$ of $\mu_{n,\beta,\delta}^{\CJb}$ are independent and described explicitly their joint distribution. 

We first introduce a generalization of the $\Theta(a+1)$ distribution defined in Definition \ref{def:Theta}. 

\begin{definition}\label{def:Theta_delta}
For  $a>0$ and $\Re \delta>-1/2$ let $\Theta(a+1,\delta)$ be the distribution on $\D$ that has probability density function
\begin{align}
	 c_{a,\delta} (1-|z|^2)^{a/2-1}(1-z)^{\bar \delta} (1-\bar z)^{\delta},\qquad c_{a,\delta}=\tfrac{\Gamma(a/2+1+\delta)\Gamma(a/2+1+\bar \delta)}{\pi \Gamma(a/2)\Gamma(a/2+1+\delta+\bar \delta)}.
	\end{align}
For the $a=0$, $\Re \delta>-1/2$ case we define $\Theta(1,\delta)$ to be the distribution on $\partial \D=\{|z|=1\}$ with probability density function
\begin{align}
\tfrac{\Gamma(1+\delta)\Gamma(1+\bar \delta)}{\Gamma(1+\delta+\bar \delta)} (1-z)^{\bar \delta} (1-\bar z)^{\delta}.
\end{align}
\end{definition}

\begin{proposition}[\cite{BNR2009}]\label{prop:CJ_mVerblunsky}
    For given $n\ge 1, \beta>0,\Re\delta>-1/2$,  the sequence of modified Verblunsky coefficients  $\gamma_k,0\le k\le n-1$ of $\mu_{n,\beta,\delta}^{\textup{CJ}}$ are independent with  $\gamma_k\sim \Theta(\beta(n-k-1)+1,\delta)$ for $0\le k\le n-1$.
\end{proposition}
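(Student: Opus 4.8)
The plan is to obtain the statement by \emph{reweighting} the Killip--Nenciu result of Proposition~\ref{prop:circ_Verblunsky} rather than recomputing a Jacobian from scratch. Setting $\delta=0$ in \eqref{eq:CJ_pdf} recovers the circular beta density \eqref{eq:cbe}, and the weights are $\mathrm{Dirichlet}(\beta/2,\dots,\beta/2)$ in both $\mu^{\textup{KN}}_{n,\beta}$ and $\mu^{\CJb}_{n,\beta,\delta}$. Hence, viewed as laws on the pair (support, weights) $=(\{\theta_j\},\{\pi_j\})$, the Radon--Nikodym derivative of $\mu^{\CJb}_{n,\beta,\delta}$ with respect to $\mu^{\textup{KN}}_{n,\beta}$ is proportional to the Jacobi factor $\prod_{k=1}^n(1-e^{-i\theta_k})^{\delta}(1-e^{i\theta_k})^{\bar\delta}$, which depends only on the support. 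So it suffices to push the circular beta law to modified Verblunsky coordinates and reweight by this factor.

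The key algebraic step is to re-express the Jacobi factor through the modified Verblunsky coefficients. Since $\Phi_n(z)=\prod_{j=1}^n(z-e^{i\theta_j})$ we have $\Phi_n(1)=\prod_{j=1}^n(1-e^{i\theta_j})$, so the Jacobi factor equals $\Phi_n(1)^{\bar\delta}\,\overline{\Phi_n(1)}^{\,\delta}$; in particular it is a function of the Verblunsky data alone. I would then prove by induction on $k$ the identity
\[
\Phi_k(1)=\prod_{j=0}^{k-1}(1-\gamma_j),
\]
using the Szeg\H{o} recursion \eqref{eq:Szego1} at $z=1$, which gives $\Phi_{k+1}(1)=\Phi_k(1)-\bar\alpha_k\,\overline{\Phi_k(1)}$ (as $\Phi_k^*(1)=\overline{\Phi_k(1)}$), together with the definition \eqref{eq:modifiedV}, which yields $\gamma_k=\bar\alpha_k\,\overline{\Phi_k(1)}/\Phi_k(1)$ and hence $\Phi_{k+1}(1)=\Phi_k(1)(1-\gamma_k)$. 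Consequently the Jacobi factor becomes $\prod_{k=0}^{n-1}(1-\gamma_k)^{\bar\delta}(1-\bar\gamma_k)^{\delta}$, a product whose $k$-th term depends on $\gamma_k$ alone.

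Next I would transfer the independence in Proposition~\ref{prop:circ_Verblunsky} from the ordinary to the modified Verblunsky coefficients under $\mu^{\textup{KN}}_{n,\beta}$. The map $\cT$ of \eqref{eq:modifiedV} is triangular: conditionally on $\alpha_0,\dots,\alpha_{k-1}$ (equivalently on $\gamma_0,\dots,\gamma_{k-1}$) one has $\gamma_k=\bar\alpha_k\,\omega_k$ with $\omega_k=\prod_{j<k}(1-\bar\gamma_j)/(1-\gamma_j)$ a unit-modulus factor measurable with respect to the past. Since each $\alpha_k$ is independent of the past with the rotationally invariant law $\Theta(\beta(n-k-1)+1)$, and this law is preserved under conjugation and under multiplication by a unit-modulus constant, the conditional law of $\gamma_k$ is again $\Theta(\beta(n-k-1)+1)$ and does not depend on $\gamma_0,\dots,\gamma_{k-1}$. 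An induction on $k$ then shows that under $\mu^{\textup{KN}}_{n,\beta}$ the $\gamma_k$ are independent with $\gamma_k\sim\Theta(\beta(n-k-1)+1)$.

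Finally, reweighting this product law by $\prod_{k=0}^{n-1}(1-\gamma_k)^{\bar\delta}(1-\bar\gamma_k)^{\delta}$ factorizes across coordinates, so the $\gamma_k$ stay independent and the $k$-th marginal becomes proportional to $(1-|\gamma_k|^2)^{\beta(n-k-1)/2-1}(1-\gamma_k)^{\bar\delta}(1-\bar\gamma_k)^{\delta}$; by Definition~\ref{def:Theta_delta} this is exactly $\Theta(\beta(n-k-1)+1,\delta)$, including the boundary coefficient $\gamma_{n-1}$ on $\partial\D$ (where the exponent $\beta(n-k-1)$ vanishes and $\Theta(1)$ is the uniform law). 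The normalization factorizes by Fubini and reproduces the constants $c_{a,\delta}$. I expect the main obstacle to be the triangular change-of-variables argument of the third paragraph: one must check carefully that the conditional-rotation structure of $\cT$ together with the rotational invariance of $\Theta$ transports the \emph{full} independence structure rather than merely matching marginals, and that the reweighting and its normalization genuinely decouple the coordinates.
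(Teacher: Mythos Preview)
The paper does not prove this proposition; it is quoted as a result of Bourgade--Nikeghbali--Rouault \cite{BNR2009}. Your argument is a clean reconstruction of that result and is correct. The identity $\Phi_k(1)=\prod_{j<k}(1-\gamma_j)$ you derive is in fact Proposition~2.5 of \cite{BNR2009}, invoked later in this paper as \eqref{eq:CJ_Jacobi1}. The conditional-rotation step you flag as the potential obstacle is fine: since $\cT_k$ is a bijection between $(\alpha_0,\dots,\alpha_{k-1})$ and $(\gamma_0,\dots,\gamma_{k-1})$, independence of $\alpha_k$ from $\alpha_0,\dots,\alpha_{k-1}$ is exactly independence of $\alpha_k$ from $\gamma_0,\dots,\gamma_{k-1}$, and the rotational invariance of $\Theta(a+1)$ then gives that the conditional law of $\gamma_k=\bar\alpha_k\,\omega_k$ is $\Theta(a+1)$ regardless of the unit-modulus past-measurable factor $\omega_k$. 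This yields full joint independence, not merely marginal matching, and the product reweighting then decouples coordinate by coordinate as you say.
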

Note that for  $\delta\ne 0$ the Verblunsky coefficients $\alpha_k,0\le k\le n-1$ are not independent.

\begin{definition}\label{def:CJop}
     For given $n\ge 1, \beta>0,\Re\delta>-1/2$, 
      we define $\CJ_{n,\beta,\delta}$ and $\CJop_{n,\beta,\delta}$ as the CMV matrix model and the Dirac-type operator corresponding to $\mu_{n,\beta,\delta}^{\textup{CJ}}$, respectively.    
\end{definition}

\subsection{Random operator limits from beta ensembles}\label{sub:limiting_op}
This section discusses the operator limits of the finite ensembles in Section \ref{sub:finite}. The main idea is that under the appropriate scaling, the piece-wise constant generating paths of the $\Circop_{n,\beta}, \ROop_{2n,\beta,a,b}$ and $\CJop_{n,\beta,\delta}$ operators converge to certain diffusions in the hyperbolic plane. Then one can construct random differential operators in terms of these diffusions. These limiting operators will be denoted as $\Sineop,\Bessop$ and $\HPop$, respectively. 
For convenience, we will define the limiting operators with generating paths that lie in $\HH$. 
We also set 
\begin{equation*}
v(t)=v_\beta(t)=-\frac{4}{\beta}\log(1-t)
\end{equation*}
be the logarithmic time change function.

The $\Sineop$ operator was introduced in \cite{BVBV_op} as the $n\to\infty$ limit of the $\Circop_{n,\beta}$ operator. The operator level convergence was proved in \cite{BVBV_19} with an explicit rate of convergence, see Proposition \ref{prop:Sineb_coupling} below.

\begin{definition}\label{def:Sineop}
Fix $\beta>0$. Let $B_1, B_2$ be independent standard Brownian motion, and let $\mathsf{x}_v+i \mathsf{y}_v, v\ge 0$ be the strong solution of the SDE
\begin{align}\label{eq:Sineop_sde}
d\mathsf{y}=\mathsf{y} dB_1,\quad d\mathsf{x}=\mathsf{y}dB_2, \quad \mathsf{y}(0)=1, \mathsf{x}(0)=0.
\end{align}
For $t\in[0,1)$ define $ z(t)= x(t)+i y(t) =\mathsf{x}_v+i\mathsf{y}_v$ where $v=v(t)$. Let $\uu_0=\binom{1}{0}$, $\uu_1=\binom{-q}{-1}$, where $q=\lim_{t\to\infty} \mathsf{x}(t)$. Set $\Sineop=\Dirop( z(\cdot),\uu_0,\uu_1)$.
\end{definition}

Note that $\mathsf{x}_v+i \mathsf{y}_v, v\ge 0$  is just a hyperbolic Brownian motion in $\HH$, started from $i$. 

The hard-edge Dirac operator $\Bessop$ and the Hua-Pickrell operator $\HPop$ were also introduced in \cite{BVBV_op}. It has been shown in \cite{LV} that they are indeed the operator level limits of the $\ROop_{2n,\beta,a,b}$ and $\CJop_{n,\beta,\delta}$ operators, respectively. 

\begin{definition}\label{def:Bessop}
    Fix $\beta>0,a>-1$, and let $B$ be a standard Brownian motion. Set  $\mathsf{y}(t)=e^{-\frac{\beta}{4}(2a+1)t-B(2t)}$, $ y(t)=\mathsf{y}(v(t))$, $\uu_0=\binom{1}{0}$, and $\uu_1=\binom{0}{-1}$. Define the hard-edge operator as $\Bessop:=\Dirop (i  y, \uu_0, \uu_1)$.
\end{definition}

\begin{definition}\label{def:HPop}
    Fix $\beta>0$ and $\delta\in\C$ with $\Re \delta>-1/2$. 
    Let $B_1, B_2$ be independent standard Brownian motion, and let $\mathsf{x}_v+i \mathsf{y}_v, v\ge 0$ be the strong solution of the SDE
\begin{align}\label{eq:HPsde}
d\mathsf{y}=\left( -\Re \delta dt +  dB_1 \right)\mathsf{y},\quad d\mathsf{x}=\left(\Im \delta dt +  dB_2 \right)\mathsf{y}, \quad \mathsf{y}(0)=1, \mathsf{x}(0)=0.
\end{align}
For $t\in[0,1)$ define $ z(t)= x(t)+i y(t) =\mathsf{x}_v+i\mathsf{y}_v$ where $v=v(t)$. Let $\uu_0=\binom{1}{0}$, $\uu_1=\binom{-q}{-1}$, where $q=\lim_{v\to\infty} \mathsf{x}(v)$. 
Set $\HPop=\Dirop(z(\cdot),\uu_0,\uu_1)$. 
\end{definition}
Note that the SDE \eqref{eq:HPsde} can be solved explicitly and the solution is given by
\begin{equation}\label{eq:hpsol}
	\mathsf{y}(v)=e^{B_1(v)-(\Re\delta+\frac12)v},\quad \mathsf{x}(v)=\int_0^v \mathsf{y}(s)dB_2(s)+\Im\delta\int_0^v \mathsf{y}(s) ds.
\end{equation}
In the case when $\delta=0$ the equation reduces  to \eqref{eq:Sineop_sde}. 
In particular, the $\HPop$ operator can be viewed as the $\delta$-generalization of $\Sineop$, and we have $\mathtt{HP}_{\beta,0}=\Sineop$.

\subsection{Random analytic functions from beta ensembles}

As explained in Section \ref{sub:Dirac_secular}, the Hilbert–Schmidt convergence of the resolvents of Dirac-type operators and the convergence of the integral traces imply the uniform on compacts convergence of the secular functions. In Section \ref{sub:finite_Dir} we saw that we can express the characteristic polynomials of finite ensembles on the unit circle via the secular function of an associated Dirac-type operator. The operator level limits discussed in the previous section lead then lead to convergence statements regarding the scaled and normalized characteristic polynomials of the respective finite ensembles.

In this section, we briefly review the secular functions and structure functions arising from the limits of the considered beta ensembles. The constructions rely on certain time-reversed and transformed versions of the limiting operators introduced in Section \ref{sub:limiting_op}. Recall the  transformations introduced in Section \ref{sub:Dirac_secular}. 

In \cite{BVBV_szeta} Valk\'o and Vir\'ag constructed the following Dirac-type operator on $(0,1]$ and showed that it is orthogonally equivalent to the $\Sineop$ operator. Consider the time change
\begin{equation}\label{eq:u_timechange}
    u(t)=u_\beta(t)=\frac{4}{\beta}\log t,\quad\quad t\in(0,1].
\end{equation}

\begin{definition}\label{def:Sineop_rev}
Let $b_1, b_2$ be independent two-sided standard Brownian motion, and set 
	\begin{align}\label{eq:Sine_rev_path}
	\mathsf{y}_u=e^{b_2(u)-u/2}, \qquad  \mathsf{x}_u=-\int_u^0 e^{b_2(s)-\tfrac{s}{2}} d b_1, 
	\end{align}
For $t\in(0,1]$ set $\hat z(t)=(\hat x+i\hat y)(t):=\mathsf{x}_u+i\mathsf{y}_u,\uu_0=\binom{1}{0}$, and $\uu_1=\binom{-q}{-1}$, where $q$ is a Cauchy distributed random variable independent of  $b_1, b_2$. Define $\tau_\beta =\tau_\beta^{\textup{Sine}}= \Dirop(\hat z(\cdot),\uu_0,\uu_1)$, and denote by $\zeta_\beta:=\zeta_{\tau_\beta}$ and $\cE_\beta:=\cE_{\tau_{\beta}}$ the corresponding secular and structure function according to Definition \eqref{def:RAF}.
\end{definition}

\begin{proposition}[\cite{BVBV_szeta}]\label{prop:Sineop_equiv}
Let $q$ and $\tau_\beta$ be defined as in Definition \ref{def:Sineop_rev}. Then the $\tau_\beta$ operator satisfies Assumption \ref{assumption:u}, and the orthogonally equivalent operator
	\begin{equation*}
	 \rho^{-1}SQ\tau_\beta Q^{-1}S\rho ,\qquad Q=\frac{1}{\sqrt{1+q^2}}\mat{q}{1}{-1}{q},
	\end{equation*}
 has the same distribution as $\Sineop$. Let $H_\beta(t,z)$ be the solution to the canonical system \eqref{eq:canonical} of the $\tau_\beta$ operator, and let $\mathsf{x}_u$  and  $\mathsf{y}_u$ be defined according to \eqref{eq:Sine_rev_path}. Set
 \begin{equation}\label{eq:cH_beta}
 \cH_\beta(u,z)=\mat{1}{-\mathsf{x}_u}{0}{\mathsf{y}_u}H_\beta(t(u),z),\qquad t=t(u)=e^{\beta u/4},
 \end{equation}
 then $\cH$ satisfies the stochastic differential equation \eqref{eq:cH}. Since $\cH_\beta(0,z)=H_\beta(1,z)$, the structure-function $\cE_{\beta}(z)$ can be represented as $\cE_{\beta}(z)=\cH_\beta(0,z)^{\dag}\binom{1}{-i}$.
 \end{proposition}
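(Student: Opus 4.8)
The plan is to verify the four assertions in turn: that $\tau_\beta$ satisfies Assumption \ref{assumption:u}, that the conjugated operator equals $\Sineop$ in law, that the conjugated canonical solution $\cH_\beta$ solves \eqref{eq:cH}, and that $\cE_\beta$ has the stated form. The normalization $\uu_0^{\dag}J\uu_1=1$ is immediate since $J\uu_1=\binom{1}{-q}$ and $\uu_0=\binom{1}{0}$. For the remaining integrability in Assumption \ref{assumption:u} I would compute the weight $R=\tfrac{X^{\dag}X}{2\det X}$ of $\tau_\beta$ explicitly, obtaining $\uu_0^{\dag}R\uu_0=\tfrac{1}{2\mathsf{y}_u}$ and $\uu_1^{\dag}R\uu_1=\tfrac{(\mathsf{x}_u-q)^2+\mathsf{y}_u^2}{2\mathsf{y}_u}$, and then change variables by $t=e^{\beta u/4}$. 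Near $t=1$ (i.e. $u=0$) the path sits at $i$ and there is nothing to check; near $t=0$ ($u\to-\infty$) one has $\mathsf{y}_u=e^{b_2(u)-u/2}\to\infty$, so $1/\mathsf{y}_u$ and the Jacobian $e^{\beta u/4}$ both decay exponentially while $\mathsf{x}_u$ grows only sub-exponentially; standard Gaussian tail bounds for the hyperbolic-Brownian path then yield \eqref{HS_finite} and the one-sided integrability of $\uu_0^{\dag}R$.

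The central step is the distributional identity. I would apply Lemma \ref{lem:rotation} with the rotation $Q$ and then Lemma \ref{lem:timerev} with $S,\rho$ to read off the data of $\tau'':=\rho^{-1}SQ\tau_\beta Q^{-1}S\rho$: its generating path is $z''(t)=\mathfrak{r}(\mathcal{Q}(\hat z(1-t)))$ and its boundary vectors are $S\mathcal{Q}\uu_1,S\mathcal{Q}\uu_0$. A short computation shows $\mathcal{Q}$ fixes $i$, that $S\mathcal{Q}\uu_1\propto\binom{1}{0}$ (projective point $\infty$), and that $S\mathcal{Q}\uu_0$ has projective point $q$; since the operator depends on the boundary data only through the directions $J\uu_0,J\uu_1$, this already matches the boundary structure of $\Sineop$ in Definition \ref{def:Sineop}. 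To identify the path law, write $v=v(t)=-u(1-t)$ so the reversed path is $w_v=\mathsf{x}_{-v}+i\mathsf{y}_{-v}$. Using that the reversal of a two-sided Brownian motion is again a Brownian motion, together with a Dubins--Schwarz representation of the backward martingale $\mathsf{x}_{-v}$, I would show $w_v$ solves $d\mathsf{y}=\mathsf{y}\,dv+\mathsf{y}\,dW_1$, $d\mathsf{x}=\mathsf{y}\,dW_2$ with $W_1,W_2$ independent standard Brownian motions. This is exactly the Doob $h$-transform of hyperbolic Brownian motion by the Poisson kernel $h(x+iy)=y$, i.e. hyperbolic Brownian motion from $i$ conditioned to exit at $\infty$. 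The isometry $\mathfrak{r}\circ\mathcal{Q}$ fixes $i$ and sends $\infty\mapsto q$, so $\mathfrak{r}(\mathcal{Q}(w_v))$ is hyperbolic Brownian motion from $i$ conditioned to exit at $q$. Finally, the exit point of unconditioned hyperbolic Brownian motion from $i$ is standard Cauchy and $q$ in Definition \ref{def:Sineop_rev} is an independent standard Cauchy, so integrating the conditioned law against the Cauchy exit distribution recovers the unconditioned path of Definition \ref{def:Sineop}, with its own exit point in the role of $q$. Hence $z''$ together with its boundary data has the $\Sineop$ law, giving $\tau''\ed\Sineop$; since the transformations are pathwise and preserve Assumption \ref{assumption:u}, this also reconfirms the first assertion.

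For the SDE, set $G(u)=\mat{1}{-\mathsf{x}_u}{0}{\mathsf{y}_u}$, so that $\cH_\beta(u,z)=G(u)H_\beta(t(u),z)$ and $G$ realizes $R=\tfrac{G^{\dag}G}{2\det G}$. Since $H_\beta$ solves the canonical system of Proposition \ref{prop:canonical_H}, the $u$-derivative $\partial_u H_\beta(t(u),z)=-z\tfrac{\beta}{4}t\,JR(t)H_\beta$ is of finite variation, while $dG=\mat{0}{-\mathsf{y}_u\,db_1}{0}{\mathsf{y}_u\,db_2}$; thus Itô's product rule (the cross term vanishes) gives $d\cH_\beta=(dG)G^{-1}\cH_\beta-z\tfrac{\beta}{4}t\,GJRG^{-1}\cH_\beta\,du$. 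Two matrix identities then close the computation: $(dG)G^{-1}=\mat{0}{-db_1}{0}{db_2}$, and, using $GJG^{\dag}=(\det G)J$ so that $GJRG^{-1}=\tfrac12 J$, the drift term becomes $-z\tfrac{\beta}{8}e^{\beta u/4}J\,\cH_\beta\,du$, reproducing \eqref{eq:cH} exactly. The initial condition $\cH_\beta(u,\cdot)\to\binom{1}{0}$ uniformly on $\{|z|<1\}$ as $u\to-\infty$ follows from the continuity bound \eqref{eq:H_cont} of Proposition \ref{prop:H_comparison} together with the path decay from the first step. Assertion (d) is then immediate: at $u=0$ we have $t=1$, $\mathsf{x}_0=0$, $\mathsf{y}_0=1$, so $G(0)=I$ and $\cH_\beta(0,z)=H_\beta(1,z)$, whence \eqref{def:structure} gives $\cE_\beta(z)=H_\beta(1,z)^{\dag}\binom{1}{-i}=\cH_\beta(0,z)^{\dag}\binom{1}{-i}$.

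I expect the main obstacle to be the path identity of the second paragraph: rigorously reversing time in the stochastic integral defining $\mathsf{x}_u$ so as to produce genuinely independent driving Brownian motions for $w_v$, and correctly tracking the boundary point through the $q$-dependent rotation $Q$ and the reflection $\mathfrak{r}$. Once $w_v$ is identified as the $h$-transformed hyperbolic Brownian motion and the Cauchy exit law is invoked, the first assertion reduces to routine tail estimates and the last two to the mechanical Itô computation above.
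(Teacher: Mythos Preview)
The paper does not prove this proposition: it is stated with attribution to \cite{BVBV_szeta} and no proof (or sketch) appears in the present text. Consequently there is nothing here to compare your argument against line by line. That said, your outline is consistent with the tools the paper imports for this purpose---Lemmas \ref{lem:timerev} and \ref{lem:rotation} for tracking the path and boundary data under $\rho,S,Q$, Proposition \ref{prop:H_comparison} for the initial condition of $\cH_\beta$, and the explicit form \eqref{eq:Sine_rev_path} of the reversed path---and the It\^o computation you describe for \eqref{eq:cH} is correct (the identities $(dG)G^{-1}=\mat{0}{-db_1}{0}{db_2}$ and $GJRG^{-1}=\tfrac12 J$ are exactly what is needed).

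The one place I would flag is the path-reversal step. Your claim that the backward process $w_v=\mathsf{x}_{-v}+i\mathsf{y}_{-v}$ solves $d\mathsf{y}=\mathsf{y}\,dv+\mathsf{y}\,dW_1$, $d\mathsf{x}=\mathsf{y}\,dW_2$ with \emph{independent} $W_1,W_2$ is the heart of the argument and is not immediate: reversing the stochastic integral defining $\mathsf{x}_u$ produces a backward integral against $b_1$, and extracting a forward Brownian driver independent of the one driving $\mathsf{y}$ requires a careful enlargement-of-filtration or time-reversal-of-diffusion argument (the Dubins--Schwarz representation you mention gives \emph{a} Brownian motion, but not automatically one independent of the $\mathsf{y}$-driver). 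In \cite{BVBV_szeta} this is handled, and your identification of the resulting process as the $h$-transform of hyperbolic Brownian motion by the Poisson kernel, followed by de-conditioning via the independent Cauchy variable $q$, is the right conceptual picture; just be aware that making the independence rigorous is the nontrivial step you correctly anticipate.
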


Using the results of Proposition \ref{prop:Sineop_equiv} \cite{BVBV_szeta} showed the convergence of the scaled and normalized characteristic polynomials of the circular beta ensemble to the stochastic zeta function. 
\begin{proposition}[\cite{BVBV_szeta}]\label{prop:Sineop_equiv_2}
 Consider the size $n$ (unperturbed) circular beta ensemble, i.e., the eigenvalues of $\Circ_{n,\beta}$, and its normalized characteristic polynomial $p_{n,\beta}(z):=\frac{\det(zI-\Circ_{n,\beta})}{\det(I-\Circ_{n,\beta})}$. There exists a coupling of $p_{n,\beta}(z),n\ge 1$, the secular function $\zeta_{\beta}(z)=\cH_{\beta}(0,z)^\dag\binom{1}{-q}$, and an a.s.~finite $C$ so that for all $z\in\C$ we have
 \begin{align*}
     |p_{n,\beta}(e^{iz/n})e^{-iz/2}-\zeta_{\beta}(z)|\le C^{|z|^2+1}\left(e^{|z|\frac{\log^3 n}{\sqrt{n}}}-1\right).
 \end{align*}
 \end{proposition}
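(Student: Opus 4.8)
The plan is to recognize both sides of the inequality as secular functions of Dirac-type operators on $(0,1]$ and then apply the operator-level comparison estimate \eqref{eq:zeta_comp} of Remark \ref{rmk:Dir_comp}. First I would identify the left-hand side. Since the spectrum of $\Circ_{n,\beta}$ equals the support of $\mu^{\textup{KN}}_{n,\beta}$, the normalized characteristic polynomial $p_{n,\beta}$ is exactly the normalized orthogonal polynomial $\Psi_n$, so by the identity \eqref{eq:char_pol} following Proposition \ref{prop:unitary_ODE} the scaled quantity $p_{n,\beta}(e^{iz/n})e^{-iz/2}$ equals the secular function attached to the operator $\Circop_{n,\beta}$. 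On the limit side, Proposition \ref{prop:Sineop_equiv} identifies $\zeta_\beta=\cH_\beta(0,\cdot)^{\dag}\binom{1}{-q}$ as the secular function $\zeta_{\tau_\beta}$ of the reversed operator $\tau_\beta$. Because $\Circop_{n,\beta}$ lives on $[0,1)$ while $\tau_\beta$ lives on $(0,1]$, I would apply the time reversal of Lemma \ref{lem:timerev} together with the rotation of Lemma \ref{lem:rotation} (both of which preserve secular functions and integral traces) to produce a finite operator $\tilde\tau_n$ on $(0,1]$, built as the finite analogue of $\tau_\beta$, with $\zeta_{\tilde\tau_n}=\zeta_{\Circop_{n,\beta}}=p_{n,\beta}(e^{i\cdot/n})e^{-i\cdot/2}$ and with boundary data matching that of $\tau_\beta$.

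The problem is thereby reduced to bounding $|\zeta_{\tilde\tau_n}(z)-\zeta_{\tau_\beta}(z)|$, for which \eqref{eq:zeta_comp} requires control of the integral-trace difference $|\mathfrak t_{\tilde\tau_n}-\mathfrak t_{\tau_\beta}|$, the resolvent difference $\|\res\tilde\tau_n-\res\tau_\beta\|$, and the individual resolvent norms and traces. I would obtain all of these from a single coupling of the two generating paths. Using the explicit Verblunsky distributions of Proposition \ref{prop:circ_Verblunsky}, the log-time-changed discrete path of $\tilde\tau_n$ is a random walk whose increments match in law the drift-diffusion increments driving $\tau_\beta$; a strong invariance principle (of Komlós–Major–Tusnády type) couples the walk with the hyperbolic Brownian motion, and simultaneously couples the terminal boundary point to the Cauchy variable $q$, so that on compact time intervals the paths lie uniformly within $\epsilon_n=O(\log^3 n/\sqrt n)$ of one another. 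This is precisely the coupling recorded in Proposition \ref{prop:Sineb_coupling} of \cite{BVBV_19}. Since the weight function $R$ and the vectors $\mathfrak a_0,\mathfrak a_1$ entering the kernels \eqref{eq:HS_kernel} and \eqref{Dir:inverse} are explicit continuous functionals of the path, a sup-norm path bound translates, after integration, into $|\mathfrak t_{\tilde\tau_n}-\mathfrak t_{\tau_\beta}|+\|\res\tilde\tau_n-\res\tau_\beta\|=O(\epsilon_n)$.

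Feeding these estimates into \eqref{eq:zeta_comp} yields, for every $z\in\C$,
\[
|\zeta_{\tilde\tau_n}(z)-\zeta_{\tau_\beta}(z)|\le\Big(c^{\,|z|\epsilon_n}-1+|z|\epsilon_n\Big)\,c^{\,|z|^2(\|\res\tilde\tau_n\|^2+\|\res\tau_\beta\|^2)+|z|(|\mathfrak t_{\tilde\tau_n}|+|\mathfrak t_{\tau_\beta}|)+1}.
\]
Since $c^{|z|\epsilon_n}-1+|z|\epsilon_n$ is bounded by a constant multiple of $e^{|z|\log^3 n/\sqrt n}-1$, and the exponent in the second factor is of the form $C(|z|^2+1)$ once the resolvent norms and integral traces are bounded, elementary manipulations (folding all absolute and a.s.\ finite constants into a single $C$) give the claimed estimate $C^{|z|^2+1}(e^{|z|\log^3 n/\sqrt n}-1)$.

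The last point is where I expect the real work to lie: the constant $C$ must be a.s.\ finite \emph{and independent of $n$}, which requires uniform-in-$n$ tail control of $\|\res\tilde\tau_n\|$ and $|\mathfrak t_{\tilde\tau_n}|$ along the coupling. The delicate region is $t\to1$, where the log-time change sends the hyperbolic Brownian path to infinity; there the random-walk path of $\tilde\tau_n$ must be shown to track the limiting diffusion not merely in distribution but almost surely at the stated rate, and to stay inside a single a.s.\ finite envelope for all $n$ at once, so that Proposition \ref{prop:inv_HS} furnishes a common bound on the resolvent norms. Granting the coupling and uniform envelope from \cite{BVBV_19}, the remaining steps are the routine continuity-of-functional and arithmetic manipulations sketched above.
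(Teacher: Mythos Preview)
The paper does not supply its own proof of this proposition: it is quoted from \cite{BVBV_szeta}, and the surrounding text only records the estimates \eqref{eq:circ_coupling1} from Proposition \ref{prop:Sineb_coupling} together with the remark that the integral-trace bound ``follows from the estimates used in the proof of Theorem 49 of \cite{BVBV_szeta}.'' So there is no in-paper argument to compare against directly.

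That said, your outline is correct and is exactly the argument the cited reference carries out. The identification of $p_{n,\beta}(e^{iz/n})e^{-iz/2}$ with the secular function of $\Circop_{n,\beta}$ via \eqref{eq:char_pol}, the passage to $(0,1]$ via Lemmas \ref{lem:timerev}--\ref{lem:rotation} (which preserve secular functions, integral traces, and resolvent Hilbert--Schmidt norms), and the application of \eqref{eq:zeta_comp} under the coupling of Proposition \ref{prop:Sineb_coupling} are the right steps. Note that because those lemmas are orthogonal conjugations, the bounds \eqref{eq:circ_coupling1} transfer verbatim to the transformed operators $\tilde\tau_n$ and $\tau_\beta$, so you need not rederive them on the $(0,1]$ side. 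Your diagnosis of the only nontrivial point is also accurate: the uniform-in-$n$ finiteness of $\|\res\tilde\tau_n\|$ and $|\mathfrak t_{\tilde\tau_n}|$ is not a consequence of the \emph{difference} estimates \eqref{eq:circ_coupling1} alone, and relies on the two-regime path control \eqref{eq:circ_path_dist} (the split at $t_n=1-\log^6 n/n$) to handle the region where the weight blows up; this is precisely what \cite{BVBV_19,BVBV_szeta} supply, and granting it the remainder is the routine substitution into \eqref{eq:zeta_comp} that you describe.
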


The time-reversed and transformed version of the $\Bessop$ and $\HPop$ operators were constructed in \cite{LV} in a similar spirit.

\begin{definition}\label{def:Bess_rev}
    Let $B$ be a standard two-sided Brownian motion. Let $\mathsf{y}_u=e^{-\frac{\beta}{4}(2a+1)u+B(2u)}$ and $\hat y(t)=\mathsf{y}(u(t))$ for $t\in(0,1]$, where $u$ is defined in \eqref{eq:u_timechange}. Set $\uu_0 =\binom{1}{0}, \uu_1=\binom{0}{-1}$ and define $\tau_{\beta,a}= \tau_{\beta,a}^{\textup{Bess}} = \Dirop(i\hat y(t), \uu_0, \uu_1)$. We also define $\zeta_{\beta,a}=\zeta_{\tau_{\beta,,a}}$ and $\cE_{\beta,a}=\cE_{\tau_{\beta,a}}$ as the secular and structure function of the $\tau_{\beta,a}$ operator via Definition \ref{def:RAF}.
\end{definition}
\begin{proposition}\label{prop:Bessop_equiv}
    The operator $\tau_{\beta,a}$ satisfies Assumption \ref{assumption:u}, and we have 
    \[
    \rho^{-1}J\tau_{\beta,a}J\rho\ed \Bessop,\qquad J=\mat{0}{-1}{1}{0}.
    \]
   Let $H_{\beta,a}(t,z)$ be the solution to the canonical system \eqref{eq:canonical} of the $\tau_{\beta,a}$ operator and set $\cH_{\beta,a}(u,z)=\diag(1,\mathsf{y}_u)H_{\beta,a}(e^{\frac{\beta}{4}u},z)$ for $u\le 0,z\in\C$, where $\mathsf{y}_u$ is defined as in Definition \ref{def:Bess_rev}. Then $\cH_{\beta,a}$ is the unique strong solution to the SDE
    \begin{equation}\label{eq:Bess_cH}
d\cH= \begin{pmatrix}
0 & 0\\0& \sqrt{2}dB+(1-\frac{\beta}{4}(2a+1))du
\end{pmatrix}\cH-z\frac{\beta}{8}e^{\beta u/4}\begin{pmatrix}
0&-1\\1&0
\end{pmatrix}\cH du,
\end{equation}	
with boundary conditions $\lim\limits_{u\to -\infty} \sup_{|z|<1} \left|\cH(u,z)-\binom{1}{0}\right|=0$. Since $\cH_{\beta,a}(0,z)=H_{\beta,a}(1,z)$, we have $\cE_{\beta,a}(z)=\cH_{\beta,a}(0,z)^{\dag}\binom{1}{-i}$ and $\zeta_{\beta,a}=\cH_{\beta,a}(1,z)^{\dag}\binom{1}{0}$. 
\end{proposition}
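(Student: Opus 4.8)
The plan is to verify the four assertions of the proposition in turn, following the template of Proposition~\ref{prop:Sineop_equiv} for $\Sineop$ but in the simpler situation where the right boundary vector $\uu_1=\binom{0}{-1}$ is deterministic rather than Cauchy-distributed. I would begin by checking Assumption~\ref{assumption:u}. Since the generating path $i\hat y$ is purely imaginary, the encoding matrix is $X=\diag(1,\hat y)$ and the weight is diagonal, $R=\tfrac12\diag(\hat y^{-1},\hat y)$; thus $\uu_0^{\dag}J\uu_1=1$, $\uu_0^{\dag}R\uu_0=\tfrac1{2\hat y}$, $\uu_1^{\dag}R\uu_1=\tfrac{\hat y}2$, and (usefully) $\uu_0^{\dag}R\uu_1=0$, so the integral trace vanishes. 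The two integrability requirements then reduce to $\int_0^1\hat y(s)^{-1}\,ds<\infty$ and $\int_0^1\!\int_0^t\hat y(t)\hat y(s)^{-1}\,ds\,dt<\infty$. Writing $\hat y(t)=\mathsf{y}_{u(t)}$ with $u(t)=\tfrac4\beta\log t$, the drift gives $\hat y(t)^{-1}\asymp t^{2a+1}$ near $0$ while the Brownian term $B(2u(t))$ is $o(\log t)$ by the law of the iterated logarithm, so both integrals converge for every $a>-1$; these are the same estimates used to define $\Bessop$ in \cite{BVBV_op,LV}.

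For the distributional identity I would compute $\rho^{-1}J\tau_{\beta,a}J\rho$ directly. Because the weight is diagonal, the two factors of $J$ (with $J^2=-I$) and the time reversal recombine into a genuine Dirac operator: one finds $\rho^{-1}J\tau_{\beta,a}J\rho=\Dirop\!\big(i/\hat y(1-\cdot),\binom10,\binom0{-1}\big)$, the boundary vectors being read off from the transformed domain conditions and the generating path from the identity $J(\rho R)^{-1}\tfrac{d}{dt}=\hat R^{-1}J\tfrac{d}{dt}$ with $\hat R=\tfrac12\diag(\hat y(1-\cdot),\hat y(1-\cdot)^{-1})$. It remains to match this path with that of $\Bessop$ in law. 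Using $u(1-t)=-v(t)$ gives $\hat y(1-t)=\mathsf{y}_{-v(t)}$, hence $1/\hat y(1-t)=\exp(-\tfrac\beta4(2a+1)v(t)-B(-2v(t)))$, which has the same law as the $\Bessop$ path $\exp(-\tfrac\beta4(2a+1)v(t)-\tilde B(2v(t)))$ since $\{-B(-2s)\}_{s\ge0}\ed\{\tilde B(2s)\}_{s\ge0}$ for two-sided Brownian motion. Lemmas~\ref{lem:rotation} and \ref{lem:timerev} then certify that the transformation preserves Assumption~\ref{assumption:u} and yields an orthogonal equivalence.

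To obtain \eqref{eq:Bess_cH} I would start from the canonical system \eqref{eq:canonical} of $\tau_{\beta,a}$, which for the diagonal weight reads $A'=\tfrac{z\hat y}2B$, $B'=-\tfrac{z}{2\hat y}A$ for $H_{\beta,a}=\binom{A}{B}$, substitute $t=e^{\beta u/4}$, and apply the gauge $\diag(1,\mathsf{y}_u)$. An Itô computation, using $d\mathsf{y}_u=\mathsf{y}_u\big(\sqrt2\,dB+(1-\tfrac\beta4(2a+1))\,du\big)$ for the lower-right entry (the $+\,du$ coming from the quadratic variation $2\,du$) and noting that the gauge converts the rescaled ODE drift into $-z\tfrac\beta8 e^{\beta u/4}J\cH$, produces exactly \eqref{eq:Bess_cH}. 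The boundary condition follows from $\lim_{t\to0}H_{\beta,a}(t,z)=\binom10$ together with \eqref{eq:H_cont}: the lower component of $H_{\beta,a}$ is bounded by $|z|\int_0^t\hat y^{-1}\asymp|z|\,t^{2a+2}$, and multiplying by the gauge factor $\mathsf{y}_u\asymp t^{-(2a+1)}$ leaves a bound $|z|\,e^{\beta u/4+B(2u)}\to0$ uniformly on $|z|<1$, the $a$-dependence cancelling in the exponent. Uniqueness of the strong solution is the linear-SDE argument used in the Sine case. Finally $\mathsf{y}_0=1$ gives $\cH_{\beta,a}(0,z)=H_{\beta,a}(1,z)$, and since $J\uu_1=\binom10$ the representations of $\cE_{\beta,a}$ and $\zeta_{\beta,a}$ are immediate from Definition~\ref{def:RAF}.

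I expect the main obstacle to be the analysis at the singular endpoint $t\to0$ (equivalently $u\to-\infty$): both \eqref{HS_finite} and, more delicately, the boundary condition at $-\infty$ require showing that the decay of the canonical solution's lower component outpaces the blow-up of the gauge factor $\mathsf{y}_u$ in the regime $a>-\tfrac12$, with the Brownian fluctuations controlled only up to the law of the iterated logarithm. The sign bookkeeping in the transformation step ($J^2=-I$ against the time-reversal sign) is fiddly but becomes routine once the diagonal structure is exploited, and is in any case insulated by the fact that the purely imaginary path forces the spectrum of $\Bessop$ to be symmetric about $0$.
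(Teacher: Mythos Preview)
The paper does not supply its own proof of Proposition~\ref{prop:Bessop_equiv}; the result is stated without argument, as it is essentially a specialisation to the hard-edge setting of the construction carried out for $\Sineop$ in \cite{BVBV_szeta} and for $\HPop$ in \cite{LV}. Your proposal therefore cannot be compared line-by-line against the paper, but it is the natural direct verification and is essentially correct.

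A few comments. Your direct computation of $\rho^{-1}J\tau_{\beta,a}J\rho$ is the cleanest way through the sign issues: writing out $(\rho^{-1}J\tau J\rho)f = J R(1-\cdot)^{-1}f'$ and using that $J\diag(a,b)J^{-1}=\diag(b,a)$ gives the new weight $\tfrac12\diag(\hat y(1-\cdot),\hat y(1-\cdot)^{-1})$ and hence the path $i/\hat y(1-\cdot)$ without ever decomposing $J$ as $SQ$ (which indeed fails since $\det(SJ)=-1$). The boundary vectors come out as $\binom{-1}{0},\binom{0}{-1}$, which after renormalising to satisfy \eqref{eq:u_normalization} become $\binom{1}{0},\binom{0}{-1}$; your remark that the symmetric spectrum absorbs any residual sign is correct but in fact not needed here. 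For the SDE, the It\^o computation you sketch is right; the only point to make explicit is that the second component $B$ of $H_{\beta,a}$ has finite variation in $u$, so the cross-variation term in $d(\mathsf{y}_u B)$ vanishes. For the boundary condition at $u\to-\infty$, your cancellation is correct and in fact works uniformly for all $a>-1$, not only $a>-\tfrac12$: the combination $\mathsf{y}_u\int_0^t\hat y(s)^{-1}ds$ is, up to the Brownian correction, of order $t=e^{\beta u/4}$ regardless of the sign of $2a+1$, so the regime $a\in(-1,-\tfrac12]$ (where $\mathsf{y}_u$ does not blow up) is if anything easier. The LIL control you invoke is exactly what is needed; the bounds \eqref{eq:Bess_y} in Proposition~\ref{prop:Bess_limit} package this in a reusable form.
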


\cite{LV} showed that  the secular function $\zeta_{\beta,a}$ of the $\Bessop$ operator is the limit in distribution of the normalized characteristic polynomial of the real orthogonal beta ensemble under the edge scaling \eqref{eq:edgescaling}.

\begin{definition}\label{def:HPop_rev}
    Let $B_1, B_2$ be independent two-sided standard Brownian motion, and for $u\le 0$ define
	\begin{align}\label{eq:xy}
	\mathsf{y}_u=e^{B_2(u)-(\Re\delta+\frac12)u}, \quad  \mathsf{x}_u=
-\int_u^0 e^{B_2(s)-(\Re\delta+\frac12)s} dB_1 - \Im\delta\int_u^0 e^{B_2(s)-(\Re\delta+\frac12)s}ds, 
	\end{align}
For $t\in (0,1]$ let $\hat z(t)=(\hat x+i\hat y)(t)=\mathsf{x}_u+i\mathsf{y}_u$ with $u=u(t)$ defined in \eqref{eq:u_timechange}. Set $\uu_0=\binom{1}{0}$, and $\uu_1=\binom{-q}{-1}$, where $q\sim\Theta(1,\delta)$ is independent of  $B_1, B_2$. Define $\tau_{\beta,\delta}=\tau_{\beta,\delta}^{\textup{HP}} = \Dirop(\hat z(\cdot),\uu_0,\uu_1)$, and denote by $\zeta_{\beta,\delta}=\zeta_{\tau_{\beta,\delta}}$ and $\cE_{\beta,\delta}=\cE_{\tau_{\beta,\delta}}$ the secular and structure function of $\tau_{\beta,\delta}$, respectively.\footnote{From the context it will always be clear if $\zeta_{\beta,\cdot}$, $\cE_{\beta,\cdot}$ refer to the objects related to the $\Bessop$ or the $\HPop$ operators.}
\end{definition}

\begin{proposition}[\cite{LV}]\label{prop:HP_equiv}    
Let $q$ and $\tau_{\beta,\delta}$ be defined as in Definition \ref{def:HPop_rev}. Then the $\tau_{\beta,\delta}$ operator satisfies Assumption \ref{assumption:u}, and the orthogonally equivalent operator
	\begin{equation*}
	 \rho^{-1}SQ\tau_{\beta,\delta} Q^{-1}S\rho ,\qquad Q=\frac{1}{\sqrt{1+q^2}}\mat{q}{1}{-1}{q},
	\end{equation*}
 has the same distribution as $\HPop$. Let $H_{\beta,\delta}$ be the solution of the canonical system \eqref{eq:canonical} of the $\tau_{\beta,\delta}$ operator, and let $\mathsf{x}_u,\mathsf{y}_u$ be defined as in \eqref{eq:xy}. For $u\le 0,z\in\C$ set 
 \[
 \cH_{\beta,\delta}(u,z) = \mat{1}{-\mathsf{x}_u}{0}{\mathsf{y}_u}H_{\beta,\delta}(e^{\beta u/4},z),
 \]
 then $\cH_{\beta,\delta}$ is the unique solution of the SDE 
 \begin{equation}\label{eq:HP_cH}
	d\mathcal{H} = \begin{pmatrix}
	0 & -dB_1\\0& dB_2
	\end{pmatrix}\mathcal{H}+ \begin{pmatrix}
	0 & -\Im\delta du\\0& -\Re\delta du
	\end{pmatrix}\mathcal{H} -z\frac{\beta}{8}e^{\beta u/4}\mat{0}{-1}{1}{0}\mathcal{H} du, 
\end{equation}
with the boundary condition $\lim\limits_{u\to -\infty} \sup_{|z|<1} \left|\cH(u,z)-\binom{1}{0}\right|=0$, and we have $\cE_{\beta,\delta}(z)=\cH_{\beta,\delta}(0,z)^{\dag}\binom{1}{-i}$. 
\end{proposition}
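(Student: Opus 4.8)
The plan is to follow the blueprint of Proposition \ref{prop:Sineop_equiv} (the $\delta=0$ case), treating the four assertions in turn and using the explicit representation \eqref{eq:hpsol}--\eqref{eq:xy} of the generating path throughout. Since $\HPop$ reduces to $\Sineop$ at $\delta=0$, each step should be a $\delta$-perturbation of the corresponding step for the Sine operator, with the drift parameters $\Re\delta,\Im\delta$ entering linearly.

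\emph{Well-definedness (Assumption \ref{assumption:u}).} First, $\uu_0^{\dag}J\uu_1=\binom{1}{0}^{\dag}\binom{1}{-q}=1$, so the normalization \eqref{eq:u_normalization} holds. The two integrability conditions reduce to exponential-moment estimates on $\mathsf{y}_u=e^{B_2(u)-(\Re\delta+\frac12)u}$ and $\mathsf{x}_u$ near the endpoints $u\to-\infty$ (i.e.~$t\to0$) and $u\to0^-$ (i.e.~$t\to1$). Because $\Re\delta>-\frac12$, the deterministic rate $-(\Re\delta+\frac12)$ is strictly negative, which is exactly what makes the weight function $R=\frac{X^{\dag}X}{2\mathsf{y}}$ integrable in the relevant sense; the computation is that of the Sine case, with the drift only improving the decay at $t\to0$ and leaving the behaviour at $t\to1$ essentially unchanged. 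I would read off these moment bounds directly from \eqref{eq:xy} to verify \eqref{HS_finite} and the one-sided condition for $\cI=(0,1]$.

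\emph{Orthogonal equivalence with $\HPop$.} This is where the genuinely new input appears. Applying Lemma \ref{lem:rotation} with $Q=\frac{1}{\sqrt{1+q^2}}\mat{q}{1}{-1}{q}$, the isometry $\mathcal Q$ sends the right boundary point $\mathcal P\uu_1=q$ to $\infty$ and the left boundary point $\mathcal P\uu_0=\infty$ to $-q$, while transporting the path to $\mathcal Q\hat z$. Then Lemma \ref{lem:timerev} reverses time and reflects via $\mathfrak{r}\rho$, swapping and negating the two endpoints. It remains to check that the resulting generating path has the law of the forward drifted hyperbolic Brownian motion $\mathsf{x}_v+i\mathsf{y}_v$ of \eqref{eq:HPsde} started at $i$, and that the endpoint becomes $q\sim\Theta(1,\delta)$. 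I would verify this at the level of SDEs: show that the time-reversed, rotated path solves \eqref{eq:HPsde}, and identify the law of $\lim_{v\to\infty}\mathsf{x}_v$ as $\Theta(1,\delta)$, exactly as the Cauchy law arises for $\Sineop$. This time-reversal of a \emph{drifted} diffusion, together with pinning down the endpoint distribution, is the main obstacle.

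\emph{The SDE \eqref{eq:HP_cH} and the structure function.} With $X_u=\mat{1}{-\mathsf{x}_u}{0}{\mathsf{y}_u}$ and $t=t(u)=e^{\beta u/4}$, write $\cH_{\beta,\delta}=X_u H_{\beta,\delta}(t(u))$ and differentiate. Since $H(t(u))$ has finite variation in $u$, Itô gives $d\cH=(dX_u)H+X_u\,dH$ with vanishing cross-bracket. From \eqref{eq:xy} one computes $d\mathsf{y}_u=\mathsf{y}_u\,dB_2-\Re\delta\,\mathsf{y}_u\,du$ and $d\mathsf{x}_u=\mathsf{y}_u\,dB_1+\Im\delta\,\mathsf{y}_u\,du$, so that $(dX_u)X_u^{-1}=\mat{0}{-dB_1}{0}{dB_2}+\mat{0}{-\Im\delta\,du}{0}{-\Re\delta\,du}$, producing the first two matrices in \eqref{eq:HP_cH}. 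For the last term, the canonical system \eqref{eq:canonical} gives $\frac{dH}{dt}=-zJR H$, and the identity $XJX^{\dag}=(\det X)J$ yields $X_uJR=\tfrac12 JX_u$, whence $X_u\,dH=-z\frac{\beta}{8}e^{\beta u/4}J\cH\,du$; assembling the pieces reproduces \eqref{eq:HP_cH}. The boundary condition follows from $\lim_{t\to0}H(t,z)=\binom{1}{0}$ (Proposition \ref{prop:canonical_H}), upgraded to uniformity on $|z|<1$ via the estimate \eqref{eq:H_cont} of Proposition \ref{prop:H_comparison} to control the error against the growth of $X_u$. Finally, at $u=0$ we have $t=1$, $\mathsf{y}_0=1$, $\mathsf{x}_0=0$, so $X_0=I$ and $\cH_{\beta,\delta}(0,z)=H_{\beta,\delta}(1,z)$; the representation $\cE_{\beta,\delta}(z)=\cH_{\beta,\delta}(0,z)^{\dag}\binom{1}{-i}$ is then immediate from \eqref{def:structure}.
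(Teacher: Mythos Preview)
The paper does not supply its own proof of this proposition; it is quoted from \cite{LV} and stated without argument. So there is no in-paper proof to compare against. That said, your outline is the natural route and matches what the $\delta=0$ blueprint (Proposition \ref{prop:Sineop_equiv}) would suggest.

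Your SDE derivation is correct in detail: from \eqref{eq:xy} one indeed gets $d\mathsf{y}_u=\mathsf{y}_u\,dB_2-\Re\delta\,\mathsf{y}_u\,du$ and $d\mathsf{x}_u=\mathsf{y}_u\,dB_1+\Im\delta\,\mathsf{y}_u\,du$, whence $(dX_u)X_u^{-1}$ produces the two drift/noise matrices; the identity $XJX^{\dag}=(\det X)J$ and the change of variables $dt=\tfrac{\beta}{4}e^{\beta u/4}du$ give the $-z\tfrac{\beta}{8}e^{\beta u/4}J\cH\,du$ term exactly. The endpoint computation $X_0=I$, hence $\cH_{\beta,\delta}(0,\cdot)=H_{\beta,\delta}(1,\cdot)$ and the structure-function formula, is also fine. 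For the boundary condition at $u\to-\infty$ you correctly point to \eqref{eq:H_cont}; one must check that the quantities $|\ttr_{\tau_t}|,\|\res\tau_t\|,\int_0^t|\mathfrak a_0|^2$ vanish fast enough as $t\to0$ to beat the growth of $X_u$, and you should also say a word about uniqueness of the SDE solution (it follows from uniqueness of $H$ in Proposition \ref{prop:canonical_H} once the transformation is shown to be invertible).

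The one place where your sketch is genuinely incomplete is the orthogonal equivalence with $\HPop$. You have correctly tracked the boundary points through $Q$ and then $\rho^{-1}S$, but the heart of the matter is the distributional identity for the \emph{path}: after rotation by $\mathcal Q$ (which depends on the random $q$), reflection $\mathfrak r$, and time reversal, one must show that the resulting process on $[0,1)$ has the law of the forward drifted hyperbolic Brownian motion \eqref{eq:HPsde} started at $i$, with endpoint $q=\lim_{v\to\infty}\mathsf{x}(v)$. This is a time-reversal statement for a diffusion with nonzero drift, and it is not a consequence of Lemmas \ref{lem:timerev}--\ref{lem:rotation} alone (those are deterministic). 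In \cite{LV} this is handled by a factorization/conditioning argument for the drifted hyperbolic Brownian motion analogous to the discrete Proposition \ref{prop:CJ_conditioned} and Corollary \ref{cor:CJ_factor} of the present paper; you have flagged this as ``the main obstacle'' but not indicated how to resolve it. Filling in that step is where the actual work lies.
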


\cite{LV} also showed that the secular function $\zeta_{\beta,\delta}=\cH_{\beta,\delta}(0,z)^{\dag}\binom{1}{-q}$ of the $\HPop$ operator is the limit in distribution of the normalized characteristic polynomials of the circular Jacobi beta ensemble under the edge scaling \eqref{eq:edgescaling}.

\section{Convergence of the truncated models}\label{sec:general}

This section establishes general convergence results for the rank-one truncations and multiplicative perturbations of the finite ensembles when the generating paths satisfy certain path bounds. The statements in this section hold in the deterministic setting. 

Throughout the section, we assume $\mu_n$ is a probability measure on $\partial \D$ supported on $n$ distinct points, with Verblunsky coefficients $\alpha_0,\dots, \alpha_{n-1}$. We denote by $\tau_n$ the Dirac operator corresponding to $\mu_n$.

Recall the reversed (discrete) probability measures defined in Definition \ref{def:rev_Verblunsky}. We first introduce the reversed Dirac operator and its pulled-back version, which are closely connected to the truncated ensembles. 

\begin{definition}\label{def:rev_Dirac}
Let $\rev \mu_n$ be the reversed version of $\mu_n$, i.e., the probability measure corresponding to the reversed Verblunsky coefficients  $(\rev \alpha_0,\rev \alpha_1,\dots,\rev \alpha_{n-1})$.
We define $\rev \tau_n$ as the Dirac operator corresponding to $\rev \mu_n$ and call it the reversed version of $\tau_n$. We denote by $\rev  b_k,0\le k\le n$ the path parameters  of $\rev \tau_n$ in $\D$.
\end{definition}

 Recall the affine transformation $\mathcal{A}_{\gamma,\D}$ defined in \eqref{eq:Adef}.
\begin{definition}\label{def:aff_Dirac}
Set $\aff  b_k:=\cA_{\rev  b_{n-1},\D}(\rev  b_k)$ for $0\le k\le n$ with the extension that $\aff b_{-1}=\cA_{\rev b_{n-1},\D}(\rev b_{-1})=1$. We define $\aff \tau_n:=\cA_{\rev b_{n-1},\D}(\rev \tau_n):=\Dirop(1, \aff b_0,\dots,\aff b_{n-1},\aff b_n)$ as the Dirac operator with path parameters $\aff b_k,0\le k\le n$ in $\D$. We call $\aff\tau_n$ the pulled-back version of $\rev \tau_n$. 
\end{definition}

To summarize, we are considering three sets of path parameters corresponding to $\mu_n$, and each one has a Dirac-type operator associated to it. $b_k, 0\le k\le n$ are the path parameters constructed from the Verblunsky coefficients $\alpha_k, 0\le k\le n-1$ according to Definition \ref{def:path}. The reversed path $\rev b_k, 0\le k\le n$ is constructed via the same definition, but from the reversed Verblunsky coefficients \eqref{eq:rev_Verblunsky}. Finally, the path $\aff b_k, 0\le k\le n$ can be obtained from the reversed path by applying an affine transformation that maps $\rev b_{n-1}$ to 0 in $\D$. See Table \ref{table:summary} for a summary of the defined objects.

Note that by the definition of $\cA_{\gamma,\D}$ and the recursion \eqref{eq:b_rec}, we have $\aff b_{n-1}=0$ and $\aff b_n=\rev \gamma_{n-1}$, where $\rev \gamma_{n-1}$ is the last modified Verblunsky coefficient corresponding to $\rev \mu_n$.

\begin{table}
\begin{center}
    \begin{tabular}{ccccc}
         original objects&&reversed objects&&pulled-back objects  \\
         \hline\hline\\
         $\mu$&&$\rev{\mu}$&&\\
         discrete prob.~measure on $\partial \D$&&reversed measure&&\\[5pt]
         $\updownarrow$ \eqref{eq:Szego1}&\eqref{eq:rev_Verblunsky}&$\uparrow$\eqref{eq:Szego1}&&\\[5pt]
         $\alpha_k,0\le k\le n-1$&$\longleftrightarrow$&$\rev \alpha_k, 0\le k\le n-1$&&\\
         Verblunsky coefficients&&reversed Verbl.~coeffs&&\\[5pt]
         $\updownarrow$ \eqref{eq:modifiedV}&&&&\\[5pt]
         $\gamma_k,0\le k\le n-1$&&\multirow{2}{*}{$\xdownarrow{1 cm}$ Def.~\ref{def:path}}\\
         modified Verblunsky coeffs&&&&\\[5pt]
     $\updownarrow$ Def.~\ref{def:path}&&&Def.~\ref{def:aff_Dirac}&\\[5pt]
        $b_k, 0\le k\le n$&&$\rev b_k, 0\le k\le n$&$\longrightarrow$& $\aff b_k, 0\le k\le n$\\
        $z_k,0\le k\le n$&&$\rev z_k,0\le k\le n$&&$\aff z_k,0\le k\le n$\\
        path parameters&&reversed path&&pulled-back path\\[5pt]
           $\downarrow$ Def.~\ref{def:path}&&$\downarrow$Def.~\ref{def:path}&& $\downarrow$Def.~\ref{def:path}, \ref{def:aff_Dirac}\\[5pt]
           $\tau=\Dirop(b_{-1},\dots,b_n)$&& $\rev \tau=\Dirop(\rev b_{-1},\dots,\rev b_n)$&&$\aff \tau=\Dirop(\aff b_{-1},\dots,\aff b_n)$\\
           Dirac-type operator&&reversed operator&&pulled-back operator 
    \end{tabular}
\end{center}
\caption{Table of objects associated to a probability measure $\mu$ supported on $n$ points on $\partial \D$.}
\label{table:summary}
\end{table}

The next statement represents the orthogonal polynomial of degree $n-1$ of $\rev \mu_n$ using the operator $\aff \tau_n$.
\begin{proposition}\label{prop:general1}
Let $\rev \mu_n$ be the reversed version of $\mu_n$, and $\rev \tau_n,\aff \tau_n$ be defined as in Definitions \ref{def:rev_Dirac} and \ref{def:aff_Dirac}. 
 For $0\le k\le n-1$, let $\rev \Phi_k(\cdot)$ be the monic orthogonal polynomials of degree $k$ associated to $\rev \mu_n$, and set $\rev \Psi_k(\cdot)=\rev \Phi_k(\cdot)/\rev \Phi_k(1)$. Then we have
	\[
	\rev \Psi_{n-1}(e^{iz/n}) = e^{iz(n-1)/(2n)}\aff H_n\big(\tfrac{n-1}{n},z\big)^{\dag}\binom{1}{-i},
	\]
	where $\aff H_n(t,z)$ is the solution of the canonical system \eqref{eq:canonical} corresponding to the operator $\aff \tau_n$.
\end{proposition}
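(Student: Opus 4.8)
The plan is to connect the degree-$(n-1)$ normalized orthogonal polynomial of $\rev\mu_n$ to the canonical system of $\aff\tau_n$ via the general polynomial representation recorded in Proposition \ref{prop:unitary_ODE}, adapted to account for the affine pull-back relating $\rev\tau_n$ and $\aff\tau_n$. The central observation is that $\aff\tau_n = \cA_{\rev b_{n-1},\D}(\rev\tau_n)$ is obtained from $\rev\tau_n$ by applying a single hyperbolic isometry $\cA_{\rev b_{n-1},\D}$ that sends $\rev b_{n-1}$ to the origin in $\D$, so that $\aff b_{n-1}=0$ and $\aff b_n=\rev\gamma_{n-1}$ as already noted in the text. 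Because $0\in\D$ corresponds (under the Cayley transform $\cU$ from \eqref{eq:U}) to the point $i\in\HH$, the right endpoint of the pulled-back operator at $t=\tfrac{n-1}{n}$ lands precisely at $i$, which is exactly what produces the boundary vector $\binom{1}{-i}$ appearing in the claim.

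\textbf{Key steps.} First, I would apply Proposition \ref{prop:unitary_ODE} directly to the measure $\rev\mu_n$ with its generating path $\rev z_k$. This gives, for each $k$, the identity
\[
\binom{\rev\Psi_k(e^{iz/n})}{\rev\Psi_k^*(e^{iz/n})}
= e^{izk/(2n)}\mat{1}{-\rev z_k}{1}{-\rev{\bar z}_k}\rev H_n(k/n,z),
\]
where $\rev H_n$ is the canonical solution for $\rev\tau_n$. Taking $k=n-1$ isolates $\rev\Psi_{n-1}$ in terms of $\rev H_n(\tfrac{n-1}{n},z)$ and the path point $\rev z_{n-1}=\cU^{-1}(\rev b_{n-1})$. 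Second, I would use Lemma \ref{lem:rotation} (the effect of a hyperbolic isometry on a Dirac operator) to relate the canonical solution $\aff H_n$ of $\aff\tau_n$ to $\rev H_n$: since $\aff\tau_n=\cA_{\rev b_{n-1},\D}(\rev\tau_n)$ and the isometry acts by a fixed matrix $Q$ (the $\HH$-side representative of $\cA_{\rev b_{n-1},\D}$, obtained by conjugating $A_{\rev b_{n-1},\D}$ through $U$), the two canonical solutions differ by left multiplication by this matrix, up to the normalization built into the initial condition $\uu_0=\binom{1}{0}$. Third, I would read off the right-boundary data: because $\aff b_{n-1}=0$, the corresponding $\HH$-path point is $\aff z_{n-1}=\cU^{-1}(0)=i$, so the top row $\mat{1}{-\aff z_{n-1}}{\cdot}{\cdot}$ evaluated at the $(n-1)/n$ node becomes $\binom{1}{-i}^{\dag}$ acting on $\aff H_n(\tfrac{n-1}{n},z)$. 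Combining the three steps, the affine factors and the phase $e^{iz(n-1)/(2n)}$ should match and produce exactly
\[
\rev\Psi_{n-1}(e^{iz/n}) = e^{iz(n-1)/(2n)}\,\aff H_n\!\big(\tfrac{n-1}{n},z\big)^{\dag}\binom{1}{-i}.
\]

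\textbf{Main obstacle.} I expect the delicate part to be bookkeeping the normalization and ensuring the isometry $\cA_{\rev b_{n-1},\D}$ is compatible with the fixed initial condition $\uu_0=\binom{1}{0}$ shared by all the canonical systems in Definition \ref{def:path}. Applying Lemma \ref{lem:rotation} changes boundary conditions to $\mathcal Q\uu_0,\mathcal Q\uu_1$, whereas the operator $\aff\tau_n$ is \emph{defined} with the standard $\uu_0=\binom{1}{0}$; reconciling these requires either restarting the canonical system from the pulled-back initial data or tracking an overall scalar/linear factor that the isometry introduces. The key algebraic check is that evaluating at the node $t=(n-1)/n$ (rather than at $t=1$) is what produces $\binom{1}{-i}$ instead of $J\binom{1}{\rev z_n}$: since $\aff b_{n-1}=0\mapsto i$, the vector $\mat{1}{-i}{1}{i}$ collapses on its first row to $\binom{1}{-i}^{\dag}$, and I would verify that the reversed-star polynomial $\rev\Psi_{n-1}^*$ drops out cleanly, leaving only the $\rev\Psi_{n-1}$ component. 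Carefully matching these phases and the determinant normalizations from \eqref{e:A1def}–\eqref{eq:Adef} is the crux; once the isometry is correctly transported through the canonical system, the identity follows by specializing $k=n-1$ in the transported version of \eqref{eq:Phi_H}.
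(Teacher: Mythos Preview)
Your overall strategy matches the paper's: apply Proposition \ref{prop:unitary_ODE} to $\rev\mu_n$, relate the canonical solutions of $\rev\tau_n$ and $\aff\tau_n$ through the conjugation defining the pull-back, and then exploit $\aff z_{n-1}=i$ so that the matrix $\mat{1}{-\aff z_{n-1}}{1}{-\overline{\aff z_{n-1}}}$ reduces to $U=\mat{1}{-i}{1}{i}$ at the node $t=(n-1)/n$.

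The one misstep is your appeal to Lemma \ref{lem:rotation}. That lemma is stated for a $2\times 2$ \emph{orthogonal} matrix $Q$ with $\det Q=1$, whereas the isometry $\cA_{\rev b_{n-1},\D}$ corresponds on the $\HH$-side to the \emph{upper-triangular} matrix $\rev X_{n-1}:=A_{\rev z_{n-1},\HH}=\mat{1}{-\rev x_{n-1}}{0}{\rev y_{n-1}}$, which is not orthogonal. The paper instead establishes the conjugation directly: using $\aff X_k=\rev X_k(\rev X_{n-1})^{-1}$ and the identity $y^{-1}X^{\dag}J=JX^{-1}$, one checks that $\aff\tau_n=\rev X_{n-1}\,\rev\tau_n\,(\rev X_{n-1})^{-1}$, and hence $\aff H_n(t,z)=\rev X_{n-1}\,\rev H_n(t,z)$. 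This immediately dissolves the ``main obstacle'' you anticipate: because $\rev X_{n-1}$ is upper triangular with a $1$ in the top-left entry, it fixes $\uu_0=\binom{1}{0}$ exactly, so both canonical systems share the same initial condition with no rescaling needed. The identity $U\rev X_k\rev H_n=U\aff X_k\aff H_n$ then transports \eqref{eq:Phi_H} cleanly to the pulled-back path, and since $\aff X_{n-1}=I$ (as $\aff z_{n-1}=i$), specializing to $k=n-1$ yields the claimed formula directly. Your proposal would go through once you replace the invocation of Lemma \ref{lem:rotation} by this upper-triangular conjugation.
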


\begin{proof}
	Let $\rev b_k,0\le k\le n$ be the path parameters of $\rev \tau_n$ and set $\rev z_k=\mathcal{U}^{-1}(\rev b_k),0\le k\le n$ to be the corresponding path parameters in $\HH$. Here $\cU$ is the Cayley transform defined in \eqref{eq:U}. Consider the linear fractional transformation $\cA_{\rev z_{n-1},\HH}$ (the upper half plane representation of $\cA_{\rev b_{n-1},\D}$) such that  
\[
\cA_{\rev z_{n-1},\HH}(w) =\mathcal{U}^{-1}\circ\cA_{\rev b_{n-1},\D}\circ\mathcal{U}(w)= \frac{w-\Re \rev z_{n-1}}{\Im \rev z_{n-1}},\qquad w\in\HH.
\]
Define  $\aff z_k=\aff x_k+i\aff y_k=\cA_{\rev z_{n-1},\HH}(\rev z_k)$  for $0\le k\le n$. Then by \eqref{eq:A_equiv} we have $\aff z_k=\cU^{-1}(\aff b_k), 0\le k\le n$ with the extension that $\aff z_{-1}=\cU^{-1}(\rev b_{-1})=\infty$.

Introduce the temporary notation
\[
\rev X_k=A_{\rev z_k,\HH}=\mat{1}{-\rev x_k}{0}{\rev y_k}, \quad \aff X_k = A_{\aff z_k,\HH} = \mat{1}{-\aff x_k}{0}{\aff y_k}, \qquad 0\le k\le n,
\]
then we have 
\begin{equation}\label{eq:Xk_trans}
\aff X_k=\rev X_k(\rev X_{n-1})^{-1} ,\quad \quad 0\le k\le n. 
\end{equation}
For a matrix of the form 
\[
X=\mat{1}{-x}{0}{y}
\]
with $y>0$ we have the identity
\[
y^{-1}X^{\dag}J=JX^{-1}. 
\]
Using this together with \eqref{def:R} and \eqref{def:Dirop}
we obtain \begin{align}\label{eq:tau1234}
\aff \tau_{n}=\rev X_{n-1}\rev \tau_n(\rev X_{n-1})^{-1}.
\end{align}
Let $\aff H_n(t,z)$ and $\rev H_n(t,z)$ be the solutions to the canonical systems \eqref{eq:canonical} of $\aff \tau_{n}$ and $\rev \tau_{n}$ respectively, then \eqref{eq:tau1234} leads to   
\begin{align}\label{eq:HHH}
\rev H_n(t,z)=(\rev X_{n-1})^{-1}\aff H_n(t,z).
\end{align}
Let $\rev{\Psi}_k^*(z)=z^k\overline{\rev \Psi_k(1/\bar z)}$ be the reversed polynomial of $\rev \Psi_k(z)$. 
Using \eqref{eq:U}, \eqref{eq:Phi_H}, \eqref{eq:Xk_trans}, and \eqref{eq:HHH} we get 
\begin{align*}
\binom{\rev \Psi_k(e^{iz/n})}{\rev \Psi_k^{*}(e^{iz/n})} &= e^{izk/(2n)}U\rev X_k\rev H_n(k/n,z)=e^{izk/(2n)}U\aff X_k\aff H_n(k/n,z).
\end{align*}
Since $\aff z_{n-1}=\cA_{\rev z_{n-1},\HH}(\rev z_{n-1})=i$, and $\aff X_{n-1}=I$, we have
\[
\rev \Psi_{n-1}(e^{iz/n}) = e^{iz(n-1)/(2n)}\aff H_n((n-1)/n,z)^{\dag}\binom{1}{-i},
\]
finishing the proof. 
\end{proof}

	Suppose now that the probability measure $\mu_n$ is the spectral measure of an $n\times n$ unitary or orthogonal matrix $U$ with respect to $\mathbf{e}_1$. By Propositions \ref{prop:CMV1} and \ref{prop:CMV3}, the eigenvalues of the truncated matrix $\trunc{U}$ are exactly the zeros of the normalized orthogonal polynomial $\rev \Psi_{n-1}(\cdot)$, hence they can be expressed from the vector-valued function $\aff H_n$.
	
	Next we turn to the discussion of the scaling limit of the eigenvalues of the truncated matrices. If we can prove uniform on compacts convergence of the normalized orthogonal polynomials, then this leads to the convergence of the eigenvalues of the truncated models. By Proposition \ref{prop:general1} it is sufficient to show the convergence 
  the convergence of $\aff  H_n$. Using Proposition \ref{prop:H_comparison}, this can be done if we have  sufficient control of the integral traces, resolvent norms, and the proper integrals of the kernels of $\res\aff \tau_n$. We summarize this idea in the following statement. 

\begin{proposition}\label{prop:general2}
Suppose that $\mu_n, n\ge 1$ is a sequence of probability measures on $\partial \D$, with $\mu_n$ supported on $n$ distinct points. Consider the setup of 
  Proposition \ref{prop:general1} and denote by $z_n(\cdot)$  the generating path of $\aff\tau_n$ for $n\ge 1$. 
	Suppose that there exists a Dirac operator $ \tau_\infty=\Dirop(z_\infty(\cdot),\uu_0,\uu_1)$ with $\uu_0=\binom{1}{0}$ and $\cI=(0,1]$ so that 
	\begin{equation}\label{eq:discrete_assumption}
\|\res\aff \tau_n-\res\tau_\infty\|_{\textup{HS}}\to 0,\qquad \ttr_{\aff \tau_n}-\ttr_{\tau_\infty}\to 0,\qquad \int_0^1 |\mathfrak{a}_{0,n}(s)-\mathfrak{a}_{0,\infty}(s)|^2 ds \to 0,
\end{equation}
where $\mathfrak{a}_{0,n}(\cdot)=\binom{\Im z_{n}(\cdot)^{-\frac12}}{0},n\in \Z_+\cup\{\infty\}$ is defined according to \eqref{Dir:inverse}. Let $\rev \Psi_{n-1,n}$ be the normalized orthogonal polynomial of degree $n-1$ of $\rev \mu_n$, and let $\cE(z):=H(1,z)^{\dag}\binom{1}{-i}$ be the structure function of $\tau_\infty$. Then we have
\begin{equation}\label{eq:Psi_conv_general}
|e^{-iz/2}\rev \Psi_{n-1,n}(e^{iz/n})-\cE(z)|\to 0\quad \text{uniformly on compacts in $\C$ as $n\to\infty$.}
\end{equation}
\end{proposition}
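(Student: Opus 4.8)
The plan is to combine Proposition \ref{prop:general1} with the continuity estimate of Proposition \ref{prop:H_comparison}. By Proposition \ref{prop:general1} applied to the $n$-th ensemble, we have the identity
\[
e^{-iz(n-1)/(2n)}\rev \Psi_{n-1,n}(e^{iz/n}) = \aff H_n\big(\tfrac{n-1}{n},z\big)^{\dag}\binom{1}{-i},
\]
while the target $\cE(z) = H(1,z)^{\dag}\binom{1}{-i}$, where $H$ solves the canonical system of $\tau_\infty$ on $(0,1]$. So the quantity to control splits naturally into three pieces: first, the scalar prefactor $e^{-iz/2}$ versus $e^{-iz(n-1)/(2n)}$, which converges to $1$ uniformly on compacts since the exponents differ by $O(|z|/n)$; second, the difference between $\aff H_n(\tfrac{n-1}{n},z)$ and $\aff H_n(1,z)$, i.e.~the effect of evaluating the solution of the $\aff\tau_n$ system at $\tfrac{n-1}{n}$ rather than at the right endpoint $1$; and third, the difference between $\aff H_n(1,z)$ and $H(1,z)$, the convergence of the solution of the canonical system of $\aff\tau_n$ to that of $\tau_\infty$ at the endpoint.

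First I would handle the third (main) piece using the comparison bound \eqref{eq:H1_cont} of Proposition \ref{prop:H_comparison}, applied with $\tau=\aff\tau_n$, $\tilde\tau=\tau_\infty$, and $t=1$. The right-hand side of \eqref{eq:H1_cont} is controlled by exactly the three quantities appearing in the hypothesis \eqref{eq:discrete_assumption}: the difference of integral traces $|\ttr_{\aff\tau_n}-\ttr_{\tau_\infty}|$, the resolvent norm difference $\|\res\aff\tau_n-\res\tau_\infty\|$ (dominated by the Hilbert--Schmidt norm), and the $L^2$ difference $\int_0^1|\mathfrak{a}_{0,n}(s)-\mathfrak{a}_{0,\infty}(s)|^2\,ds$. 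Under \eqref{eq:discrete_assumption} all three tend to $0$, so the factor $c^{|z|(\cdots)}-1$ in \eqref{eq:H1_cont} tends to $0$ for each fixed $z$; meanwhile the second factor stays bounded on compacts once the resolvent norms, integral traces, and $\int_0^1|\mathfrak{a}_{0,n}|^2$ are uniformly bounded, which follows from their convergence to the corresponding finite quantities for $\tau_\infty$. This gives $\aff H_n(1,z)\to H(1,z)$ uniformly on compacts in $z$, and hence uniform convergence of the $\binom{1}{-i}$-projections.

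For the second piece, I would compare $\aff H_n(\tfrac{n-1}{n},z)$ with $\aff H_n(1,z)$ by viewing both as endpoint values of the restricted canonical systems and applying the same comparison estimate (now for a single operator evaluated on the nested intervals $(0,\tfrac{n-1}{n}]$ and $(0,1]$), or equivalently by bounding $\int_{(n-1)/n}^1 R_n(s)\,ds$ directly through the integral trace and resolvent contributions of the last sliver of the path. The key observation is that the path parameters of $\aff\tau_n$ satisfy $\aff b_{n-1}=0$, so the generating path passes through the base point $i$ of $\HH$ near the right endpoint, which keeps $\Im z_n$ bounded away from $0$ there and makes the weight integral over $[\tfrac{n-1}{n},1]$ negligible; combined with the uniform control already established, this forces $\aff H_n(\tfrac{n-1}{n},z)-\aff H_n(1,z)\to 0$ uniformly on compacts. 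Assembling the three pieces and using that $H(1,z)^{\dag}\binom{1}{-i}=\cE(z)$ yields \eqref{eq:Psi_conv_general}.

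I expect the main obstacle to be the \emph{uniformity} in $z$ over compact sets rather than the pointwise limit: the bound \eqref{eq:H1_cont} is genuinely uniform in $z$ once the five controlling quantities are uniformly bounded in $n$, so the real work is verifying those uniform bounds --- in particular that $\int_0^1|\mathfrak{a}_{0,n}(s)|^2\,ds$ and the resolvent Hilbert--Schmidt norms of $\aff\tau_n$ stay bounded. These follow from \eqref{eq:discrete_assumption} together with the finiteness of the corresponding quantities for $\tau_\infty$ (via the triangle inequality), so the argument is structurally clean; the delicate point is simply to confirm that the endpoint correction in the second piece does not spoil the uniformity, which is where the $\aff b_{n-1}=0$ normalization is essential.
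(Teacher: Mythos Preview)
Your proposal is correct and follows the same overall strategy as the paper: invoke Proposition \ref{prop:general1}, split with the triangle inequality, and control the main comparison via Proposition \ref{prop:H_comparison} using the three hypotheses in \eqref{eq:discrete_assumption}. The only difference is in the choice of intermediate point in the triangle inequality. You interpolate through $\aff H_n(1,z)$ (so your ``endpoint shift'' piece is $\aff H_n(\tfrac{n-1}{n},z)-\aff H_n(1,z)$ and your comparison piece is at $t=1$), whereas the paper interpolates through $H(\tfrac{n-1}{n},z)$ (so its endpoint shift is $H(\tfrac{n-1}{n},z)-H(1,z)$ and its comparison is at $t=\tfrac{n-1}{n}$). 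The paper's choice makes the endpoint shift a statement about the \emph{fixed} limiting system only, hence automatically uniform in $n$; your choice requires a uniform-in-$n$ bound on the $\aff\tau_n$ system over the last sliver, which you correctly obtain from the normalization $\aff b_{n-1}=0$ (so the weight equals $\tfrac12 I$ there). Both routes are equally valid and of comparable length.
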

Note that \eqref{eq:Psi_conv_general} implies that the zeros of $\rev \Psi_{n-1,n}$ converge to the zeros of $\cE$ under the edge scaling \eqref{eq:edgescaling}. 
\begin{proof}[Proof of Proposition \ref{prop:general2}]
Let $\aff H_n,n\ge 1$ be the solution of the canonical system \eqref{eq:canonical} of $\aff \tau_n$. By Proposition \ref{prop:general1} and the triangle inequality, we have \begin{equation}\label{ineq:circ_conv_tri}
\begin{aligned}
|\rev \Psi_{n-1,n}(e^{iz/n})e^{-iz/2}-\cE(z)|&\le |e^{-iz/(2n)}|\left|\Big(\aff H_n(\tfrac{n-1}{n},z)^{\dag}- H(\tfrac{n-1}{n},z)^{\dag}\Big)\cdot\binom{1}{-i}\right|\\
& + |e^{-iz/(2n)}|\left|\Big( H(\tfrac{n-1}{n},z)^{\dag}- H(1,z)^{\dag}\Big)\cdot\binom{1}{-i}\right|\\
& +\left|e^{-iz/(2n)}-1\right|\Big|H(1,z)^{\dag}\cdot\binom{1}{-i}\Big|.
\end{aligned}
	\end{equation}
	
	For $z$ in a compact set of $\C$ and large enough $n$, the sum of the second and the third term of \eqref{ineq:circ_conv_tri} is upper bounded by an error term of the order $O(n^{-1})$. Hence it remains to provide an upper bound for the difference $|\aff H_n(\tfrac{n-1}{n},z)- H(\tfrac{n-1}{n},z)|$.  By Proposition \ref{prop:H_comparison}, it suffices to control the terms 
\[
|\ttr_{\tau_\infty}-\ttr_{\aff \tau_{n}}|,\quad \quad \|\res  \tau_\infty-\res \aff \tau_{n}\|_{\textup{HS}},\quad \quad \int_0^1|\mathfrak{a}_{0,\infty}(s)-\mathfrak{a}_{0,n}(s)|^2 ds.
\]
Applying the assumptions \eqref{eq:discrete_assumption} finishes the proof. 
\end{proof}

Next, we turn to the discussion of the rank-one multiplicative perturbation of the matrix models. Suppose
$U$ is an $n\times n$ unitary or orthogonal matrix with CMV matrix form $\cC(\alpha_0,\dots,\alpha_{n-1})$. Let $r\in [0,1]$ and $U^{[r]}=U\cdot \diag(r,1,1,\dots,1)$ be its rank-one multiplicative perturbation. Recall from Proposition \ref{prop:CMV3} that  the perturbed matrix $U^{[r]}$ has the same eigenvalues as the CMV matrix
\begin{equation}\label{eq:CMV_perturb_general}
\cC_n^{[r]}:=\cC(\rev \alpha_0, \dots, \rev \alpha_{n-2},r\rev \alpha_{n-1}). 
\end{equation}
Note that this matrix is a function of the spectral measure of $U$. In particular, for any probability measure $\mu_n$ on $\partial \D$ (supported on $n$ points) we can define the matrix $\cC_n^{[r]}$ from its Verblunsky coefficients. 

Suppose now that we have a sequence of probability measures $\mu_n, n\ge 1$, just as in  Proposition \ref{prop:general2}.
Let $\rev\gamma_{n-1}$ be the last modified Verblunsky coefficient corresponding to the Verblunsky coefficients $\rev \alpha_0, \dots, \rev \alpha_{n-1}$ of $\rev \mu_n$. (Note the abuse of notation: we should write $\rev \alpha_{k,n}, 0\le k\le n-1$ here, but we drop the extra $n$ from the notation.) The next result shows that if $\rev \gamma_{n-1}$ converges and the  assumptions of Proposition \ref{prop:general2} hold, then the eigenvalues of $\cC_n^{[r]}$ converge as well.

\begin{proposition}\label{prop:general3}
	Consider the same setup as in Proposition \ref{prop:general2}. Fix $r\in[0,1]$, let $\Psi_{n}^{[r]}(z)=\prod_{i=1}^n \frac{z-\lambda_i}{1-\lambda_i}$ be the normalized characteristic polynomial of $\cC_n^{[r]}$. Assume that the convergence \eqref{eq:Psi_conv_general} holds, and assume further that $\lim_{n\to\infty}\rev\gamma_{n-1} =\gamma\neq 1$. Then we have
	\begin{equation}\label{eq:conv1234}
	|\Psi_{n}^{[r]}(e^{iz/n})e^{-iz/2}-\cE^{[r]}(z)| \to 0, \qquad \cE^{[r]}(z)=
	 H(1,z)^{\dag}\binom{1}{-i\frac{1-r\gamma}{1+r\gamma}}
	\end{equation}
uniformly on compacts in $\C$ as $n\to\infty$. 
\end{proposition}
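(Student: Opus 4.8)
The plan is to run the modified Szeg\H{o} recursion \eqref{eq:Szego2} for one extra step and then feed in the convergence already established in Proposition \ref{prop:general2}. Because the map $\cT$ of \eqref{eq:modifiedV} carries the first $k$ Verblunsky coefficients bijectively onto the first $k$ modified ones, scaling only the last coefficient $\rev\alpha_{n-1}\mapsto r\rev\alpha_{n-1}$ ($r\in[0,1]$ real) leaves $\rev\gamma_0,\dots,\rev\gamma_{n-2}$ unchanged and, by the explicit formula \eqref{eq:modifiedV}, sends $\rev\gamma_{n-1}\mapsto r\rev\gamma_{n-1}$ (the product over $j<n-1$ is untouched and $r$ is real). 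Hence the degree $n-1$ normalized orthogonal polynomial of $\cC_n^{[r]}$ and its reversed polynomial coincide with $\rev\Psi_{n-1},\rev\Psi_{n-1}^*$, and \eqref{eq:Szego2} at index $n-1$ gives
\[
\Psi_n^{[r]}=\frac{z\,\rev\Psi_{n-1}-r\rev\gamma_{n-1}\,\rev\Psi_{n-1}^*}{1-r\rev\gamma_{n-1}}.
\]
Setting $z=1$ and using $\rev\Psi_{n-1}(1)=\rev\Psi_{n-1}^*(1)=1$ yields $\Psi_n^{[r]}(1)=1$, so (via the determinant identity \eqref{eq:CMV_characteristic}) this is the normalized characteristic polynomial of $\cC_n^{[r]}$.

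I would then substitute the representations of Proposition \ref{prop:general1}. With $\aff H_n(\tfrac{n-1}{n},z)=\binom{A_n}{B_n}$ it gives $\rev\Psi_{n-1}(e^{iz/n})=e^{iz(n-1)/(2n)}(A_n-iB_n)$, and reading off the second coordinate of $U\aff X_{n-1}\aff H_n$ at $\aff z_{n-1}=i$ (where $U\aff X_{n-1}=\mat{1}{-i}{1}{i}$) gives the companion identity $\rev\Psi_{n-1}^*(e^{iz/n})=e^{iz(n-1)/(2n)}(A_n+iB_n)$. Substituting both into the recursion and multiplying by $e^{-iz/2}$ collapses the prefactors to $e^{-iz/(2n)}$, so
\[
e^{-iz/2}\,\Psi_n^{[r]}(e^{iz/n})=e^{-iz/(2n)}\,\frac{e^{iz/n}(A_n-iB_n)-r\rev\gamma_{n-1}(A_n+iB_n)}{1-r\rev\gamma_{n-1}}.
\]

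Finally I pass to the limit. In the setup of Proposition \ref{prop:general2} the bound of Proposition \ref{prop:H_comparison} gives $\aff H_n(\tfrac{n-1}{n},\cdot)\to H(1,\cdot)$ uniformly on compacts, hence $A_n\to A(1,z)$, $B_n\to B(1,z)$, while $e^{-iz/(2n)},e^{iz/n}\to1$ and $\rev\gamma_{n-1}\to\gamma$. Since $\rev\gamma_{n-1}$ is a last modified Verblunsky coefficient we have $|\rev\gamma_{n-1}|=1$, so the denominator stays bounded away from $0$: for $r<1$ automatically ($|r\rev\gamma_{n-1}|=r<1$), and for $r=1$ exactly because $\gamma\neq1$. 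Thus the quotient converges uniformly on compacts, and collecting the constant-coefficient combination of $A(1,z),B(1,z)$ writes the limit as $H(1,z)^{\dag}\binom{1}{-c}$ with $c$ a fixed M\"obius image of $r\gamma$; it remains to identify $c$ with $i\frac{1-r\gamma}{1+r\gamma}$, which gives \eqref{eq:conv1234}.

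I expect the main obstacle to be exactly this last identification: one has to carry the Cayley transform $\cU$ of \eqref{eq:U} together with the conjugation and sign encoded in the reversal \eqref{eq:rev_Verblunsky} through the computation so that the M\"obius coefficient is pinned to the stated value rather than its reciprocal — a finite algebraic check, since all the analytic content (uniform convergence of $\aff H_n$ and the denominator bound) is already delivered by Proposition \ref{prop:general2}. Two degenerate checks support the form: at $r=0$ one recovers $\cE^{[0]}=H(1,\cdot)^{\dag}\binom{1}{-i}=\cE$, the truncated limit, while the surviving factor $e^{iz/n}\to1$ absorbs the spurious eigenvalue at $0$ introduced by setting the last coefficient to $0$.
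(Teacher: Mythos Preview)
Your proposal is correct and follows essentially the same route as the paper's proof: apply the modified Szeg\H{o} recursion \eqref{eq:Szego2} at the last index (with coefficient $r\rev\gamma_{n-1}$), use the convergence from Proposition~\ref{prop:general2} for both $\rev\Psi_{n-1}$ and $\rev\Psi_{n-1}^*$, pass to the limit using $\rev\gamma_{n-1}\to\gamma$, and identify the resulting linear combination of $A(1,z),B(1,z)$. The paper packages the reversed-polynomial limit as $\cE^*(z):=\overline{\cE(\bar z)}=A(1,z)+iB(1,z)$, while you work directly with the components $A_n,B_n$; the content is identical, and your remarks on the denominator ($|\rev\gamma_{n-1}|=1$, so $r<1$ or $\gamma\neq1$ keeps it bounded away from~$0$) and the $r=0$ sanity check are exactly right.
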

Note that \eqref{eq:conv1234} implies the convergence of the eigenvalues of $\Psi_{n}^{[r]}$ to the zeros of $\cE^{[r]}$ under the edge scaling \eqref{eq:edgescaling}.
\begin{proof}[Proof of Proposition \ref{prop:general3}]
By the modified Szeg\H o recursion \eqref{eq:Szego2}, we have
\begin{equation}\label{eq:Psi_Szego}
\Psi_{n}^{[r]}(z) = \tfrac{1}{1-r\rev\gamma_{n-1}}z\Psi_{n-1}^{[r]}(z)-\tfrac{r\rev\gamma_{n-1}}{1-r\rev\gamma_{n-1}}\Psi_{n-1}^{[r],*}(z),
\end{equation}
where $\Psi_{n-1}^{[r],*}(\cdot)$ is the reversed polynomial of $\Psi_{n-1}^{[r]}(\cdot)$. Note that the polynomials $\Psi_{n-1}^{[r]}$ and $\Psi_{n-1}^{[r],*}$ do not depend on $\rev\gamma_{n-1}$.

Introduce  $\cE^*(z)=\overline{\cE(\bar z)}$, by the definition of the reversed polynomials, \eqref{eq:Phi_H} and \eqref{eq:Psi_conv_general}, we have   \begin{equation}\label{eq:Psi_conv2}
\left|\Psi_{n-1}^{[r],*}(e^{iz/n})e^{-iz/2}-\cE^*(z)\right|\to 0\quad \text{uniformly on compacts as $n\to\infty$}.
\end{equation}
Together with \eqref{eq:Psi_Szego} and the convergence of $\rev\gamma_{n-1}\to \gamma$ as $n\to\infty$, we get
\begin{align*}
\lim_{n\to\infty}\Psi_{n}^{[r]}(e^{iz/n})e^{-iz/2}= \tfrac{1}{1-r\gamma}\cE(z)-\tfrac{r\gamma}{1-r\gamma}\cE^*(z) 
= H(1,z)^{\dag}\binom{1}{-i\frac{1-r\gamma}{1+r\gamma}},
\end{align*}
uniformly on compacts in $z$.
\end{proof}

\section{Edge limits of the truncated circular and real orthogonal beta ensembles}\label{sec:trCbE_trRbE}

Our goal is to prove Theorems \ref{thm:main} and \ref{thm:trunc_circ}, the edge scaling limits of the rank-one truncation and multiplicative perturbation of the circular beta ensemble using the framework developed in Section \ref{sec:general}. We will also prove the analogue results for the real orthogonal beta ensemble. 

In both cases we will check that the random measures associated to the respective beta ensembles (introduced in Section \ref{sub:finite}) satisfy the conditions in Propositions \ref{prop:general2} and \ref{prop:general3}.

\subsection{The truncated circular beta ensemble}\label{sub:truncated_circ}

Recall the definition of the random probability measure $\mu_{n,\beta}^{\textup{KN}}$ introduced in Section \ref{sub:finite}
and the corresponding CMV matrix $\Circ_{n,\beta}$.  Proposition \ref{prop:circ_Verblunsky} gives the distribution of the  Verblunsky coefficients $\alpha_k,0\le k\le n-1$ corresponding to $\mu_{n,\beta}^{\textup{KN}}$. The truncated circular beta ensemble is the joint distribution of the eigenvalues of the truncated CMV matrix $\tCircc_{n,\beta}$. The rank-one multiplicative perturbation of the circular beta ensemble (corresponding to a parameter $r\in [0,1]$) is defined as the distribution of the eigenvalues of $\Circ_{n,\beta}^{[r]}$.

Note that by Proposition \ref{prop:circ_Verblunsky} the Verblunsky coefficients $\alpha_k, 0\le k\le n-1$   are independent and rotationally invariant. Hence from \eqref{eq:rev_Verblunsky}
we get the following distributional identity:
\begin{equation}\label{eq:rev_Verblunsky_cbe}
(\rev \alpha_{0},\rev \alpha_{1},\dots,\rev \alpha_{n-2},\rev \alpha_{n-1})\ed  (\alpha_{n-2},\alpha_{n-3},\dots,\alpha_0,\alpha_{n-1}). 
\end{equation}
Using this together with Proposition \ref{prop:CMV3}, we recover the following result of Killip and Kozhan \cite{KK}.

\begin{proposition}[\cite{KK}]\label{prop:tcirc_matrix}
For fixed $n$ and $\beta>0$, let $\alpha_k,0\le k\le n-1$ be distributed according to Proposition \ref{prop:circ_Verblunsky}. Then the joint eigenvalue distribution of the CMV matrix  $\cC(\alpha_{n-2},\alpha_{n-3},\dots,\alpha_0)$ 
is the same as that of $\tCircc_{n,\beta}$, which is given by the truncated circular beta ensemble.
For fixed $r\in[0,1]$, the  matrix $\cC(\alpha_{n-2},\alpha_{n-3},\dots,\alpha_0,r\alpha_{n-1})$ has the same distribution as $\Circ_{n,\beta}^{[r]}$, in particular its joint eigenvalue distribution is given by the rank-one multiplicative perturbation of the circular beta ensemble.
\end{proposition}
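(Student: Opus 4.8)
The plan is to read off the truncated and perturbed matrices from Proposition~\ref{prop:CMV3} and then use the distributional symmetry \eqref{eq:rev_Verblunsky_cbe} to replace the reversed Verblunsky coefficients by a reordering of the original ones. By Definition~\ref{def:Circop}, $\Circ_{n,\beta}=\cC(\alpha_0,\dots,\alpha_{n-1})$ with the $\alpha_k$ distributed as in Proposition~\ref{prop:circ_Verblunsky}, and since $\mathbf{e}_1$ is cyclic for a CMV matrix we may apply Proposition~\ref{prop:CMV3} with $U=\Circ_{n,\beta}$. This gives that $\tCircc_{n,\beta}$ is unitary equivalent to $\cC(\rev\alpha_0,\dots,\rev\alpha_{n-2})$ when $n$ is even and to its adjoint when $n$ is odd, and likewise that $\Circ_{n,\beta}^{[r]}$ is unitary equivalent to $\cC(\rev\alpha_0,\dots,\rev\alpha_{n-2},r\rev\alpha_{n-1})$, resp.\ its adjoint. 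In particular the eigenvalue distributions in question are determined by the joint law of the reversed coefficients.

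Next I would invoke \eqref{eq:rev_Verblunsky_cbe}, namely $(\rev\alpha_0,\dots,\rev\alpha_{n-1})\ed(\alpha_{n-2},\dots,\alpha_0,\alpha_{n-1})$, which holds because the $\alpha_k$ are independent and rotationally invariant: conditioning on the phase $\alpha_{n-1}$, each entry $-\alpha_{n-1}\bar\alpha_{n-2-j}$ (cf.\ \eqref{eq:rev_Verblunsky}) has the same $\Theta$-law as $\alpha_{n-2-j}$. Restricting to the first $n-1$ coordinates yields $\cC(\rev\alpha_0,\dots,\rev\alpha_{n-2})\ed\cC(\alpha_{n-2},\dots,\alpha_0)$ as random matrices, and applying the deterministic scaling of the last coordinate by $r$ gives $\cC(\rev\alpha_0,\dots,\rev\alpha_{n-2},r\rev\alpha_{n-1})\ed\cC(\alpha_{n-2},\dots,\alpha_0,r\alpha_{n-1})$. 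For even $n$ this already identifies the eigenvalue laws with those asserted, and by the definitions these are exactly the truncated circular beta ensemble and its rank-one multiplicative perturbation.

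The only genuine subtlety, and the step I expect to be the main obstacle, is the odd-$n$ case, where Proposition~\ref{prop:CMV3} produces an adjoint and hence the complex conjugates of the eigenvalues. To absorb this I would prove that the two eigenvalue laws are invariant under complex conjugation. Since $\spec(M^{\dag})=\overline{\spec(M)}$ as multisets, and since $\overline{\cC(\alpha_0,\dots,\alpha_{m-1})}=\cC(\bar\alpha_0,\dots,\bar\alpha_{m-1})$ (immediate from Definition~\ref{def:CMV}, as the $\rho_k$ are real), it suffices to note that $(\alpha_{n-2},\dots,\alpha_0,r\alpha_{n-1})\ed(\bar\alpha_{n-2},\dots,\bar\alpha_0,r\bar\alpha_{n-1})$; this again follows from the rotational invariance of each $\alpha_k$ together with $r\in\R$. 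Hence $\spec(\cC(\alpha_{n-2},\dots,\alpha_0,r\alpha_{n-1}))\ed\overline{\spec(\cC(\alpha_{n-2},\dots,\alpha_0,r\alpha_{n-1}))}$, so this spectrum is conjugation-invariant, and the same argument applies verbatim to $\cC(\alpha_{n-2},\dots,\alpha_0)$ for the truncated matrix. Consequently the odd-$n$ adjoint produces the same eigenvalue distribution as the even case, and combining the two parities completes the identification.
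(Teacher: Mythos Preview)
Your proof is correct and follows essentially the same approach as the paper: combine the distributional identity \eqref{eq:rev_Verblunsky_cbe} (from independence and rotational invariance of the $\alpha_k$) with Proposition~\ref{prop:CMV3}. You are in fact more careful than the paper's brief justification, which does not explicitly address the odd-$n$ adjoint case; your conjugation-invariance argument to dispose of it is correct and fills that gap cleanly.
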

Note that \cite{KK} also proved that the joint density of the truncated circular beta ensemble is given 
by \eqref{eq:tcbe}, and described explicitly the joint eigenvalue distribution of the perturbed matrix $\Circ_{n,\beta}^{[r]}$ (see Proposition 7.2 of \cite{KK}).

Recall the Dirac operator $\Circop_{n,\beta}$ in Definition \ref{def:Circop}. We denote by $\rev \tau_{n,\beta}$ 
the reversed version of $\Circop_{n,\beta}$, and by $\aff \tau_{n,\beta}$  the pulled-back version of $\rev{\tau}_{n,\beta}$ (see Definitions \ref{def:rev_Dirac} and \ref{def:aff_Dirac}).
By studying the generating paths of the operators $\aff \tau_{n,\beta}$ and $\Circop_{n,\beta}$, we claim that these operators are orthogonally equivalent. Indeed, this observation follows from Proposition 52 of \cite{BVBV_palm} using the rotational invariance of the models and a conditioning argument.
We will provide a different proof that holds for the circular Jacobi beta ensemble ($\delta=0$ corresponding to the circular beta ensemble). To avoid repetition, we will state the result without proof here.

\begin{proposition}\label{prop:circ_equiv}
Recall the definitions of the operators $\rho$ and $S$ from Lemma \ref{lem:timerev}.
Denote by $\aff b_k,0\le k\le n$ the path parameters of $\aff \tau_{n,\beta}$, and let 
\[
Q=\frac{1}{\sqrt{1+q^2}}\begin{pmatrix} q & 1\\ -1 & q\end{pmatrix},\qquad q=\cU^{-1}(\aff b_n).
\]
Then the operator
$\rho^{-1}(SQ)\aff \tau_{n,\beta}(SQ)^{-1}\rho$ has the same distribution as $\Circop_{n,\beta}$. 
\end{proposition}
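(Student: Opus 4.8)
The plan is to reduce the statement to an identity in law between the two operators' path data. A Dirac-type operator attached to a finitely supported measure on $\partial\D$ is determined by its path parameters $z_0,\dots,z_n$ in $\HH$ together with the boundary vectors, where $\uu_0=\binom{1}{0}$ is fixed and $\uu_1=\binom{-z_n}{-1}$ is encoded by the last parameter (Definition \ref{def:path}). Hence it suffices to show that the path parameters of $\rho^{-1}(SQ)\aff\tau_{n,\beta}(SQ)^{-1}\rho$, together with its boundary point, have the same joint distribution as the sequence $z_0,\dots,z_n$ attached to $\Circop_{n,\beta}$. I would track this path deterministically through the three operations — reversal, pull-back, and the conjugation by $SQ$ combined with the time reversal $\rho$ — and invoke the distributional input (rotational invariance together with \eqref{eq:rev_Verblunsky_cbe}) only at the very end.

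For the deterministic step I start from the generating path of $\aff\tau_{n,\beta}$, which by Definition \ref{def:aff_Dirac} and \eqref{eq:A_equiv} is $\aff z_k=\cA_{\rev z_{n-1},\HH}(\rev z_k)$, with $\aff z_{n-1}=i$, left boundary $\aff z_{-1}=\infty$, and right boundary $\aff z_n=\cU^{-1}(\rev\gamma_{n-1})=q$. Writing $\rho^{-1}(SQ)\aff\tau_{n,\beta}(SQ)^{-1}\rho=\rho^{-1}S\big(Q\aff\tau_{n,\beta}Q^{-1}\big)S\rho$, I first apply Lemma \ref{lem:rotation} to replace the path by $\cQ(\aff z_k)$; a direct computation gives $\cQ(w)=\frac{qw+1}{q-w}$, so $\cQ$ fixes $i$ and maps the right boundary $q$ to $\infty$. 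Then Lemma \ref{lem:timerev} applied to $\tau\mapsto\rho^{-1}S\tau S\rho$ reflects the path across the imaginary axis by $\mathfrak r$, reverses time, and swaps the boundary conditions, so the resulting operator has generating path $t\mapsto\mathfrak r\,\cQ\big(\aff z(1-t)\big)$. I would check that this path starts at $\mathfrak r\,\cQ(i)=i$, that its left boundary becomes $\mathfrak r(\infty)=\infty$ (so $\uu_0=\binom{1}{0}$), and that its right boundary point returns to $q$; these are exactly the structural features of $\Circop_{n,\beta}$, so the two operators are of the same measure type and only the laws of their path increments remain to be compared.

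For the distributional step, the final path parameters are $\Phi(\rev z_j)$ for the fixed isometry $\Phi=\mathfrak r\circ\cQ\circ\cA_{\rev z_{n-1},\HH}$, reindexed in the opposite order by the time reversal, and thus are explicit deterministic functions of the reversed modified Verblunsky coefficients $\rev\gamma_k$. Since for the circular beta ensemble the coefficients $\gamma_k$ are independent and conditionally rotationally invariant with $\gamma_k\sim\Theta(\beta(n-k-1)+1)$ (Proposition \ref{prop:circ_Verblunsky} and the discussion following \eqref{eq:modifiedV}), I would run a conditioning argument: conditioning on the moduli and on the earlier coefficients, the deterministic phase factors produced by the pull-back $\cA_{\rev z_{n-1},\HH}$ and by $\cQ$ are absorbed into the uniform phases, while the joint law of the moduli is supplied by the distributional identity \eqref{eq:rev_Verblunsky_cbe}. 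Carrying out this phase bookkeeping — verifying precisely that the composite affine-and-rotation map acts on each modified Verblunsky coefficient as a rotation, so that rotational invariance makes every phase disappear and the order reversal introduced by $\rho$ matches the decreasing index order of $\Circop_{n,\beta}$ — is the main obstacle, and it is the point where the conditioning argument does the essential work.
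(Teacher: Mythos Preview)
Your approach is essentially the rotational-invariance-and-conditioning argument that the paper itself alludes to (attributing it to Proposition 52 of \cite{BVBV_palm}) but does not carry out. The paper's own proof goes a different route: it defers to the circular Jacobi case (Proposition \ref{prop:CJ_equiv}) and specializes to $\delta=0$. That route relies on the algebraic machinery of Proposition \ref{prop:CJ_conditioned}, Corollary \ref{cor:CJ_factor}, and especially Claim \ref{claim:CJ} --- a distributional swap identity for $\Theta(a+1,\delta)$ variables that replaces the rotational-invariance step and works even when the phases are not uniform. What the paper's approach buys is generality: the same structure proves the analogous equivalence for $\CJop_{n,\beta,\delta}$ with $\delta\neq 0$, where your argument would break since $\Theta(a+1,\delta)$ is not rotationally invariant. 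What your approach buys is directness in the $\delta=0$ case: you never need Claim \ref{claim:CJ} or the conditioned-path description (3) of Proposition \ref{prop:CJ_conditioned}.

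One point to tighten in your sketch: the composite isometry does not act on the \emph{modified} Verblunsky coefficients as a simple rotation. In the disk model the rotation piece $\cQ$ corresponds (via \eqref{eq:Aleks_path}) to multiplying each \emph{regular} Verblunsky coefficient by a common unimodular factor; the pull-back and time reversal then have to be handled separately (the time reversal pairs with the order reversal in \eqref{eq:rev_Verblunsky_cbe}, and the pull-back with complex conjugation is what produces the path $b_k'$ of Proposition \ref{prop:CJ_conditioned}(3)). For $\delta=0$ this distinction is harmless since the regular $\alpha_k$ are themselves independent and rotationally invariant, so your conditioning argument still goes through --- but phrase it in terms of the regular Verblunsky coefficients.
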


Recall the definition of the  $\Sineop$ operator from  Definition \ref{def:Sineop}, and its reversed and transformed version $\tau_\beta$ defined in Definition \ref{def:Sineop_rev}. Proposition \ref{prop:circ_equiv}  shows that one can obtain the operators $\tau_{n,\beta}$ and $\tau_\beta$ from $\Circop_{n,\beta},n\ge 1$ and $\Sineop$ under the same orthogonal transformations. The final ingredient of proving Theorems \ref{thm:main} and \ref{thm:trunc_circ} is the following strong operator level convergence of $\Circop_{n,\beta}$ to the $\Sineop$ operator. Let $d_\HH(z_1,z_2)= \operatorname{arccosh}(1+\frac{|z_1-z_2|^2}{2\Im z_1\Im z_2})$ denote the hyperbolic distance in $\HH$.

\begin{proposition}
[\cite{BVBV_19,BVBV_szeta}]\label{prop:Sineb_coupling}
    There exists an explicit coupling of the operators $\Circop_{n,\beta}=\Dirop( z_n(\cdot),\uu_0,\uu_1^{(n)}),n\ge 1$  and $\Sineop=\Dirop( z(\cdot),\uu_0,\uu_1)$ such that $\uu_1^{(n)}=\uu_1$, and for large enough $n$,	\begin{equation}\label{eq:circ_path_dist}
	\begin{aligned}
	d_{\HH} ( z_n(t), z(t))&\le \frac{\log^{3-1/8}n}{\sqrt{(1-t)n}},\quad \quad 0\le t\le t_n:=1-\frac{\log^6 n}{n},\\
	d_{\HH} ( z_n(t_n),z(t))&\le (\log\log n)^4,\quad\quad  t_n\le t< 1.
	\end{aligned}
	\end{equation}
	 Under this coupling, we have
		\begin{equation}\label{eq:circ_coupling1}
	\|\res \Circop_{n,\beta}-\res \Sineop\|_{\text{HS}}^2 \le \frac{\log^6 n}{n},\quad \quad |\ttr_{\Circop_{n,\beta}}-\ttr_{\Sineop}|\le \frac{\log^3n}{\sqrt{n}}.
    \end{equation}
\end{proposition}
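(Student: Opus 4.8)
The plan is to build an explicit coupling of the piecewise-constant generating path $z_n(\cdot)$ of $\Circop_{n,\beta}$ with the time-changed hyperbolic Brownian motion $z(\cdot)$ generating $\Sineop$, prove the pathwise hyperbolic-distance bounds \eqref{eq:circ_path_dist}, and then feed these into the explicit resolvent-kernel and integral-trace formulas of Proposition \ref{prop:inv_HS} to obtain \eqref{eq:circ_coupling1}. By Proposition \ref{prop:unitary_repr_2} the $\HH$-valued path of $\Circop_{n,\beta}$ obeys the affine recursion $z_{k+1}=z_k+(v_k+iw_k)\Im z_k$, where $v_k+iw_k=\cU^{-1}(\gamma_k)-i$ is read off from the modified Verblunsky coefficients; for the circular beta ensemble the $\gamma_k$ are rotationally invariant with $\gamma_k\sim\Theta(\beta(n-k-1)+1)$. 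Expanding $\cU^{-1}$ and using $\ev|\gamma_k|^2\approx\tfrac{2}{\beta(n-k-1)+2}$, one checks that, after the logarithmic time change $v(t)=-\tfrac{4}{\beta}\log(1-t)$ that matches the discrete step $k/n$ to hyperbolic time, the rescaled coordinates have the same infinitesimal drift and diffusion coefficients as the SDE \eqref{eq:Sineop_sde} for $\mathsf{x}_v+i\mathsf{y}_v$.

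I would then decompose each coordinate increment into a martingale difference plus a lower-order predictable remainder and apply a strong (KMT/Skorokhod-type) embedding of the martingale parts into the driving Brownian motions $B_1,B_2$ of \eqref{eq:Sineop_sde}. Since the $k$-th step has hyperbolic size of order $|\gamma_k|\asymp(n-k)^{-1/2}$, and $n-k\asymp n(1-t)$ at time $t\approx k/n$, the per-step embedding error is of order $(n(1-t))^{-1/2}$; a Gr\"onwall estimate for the linearized hyperbolic dynamics then shows the accumulated discrepancy is of this same order up to a power of $\log n$, which is exactly the first line of \eqref{eq:circ_path_dist}. In the boundary layer $t\in[t_n,1)$ the individual steps are no longer small and the linearization breaks down, so there I would only track the path on a coarse scale and bound the whole excursion by $(\log\log n)^4$. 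The boundary data are coupled here as well: the endpoint $z_n\in\partial\HH$ coming from the uniform-on-circle coefficient $\alpha_{n-1}$ can be coupled to the Cauchy limit $q=\lim_{v\to\infty}\mathsf{x}_v$, which is precisely what lets us take $\uu_1^{(n)}=\uu_1$.

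Finally I would convert path proximity into operator proximity. By \eqref{eq:int_tr} the integral trace is $\int_0^1\uu_0^\dag R(s)\uu_1\,ds$, and by \eqref{Dir:inverse} the resolvent kernel is a bilinear expression in $\mathfrak{a}_j(s)=X(s)\uu_j/\sqrt{\det X(s)}$; with $\uu_1^{(n)}=\uu_1$ fixed, these depend on the path through quantities that are controlled by the hyperbolic distance between $z_n$ and $z$. Splitting $[0,1)$ at $t_n$ and integrating the (squared) path bounds over the bulk yields a contribution of order $\mathrm{polylog}(n)/n$ to both $\|\res\Circop_{n,\beta}-\res\Sineop\|_{\mathrm{HS}}^2$ and $|\ttr_{\Circop_{n,\beta}}-\ttr_{\Sineop}|$; the careful bookkeeping of the logarithmic factors (as carried out in \cite{BVBV_19,BVBV_szeta}) recovers the stated exponents $\log^6 n$ and $\log^3 n$.

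The crux is the boundary layer $[t_n,1)$. There the weight function is large, the coarse $(\log\log n)^4$ control is all that is available for the path, and the kernel must instead be estimated through the integrability tails guaranteed by Assumption \ref{assumption:u} — the finiteness of $\int_0^1\|R(s)\uu_1\|\,ds$ and of the double integral in \eqref{HS_finite}. The point is to show that the mass of both $\res\Circop_{n,\beta}$ and $\res\Sineop$ on $[t_n,1)$ is already $O(\log^6 n/n)$, so their difference is negligible there regardless of how poorly the paths are matched; simultaneously matching the two operators' tails and the boundary vectors is the delicate part. With the hypotheses of Propositions \ref{prop:inv_HS} and \ref{prop:H_comparison} in force, the rest reduces to the Gr\"onwall bound and the strong-approximation input, so one may equally well cite the coupling of \cite{BVBV_19} together with the trace estimate of \cite{BVBV_szeta} directly.
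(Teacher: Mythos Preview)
The paper does not actually prove this proposition: it is stated as a citation, with the first inequality of \eqref{eq:circ_coupling1} attributed to \cite{BVBV_19} and the second to the proof of Theorem~49 in \cite{BVBV_szeta} (equations (107)--(109) there), together with a one-line clarification of what ``large enough $n$'' means. There is no argument given in the paper beyond pointing to these sources.

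Your proposal is a reasonable outline of the strategy in those references: embed the affine random walk of Proposition~\ref{prop:unitary_repr_2} into the hyperbolic Brownian motion \eqref{eq:Sineop_sde} via a strong approximation of the martingale increments, derive the two-regime hyperbolic-distance bounds \eqref{eq:circ_path_dist}, and then convert path closeness into closeness of $\res\tau$ and $\ttr_\tau$ using the explicit kernel \eqref{Dir:inverse} and \eqref{eq:int_tr}. One point to be careful about: the actual coupling in \cite{BVBV_19} is built by realizing the $\Theta$-distributed increments on the same probability space as the Brownian increments through an explicit construction rather than a generic KMT embedding, and the boundary-layer analysis uses oscillation estimates for hyperbolic Brownian motion (the transience and exponential contraction of $\mathsf{y}$) rather than Assumption~\ref{assumption:u} alone; the integrability in \eqref{HS_finite} by itself does not give the quantitative $O(\log^6 n/n)$ tail you need. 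But since the paper merely quotes the result, you may (and probably should) do the same: cite \cite{BVBV_19,BVBV_szeta} directly, as you note in your last sentence.
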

Here the first inequality of \eqref{eq:circ_coupling1} was proved in \cite{BVBV_19} and the second one follows from the estimates used in the proof of Theorem 49 of \cite{BVBV_szeta} (in particular equations (107)-(109)). `Large enough $n$' means that there is a finite random variable $N_0$ so that the statements hold for $n\ge N_0$.

Proposition \ref{prop:Sineb_coupling} provides us with the necessary ingredients so that we can apply Proposition \ref{prop:general2} to prove the convergence of the normalized characteristic 
polynomials of the truncated circular beta ensemble.

\begin{proposition}\label{prop:tcirc_func} 
Let $\lambda_i,1\le i\le n-1$ be the size $n-1$ truncated circular beta ensemble and set 
$p_{n-1,\beta}(z)=\prod_{i=1}^{n-1} \frac{z-\lambda_i}{1-\lambda_i}$ be the normalized characteristic polynomial. Let $\cE_\beta$ be the structure function of $\tau_\beta$ defined via \eqref{def:structure}.
Then there is a coupling of $p_{n-1,\beta},n\ge 2$ and $\cE_\beta$ such that
	\begin{equation}\label{eq:coupled_diff}
	|p_{n-1,\beta}(e^{iz/n})e^{-iz/2}-\cE_\beta(z)|\le \left(e^{|z|\frac{\log^3 n}{\sqrt{n}}}-1\right)C^{1+|z|^2}
	\end{equation}
	for all $z\in\C$ and $n\ge 1$, where $C$ is an a.s.~finite constant. 
\end{proposition}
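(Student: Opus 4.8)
The plan is to feed the quantitative operator convergence of Proposition \ref{prop:Sineb_coupling} into the solution-comparison estimate \eqref{eq:H1_cont}, using the exact representation of the truncated characteristic polynomial from Proposition \ref{prop:general1}. By Proposition \ref{prop:tcirc_matrix} the eigenvalues $\lambda_i$ coincide with the zeros of the degree $n-1$ normalized orthogonal polynomial $\rev\Psi_{n-1}$ of the reversed measure $\rev\mu_{n,\beta}$, so $p_{n-1,\beta}=\rev\Psi_{n-1}$, and Proposition \ref{prop:general1} gives
\[
p_{n-1,\beta}(e^{iz/n})e^{-iz/2}=e^{-iz/(2n)}\,\aff H_n\big(\tfrac{n-1}{n},z\big)^{\dag}\binom{1}{-i},
\]
where $\aff H_n$ solves the canonical system of the pulled-back operator $\aff\tau_{n,\beta}$. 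Since $\cE_\beta(z)=H_\beta(1,z)^{\dag}\binom{1}{-i}$ with $H_\beta$ the canonical solution of $\tau_\beta$, the problem reduces to comparing $\aff H_n(\tfrac{n-1}{n},\cdot)$ with $H_\beta(1,\cdot)$.

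Following the triangle inequality in the proof of Proposition \ref{prop:general2}, I would split this difference into the operator-comparison term $|(\aff H_n(\tfrac{n-1}{n},z)-H_\beta(\tfrac{n-1}{n},z))^{\dag}\binom{1}{-i}|$, the endpoint-continuity term $|(H_\beta(\tfrac{n-1}{n},z)-H_\beta(1,z))^{\dag}\binom{1}{-i}|$, and the prefactor term $|e^{-iz/(2n)}-1|\,|\cE_\beta(z)|$. The last two are lower order: $t=1$ is the regular endpoint of $\tau_\beta$ (its generating path equals $i$ there), so \eqref{eq:H_cont} and the boundedness of the weight on $[\tfrac{n-1}{n},1]$ bound the second term by $O(|z|/n)$ times an a.s.\ finite, $z$-dependent growth factor, while $|e^{-iz/(2n)}-1|=O(|z|/n)$ controls the third; both are dominated by the right-hand side of \eqref{eq:coupled_diff} once $n$ is large.

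The heart of the matter is the operator-comparison term, for which I would apply \eqref{eq:H1_cont} of Proposition \ref{prop:H_comparison} to $\aff\tau_{n,\beta}$ and $\tau_\beta$ at $t=\tfrac{n-1}{n}$. This needs the three quantities $|\ttr_{\aff\tau_{n,t}}-\ttr_{\tau_{\beta,t}}|$, $\|\res\aff\tau_{n,t}-\res\tau_{\beta,t}\|$ and $\int_0^t|\mathfrak{a}_{0,n}-\mathfrak{a}_{0,\infty}|^2$ to be $O(\log^3 n/\sqrt n)$, while the individual traces, resolvent norms and $\mathfrak{a}_0$-integrals stay a.s.\ finite by Assumption \ref{assumption:u}. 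To obtain the rates I would invoke Propositions \ref{prop:circ_equiv} and \ref{prop:Sineop_equiv}: $\aff\tau_{n,\beta}$ and $\tau_\beta$ are related to $\Circop_{n,\beta}$ and $\Sineop$ by the same time-reversal-and-rotation, which by Lemmas \ref{lem:timerev} and \ref{lem:rotation} is a single $L^2$-isometry preserving integral traces and Hilbert--Schmidt resolvent norms. Hence, under the coupling of Proposition \ref{prop:Sineb_coupling}, the global bounds \eqref{eq:circ_coupling1} transfer to $|\ttr_{\aff\tau_{n,\beta}}-\ttr_{\tau_\beta}|$ and $\|\res\aff\tau_{n,\beta}-\res\tau_\beta\|_{\textup{HS}}$; moreover, since the kernel \eqref{Dir:inverse} depends only on $\mathfrak{a}_0,\mathfrak{a}_1$, the restricted resolvent on $(0,\tfrac{n-1}{n}]$ agrees with the full one there, so passing to the restricted operators only discards a tail near the regular endpoint $t=1$, of lower order than $\log^3 n/\sqrt n$. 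For the $\mathfrak{a}_0$-integral I would instead use that a hyperbolic isometry preserves $d_\HH$, so the path bounds \eqref{eq:circ_path_dist} transfer to $d_\HH(\aff z_n(s),\hat z(s))$, whence $|(\Im\aff z_n(s))^{-1/2}-(\Im\hat z(s))^{-1/2}|$ is estimated pointwise and integrated. Plugging $A_n=O(\log^3 n/\sqrt n)$ and an a.s.\ finite growth factor into \eqref{eq:H1_cont} produces a bound of the form $(c^{|z|A_n}-1)\,c^{(|z|B_n+1)^2}$, which after absorbing the $z$-independent randomness into one a.s.\ finite $C$ gives \eqref{eq:coupled_diff}; the finitely many small $n$ and the a.s.\ finite threshold $N_0$ in Proposition \ref{prop:Sineb_coupling} are handled by enlarging $C$.

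The step I expect to be the main obstacle is the $\mathfrak{a}_0$-integral, and the path control behind it, near the singular endpoint $s=0$ of $\aff\tau_{n,\beta}$: under the time reversal this corresponds to the singular endpoint $t=1$ of $\Circop_{n,\beta}$, where \eqref{eq:circ_path_dist} degrades to the crude bound $(\log\log n)^4$. There both generating paths escape to $i\infty$, so their distance is only weakly controlled; the saving feature is that the weight $\mathfrak{a}_0=(\Im z)^{-1/2}$ and the trace integrand vanish in this window, keeping its contribution of the required order $\log^3 n/\sqrt n$. Making this trade-off quantitative---balancing the weak distance bound against the decaying weight, as in the endpoint estimates behind \eqref{eq:circ_coupling1} in \cite{BVBV_szeta}---is the delicate part of the argument.
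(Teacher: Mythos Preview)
Your proposal is correct and follows essentially the same route as the paper: identify $p_{n-1,\beta}$ with $\rev\Psi_{n-1}$ via Proposition~\ref{prop:tcirc_matrix}, represent it through $\aff H_n$ by Proposition~\ref{prop:general1}, split into the same three terms as in \eqref{ineq:circ_conv_tri}, and control the main term with Proposition~\ref{prop:H_comparison} after transferring the bounds \eqref{eq:circ_coupling1} from $(\Circop_{n,\beta},\Sineop)$ to $(\aff\tau_{n,\beta},\tau_\beta)$ via the common orthogonal transformation of Propositions~\ref{prop:circ_equiv} and~\ref{prop:Sineop_equiv}. The only difference is that for the $\mathfrak a_0$-integral the paper does not work through the singular-endpoint trade-off you describe but simply cites the already-established estimate $\int_0^1|\mathfrak a_{0,n}-\mathfrak a_0|^2\le \log^6 n/n$ from equations (69)--(72) of \cite{BVBV_palm}, which encapsulates exactly the balancing argument you identified as the main obstacle.
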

\begin{proof}
Let $\rev \mu_n$ be the reversed version of the Killip-Nenciu probablity measures $\mu_n:=\mu_{n,\beta}^{\textup{KN}}$, and let $\aff \tau_{n,\beta}$ be the pulled-back operator. By Propositions \ref{prop:CMV1} and \ref{prop:tcirc_matrix}, the normalized characteristic polynomial $p_{n-1,\beta}$  has the same distribution as $\rev \Psi_{n-1,\beta}$, the monic orthogonal polynomial of degree $n-1$ associated to $\rev \mu_n$. Hence it is enough to provide a coupling of $\mu_n$ and $\cE_\beta$ where \eqref{eq:coupled_diff} holds with $\rev \Psi_{n-1,\beta}$ in place of $p_{n-1,\beta}$.

By Proposition \ref{prop:general1}, we have 
\[
\rev \Psi_{n-1,\beta}(e^{iz/n})=e^{iz(n-1)/(2n)}\aff H_{n,\beta}((n-1)/n,z)^{\dag}\binom{1}{-i},
\]
where $\aff H_{n,\beta}$ solves the ODE \eqref{eq:canonical} of $\aff \tau_{n,\beta}$. Recall that $\cE_\beta(z)=H_\beta(1,z)^{\dag}\binom{1}{-i}$.

Consider the coupling of Proposition \ref{prop:Sineb_coupling}. Using the triangle inequality as in \eqref{ineq:circ_conv_tri} within the proof of Proposition \ref{prop:general2}, we have
\begin{equation}\label{eq:diff_1234}
\begin{aligned}
    |\rev \Psi_{n-1,\beta}(e^{iz/n})e^{-iz/2}-\cE_\beta(z)| & \le  |e^{-iz/(2n)}|\left|\Big(\aff H_{n,\beta}(\tfrac{n-1}{n},z)^{\dag}- H_\beta(\tfrac{n-1}{n},z)^{\dag}\Big)\cdot\binom{1}{-i}\right|\\
& + |e^{-iz/(2n)}|\left|\Big( H_\beta(\tfrac{n-1}{n},z)^{\dag}- H_\beta(1,z)^{\dag}\Big)\cdot\binom{1}{-i}\right|\\
& +\left|e^{-iz/(2n)}-1\right|\Big|H_\beta(1,z)^{\dag}\cdot\binom{1}{-i}\Big|. 
\end{aligned}
\end{equation}
Note that for any compact subset of $(0,1]$ the operator norm of the weight function $R(s)$ of $\tau_\beta$ is bounded by a finite random constant. Hence by the standard theory of ordinary differential equations, for $z$ in a compact set of $\C$, the second  term on the right hand side of \eqref{eq:diff_1234} can be upper bounded by $Cn^{-1}$ where $C$ is an a.s. finite constant. The third term on the right side of \eqref{eq:diff_1234} can also be bounded similarly, since $z\mapsto H_\beta(1,z)^{\dag}\cdot\binom{1}{-i}$ is a random entire function.

It remains to estimate the first term on the right hand side of \eqref{eq:diff_1234}. By Propositions \ref{prop:Sineop_equiv} and \ref{prop:circ_equiv}, the operators $\aff \tau_{n,\beta},n\ge 1$ and $\tau_{\beta}$ can be obtained from $\Circop_{n,\beta},n\ge 1$ and $\Sineop$ under the same orthogonal transformations. Hence in the coupling of Proposition \ref{prop:Sineb_coupling}, for large enough $n$ we have
	\begin{align}\label{eq:error_123}
	\|\res\tau_{\beta}-\res\aff \tau_{n,\beta}\|\le \frac{\log^3n}{\sqrt{n}},\quad \quad  |\ttr_{\tau_{\beta}}-\ttr_{\aff\tau_{n,\beta}}|\le \frac{\log^3n}{\sqrt{n}}, 
	\end{align}
It has also been shown in the proof of Proposition 3 of \cite{BVBV_palm}  (see equations (69)-(72) therein) that
\begin{align}\label{eq:error_234}
\int_0^1 |\mathfrak{a}_{0,n}(s)-\mathfrak{a}_0(s)|^2 ds \le \frac{\log^6n}{n}.
\end{align}
The estimates \eqref{eq:error_123} and \eqref{eq:error_234} allow us to use Proposition \ref{prop:H_comparison} 
to bound $\|\aff H_{n,\beta}(\tfrac{n-1}{n},z)-H_\beta(\tfrac{n-1}{n},z)\|$ for $n$ large enough. From this it follows that \eqref{eq:diff_1234} can be bounded from above according to \eqref{eq:coupled_diff},  which proves the proposition. 
\end{proof}

We are now ready to prove Theorems \ref{thm:main} and \ref{thm:trunc_circ}.

\begin{proof}[Proof of Theorem \ref{thm:main}]
First note that by  \eqref{eq:cH_beta} we have $\cH_\beta(0,\cdot)=H_\beta(1,\cdot)$, hence $\cE_\beta(\cdot)=\cH_\beta(0,\cdot)^{\dag}\binom{1}{-q}$. Then by Proposition \ref{prop:tcirc_func}, we obtain the convergence of the normalized characteristic polynomials of the truncated circular beta ensemble to the random analytic function $\cE_\beta$ with an explicit error bound.
Under the edge scaling \eqref{eq:edgescaling}, the size $n$ truncated circular beta ensemble has the same distribution as   the zeros of the function $p_{n,\beta}(e^{iz/n})$ defined in Proposition \ref{prop:tcirc_func}. 
The weak convergence of the truncated circular beta ensemble to $\mathcal{X}_\beta:=\{z\in\HH:\cE_\beta(z)=0\}$ follows directly from Proposition \ref{prop:tcirc_func} and Hurwitz’s theorem.
\end{proof}

\begin{proof}[Proof of Theorem \ref{thm:trunc_circ}]
    Consider the coupling of $\tau_{n,\beta}$ and $\tau_\beta$ described in Proposition \ref{prop:tcirc_func} again, the right boundary condition of these operators are coupled together. By Propositions \ref{prop:general3} and \ref{prop:tcirc_func} we get the uniform-on-compacts convergence of the normalized characteristic polynomials of $\Circ_{n,\beta}^{[r]}$ to $\cE_{r,\beta}$ with a similar error bound. 
    This in turn gives the weak convergence of the eigenvalues of $\Circ_{n,\beta}^{[r]}$ under the edge scaling and completes the proof. 
\end{proof}

\subsection{Edge limits of the truncated real orthogonal beta ensemble}\label{sub:truncated_RO}

This section discusses the edge limits of the rank-one truncation and multiplicative perturbation of the real orthogonal beta ensemble. We will follow the same approach as in Section \ref{sub:truncated_circ}. 

Recall the size $2n$ real orthogonal ensemble with joint density \eqref{eq:PDF_ortho} and the random probability measure $\mu_{2n,\beta,a,b}^{\textup{RO}}$ in \eqref{eq:RO_measure}. Proposition \ref{prop:RO_Verblunsky} describes the distribution of the corresponding Verblunsky coefficients $\alpha_k,0\le k\le 2n-1$. Since $\alpha_k$'s are all real and $\alpha_{2n-1}=-1$, from \eqref{eq:rev_Verblunsky} we get that the reversed Verblunsky coefficients $\rev \alpha_{k},0\le k\le 2n-1$ satisfy
\begin{equation}\label{eq:RO_rev_Verblunsky}
    (\rev \alpha_{0},\rev \alpha_{1},\cdots,\rev \alpha_{2n-2},\rev \alpha_{2n-1}) = (\alpha_{2n-2},\alpha_{2n-3},\cdots,\alpha_0,\alpha_{2n-1}).
\end{equation}
The rank-one truncation and multiplicative perturbation of the real orthogonal beta ensemble are defined as the joint eigenvalue distributions of the truncated CMV matrix $\trunc\RO_{2n,\beta,a,b}$ and the perturbed CMV matrix $\RO_{2n,\beta,a,b}^{[r]}$ (indexed by $r\in[0,1])$, respectively. Using \eqref{eq:RO_rev_Verblunsky} with Proposition \ref{prop:CMV3} yields the following result of Killip and Kozhan \cite{KK}.

\begin{proposition}[\cite{KK}]\label{prop:matrix_tRO}
 For given $\beta>0$, $a, b>-1$ and $n\ge 1$, let $\alpha_k,0\le k\le 2n-1$ be distributed according to Proposition \ref{prop:RO_Verblunsky}. Then the CMV matrix $\cC(\alpha_{2n-2},\alpha_{2n-3}\dots,\alpha_{0})$ has the same joint eigenvalue distribution as $\trunc{\RO}_{2n,\beta,a,b}$.
For fixed $r\in[0,1]$, the CMV matrix $\cC(\alpha_{2n-2},\dots,\alpha_{0},-r)$ has the same joint eigenvalue distribution as $\RO_{2n,\beta,a,b}^{[r]}$.
\end{proposition}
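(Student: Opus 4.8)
The plan is to mirror the three-step argument that established the circular analogue in Proposition~\ref{prop:tcirc_matrix}: first identify the law of the Verblunsky coefficients, then compute their reversal explicitly, and finally feed the result into the Killip--Kozhan identities of Proposition~\ref{prop:CMV3}.

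First I would invoke Proposition~\ref{prop:RO_Verblunsky}, which tells us that the Verblunsky coefficients $\alpha_k$, $0\le k\le 2n-1$, of $\mu^{\ROb}_{2n,\beta,a,b}$ are all real and that $\alpha_{2n-1}=-1$. Substituting $\bar\alpha_k=\alpha_k$ and $\alpha_{2n-1}=-1$ into the reversal formula \eqref{eq:rev_Verblunsky} then gives $\rev\alpha_k=-\alpha_{2n-1}\,\alpha_{2n-2-k}=\alpha_{2n-2-k}$ for $0\le k\le 2n-2$, together with $\rev\alpha_{2n-1}=\alpha_{2n-1}=-1$; this is precisely the identity \eqref{eq:RO_rev_Verblunsky}. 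It is worth emphasizing that, unlike the circular case where rotational invariance produced only the distributional identity \eqref{eq:rev_Verblunsky_cbe}, here the reversal acts as a genuine (deterministic) reflection of the index set, because the coefficients are real and the last one is pinned at $-1$.

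Next I would apply Proposition~\ref{prop:CMV3} to the unitary CMV matrix $\RO_{2n,\beta,a,b}=\cC(\alpha_0,\dots,\alpha_{2n-1})$, whose spectral measure with respect to $\mathbf{e}_1$ is $\mu^{\ROb}_{2n,\beta,a,b}$. Because the matrix size $2n$ is even, the even branches of \eqref{eq:CMV_trunc} and \eqref{eq:CMV_pert} are the relevant ones: the truncation $\trunc{\RO}_{2n,\beta,a,b}$ is unitary equivalent to $\cC(\rev\alpha_0,\dots,\rev\alpha_{2n-2})$, and for $r\in[0,1]$ the perturbation $\RO_{2n,\beta,a,b}^{[r]}$ is unitary equivalent to $\cC(\rev\alpha_0,\dots,\rev\alpha_{2n-2},r\rev\alpha_{2n-1})$. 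Substituting the reversed coefficients from the previous step rewrites these as $\cC(\alpha_{2n-2},\dots,\alpha_0)$ and $\cC(\alpha_{2n-2},\dots,\alpha_0,-r)$, and since unitary equivalence preserves the spectrum the eigenvalue distributions coincide, as claimed.

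I do not expect a genuine obstacle; the statement is essentially a specialization of the general CMV machinery. The only points demanding care are bookkeeping: confirming that $\RO_{2n,\beta,a,b}$ is already in CMV form (so that truncating it coincides with $\trunc{\cC}$ rather than merely being conjugate to it), checking that the even rather than the odd branch of Proposition~\ref{prop:CMV3} applies since the dimension is always even, and correctly carrying the sign through the perturbed coefficient $r\rev\alpha_{2n-1}=-r$.
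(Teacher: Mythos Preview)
Your proposal is correct and follows exactly the route the paper indicates: the paper simply remarks that \eqref{eq:RO_rev_Verblunsky} combined with Proposition~\ref{prop:CMV3} yields the result, and you have spelled out precisely those two steps, including the observation that the reversal identity is deterministic here (because the $\alpha_k$ are real with $\alpha_{2n-1}=-1$) rather than merely distributional as in the circular case.
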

Note that the joint distributions of the rank-one truncation and multiplicative perturbation of the real orthogonal beta ensemble were described explicitly in Theorem 6.4 and Proposition 7.2 (b) of \cite{KK}.

Recall the Dirac operator $\ROop_{2n,\beta,a,b}$ in Definition \ref{def:ROop}. Denote by $\rev\tau_{2n,\beta,a,b}$ the reversed version of $\ROop_{2n,\beta,a,b}$, and by $\aff\tau_{2n,\beta,a,b}$ the pulled-back version of $\rev\tau_{2n,\beta,a,b}$.
Recall also the limiting $\Bessop$ operator in Definition \ref{def:Bessop} and its reversed version $\tau_{\beta,a}$ in Definition \ref{def:Bess_rev}.  By Proposition \ref{prop:Bessop_equiv} we get that $\rho^{-1}J\tau_{\beta,a}J\rho\ed \Bessop$. The next result shows that under the same transformations, the operators $\aff\tau_{2n,\beta,a,b}$ and $\ROop_{2n,\beta,a,b}$ are orthogonally equivalent.

\begin{proposition}\label{prop:ROop_equiv}
\[
\rho^{-1} J \aff \tau_{2n,\beta,a,b}J\rho \ed \ROop_{2n,\beta,a,b}. 
\]
\end{proposition}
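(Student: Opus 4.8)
The plan is to imitate the structure of the circular case, but to exploit a simplification special to the real orthogonal ensemble: by Proposition~\ref{prop:RO_Verblunsky} all Verblunsky coefficients $\alpha_k$ are \emph{real} and $\alpha_{2n-1}=-1$, which forces every generating path in sight onto the imaginary axis and makes the reversal an exact (pathwise) operation. First I would record the consequences of reality. Since the $\alpha_k$ are real, the recursion \eqref{eq:modifiedV} gives $\gamma_k=\alpha_k$ for all $k$ (each factor $\tfrac{1-\bar\gamma_j}{1-\gamma_j}$ equals $1$), and \eqref{def:wv} then yields $v_k=0$, $w_k=\tfrac{2\alpha_k}{1-\alpha_k}$. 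Plugging this into \eqref{eq:z_rec} with $z_0=i$ shows inductively that the generating path of $\ROop_{2n,\beta,a,b}$ is purely imaginary, $z_k=i y_k$ with $y_k=\prod_{j=0}^{k-1}\tfrac{1+\alpha_j}{1-\alpha_j}$. Because by \eqref{eq:RO_rev_Verblunsky} the reversed coefficients are $\rev\gamma_k=\rev\alpha_k=\alpha_{2n-2-k}$ (for $k\le 2n-2$) and $\rev\gamma_{2n-1}=-1$, the same computation shows $\rev\tau_{2n,\beta,a,b}$ has purely imaginary path, and performing the affine pullback $\cA_{\rev z_{2n-1},\HH}$ (which on the imaginary axis is division by $\rev y_{2n-1}$) gives the closed form $\aff z_k=i\,\aff y_k$ with $\aff y_k=\prod_{m=0}^{2n-2-k}\tfrac{1-\alpha_m}{1+\alpha_m}$. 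In particular $\aff z_{2n}=\cU^{-1}(\rev\gamma_{2n-1})=\cU^{-1}(-1)=0$, so $\aff\tau_{2n,\beta,a,b}$ has $\uu_0=\binom{1}{0}$, $\uu_1=\binom{0}{-1}$; likewise $z_{2n}=0$ and $\uu_1=\binom{0}{-1}$ for $\ROop_{2n,\beta,a,b}$.

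Next I would make explicit the action of the map $\tau\mapsto\rho^{-1}J\tau J\rho$, exactly the transformation appearing in Proposition~\ref{prop:Bessop_equiv}. A short direct computation with $\tau f=R^{-1}Jf'$ shows that $(\rho^{-1}J\tau J\rho)g(t)=JR^{-1}(1-t)g'(t)$, so this is again a Dirac operator, with weight $\tilde R(t)=-J R(1-t)J$; the two minus signs (from $J^2=-I$ and from differentiating $g(1-t)$) cancel to leave $\tilde R$ positive definite. Computing $-JRJ$ for $R=\tfrac{X^{\dag}X}{2\det X}$ shows that $\tilde R(t)$ is the weight attached to the path $\mathcal J\big(z(1-t)\big)$, where $\mathcal J(w)=\mathcal{P}J\binom{w}{1}=-1/w$; on the imaginary axis this is simply $iy\mapsto i/y$. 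Thus $\rho^{-1}J\aff\tau_{2n,\beta,a,b}J\rho$ has generating path whose step-$k$ value (after unwinding the floor function, $k\mapsto 2n-1-k$) is $\mathcal J(\aff z_{2n-1-k})=i/\aff y_{2n-1-k}$, and its boundary vectors are the time-reversed $J$-images of those of $\aff\tau_{2n,\beta,a,b}$.

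The core of the argument is then the elementary telescoping identity, obtained by reindexing $m\mapsto 2n-2-m$,
\begin{align*}
\frac{1}{\aff y_{2n-1-k}}=\prod_{m=0}^{k-1}\frac{1+\alpha_m}{1-\alpha_m}=y_k,
\end{align*}
which is immediate from the product formulas above. Hence the transformed generating path is exactly $i y_k$, the generating path of $\ROop_{2n,\beta,a,b}$, \emph{for the same realization of the coefficients}. For the boundary data, $J\uu_1=J\binom{0}{-1}=\binom{1}{0}$ becomes the new left endpoint and $J\uu_0=J\binom{1}{0}=\binom{0}{1}$ the new right endpoint; flipping the sign of the latter (which does not change the operator) to enforce the normalization $\uu_0^{\dag}J\uu_1=1$ of \eqref{eq:u_normalization} reproduces exactly $\uu_0=\binom{1}{0}$, $\uu_1=\binom{0}{-1}$. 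Since $\rho^{-1}J\aff\tau_{2n,\beta,a,b}J\rho$ and $\ROop_{2n,\beta,a,b}$ then share generating path and boundary conditions, they coincide as (random) operators, which in particular yields the claimed equality in distribution.

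The step I expect to be most delicate is the second-paragraph bookkeeping: verifying that $\tilde R(t)=-JR(1-t)J$ is positive definite and corresponds to the path $-1/z(1-t)$, and correctly tracking the boundary vectors and their normalization through the composition of the $J$-conjugation (where $J^{-1}=-J$ must not be allowed to leak a spurious sign into the path) and the time reversal, including the half-open interval switch $[0,1)\leftrightarrow(0,1]$ and the index shift $k\mapsto 2n-1-k$. In contrast with the circular case, no distributional symmetry (rotational invariance) is needed here: reality of the coefficients makes the reversal exact, so the identity is in fact pathwise, and equality in distribution follows a fortiori.
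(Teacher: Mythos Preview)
Your proof is correct and follows essentially the same approach as the paper's: both exploit that the real coefficients force $\gamma_k=\alpha_k$ and purely imaginary paths, compute $\aff y_k=\prod_{m=0}^{2n-2-k}\tfrac{1-\alpha_m}{1+\alpha_m}$, and then use that $J$-conjugation acts as $z\mapsto -1/z$ on $i\R$ while time reversal sends $k\mapsto 2n-1-k$, yielding the telescoping identity $1/\aff y_{2n-1-k}=y_k$. The only cosmetic difference is that the paper appeals to Lemmas~\ref{lem:timerev} and~\ref{lem:rotation} for the transformation bookkeeping, whereas you compute $\tilde R(t)=-JR(1-t)J$ and the boundary vectors by hand; your observation that the identity is in fact pathwise (not merely in distribution) is also correct and worth noting.
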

\begin{proof}
   Let $\alpha_k,0\le k\le 2n-1$ be distributed as in Proposition \ref{prop:RO_Verblunsky}, and let $\gamma_k,0\le k\le 2n-1$ be the corresponding modified Verblunsky coefficients.
    Since $\alpha_k$'s are all real, from \eqref{eq:modifiedV} we have $\gamma_k=\alpha_k$ for all $0\le k\le 2n-1$. 
    
    Let $z_k,0\le k\le 2n$ and $\rev z_k,0\le k\le 2n$ be the path parameters of $\ROop_{2n,\beta,a,b}$ and $\rev\tau_{2n,\beta,a,b}$ in $\HH$, respectively.  Using \eqref{eq:z_rec} and \eqref{eq:RO_rev_Verblunsky} we get for 
    \[
    z_k=i\prod_{j=0}^{k-1} \frac{1+\gamma_j}{1-\gamma_j},\qquad \rev z_k=i\prod_{j=0}^{k-1}\frac{1+\gamma_{2n-2-j}}{1-\gamma_{2n-2-j}},\qquad 0\le k\le 2n.
    \]
    Recall the affine transformation $\cA_{z,\HH}$ in \eqref{e:A1def} and the corresponding transformation on the uni-disk model $\cA_{\gamma,\D}$ in \eqref{eq:Adef}. By Definition \ref{def:aff_Dirac} and \eqref{eq:A_equiv}, the path parameters of $\aff\tau_{2n,\beta,a,b}$ in $\HH$, denoted by $\aff z_k,0\le k\le 2n$, are obtained from $\rev z_k,0\le k\le 2n$ via the affine transformation $\cA_{\rev z_{2n-1},\HH}$. More precisely, we have
    \[
    \aff z_k = \cP \mat{1}{0}{0}{\Im\rev z_{2n-1}}\binom{\rev z_k}{1} = i\prod_{j=0}^{2n-2-k}\frac{1-\gamma_j}{1+\gamma_j},\qquad 0\le k\le 2n-1,
    \]
    with $\aff z_{2n}=\cA_{\rev z_{2n-1},\HH}(\rev z_{2n})=0$.
    
Note  that conjugating  with the permutation
matrix $J$ maps $z\mapsto -1/z$ for $z\in i\R$. Together with the time-reversal, we see that the path parameters of the operator $\rho J \aff \tau_{2n,\beta,a,b}J\rho$ are the same as the path parameters of $\ROop_{2n,\beta,a,b}$. By Lemmas \ref{lem:timerev} and \ref{lem:rotation}, the left and right boundary points of $\rho^{-1}J\aff \tau_{2n,\beta,a,b}J\rho$ are given by $1$ and $-1$, which corresponds to the vectors $\uu_0=\binom{1}{0}$ and $\uu_1=\binom{0}{-1}$ as desired. This completes the proof. 
\end{proof}

In the rest of the section, we aim to prove the convergence of the normalized characteristic polynomials of the truncated real orthogonal beta ensemble using Proposition \ref{prop:general2}. The main ingredient will be the operator level convergence of $\ROop_{2n,\beta,a,b}$ to its limit $\Bessop$ proved in \cite{LV}. 

\begin{proposition}[\cite{LV}]\label{prop:Bess_limit}
	Let $\uu_0=\binom{1}{0},\uu_1=\binom{0}{-1}$. There exists a coupling of the operators $\ROop_{2n,\beta,a,b}=\Dirop(iy_n(\cdot),\uu_0,\uu_1),n\ge 1$ and $\Bessop=\Dirop(iy(\cdot),\uu_0,\uu_1)$ such that as $n\to\infty$ we have almost surely $|y_n-y|\to 0$ point-wise on $[0,1)$, and 
 	\[
	\|\res \ROop_{2n,\beta,a,b}-\res \Bessop\| \to 0.
	\]
 Moreover, for $\eps>0$ small there exists a sequence of tight random variables $\kappa_n$ and an a.s.~finite random variable $\kappa>0$ such that for $0\le t<1$
	\begin{equation}\label{eq:RO_y}
	\kappa_n^{-1}(1-\lfloor nt\rfloor/n)^{2a+1+\eps}\le y_n(t)\le \kappa_n(1-\lfloor nt\rfloor/n)^{2a+1-\eps},
	\end{equation}
	and similarly, 
	\begin{equation}\label{eq:Bess_y}
	\kappa^{-1}(1-t)^{2a+1+\eps}\le y(t)\le \kappa(1-t)^{2a+1-\eps}.
	\end{equation}
\end{proposition}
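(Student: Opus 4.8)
The plan is to reduce the operator-level convergence to a one-dimensional statement about the imaginary generating path, and then to run a strong-coupling argument. First I would record that for the real orthogonal ensemble the Verblunsky coefficients are real, so by \eqref{eq:modifiedV} the modified coefficients agree with them, $\gamma_k=\alpha_k$, and the recursion \eqref{eq:z_rec} keeps the generating path on the imaginary axis. As in the proof of Proposition \ref{prop:ROop_equiv} this gives $z_k=i\,y_k$ with
\[
y_k=\prod_{j=0}^{k-1}\frac{1+\alpha_j}{1-\alpha_j},\qquad\text{so that}\qquad \log y_{k}=\sum_{j=0}^{k-1}\log\frac{1+\alpha_j}{1-\alpha_j}.
\]
Using the beta laws of Proposition \ref{prop:RO_Verblunsky}, the shape parameters of $\alpha_j$ grow linearly in the number of remaining steps, so $\alpha_j$ concentrates near $0$ and the expansion $\log\frac{1+\alpha}{1-\alpha}=2\alpha+O(\alpha^3)$ lets me compute the increment means and variances explicitly. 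Grouping the increments in consecutive pairs, the telescoping of the means reproduces, under the logarithmic time change implicit in \eqref{eq:RO_y}, the drift of the limiting log-path $\log\mathsf{y}(v)=-\tfrac{\beta}{4}(2a+1)v-B(2v)$ of Definition \ref{def:Bessop}, while the summed variances reproduce its diffusion coefficient $2v$.

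Second, to upgrade this distributional match to an almost-sure coupling I would embed the centered martingale part of $\log y_n$ into a single Brownian motion via a Skorokhod/strong-invariance argument, with the deterministic means supplying the drift. This produces a coupling of $\ROop_{2n,\beta,a,b}$ and $\Bessop$ under which $\log y_n(t)\to\log y(t)$, and hence $|y_n(t)-y(t)|\to 0$, for every fixed $t\in[0,1)$ almost surely. This is the hard-edge analogue, restricted to the imaginary axis, of the random-walk-to-hyperbolic-Brownian-motion coupling for $\Circop_{n,\beta}$ recorded in Proposition \ref{prop:Sineb_coupling}.

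Third, for the two-sided bounds \eqref{eq:RO_y} and \eqref{eq:Bess_y} I would separate drift and fluctuation: the drift computed above contributes exactly the power $(1-t)^{2a+1}$ through the time change, while a maximal inequality (or law of the iterated logarithm) shows the centered partial sums are, uniformly over the whole range, of smaller order than any power $(1-t)^{-\eps}$. This yields the stated bounds with an a.s.\ finite $\kappa$ in the limit and a tight prefactor $\kappa_n$ in the prelimit. I expect this step to be the main obstacle: near the hard edge $t=1$ the path degenerates toward $\partial\HH$ and the beta shape parameters become $O(1)$, so one needs genuinely uniform-in-$n$ tail control of $\sup_{t<1}(1-t)^{\mp\eps}\big|\log y_n(t)-(2a+1)\log(1-t)\big|$ together with tightness of the worst-case constant, rather than a mere pointwise estimate.

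Finally, the resolvent convergence follows from the first three steps. Since the path is purely imaginary, the weight function is $R=\diag(\tfrac{1}{2y},\tfrac{y}{2})$ and the Hilbert--Schmidt kernel of Proposition \ref{prop:inv_HS} (see \eqref{Dir:inverse}) is built only from the ratios $y(t)/y(s)$, giving
\[
\|\res\ROop_{2n,\beta,a,b}-\res\Bessop\|_{\mathrm{HS}}^2=\tfrac12\int_0^1\!\!\int_0^t\Big|\sqrt{\tfrac{y_n(t)}{y_n(s)}}-\sqrt{\tfrac{y(t)}{y(s)}}\Big|^{2}\,ds\,dt .
\]
The integrand tends to $0$ pointwise by the second step, while the bounds of the third step provide an $n$-independent integrable dominating function, whose integrability over $\{0\le s\le t\le 1\}$ is exactly the content of \eqref{HS_finite}. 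Dominated convergence then gives $\|\res\ROop_{2n,\beta,a,b}-\res\Bessop\|\to 0$, completing the proof.
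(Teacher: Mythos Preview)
The paper does not prove this proposition; it is quoted from \cite{LV} (note the bracketed citation in the proposition heading), so there is no proof in the present paper to compare against. Your outline---reducing to the imaginary axis via the real Verblunsky coefficients, computing the increment statistics of $\log y_k$ from the beta laws of Proposition \ref{prop:RO_Verblunsky}, strongly coupling the resulting random walk to the Brownian motion with drift in Definition \ref{def:Bessop}, and controlling the fluctuations near $t=1$ by a maximal inequality---matches the strategy used in \cite{LV}, and your explicit Hilbert--Schmidt formula is correct.

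One step in your final paragraph needs care. You assert that \eqref{eq:RO_y}--\eqref{eq:Bess_y} furnish an ``$n$-independent integrable dominating function'' for the dominated convergence argument. But as stated the proposition only asserts that the $\kappa_n$ are \emph{tight}; tightness is a distributional condition and does not by itself guarantee $\sup_n\kappa_n<\infty$ almost surely, which is what an $n$-independent envelope would require. To conclude \emph{almost sure} convergence of the Hilbert--Schmidt norm you must either upgrade the bound inside the specific coupling to an a.s.\ uniform-in-$n$ envelope (this is how \cite{LV} proceeds: the Skorokhod/KMT-type embedding is built so that the prelimit and limit log-paths share a common a.s.\ finite fluctuation bound), or run a subsequence argument. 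The present paper, when it later uses these bounds in the proof of Theorem \ref{thm:RO_main}, explicitly invokes ``a standard subsequence argument,'' which is the usual way to finesse the gap between tightness and an a.s.\ envelope; you should do the same or explain why your coupling yields $\sup_n\kappa_n<\infty$ a.s.
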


Now we are ready to prove the main result of this section. 
\begin{theorem}\label{thm:RO_main}
	For fixed $\beta>0,a,b>-1$ and $n\ge 1$, let $\lambda_i,1\le i\le 2n-1$ be the size $(2n-1)$ truncated real orthogonal beta ensemble and set $p_{2n-1,\beta,a,b}(z)=\prod_{i=1}^{2n-1}\frac{z-\lambda_i}{1-\lambda_i}$ be the normalized characteristic polynomial. Let $\cE_{\beta,a}$ be the structure function of  $\tau_{\beta,a}$ defined via \eqref{def:structure}. Then there is a coupling of $p_{2n-1,\beta,a,b},n\ge 1$ and $\cE_{\beta,a}$ such that almost surely 
	\begin{equation}\label{eq:tRO_conv}
	\left|p_{2n-1,\beta,a,b}(e^{iz/(2n)})e^{-iz/2}-\cE_{\beta,a}(z)\right|\to 0 \quad \text{ uniformly on compacts as $n\to\infty$.}
	\end{equation}
	 Consequently, the truncated real orthogonal beta ensembles converge weakly to the zeros of $\cE_{\beta,a}(\cdot)$ under the edge scaling \eqref{eq:edgescaling} as $n\to\infty$. 
\end{theorem}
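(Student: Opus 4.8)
The plan is to follow the strategy of Proposition~\ref{prop:tcirc_func} and Theorem~\ref{thm:main}, with the $\Sineop$ inputs replaced by their $\Bessop$ analogues. First I would pass from the characteristic polynomial to an orthogonal polynomial of the reversed measure: by Propositions~\ref{prop:CMV1} and~\ref{prop:matrix_tRO}, $p_{2n-1,\beta,a,b}$ has the same law as $\rev\Psi_{2n-1}$, the degree $(2n-1)$ normalized orthogonal polynomial of $\rev\mu^{\ROb}_{2n,\beta,a,b}$. Proposition~\ref{prop:general1}, applied with $n$ replaced by $2n$, then gives $\rev\Psi_{2n-1}(e^{iz/(2n)})=e^{iz(2n-1)/(4n)}\,\aff H_{2n}(\tfrac{2n-1}{2n},z)^{\dag}\binom{1}{-i}$, where $\aff H_{2n}$ solves the canonical system of the pulled-back operator $\aff\tau_{2n,\beta,a,b}$. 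Since $\cE_{\beta,a}(z)=H_{\beta,a}(1,z)^{\dag}\binom{1}{-i}$, it suffices to check the three hypotheses in~\eqref{eq:discrete_assumption} of Proposition~\ref{prop:general2} with $\tau_\infty=\tau_{\beta,a}$; the conclusion~\eqref{eq:tRO_conv} then follows, noting that $e^{-iz/2}e^{iz(2n-1)/(4n)}=e^{-iz/(4n)}\to1$ absorbs the leftover phase.

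The resolvent and integral-trace conditions come essentially for free. Working inside the coupling of Proposition~\ref{prop:Bess_limit}, Propositions~\ref{prop:ROop_equiv} and~\ref{prop:Bessop_equiv} let me realize $\aff\tau_{2n,\beta,a,b}$ and $\tau_{\beta,a}$ as the pre-images of $\ROop_{2n,\beta,a,b}$ and $\Bessop$ under one and the same orthogonal equivalence $T\mapsto\rho^{-1}JTJ\rho$. This conjugation preserves the Hilbert--Schmidt norm of the resolvent and the integral trace (Lemmas~\ref{lem:timerev} and~\ref{lem:rotation}), exactly as in the circular case, so $\|\res\aff\tau_{2n,\beta,a,b}-\res\tau_{\beta,a}\|_{\textup{HS}}=\|\res\ROop_{2n,\beta,a,b}-\res\Bessop\|_{\textup{HS}}\to0$ almost surely, which is the first condition. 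For the trace condition I observe that both $\aff\tau_{2n,\beta,a,b}$ and $\tau_{\beta,a}$ have purely imaginary generating paths and boundary data $\uu_0=\binom{1}{0},\uu_1=\binom{0}{-1}$ (indeed $\aff z_{2n}=0$, as computed in the proof of Proposition~\ref{prop:ROop_equiv}); for such an operator $R$ is diagonal, so $\ttr_\tau=\int_0^1\uu_0^{\dag}R(s)\uu_1\,ds=0$. Hence $\ttr_{\aff\tau_{2n,\beta,a,b}}=\ttr_{\tau_{\beta,a}}=0$ and the second condition holds identically.

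The third condition, $\int_0^1|\mathfrak{a}_{0,2n}(s)-\mathfrak{a}_{0,\infty}(s)|^2\,ds\to0$, is the main obstacle; it is the hard-edge analogue of the estimate~\eqref{eq:error_234} used in the circular proof. Because the paths are purely imaginary, $\mathfrak{a}_{0,2n}(s)=\binom{(\Im\aff z_{2n}(s))^{-1/2}}{0}$ and the limit is $\binom{\hat y(s)^{-1/2}}{0}$, so the task reduces to $L^2(0,1)$ convergence of $(\Im\aff z_{2n})^{-1/2}$ to $\hat y^{-1/2}$. The explicit pull-back formula in the proof of Proposition~\ref{prop:ROop_equiv} writes $\Im\aff z_{2n}$ as the reciprocal of the $\ROop$-path at the time-reversed index, so the almost sure pointwise convergence $y_n\to y$ from Proposition~\ref{prop:Bess_limit} gives pointwise convergence of the integrands. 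The delicate region is $s\to0$, where $\Im\aff z_{2n}(s)$ behaves like $s^{-(2a+1)}$: here the uniform path bounds~\eqref{eq:RO_y} and~\eqref{eq:Bess_y} give $(\Im\aff z_{2n}(s))^{-1}\le\kappa_n\,s^{2a+1-\eps}$, which is integrable on $(0,1)$ exactly because $a>-1$, with the same control for the limit. Combining pointwise convergence with the resulting uniform integrability yields the $L^2$ convergence; the one subtlety is that the constants $\kappa_n$ in~\eqref{eq:RO_y} are only tight rather than almost surely uniformly bounded, so I would argue by a Vitali-type (uniform integrability) argument on the full-measure event carrying the pointwise convergence rather than by a single dominating function.

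With all three conditions in~\eqref{eq:discrete_assumption} verified, Proposition~\ref{prop:general2} gives the uniform-on-compacts convergence~\eqref{eq:tRO_conv}. Finally, the edge scaling~\eqref{eq:edgescaling} sends the truncated eigenvalues to the zeros of $z\mapsto p_{2n-1,\beta,a,b}(e^{iz/(2n)})$, so Hurwitz's theorem (as in the proof of Theorem~\ref{thm:main}) upgrades~\eqref{eq:tRO_conv} to weak convergence of the truncated real orthogonal beta ensemble to the zero set of $\cE_{\beta,a}$. I expect the principal difficulty to be the hard-edge integrability near $s=0$ in the third condition: unlike the circular case, Proposition~\ref{prop:Bess_limit} provides no explicit rate, so the argument must rest on pointwise convergence together with the uniform-integrability bounds coming from~\eqref{eq:RO_y}.
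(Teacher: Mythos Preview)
Your proposal is correct and follows essentially the same route as the paper's own proof: reduce to the orthogonal polynomial of the reversed measure via Propositions~\ref{prop:CMV1} and~\ref{prop:matrix_tRO}, express it through $\aff H_{2n}$ via Proposition~\ref{prop:general1}, use the common orthogonal equivalence from Propositions~\ref{prop:Bessop_equiv} and~\ref{prop:ROop_equiv} to transfer the resolvent convergence of Proposition~\ref{prop:Bess_limit}, note that both integral traces vanish because the paths are purely imaginary, and handle the $L^2$ convergence of $\mathfrak{a}_0$ by pointwise convergence plus the path bounds~\eqref{eq:RO_y}--\eqref{eq:Bess_y}. The paper deals with the tightness (as opposed to a.s.\ boundedness) of $\kappa_n$ by a ``standard subsequence argument'' rather than a Vitali-type argument, but these are equivalent ways of packaging the same uniform-integrability input.
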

\begin{proof}
Let $\rev \mu_{2n}$ be the reversed version of the random probability measures $\mu_{2n}:=\mu_{2n,\beta,a,b}^{\textup{RO}}$, and let $\aff\tau_{2n,\beta,a,b}$ be the pulled-back operator. 
By Propositions \ref{prop:CMV1} and \ref{prop:matrix_tRO}, the normalized characteristic polynomial $p_{2n-1,\beta,a,b}$ has the same distribution as the monic orthogonal polynomial of degree $2n-1$ associated to $\rev \mu_{2n}$. Hence it is enough to show \eqref{eq:tRO_conv} holds with $\rev\Psi_{2n-1,\beta,a,b}$ in place of $p_{2n-1,\beta,a,b}$. 

By Proposition \ref{prop:general1}, we have 
\[
\rev\Psi_{2n-1,\beta,a,b}(e^{iz/(2n)})=e^{iz(2n-1)/(4n)}\aff H_{2n,\beta,a,b}((2n-1)/(2n),z)^{\dag}\binom{1}{-i},
\]
where $\aff H_{2n,\beta,a,b}$ solves the ODE \eqref{eq:canonical} of $\aff \tau_{2n,\beta,a,b}$. Recall that $\cE_{\beta,a}=H_{\beta,a}(1,\cdot)^{\dag}\binom{1}{-i}$. It suffices to provide a coupling of $\mu_{2n}$ and $\cE_{\beta,a}$ under which the uniform-on-compacts convergence of $\aff H_{2n,\beta,a,b}(1,z)$ to $H_{\beta,a}(1,z)$ holds. 

By Propositions \ref{prop:Bessop_equiv} and \ref{prop:ROop_equiv}, the operators $\aff \tau_{2n,\beta,a,b},n\ge 1$ and $\tau_{\beta,a}$ can be obtained from $\ROop_{2n,\beta,a,b},n\ge 1$ and $\Bessop$ under the same orthogonal transformations. 
Therefore, in the coupling of Proposition \ref{prop:Bess_limit}, we have 
\begin{equation}\label{eq:RO_diff123}
\|\res \aff\tau_{2n,\beta,a,b}-\res\tau_{\beta,a}\|\to 0,\quad \quad \ttr_{\aff\tau_{2n,\beta,a,b}}=\ttr_{\tau_{\beta,a}}=0.
\end{equation}
By the triangle inequality, the path bounds \eqref{eq:RO_y} and \eqref{eq:Bess_y}, and a standard subsequence argument, we have
$\lim_{n\to\infty}\int_0^1 |\hat y(s)^{-1/2}-\hat y_n(s)^{-1/2}|^2 ds=0$. Together with \eqref{eq:RO_diff123}, this verifies the conditions in \eqref{eq:discrete_assumption}. Using Proposition \ref{prop:general2} completes the proof of the proposition.
\end{proof}

Note that by Proposition \ref{prop:HP_equiv} the limit random analytic function $\cE_{\beta,a}$ can also be characterized as $\cE_{\beta,a}=\cH_{\beta,a}(0,z)^{\dag}\binom{1}{-i}$, where $\cH_{\beta,a}$ solves the SDE \eqref{eq:HP_cH}. 

Using Proposition \ref{prop:general3} and Theorem \ref{thm:RO_main}, one obtains the following result on the convergence of normalized characteristic polynomial of the perturbed matrix $\rRO_{2n,\beta,a,b}$.
\begin{corollary}\label{cor:RO_perturb}
For fixed $r\in[0,1]$ let $\Lambda_{2n}=\{\lambda_1,\dots,\lambda_{2n}\}$ be the size $2n$ rank-one multiplicative perturbed real orthogonal ensemble and set
$
p_{2n,\beta,a,b}^{[r]}(z) = \prod_{i=1}^{2n} \frac{z-\lambda_i}{1-\lambda_i}$ 
to be the normalized characteristic polynomial. Then under the coupling of Proposition \ref{prop:Bess_limit}, we have almost surely  as $n\to\infty$
\begin{align*}
\left|\Psi_{2n,\beta,a,b}^{[r]}(e^{iz/n})e^{-iz/(2n)}-\cE_{\beta,a}^{[r]}(z)\right|\to 0,\quad \text{ uniformly on compacts in $\C$,}
\end{align*}
where $\cE_{\beta,a}^{[r]}(z)= H_{\beta,a}(1,z)^{\dag}\binom{1}{-i\frac{1-r}{1+r}}$. Consequently, the rank-one multiplicative perturbed real orthogonal beta ensemble $\Lambda_{2n}$ converges weakly to the zero set of the random analytic function $\cE_{\beta,a}^{[r]}$ under the edge scaling \eqref{eq:edgescaling} as $n\to\infty$.
\end{corollary}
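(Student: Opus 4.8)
The plan is to derive Corollary \ref{cor:RO_perturb} as a direct application of the general multiplicative-perturbation result, Proposition \ref{prop:general3}, to the random measures $\mu_{2n}:=\mu^{\ROb}_{2n,\beta,a,b}$. By Proposition \ref{prop:matrix_tRO}, the normalized characteristic polynomial $p^{[r]}_{2n,\beta,a,b}$ of the perturbed ensemble $\RO^{[r]}_{2n,\beta,a,b}$ has the same law as the normalized characteristic polynomial $\Psi^{[r]}_{2n}$ of the CMV matrix $\cC^{[r]}_{2n}=\cC(\alpha_{2n-2},\dots,\alpha_0,-r)$, which is precisely the matrix $\cC^{[r]}_n$ of Section \ref{sec:general} with size $2n$. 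Hence it is enough to check the two hypotheses of Proposition \ref{prop:general3}: the uniform-on-compacts convergence \eqref{eq:Psi_conv_general} of the truncated normalized orthogonal polynomial, and the convergence $\rev \gamma_{2n-1}\to\gamma\neq 1$ of the last reversed modified Verblunsky coefficient.

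The first hypothesis is $r$-independent and is exactly what was proved in Theorem \ref{thm:RO_main}: in the coupling of Proposition \ref{prop:Bess_limit} we verified the conditions \eqref{eq:discrete_assumption} of Proposition \ref{prop:general2} with $\tau_\infty=\tau_{\beta,a}$, so that $\rev\Psi_{2n-1,\beta,a,b}(e^{iz/(2n)})e^{-iz/2}\to\cE_{\beta,a}(z)$ uniformly on compacts, almost surely. I would reuse this same coupling, since the rank-one perturbation alters only the final Verblunsky coefficient and therefore leaves the resolvent and integral-trace convergence of $\aff \tau_{2n,\beta,a,b}\to\tau_{\beta,a}$ untouched.

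For the second hypothesis I would compute $\rev \gamma_{2n-1}$ directly. By Proposition \ref{prop:RO_Verblunsky} every Verblunsky coefficient of $\mu_{2n}$ is real with $\alpha_{2n-1}=-1$, and the reversal \eqref{eq:RO_rev_Verblunsky} preserves reality and fixes the last coefficient, $\rev \alpha_{2n-1}=\alpha_{2n-1}=-1$. Since the correction factor $\prod_j\frac{1-\overline{\rev \gamma_j}}{1-\rev \gamma_j}$ in the recursion \eqref{eq:modifiedV} equals $1$ whenever the $\rev \gamma_j$ are real, an induction yields $\rev \gamma_k=\rev \alpha_k$ for every $k$; in particular $\rev \gamma_{2n-1}=-1$ identically, so $\gamma=-1\neq 1$ and Proposition \ref{prop:general3} applies. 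Specializing its limit formula at $\gamma=-1$ gives $\frac{1}{1+r}\cE_{\beta,a}(z)+\frac{r}{1+r}\cE_{\beta,a}^{*}(z)=H_{\beta,a}(1,z)^{\dag}\binom{1}{-i\frac{1-r}{1+r}}$, where $\cE_{\beta,a}^{*}(z)=\overline{\cE_{\beta,a}(\bar z)}$; this is exactly $\cE^{[r]}_{\beta,a}$. As a consistency check, $r=0$ recovers the truncated structure function $\cE_{\beta,a}$ and $r=1$ recovers $H_{\beta,a}(1,z)^{\dag}\binom{1}{0}=\zeta_{\beta,a}$, the secular function of the unperturbed operator $\tau_{\beta,a}$.

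Finally, the uniform-on-compacts convergence of the normalized characteristic polynomials together with Hurwitz's theorem (exactly as in the proof of Theorem \ref{thm:main}) upgrades to weak convergence of the perturbed real orthogonal ensemble, under the edge scaling \eqref{eq:edgescaling}, to the zero set $\{z\in\HH:\cE^{[r]}_{\beta,a}(z)=0\}$. Beyond reusing Theorem \ref{thm:RO_main}, the only genuinely new step is identifying the limiting boundary coefficient, and the point to get right is that the terminal Verblunsky coefficient of the real orthogonal ensemble is deterministically $-1$; this forces the non-degenerate value $\gamma=-1$ and places the limit on the clean one-parameter family $\tfrac{1-r}{1+r}$ interpolating between truncation ($r=0$) and the unperturbed ensemble ($r=1$). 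This is in contrast with the circular case of Theorem \ref{thm:trunc_circ}, where the analogous limit is random and an additional hyperbolic rotation intervenes.
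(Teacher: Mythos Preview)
Your proposal is correct and follows essentially the same route as the paper, which simply states that the corollary follows from Proposition \ref{prop:general3} together with Theorem \ref{thm:RO_main}. You correctly fill in the one detail the paper leaves implicit, namely that the reversed Verblunsky coefficients are all real with $\rev\alpha_{2n-1}=-1$, so that $\rev\gamma_{2n-1}=-1$ deterministically and the limit formula of Proposition \ref{prop:general3} specializes to $H_{\beta,a}(1,z)^{\dag}\binom{1}{-i\frac{1-r}{1+r}}$.
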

Recall that the secular function $\zeta_{\beta,a}$ of the $\Bessop$ operator is given by $\zeta_{\beta,a}= H_{\beta,a}(1,z)^{\dag}\binom{1}{0}$. The above result gives an interpolation between the scaling limits of the normalized characteristic polynomials of the unperturbed and the truncated real orthogonal beta ensemble.

\section{The truncated circular  Jacobi beta ensemble} \label{sec:trCJbE}

In Section \ref{sub:CJ_finite} we construct the 
truncated and the multiplicative perturbed circular Jacobi beta ensemble, and we compute the joint eigenvalue density of the truncated model. 
In Section \ref{sub:CJ_limit}, we derive the edge scaling limits of the truncated and perturbed models using Propositions \ref{prop:general2} and \ref{prop:general3}. 

\subsection{Matrix model and joint eigenvalue density for truncated circular Jacobi beta ensemble}\label{sub:CJ_finite}

Consider the random probability measure $\mu_{n,\beta,\delta}^{\CJb}$ in \eqref{eq:CJ_measure}, with support given by the circular Jacobi beta ensemble. Recall also the matrix model $\CJ_{n,\beta,\delta}$ defined in Proposition \ref{prop:CJ_mVerblunsky} via the sequence of regular Verblunsky coefficients $\alpha_k,0\le k\le n-1$ of $\mu_{n,\beta,\delta}^{\CJb}$. 

\begin{definition}\label{def:trunc_CJE} For  fixed $n\ge 1$, $\beta>0$, $\Re \delta>-1/2$ we define the \emph{truncated circular Jacobi beta ensemble} as the joint eigenvalue distribution of the truncated matrix $\trunc{\CJ}_{n,\beta,\delta}$. For a fixed $r\in[0,1]$ we define the perturbed circular Jacobi beta ensemble as the joint eigenvalue distribution of  the perturbed matrix  $\CJ_{n,\beta,\delta}^{[r]}:=\CJ_{n,\beta,\delta}\cdot\diag(r,1,1,\dots,1)$.    
\end{definition}

The main challenge to study these ensembles is that the Verblunsky coefficients of $\mu_{n,\beta,\delta}^{\CJb}$ are not independent, hence one cannot expect a nice description of the CMV matrices appearing in Proposition \ref{prop:CMV3}. 
However, as the next proposition shows, we can still preserve the independence by expressing the appearing CMV matrices in terms of the \emph{modified} Verblunsky coefficients.  

Recall the definition of modified Verblunsky coefficients given by the recursion \eqref{eq:modifiedV}. The recursion provides a one-to-one map between the first $k\le n-1$ Verblunsky coefficients and the first $k$ modified Verblunsky coefficients, we denoted this map by $\cT_k$.

\begin{proposition}\label{prop:tCJ_matrix}
Fix $\beta>0,\delta\in\C$ with $\Re\delta>-1/2$, and let $\gamma_k,0\le k\le n-1$ be the sequence of modified Verblunsky coefficients of $\mu_{n,\beta,\delta}^{\CJb}$.
Then the sub-unitary CMV matrix $\cC(\cT_{n-1}^{-1}(\gamma_{n-2},\gamma_{n-3}\cdots,\gamma_{0}))$ has the same joint eigenvalue distribution as the truncated model $\trunc{\CJ}_{n,\beta,\delta}$. 
For fixed $r\in[0,1]$, the matrix $\cC(\cT_{n}^{-1}(\gamma_{n-2},\gamma_{n-3}\dots,\gamma_{0},r\gamma_{n-1}))$ has the same joint eigenvalue distribution as the perturbed model $\CJ_{n,\beta,\delta}^{[r]}$. 
\end{proposition}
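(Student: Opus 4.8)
The plan is to reduce the statement to a single distributional identity for the \emph{modified} Verblunsky coefficients under reversal, and then to establish that identity by a change of variables built from one key single‑variable computation. Write $\gamma_0,\dots,\gamma_{n-1}$ for the modified Verblunsky coefficients of $\mu_{n,\beta,\delta}^{\CJb}$, which by Proposition \ref{prop:CJ_mVerblunsky} are independent with $\gamma_k\sim\Theta(\beta(n-k-1)+1,\delta)$, and let $\alpha_0,\dots,\alpha_{n-1}$ be the corresponding ordinary Verblunsky coefficients, so that $\alpha_k=\bar\gamma_k\prod_{j<k}\frac{1-\bar\gamma_j}{1-\gamma_j}$ by \eqref{eq:modifiedV}. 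By Proposition \ref{prop:CMV3} the truncated matrix $\trunc{\CJ}_{n,\beta,\delta}$ is unitarily equivalent to $\cC(\rev\alpha_0,\dots,\rev\alpha_{n-2})$ and the perturbed matrix to $\cC(\rev\alpha_0,\dots,\rev\alpha_{n-2},r\rev\alpha_{n-1})$ (up to an adjoint when $n$ is odd). Since $\cT_{n-1},\cT_n$ are bijections between ordinary and modified coefficients and $\cC$ depends only on the Verblunsky sequence, it suffices to compute the joint law of the modified Verblunsky coefficients $\rev\gamma_0,\dots,\rev\gamma_{n-1}$ of the reversed sequence. The first $n-1$ of these depend only on $\rev\alpha_0,\dots,\rev\alpha_{n-2}$, and scaling $\rev\alpha_{n-1}$ by the real number $r$ scales $\rev\gamma_{n-1}$ by $r$ while fixing the rest; hence both claims follow from the master identity
\begin{equation*}
(\rev\gamma_0,\dots,\rev\gamma_{n-1})\ed(\gamma_{n-2},\gamma_{n-3},\dots,\gamma_0,\gamma_{n-1}).
\end{equation*}

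Plugging $\alpha_k=\bar\gamma_k\prod_{j<k}\frac{1-\bar\gamma_j}{1-\gamma_j}$ into \eqref{eq:rev_Verblunsky} and computing the modified coefficients of the reversed sequence shows that each $\rev\gamma_k$ is an explicit function of the boundary coefficient $\gamma_{n-1}$ and the block $\gamma_{n-2-k},\dots,\gamma_{n-2}$; in particular $\rev\gamma_0=-\gamma_{n-1}\bar\gamma_{n-2}\frac{1-\gamma_{n-2}}{1-\bar\gamma_{n-2}}$, and more generally $|\rev\gamma_k|=|\gamma_{n-2-k}|$ for $0\le k\le n-2$ while $|\rev\gamma_{n-1}|=1$, so the moduli already agree coordinatewise. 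The analytic engine I would isolate as a lemma is: if $w\sim\Theta(a+1,\delta)$ on $\D$ and $\eta\sim\Theta(1,\delta)$ on $\partial\D$ are independent, then $\zeta:=-\eta\bar w\frac{1-w}{1-\bar w}\sim\Theta(a+1,\delta)$. To prove it, write the $\Theta(\cdot,\delta)$ densities in the form $|1-\cdot|^{2\Re\delta}e^{2\Im\delta\arg(1-\cdot)}$, change variables from $(\arg\eta,\arg w)$ to $(\arg\zeta,\arg w)$ (a unit‑Jacobian substitution, since $\arg\zeta=\arg\eta+\text{const}$ at fixed $w$), and integrate out $\arg w$ at fixed $|\zeta|=|w|$. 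The factors depending on $w$ cancel, leaving $(1-\zeta)^{\bar\delta}(1-\bar\zeta)^\delta$ times $\int_{-\pi}^{\pi}(1+c\,e^{i\theta})^{\bar\delta}(1+\bar c\,e^{-i\theta})^\delta\,d\theta$ with $|c|=1$; this integral equals the constant $2\pi\,\Gamma(1+\delta+\bar\delta)/(\Gamma(1+\delta)\Gamma(1+\bar\delta))$ (convergent because $\Re\delta>-\tfrac12$), so $\zeta$ has exactly the $\Theta(a+1,\delta)$ density.

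I would then assemble the master identity from this lemma. The reversal map is ``anti‑triangular'': $\rev\gamma_0$ involves only $(\gamma_{n-2},\gamma_{n-1})$ and is handled directly by the lemma, while each subsequent $\rev\gamma_k$ adds one more of the coefficients $\gamma_{n-2-k}$. The natural route is induction on $n$ together with a triangular change of variables: peel off $\rev\gamma_0$ using the lemma, and identify the conditional law of $(\rev\gamma_1,\dots,\rev\gamma_{n-2})$, where at each step the newly produced coordinate is again of the form covered by the lemma, so that the $\Theta(\cdot,\delta)$ marginals and the independence propagate; the last, unimodular coordinate $\rev\gamma_{n-1}$ is produced at the end and is $\Theta(1,\delta)$‑distributed. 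Scaling that final coordinate by $r$ then yields the perturbed statement at no extra cost.

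The main obstacle is precisely this joint computation: the single boundary coefficient $\gamma_{n-1}$ enters \emph{every} $\rev\gamma_k$, so the change of variables does not factor over coordinates, and the delicate point is to verify that integrating $\gamma_{n-1}$ out leaves an \emph{exactly} independent, reversed product of $\Theta(\cdot,\delta)$ laws — this requires the Jacobian together with the phase factors $\prod_{j}\frac{1-\gamma_j}{1-\bar\gamma_j}$ to telescope. A secondary bookkeeping point is the parity‑dependent adjoint in Proposition \ref{prop:CMV3}: when $n$ is odd one works with $\cC(\cdots)^{\dag}$, and one must track the resulting complex conjugation using the fact that conjugation sends $\Theta(\cdot,\delta)$ to $\Theta(\cdot,\bar\delta)$, in order to land on the stated law.
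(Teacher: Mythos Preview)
Your reduction to the master identity $(\rev\gamma_0,\dots,\rev\gamma_{n-1})\ed(\gamma_{n-2},\dots,\gamma_0,\gamma_{n-1})$ is correct and is exactly what the paper proves (stated there in the equivalent form \eqref{eq:CJ_dist1} for ordinary Verblunsky coefficients). Your single-variable lemma is also correct and coincides with the marginal of the paper's key lemma. The genuine gap is the inductive step you yourself flag as the ``main obstacle'': the marginal statement that $-\eta\bar w\tfrac{1-w}{1-\bar w}=\eta\bar w^\iota\sim\Theta(a+1,\delta)$ is not strong enough to propagate, because after fixing $\rev\gamma_0$ you have lost control of the joint law of $(\gamma_{n-1},\gamma_{n-2})$ and hence of the ``new'' $\Theta(1,\delta)$ variable needed at the next step.

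The paper closes this gap by strengthening your lemma to an \emph{exchangeability}: for independent $\eta\sim\Theta(1,\delta)$ and $\zeta\sim\Theta(a+1,\delta)$ one has $(\eta\bar\zeta^\iota,\zeta)\ed(\zeta,\eta\bar\zeta^\iota)$ (equation~\eqref{eq:CJ_swapping}). This is proved by conditioning on $|\zeta|$ and checking that the pair of unit-circle phases is conditionally exchangeable --- essentially the same density computation you sketch, but retaining both variables rather than integrating one out. With exchangeability in hand, the paper does not attack the $\rev\gamma_k$ directly; instead it passes to the successive ratios $-\alpha_k/\alpha_{k-1}=\bar\gamma_k/\bar\gamma_{k-1}^\iota$ (equation~\eqref{eq:alpha_ratio}) on both sides, producing two vectors whose entries have the form $\zeta_j^\iota/\zeta_{j-1}$ versus $\bar\zeta_j/\bar\zeta_{j-1}^\iota$. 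Repeated application of the exchangeability then ``bubbles'' the single $\Theta(1,\delta)$ variable from one end of the vector to the other (Claim~\ref{claim:CJ}, identity~\eqref{eq:gamma_ratio}), yielding the master identity at once. This sidesteps the telescoping-Jacobian computation you anticipate.

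One minor correction: your parity concern is unfounded. In this paper $\dag$ denotes the transpose, not the conjugate transpose (see the remark following the definition of $\trunc{M}$ in the introduction), so $\cC$ and $\cC^\dag$ have identical eigenvalues and no $\delta\mapsto\bar\delta$ adjustment is needed.
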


Before proving the proposition, we introduce a simple mapping  on $\D$. 
 For $\gamma\in\D$ recall the linear fractional transformation $\cA_{\gamma,\D}$ from \eqref{eq:Adef}. This is an isometry of the Poincar\'e disk that corresponds to an affine transformation in the Poincar\'e half-plane $\HH$. 
 The inverse of $\cA_{\gamma,\D}$ is also an isometry, and it also corresponds to an affine transformation in $\HH$, we denote the corresponding element in $\D$ by $\gamma^{\iota}$:
\begin{align}\label{eq:gamma_iota_def}
    \cA_{\gamma,\D}^{-1}=\cA_{\gamma^\iota,\D},\qquad \gamma^{\iota} = -\gamma\frac{1-\bar \gamma}{1-\gamma}.
\end{align}
Note that the $\iota: \gamma\mapsto \gamma^\iota$ mapping is an involution such that $\mathcal{A}_{\gamma,\D}(0)=\gamma^\iota$. 

We also need the following distributional identity for $\Theta(a+1,\delta)$ random variables (see Definition \ref{def:Theta_delta}).

\begin{claim}\label{claim:CJ}
	Fix $\delta\in\C$ with $\Re\delta>-1/2$. Let $\zeta_0,\zeta_1,\dots,\zeta_{n-1}$ be a sequence of independent random variables such that $\zeta_i\sim\Theta(a_i+1,\delta)$, with $a_0=0$ and $a_i\ge0$ for $i\ge 1$. Then 
	\begin{equation}\label{eq:gamma_ratio}
	\left(\frac{\zeta_1^\iota}{\zeta_0},\frac{\zeta_2^\iota}{\zeta_1},\dots\frac{\zeta_{n-1}^\iota}{\zeta_{n-2}},\frac{1}{\zeta_{n-1}}\right)\ed \left(\bar\zeta_1,\frac{\bar\zeta_{2}}{\bar\zeta_{1}^\iota},\dots,\frac{\bar\zeta_{n-1}}{\bar\zeta_{n-2}^\iota},\frac{\bar\zeta_0}{\bar\zeta_{n-1}^\iota}\right).
	\end{equation}
\end{claim}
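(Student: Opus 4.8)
The plan is to establish the identity by computing and comparing the joint laws of the two vectors directly, using that both are triangular, hence explicitly invertible, functions of $(\zeta_0,\dots,\zeta_{n-1})$. First I would rewrite both sides in a single cyclic notation. Setting $\zeta_n:=\zeta_0$ and observing that $|\zeta_0|=1$ forces $\zeta_0^\iota=1$ (from \eqref{eq:gamma_iota_def}), the left-hand vector is $(L_1,\dots,L_n)$ with $L_j=\zeta_j^\iota/\zeta_{j-1}$, and the right-hand vector is $(R_1,\dots,R_n)$ with $R_j=\bar\zeta_j/\bar\zeta_{j-1}^\iota$ (here $R_1=\bar\zeta_1$ since $\bar\zeta_0^\iota=1$, and $R_n=\bar\zeta_0/\bar\zeta_{n-1}^\iota$). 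Because $|\zeta_j^\iota|=|\zeta_j|$ and the indices telescope cyclically, both images lie on the manifold $\{\,w:\prod_j|w_j|=1\,\}$, which has the same $2n-1$ real dimensions as the domain $\partial\D\times\D^{n-1}$.

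A preliminary reduction records the conjugation symmetry of the $\Theta$ family: if $\zeta\sim\Theta(a+1,\delta)$ then $\bar\zeta\sim\Theta(a+1,\bar\delta)$, since conjugation preserves Lebesgue measure and exchanges $\delta\leftrightarrow\bar\delta$ in the density of Definition \ref{def:Theta_delta}; moreover $\iota$ commutes with conjugation, $\overline{\zeta^\iota}=\bar\zeta^\iota$. Writing $w_j:=\bar\zeta_j\sim\Theta(a_j+1,\bar\delta)$, the right-hand vector equals $(w_j/w_{j-1}^\iota)_j$, so the assertion is precisely that the pushforward of $\prod_j\Theta(a_j+1,\delta)$ under $F:(\zeta_j)\mapsto(\zeta_j^\iota/\zeta_{j-1})_j$ equals the pushforward of $\prod_j\Theta(a_j+1,\bar\delta)$ under $G:(w_j)\mapsto(w_j/w_{j-1}^\iota)_j$.

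Next I would invert each map and compute its Jacobian. Both inversions are triangular: for $F$ one solves $\zeta_{j-1}=\zeta_j^\iota/L_j$ downward from $\zeta_{n-1}=1/L_n$, and for $G$ one solves $\zeta_j=\bar R_j\,\zeta_{j-1}^\iota$ upward from $\zeta_1=\bar R_1$. In every elimination step the new coordinate enters through a one-variable holomorphic or antiholomorphic map (the factor $\zeta_j^\iota$, resp.\ $\zeta_{j-1}^\iota$, being an already-determined parameter), so the real Jacobian factors into an explicit product of terms $|f'|^2$. The remaining task is to check that after this change of variables the $\delta$-weights of $F$ and the $\bar\delta$-weights of $G$ agree on the constraint manifold. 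The structural reason they should is that the disk automorphism $\mathcal{A}_{\gamma,\D}$ of \eqref{eq:Adef}, which governs the recursion through $\gamma^\iota=\mathcal{A}_{\gamma,\D}(0)$ and $\mathcal{A}_{\gamma,\D}^{-1}=\mathcal{A}_{\gamma^\iota,\D}$, is the disk form of an affine map of $\HH$ and therefore fixes the boundary point $1$. Consequently the invariant hyperbolic element $(1-|z|^2)^{-2}dA$ is preserved and the deformed Jacobi weight $(1-z)^{\bar\delta}(1-\bar z)^\delta$, whose only singularity sits at this fixed point, transforms covariantly; I would use these two covariances to recombine the factors $(1-|\zeta_j|^2)^{a_j/2-1}$ and $(1-\zeta_j)^{\bar\delta}(1-\bar\zeta_j)^\delta$ into the corresponding $\bar\delta$-expression. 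Since the normalizing constants satisfy $c_{a,\delta}=c_{a,\bar\delta}$, matching the variable parts suffices.

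The main obstacle will be the bookkeeping of the deformed weight through these transformations: as $\delta$ is genuinely complex, the powers $(1-z)^{\bar\delta}$ are multivalued, and one must fix and propagate consistent branches as $z$ runs over $\D$ under $\iota$ and the rescalings hidden in the ratios $L_j,R_j$, verifying that the phase contributions of the Jacobians cancel those of the weights. A secondary technical point is the mixed dimensionality created by $\zeta_0\in\partial\D$: the comparison lives on $\{\,w:\prod_j|w_j|=1\,\}$, so I would carry out the density identification by disintegrating along $\prod_j|w_j|$ (a co-area argument), or by adjoining an auxiliary radial variable, and check that the induced measures on the manifold coincide. Once the weight-covariance identity is in hand, the surviving algebra is a direct cancellation.
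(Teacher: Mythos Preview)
Your route is genuinely different from the paper's. You attempt a direct density comparison: invert the triangular maps $F$ and $G$, compute Jacobians, and check that the transformed $\Theta$-weights coincide on the constraint manifold. The paper instead isolates a single two-variable exchangeability lemma and applies it iteratively. Concretely, the paper shows that for independent $\eta\sim\Theta(1,\delta)$ and $\zeta\sim\Theta(a+1,\delta)$ the pair $(\eta\bar\zeta^\iota,\zeta)$ is exchangeable: conditioning on $|\zeta|=r$, one checks that the joint density of the angular parts $(\eta\bar\zeta^\iota/|\zeta|,\,\zeta/|\zeta|)=:(\xi_1,\xi_2)$ is proportional to the manifestly symmetric expression
\[
(1-r(\xi_1+\xi_2)+\xi_1\xi_2)^{\bar\delta}(1-r(\xi_1^{-1}+\xi_2^{-1})+\xi_1^{-1}\xi_2^{-1})^\delta.
\]
Once this swap $(\zeta_1^\iota/\zeta_0,\bar\zeta_1)\ed(\bar\zeta_1,\zeta_1^\iota/\zeta_0)$ is in hand, one bubbles the unit-modulus variable $\zeta_0$ through the vector one coordinate at a time, obtaining \eqref{eq:gamma_ratio} after $n-1$ swaps.

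The step you flag as the main obstacle is also where your proposal has a gap. Your heuristic for why the weight $(1-z)^{\bar\delta}(1-\bar z)^\delta$ should ``transform covariantly'' invokes covariance under the automorphisms $\mathcal{A}_{\gamma,\D}$; but the maps $F$ and $G$ are not of this form---they are coordinatewise ratios $\zeta_j^\iota/\zeta_{j-1}$ and $\zeta_j/\zeta_{j-1}^\iota$, not M\"obius actions on a single variable---so that covariance does not apply directly. If you carry out the density comparison honestly you will find that the identity you need at each stage is precisely the conditional exchangeability above; your global Jacobian computation then unwinds to an iterated application of the paper's two-variable lemma. The paper's decomposition buys a short, branch-free argument (the symmetry of the conditional density makes the multivaluedness of complex powers and the co-area disintegration you worry about irrelevant), at the price of spotting the reduction to a single swap. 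Your direct approach, if completed, would reach the same endpoint but with substantially more bookkeeping and without bypassing the essential two-variable identity.
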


\begin{proof}
Our statement will follow from the following (simpler) distributional identity.  Let $\zeta\sim\Theta(a+1,\delta)$ and $\eta\sim\Theta(1,\delta)$ be independent. Then 
	\begin{equation}\label{eq:CJ_swapping}
	(\eta\bar\zeta^\iota,\zeta) \ed (\zeta,\eta\bar\zeta^\iota).
	\end{equation}
 Since $|\zeta^\iota|=|\zeta|$, it is sufficient to prove that the unit length random variables $\eta\bar\zeta^{\iota}/|\zeta|$ and $\zeta/|\zeta|$ are conditionally exchangeable given 
$|\zeta|=r$. Let $z=\zeta/|\zeta|$, then under the condition $|\zeta|=r$ we have 
\[
\eta\bar\zeta^{\iota}/|\zeta|=-\eta \frac{1-rz}{z-r}.
\]
By the independence of $\eta$ and $\zeta$,  the conditional joint  density of $(\eta,z)$ (given $|\zeta|=r$) is proportional 
	\[
	 (1-\eta)^{\bar\delta}(1-\eta^{-1})^\delta(1-rz)^{\bar\delta}(1-rz^{-1})^\delta.
	\]
Since $z\mapsto \frac{1-rz}{z-r}$ is an isometry of the unit circle, the Jacobian  of the mapping from $\left(-\eta \frac{1-rz}{z-r},z\right)\mapsto (\eta,z)$ to  is equal to $1$.
	Therefore, the conditional joint density of $(\xi_1,\xi_2):=\left(-\eta \frac{1-rz}{z-r},z\right)$ (given $|\zeta|=r)$ is proportional to 
	\[
	(1-r(\xi_1+\xi_2)+\xi_1\xi_2)^{\bar\delta}(1-r(\xi_1^{-1}+\xi_2^{-1})+\xi_1^{-1}\xi_2^{-1})^\delta.
	\]
	This shows the (conditional) exchangeability $(\xi_1,\xi_2)=(\eta\bar\zeta^{\iota}/|\zeta|,\zeta/|\zeta|)$ and proves \eqref{eq:CJ_swapping}.

 Now we turn to the proof of the statement. Note that \eqref{eq:CJ_swapping} implies that \[(\zeta_1^\iota/\zeta_0,\bar \zeta_1)\ed(\bar\zeta_1,\zeta_1^\iota/\zeta_0).\]
	Starting from the random vector on the left-hand side of \eqref{eq:gamma_ratio}, we apply \eqref{eq:CJ_swapping} repeatedly and get
	\begin{align*}
	\left(\frac{\zeta_1^\iota}{\zeta_0},\frac{\zeta_2^\iota}{\zeta_1},\dots\frac{\zeta_{n-1}^\iota}{\zeta_{n-2}},\frac{1}{\zeta_{n-1}}\right)&\ed  \left(\bar\zeta_1,\frac{\bar\zeta_0\zeta_2^\iota}{\bar\zeta_1^\iota},\dots\frac{\zeta_{n-1}^\iota}{\zeta_{n-2}},\frac{1}{\zeta_{n-1}}\right)\ed\dots\\
	&\ed  \left(\bar\zeta_1,\frac{\bar\zeta_{2}}{\bar\zeta_{1}^\iota},\dots,\frac{\bar\zeta_0\zeta_{n-1}^\iota}{\bar\zeta_{n-2}^\iota},\frac{1}{\zeta_{n-1}}\right)\ed \left(\bar\zeta_1,\frac{\bar\zeta_{2}}{\bar\zeta_{1}^\iota},\dots,\frac{\bar\zeta_{n-1}}{\bar\zeta_{n-2}^\iota},\frac{\bar\zeta_0}{\bar\zeta_{n-1}^\iota}\right),
	\end{align*}
proving \eqref{eq:gamma_ratio}.
\end{proof}

Now we return to the proof of Proposition \ref{prop:tCJ_matrix}.

\begin{proof}[Proof of Proposition \ref{prop:tCJ_matrix}]
Let $\alpha_k, 0\le k\le n-1$ 
be the (regular) Verblunsky coefficients of $\mu_{n,\beta,\delta}^{\CJb}$ and set $\rev \alpha_k,0\le k \le n-1$ to be the reversed version of $\alpha_k,0\le k\le n-1$ defined via \eqref{eq:rev_Verblunsky}.
By Proposition \ref{prop:CMV3}, the truncated and perturbed models $\trunc{\CJ}_{n,\beta,\delta}$ and $\CJ_{n,\beta,\delta}^{[r]}$ have the same eigenvalues as the  CMV matrices $\cC(\rev \alpha_{0},\rev \alpha_{1},\cdots,\rev \alpha_{n-2})$ and $\cC(\rev \alpha_{0},\rev \alpha_{1},\cdots,\rev \alpha_{n-2},r\rev \alpha_{n-1})$, respectively. Hence, the statement of the proposition follows  if we can show  
\begin{equation}\label{eq:CJ_dist1}
    (\rev \alpha_{0},\rev \alpha_{1},\dots,\rev \alpha_{n-2},\rev \alpha_{n-1})
    \ed \cT_{n}^{-1}(\gamma_{n-2},\gamma_{n-3},\cdots,\gamma_0,\gamma_{n-1}).
\end{equation}
Introduce the  temporary notation 
\[(\hat \alpha_0,\hat\alpha_1,\dots,\hat\alpha_{n-1})=\cT_n^{-1}(\gamma_{n-2},\gamma_{n-3},\dots,\gamma_0, \gamma_{n-1}).\]
We need to show $(\rev \alpha_{0},\rev \alpha_{1},\dots,\rev \alpha_{n-2},\rev \alpha_{n-1})\ed (\hat \alpha_0,\hat\alpha_1,\dots,\hat\alpha_{n-1})$, which will follow from 
\begin{equation*}	
\left(\rev \alpha_{0},-\frac{\rev \alpha_{1}}{\rev \alpha_{0}}\dots,-\frac{\rev \alpha_{n-1}}{\rev \alpha_{n-2}}\right)\ed \left(\hat\alpha_0,-\frac{\hat\alpha_1}{\hat\alpha_0}\dots,-\frac{\hat\alpha_{n-1}}{\hat\alpha_{n-2}}\right).
\end{equation*}
From \ref{eq:modifiedV} and \eqref{eq:gamma_iota_def} it follows that
\begin{align}\label{eq:alpha_ratio}
  - \frac{\alpha_k}{\alpha_{k-1}}= \frac{\bar \gamma_k}{\bar \gamma_{k-1}^{\iota}}, \quad k\ge1, \qquad \alpha_0=\bar \gamma_0. 
\end{align}
Hence
\[
\left(\hat\alpha_0,-\frac{\hat\alpha_1}{\hat\alpha_0}\dots,-\frac{\hat\alpha_{n-1}}{\hat\alpha_{n-2}}\right)=\left(
\bar \gamma_{n-2}, \frac{\bar \gamma_{n-3}}{\bar \gamma_{n-2}^{\iota}},\dots,
\frac{\bar \gamma_0}{\bar \gamma_1^\iota}, \frac{\bar \gamma_{n-1}}{\bar \gamma_0^{\iota}}
\right).
\]
On the other hand, from \ref{eq:rev_Verblunsky} and \eqref{eq:alpha_ratio} we have 
\[
\left(\rev \alpha_{0},-\frac{\rev \alpha_{1}}{\rev \alpha_{0}}\dots,-\frac{\rev \alpha_{n-1}}{\rev \alpha_{n-2}}\right)=\left(
-\frac{\bar \alpha_{n-2}}{\bar \alpha_{n-1}}, -\frac{\bar \alpha_{n-3}}{\bar \alpha_{n-2}}, \dots, -\frac{\bar \alpha_0}{\bar \alpha_1}, \frac{1}{\bar \alpha_0}
\right)=\left(
\frac{\gamma_{n-2}^{\iota}}{\gamma_{n-1}},\frac{\gamma_{n-3}^{\iota}}{\gamma_{n-2}},\dots, \frac{\gamma_0^\iota}{\gamma_1},\frac{1}{\gamma_0}
\right)
\]
From Proposition \ref{prop:CJ_mVerblunsky} we know that $\gamma_k,0\le k\le n-1$ are independent with $\gamma_k\sim \Theta(\beta(n-k-1)+1),\delta)$. 
Using Claim  \ref{claim:CJ} with $\zeta_j=\gamma_{n-1-j}, 0\le j\le n-1$ we get
\[
\left(
\frac{\gamma_{n-2}^{\iota}}{\gamma_{n-1}},\frac{\gamma_{n-3}^{\iota}}{\gamma_{n-2}},\dots, \frac{\gamma_0^\iota}{\gamma_1},\frac{1}{\gamma_0}
\right)\ed \left(
\bar \gamma_{n-2}, \frac{\bar \gamma_{n-3}}{\bar \gamma_{n-2}^{\iota}},\dots,
\frac{\bar \gamma_0}{\bar \gamma_1^\iota}, \frac{\bar \gamma_{n-1}}{\bar \gamma_0^{\iota}}
\right),
\]
and the statement of the proposition follows.
\end{proof}

Proposition \ref{prop:tCJ_matrix} allows us to derive the joint eigenvalue distribution of the finite truncated circular Jacobi beta ensemble.

\begin{theorem}\label{thm:tCJ_joint}
Fix $\beta>0,\delta\in\C$ with $\Re\delta>-1/2$. Then the eigenvalues of $\trunc{\CJ}_{n+1,\beta,\delta}$ are distributed in $\D^n$ according to the density    \begin{equation}\label{eq:tCJ_joint_density}
c_{n,\beta,\delta}\prod_{j,k=1}^n(1-z_j\bar z_k)^{\frac{\beta}{2}-1}\prod_{j<k}|z_k-z_j|^2\prod_{j=1}^n\left((1-z_j)^{\bar\delta}(1-\bar z_j)^\delta\right)
\end{equation}
with respect to the Lebesgue measure on $\mathbb{D}^n$. Here $c_{n,\beta,\delta}= \frac{1}{\pi^n n!}\prod_{j=1}^n\frac{\Gamma(\frac{\beta}{2}j+1+\delta)\Gamma(\frac{\beta}{2}j+1+\bar\delta)}{\Gamma(\frac{\beta}{2}j)\Gamma(\frac{\beta}{2}j+1+\delta+\bar\delta)} $ is the normalizing constant. 
\end{theorem}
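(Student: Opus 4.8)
The plan is to reduce the statement to the eigenvalues of an explicit sub-unitary CMV matrix and then transfer the already known $\delta=0$ density \eqref{eq:tcbe} by an absolutely continuous change of measure. By Proposition \ref{prop:tCJ_matrix} (applied with $n+1$ in place of $n$), the eigenvalues $z_1,\dots,z_n\in\D$ of $\trunc{\CJ}_{n+1,\beta,\delta}$ have the same law as those of $\cC(\cT_n^{-1}(\gamma_{n-1},\dots,\gamma_0))$, where $\gamma_0,\dots,\gamma_{n-1}$ are the independent modified Verblunsky coefficients of $\mu^{\CJb}_{n+1,\beta,\delta}$, so $\gamma_k\sim\Theta(\beta(n-k)+1,\delta)$ by Proposition \ref{prop:CJ_mVerblunsky}. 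Writing $a_k=\beta(n-k)$, the joint density of $(\gamma_0,\dots,\gamma_{n-1})$ is, by Definition \ref{def:Theta_delta}, a deterministic constant times $\prod_k(1-|\gamma_k|^2)^{a_k/2-1}(1-\gamma_k)^{\bar\delta}(1-\bar\gamma_k)^{\delta}$. Comparing with the $\delta=0$ case (Definition \ref{def:Theta}), the Radon--Nikodym derivative of the law of $(\gamma_0,\dots,\gamma_{n-1})$ with respect to its $\delta=0$ counterpart is
\[
h(\gamma_0,\dots,\gamma_{n-1})=\Big(\prod_{k=0}^{n-1}\tfrac{2\pi\,c_{a_k,\delta}}{a_k}\Big)\prod_{k=0}^{n-1}(1-\gamma_k)^{\bar\delta}(1-\bar\gamma_k)^{\delta}.
\]

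The crucial point is that $h$ is in fact a function of the eigenvalues alone. Let $\tilde\gamma_k=\gamma_{n-1-k}$ be the reversed modified Verblunsky coefficients fed into $\cT_n^{-1}$, and let $\Phi_n(z)=\prod_{j=1}^n(z-z_j)$ be the associated monic polynomial, which equals $\det(zI-\cC(\cT_n^{-1}(\tilde\gamma)))$ (the identity of Proposition \ref{prop:CMV1}, an algebraic relation in the Verblunsky coefficients that persists on the sub-unitary locus). Evaluating the normalized polynomials of the modified Szeg\H o recursion \eqref{eq:Szego2} at $z=0$ gives $\Psi_{k+1}^*(0)=(1-\bar{\tilde\gamma}_k)^{-1}\Psi_k^*(0)$, hence $\Psi_n^*(0)=\prod_k(1-\bar{\tilde\gamma}_k)^{-1}$; on the other hand $\Psi_n^*(0)=\Phi_n^*(0)/\Phi_n^*(1)=1/\overline{\Phi_n(1)}$. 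Taking conjugates yields the key identity
\[
\prod_{j=1}^n(1-z_j)=\Phi_n(1)=\prod_{k=0}^{n-1}(1-\tilde\gamma_k)=\prod_{k=0}^{n-1}(1-\gamma_k).
\]
Since $(1-w)^{\bar\delta}(1-\bar w)^{\delta}=|1-w|^{2\Re\delta}e^{2\Im\delta\,\arg(1-w)}$ for every $w\in\D$ (with principal $\arg\in(-\tfrac\pi2,\tfrac\pi2)$), and since the identity above already gives $\prod_k|1-\gamma_k|=\prod_j|1-z_j|$, the claim that $h$ factors through the eigenvalues reduces to the identity of principal arguments $\sum_k\arg(1-\gamma_k)=\sum_j\arg(1-z_j)$.

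This last identity is the main obstacle, because the product identity only pins down the two sums modulo $2\pi$, while each sum ranges in $(-n\pi/2,n\pi/2)$, an interval longer than $2\pi$ once $n\ge 3$. I would resolve it by a connectedness argument. Both sides are continuous functions of $(\gamma_0,\dots,\gamma_{n-1})\in\D^n$: the eigenvalues depend continuously on the matrix entries, and neither any $z_j$ nor any $\gamma_k$ ever reaches $[1,\infty)$, so the branch cut of $\arg(1-\cdot)$ is never crossed. Their difference is a continuous map into $2\pi\Z$, and it vanishes at the base point $\gamma\equiv0$ (where $\Phi_n(z)=z^n$, so all $z_j=0$); as $\D^n$ is connected, the difference vanishes identically. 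Granting this, $h$ equals $\big(\prod_k\tfrac{2\pi c_{a_k,\delta}}{a_k}\big)\prod_{j=1}^n(1-z_j)^{\bar\delta}(1-\bar z_j)^{\delta}$ as a function of the eigenvalues.

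It then follows from the tilting formalism (for any test function $\varphi$, $\int\varphi\,f_\delta=\mathbb{E}[\varphi(Z_0)\,h(Z_0)]$ since $h$ depends only on the eigenvalues) that the eigenvalue density in the circular Jacobi case is the $\delta=0$ density \eqref{eq:tcbe} multiplied by exactly this factor, so \eqref{eq:tcbe} is tilted into the form \eqref{eq:tCJ_joint_density}. Consistency of the normalization is then automatic, and a short Gamma-function computation using $a_k=\beta(n-k)$ and $\Gamma(\tfrac\beta2 j+1)=\tfrac\beta2 j\,\Gamma(\tfrac\beta2 j)$ confirms
\[
\frac{\beta^n}{(2\pi)^n}\prod_{k=0}^{n-1}\frac{2\pi\,c_{a_k,\delta}}{a_k}=\frac{1}{\pi^n n!}\prod_{j=1}^n\frac{\Gamma(\tfrac\beta2 j+1+\delta)\Gamma(\tfrac\beta2 j+1+\bar\delta)}{\Gamma(\tfrac\beta2 j)\Gamma(\tfrac\beta2 j+1+\delta+\bar\delta)}=c_{n,\beta,\delta},
\]
completing the proof.
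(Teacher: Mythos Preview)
Your proof is correct but takes a genuinely different route from the paper's. The paper proceeds by a direct change of variables: it records the Jacobian $|\partial(\alpha)/\partial(\gamma)|=1$, cites the Jacobian $|\partial(\alpha)/\partial(z)|=|\Delta(z)|^2\prod_j(1-|\alpha_j|^2)^{-j}$ from \cite{KK}, and then invokes two product identities (one from \cite{BNR2009} for $\prod(1-\gamma_j)=\prod(1-z_j)$, one from \cite{KK} for $\prod(1-|\gamma_j|^2)^{(\beta/2-1)(j+1)}=\prod_{j,k}(1-z_j\bar z_k)^{\beta/2-1}$) to rewrite the $\gamma$-density in terms of the eigenvalues. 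Your approach instead tilts from the known $\delta=0$ density \eqref{eq:tcbe}: you observe that the Radon--Nikodym derivative of the $\Theta(\cdot,\delta)$ law against the $\Theta(\cdot)$ law is $\prod_k(1-\gamma_k)^{\bar\delta}(1-\bar\gamma_k)^{\delta}$ up to constants, and then argue that this factor is itself a function of the eigenvalues. This is more economical, since it reuses the full $\delta=0$ computation rather than redoing the Jacobian and the $\prod(1-|\gamma_j|^2)$ identity.

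Two points are worth noting. First, your derivation of $\prod_k(1-\gamma_k)=\Phi_n(1)$ from the modified Szeg\H o recursion at $z=0$ is exactly the content of the identity the paper quotes from \cite{BNR2009}, so you are rederiving rather than citing. Second, your connectedness argument for the equality of principal arguments $\sum_k\arg(1-\gamma_k)=\sum_j\arg(1-z_j)$ is a point of genuine added care: the paper simply writes $\prod(1-\gamma_j)^{\bar\delta}(1-\bar\gamma_j)^{\delta}=\prod(1-z_j)^{\bar\delta}(1-\bar z_j)^{\delta}$ from the product identity without discussing branches, whereas you correctly observe that factor-by-factor complex powers require matching the argument sums exactly, not merely modulo $2\pi$, and resolve this cleanly by continuity on the connected domain $\D^n$. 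Your argument here is sound (the eigenvalues of a sub-unitary CMV matrix with all $|\alpha_k|<1$ lie in $\D$, so $1-z_j$ never crosses the branch cut), and it patches a step the paper leaves implicit.
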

\begin{proof}
    
The proof of the statement relies on the following computations of Jacobian determinants. We refer to Section 6 and Appendix B of \cite{KK} for more details. 

Let $\gamma_k,0\le k\le n-1$ be distributed as in Proposition \ref{prop:tCJ_matrix}, and let $(\alpha_0,\alpha_1,\cdots,\alpha_{n-1})=\cT_n^{-1}(\gamma_0,\gamma_1,\cdots,\gamma_{n-1})$. From \eqref{eq:modifiedV} we have
\begin{equation}\label{eq:Jac_alpha_gamma}
\left|\frac{\partial(\alpha_0,\dots,\alpha_{n-1})}{\partial(\gamma_0,\dots,\gamma_{n-1})}\right|=1. 
\end{equation}
Denote by $z_k,1\le k\le n$ the eigenvalues of $\trunc{\CJ}_{n+1,\beta,\delta}$.
It has been shown in \cite{KK} that 
\begin{equation}\label{eq:Jac_alpha_zero}
\left|\frac{\partial(\alpha_0,\dots,\alpha_{n-1})}{\partial(z_1,\dots,z_n)}\right|=|\Delta(z_1,\dots,z_n)|^2\prod_{j=0}^{n-1}(1-|\alpha_j|^2)^{-j},
\end{equation}
where $\Delta(z_1,\dots,z_n)=\prod_{1\le j<k\le n}(z_j-z_k)$ denotes the Vandermonde determinant of $z_k,1\le k\le n$. 
Using $|\alpha_k|=|\gamma_k|$, \eqref{eq:Jac_alpha_gamma}, and \eqref{eq:Jac_alpha_zero}, we obtain 
\begin{equation}\label{eq:Jac_gamma_zero}
\left|\frac{\partial(\gamma_0,\dots,\gamma_{n-1})}{\partial(z_1,\dots,z_n)}\right|=|\Delta(z_1,\dots,z_n)|^2\prod_{j=0}^{n-1}(1-|\gamma_j|^2)^{-j}.
\end{equation}
Proposition 2.5 of \cite{BNR2009} shows that we have
	\begin{equation}\label{eq:CJ_Jacobi1}
	\prod_{j=0}^{n-1}(1-\gamma_j) = \Phi_n(1)=\prod_{j=1}^n(1-z_j),\quad \prod_{j=0}^{n-1}(1-\bar\gamma_j) = \bar\Phi_n(1)=\prod_{j=1}^n(1-\bar z_j).
	\end{equation}
Since $|\alpha_k|=|\gamma_k|$, by Lemma B.1 (vi) of \cite{KK}, we have 	\begin{equation}\label{eq:CJ_Jacobi2}
	\prod_{j=0}^{n-1}(1-|\gamma_j|^2)^{(\frac{\beta}{2}-1)(j+1)}=\prod_{j=0}^{n-1}(1-|\alpha_j|^2)^{(\frac{\beta}{2}-1)(j+1)}=\prod_{j,k=1}^n(1-z_j\bar z_k)^{\frac{\beta}{2}-1}.
	\end{equation}
By \eqref{eq:Jac_gamma_zero}-\eqref{eq:CJ_Jacobi2} and the explicit joint distribution of $\gamma_k,0\le k\le n-1$, the joint density of $z_k,1\le k\le n$ is given by
\begin{align*}
c_{n,\beta,\delta}\prod_{j=0}^{n-1}&(1-|\gamma_j|^2)^{(\frac{\beta}{2}-1)(j+1)}(1-\gamma_j)^{\bar\delta}(1-\bar\gamma_j)^{\delta}|\Delta(z_1,\dots,z_n)|^2 \\
&=
 c_{n,\beta,\delta}\prod_{j,k=1}^n(1-z_j\bar z_k)^{\frac{\beta}{2}-1}\prod_{j=1}^n\left((1-z_j)^{\bar\delta}(1-\bar z_j)^{\delta}\right) |\Delta(z_1,\dots,z_n)|^2. 
\end{align*}
    Collecting the normalizing constants of the joint distribution of $\gamma_k,0\le k\le n-1$, we get 
	\[
	c_{n,\beta,\delta} = \frac{1}{\pi^n n!}\prod_{j=0}^{n-1}\frac{\Gamma(\frac{\beta}{2}(j+1)+1+\delta)\Gamma(\frac{\beta}{2}(j+1)+1+\bar\delta)}{\Gamma(\frac{\beta}{2}(j+1))\Gamma(\frac{\beta}{2}(j+1)+1+\delta+\bar\delta)}. 
	\]
	This finishes the proof. 
\end{proof}

Note that using the explicit description of the joint distribution of the modified Verblunsky coefficients of the random probability measure $\mu_{n,\beta,\delta}^{\textup{CJ}}$, and the method developed in Section 7 of \cite{KK}, one can also obtain the joint density of the perturbed circular Jacobi beta ensemble. We omit the computation to shorten our representation.

\subsection{Edge limit of the truncated circular Jacobi  beta ensemble}\label{sub:CJ_limit}

In this section, we prove the edge limits of the rank-one truncation and multiplicative perturbation of the circular Jacobi beta ensemble. 

Recall the Dirac operator representation $\CJop_{n,\beta,\delta}$ defined in Definition \ref{def:CJop}. 
   We denote by $\rev \tau_{n,\beta,\delta}$ the reversed version of $\CJop_{n,\beta,\delta}$ via Definition \ref{def:rev_Dirac}, and by $\aff \tau_{n,\beta,\delta}$ to be the pulled-back version of $\rev \tau_{n,\beta,\delta}$ via Definition \ref{def:aff_Dirac}.

Our approach will be similar to the one used for the circular beta ensemble case (which correspond to $\delta=0$). We will show that under appropriate transformations, the operators $\aff \tau_{n,\beta,\delta}$ and $\CJop_{n,\beta,\delta}$ are orthogonally equivalent. Note however, that in the $\delta\neq 0$  the measure $\mu_{n,\beta,\delta}^{\CJb}$ is no longer invariant under rotations, which requires us to develop a new method to prove the orthogonal equivalence. 
The key ingredient is the following proposition, providing equivalent descriptions of the conditioned path parameters of $\CJop_{n,\beta,\delta}$. 

\begin{proposition}\label{prop:CJ_conditioned} 
Let $\mu=\mu_{n,\beta,\delta}^{\textup{CJ}}$ and let $\gamma_k,0\le k\le n-1$ be its modified Verblunsky coefficients. Let $b_k,0\le k\le n$ be the path parameters of $\CJop_{n,\beta,\delta}$ in $\D$. Then the following sequences have the same joint distribution.
\begin{enumerate}[(1)]
    \item The path parameters $b_k,0\le k\le n$ conditioned on $b_n=1$.
    
    \item The path parameters corresponding to the sequence of modified Verblunsky coefficients $\bar \gamma_0^\iota,\bar \gamma_1^\iota,\dots,\bar \gamma_{n-2}^\iota,1$.

    \item The `pulled back' path parameters $b_k':=\cA_{\hat b_{n-1},\D}(\hat b_{n-k-1}), 0\le  k\le n$, where $\hat b_{-1}=1$ and $\hat b_k,0\le k\le n-1$ are the first $n$ elements of the path parameters produced by the sequence of modified Verblunsky coefficients $\bar \gamma_{n-2},\bar \gamma_{n-3},\dots,\bar \gamma_0$.
\end{enumerate}
\end{proposition}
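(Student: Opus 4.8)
My plan is to reduce all three descriptions to statements about the modified Verblunsky coefficients, using the bijection between a path $(b_0,\dots,b_n)$ normalized by $b_0=0$ and its modified Verblunsky coefficients, together with the single structural fact that every affine map $\cA_{\gamma,\D}$ fixes the boundary point $1$ (immediate from \eqref{eq:Adef}: $\cA_{\gamma,\D}(1)=1$). Writing $b_n=G_{n-1}(\gamma_{n-1})$ with $G_{n-1}=\cA_{\gamma_0,\D}^{-1}\circ\cdots\circ\cA_{\gamma_{n-2},\D}^{-1}$ as in \eqref{eq:bk_cA}, the map $G_{n-1}$ fixes $1$, so $b_n=1$ holds exactly when $\gamma_{n-1}=1$. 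I would then prove $(2)=(3)$ as a deterministic (pathwise) identity and $(1)=(2)$ as a distributional identity; chaining them gives the proposition.

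For $(2)=(3)$ I would establish the general pathwise rule for how the ``reverse-and-pull-back'' operation acts on modified coefficients. If $p_0=0,\dots,p_m$ is generated by $c_0,\dots,c_{m-1}$ and $p'_k=\cA_{p_m,\D}(p_{m-k})$ with $p_{-1}=1$, then with $\Phi_k=\cA_{c_0,\D}^{-1}\circ\cdots\circ\cA_{c_{k-1},\D}^{-1}$ and $g_k=\cA_{c_k,\D}^{-1}$ the generating isometry of $p'$ at index $j$ is $\cA_{p_m,\D}\circ\Phi_{m-j}$, so the $j$-th modified coefficient of $p'$ is
\[
(\cA_{p_m,\D}\Phi_{m-j})^{-1}\bigl(\cA_{p_m,\D}(p_{m-j-1})\bigr)=\Phi_{m-j}^{-1}(p_{m-j-1})=g_{m-1-j}^{-1}(0)=\cA_{c_{m-1-j},\D}(0)=c_{m-1-j}^{\iota},
\]
using \eqref{eq:gamma_iota_def} and the fact that all maps involved fix $1$, so the normalization underlying \eqref{eq:b_rec} is preserved; the appended point $p_{-1}=1$ contributes a final coefficient $1$. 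Applying this with $c_j=\bar\gamma_{n-2-j}$ turns the coefficients $(\bar\gamma_{n-2},\dots,\bar\gamma_0)$ of $\hat b$ into $(\bar\gamma_0^{\iota},\dots,\bar\gamma_{n-2}^{\iota},1)$, exactly the coefficients in $(2)$. Hence the paths in $(2)$ and $(3)$ coincide for every realization of the $\gamma_k$'s.

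For $(1)=(2)$, the essential point — and the main obstacle — is that ``conditioning on $b_n=1$'' must be read as a disintegration with respect to the endpoint $b_n$, which is genuinely different from setting $\gamma_{n-1}=1$ and leaving the interior untouched (a Borel–Kolmogorov phenomenon, since $b_n=G_{n-1}(\gamma_{n-1})$ with $G_{n-1}$ depending on the interior coefficients). I would compute the conditional law of $(\gamma_0,\dots,\gamma_{n-2})$ by the change of variables $\gamma_{n-1}\mapsto b_n=G_{n-1}(\gamma_{n-1})$ on the circle: since $G_{n-1}^{-1}$ fixes $1$ with real positive derivative $\prod_{k=0}^{n-2}\frac{1-|\gamma_k|^2}{|1-\gamma_k|^2}$, transporting the boundary density of $\gamma_{n-1}\sim\Theta(1,\delta)$ (Proposition \ref{prop:CJ_mVerblunsky}, Definition \ref{def:Theta_delta}) back to $1$ and including this Jacobian reweights the independent interior coefficients by
\[
\prod_{k=0}^{n-2}\left(\frac{1-|\gamma_k|^2}{|1-\gamma_k|^2}\right)^{2\Re\delta+1}.
\]
It then remains to verify the single-coordinate identity that tilting $\Theta(a+1,\delta)$ by $\bigl(\tfrac{1-|\gamma|^2}{|1-\gamma|^2}\bigr)^{2\Re\delta+1}$ yields exactly the law of $\bar\gamma^{\iota}$ for $\gamma\sim\Theta(a+1,\delta)$; this is a direct density computation on $\D$ using $1-w^\iota=\tfrac{1-|w|^2}{1-w}$ and the Jacobian $|J_\iota(w)|=\tfrac{1-|w|^2}{|1-w|^2}$ of the modulus-preserving involution $\iota$. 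By independence of the $\gamma_k$'s this matches the interior coefficients of $(2)$, giving $(1)\ed(2)$, and hence $(1)\ed(2)=(3)$.

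The steps I expect to cost the most care are making the disintegration rigorous — the boundary density $\Theta(1,\delta)$ is degenerate at $1$, so the conditional law should be obtained as the weak limit as $b_n\to 1$, with the $b_n$-dependent normalizations cancelling — and checking that the pathwise reversal identity in $(2)=(3)$ respects the ``normalization at $1$'' convention at every step. The remaining pieces, namely the derivative of $G_{n-1}$ at its fixed point and the single-coordinate tilt, are routine.
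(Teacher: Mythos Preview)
Your proposal is correct, and the (2)=(3) part matches the paper's argument essentially verbatim (the paper unwinds the composition $\cA_{\hat b_{n-1},\D}\circ\cA_{\bar\gamma_{n-2},\D}^{-1}\circ\cdots$ exactly as you do and cites the same identity $\cA_{\gamma,\D}^{-1}=\cA_{\gamma^\iota,\D}$). For (1)=(2), however, you take a genuinely different route.

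The paper works in half-plane coordinates $z_k=\cU^{-1}(b_k)$, treats $(z_k)$ as a time-inhomogeneous Markov chain, and first proves separately (Proposition~\ref{prop:hitting}) that the endpoint $b_n$ has law $\Theta(1,\delta)$ regardless of $n$, hence the conditional density of $z_n$ given $z_k=x+iy$ is $y^{-1}g((q-x)/y)$ with $g$ the Pearson~IV density. Conditioning on $b_n=1$ is then realized as $q\to\infty$ via Doob's $h$-transform with $h(z)=(\Im z)^{2\Re\delta+1}$, and Fact~\ref{fact:iota} identifies the tilted transition kernel with that of $\cU^{-1}(\bar\gamma_k^\iota)$. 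Your approach bypasses Proposition~\ref{prop:hitting} and the Markov-chain language entirely: you exploit the independence of the $\gamma_k$'s and the explicit relation $b_n=G_{n-1}(\gamma_{n-1})$, compute the derivative of the fixed-point-preserving map $G_{n-1}^{-1}$ at $1$, and read off the tilt $\prod_k\bigl(\tfrac{1-|\gamma_k|^2}{|1-\gamma_k|^2}\bigr)^{2\Re\delta+1}$ directly. The two approaches in fact compute the very same tilt, since $\Im z_{k+1}/\Im z_k=1+w_k=\tfrac{1-|\gamma_k|^2}{|1-\gamma_k|^2}$ by \eqref{def:wv} and \eqref{eq:z_rec}; your single-coordinate density check is the disk-model counterpart of the paper's Fact~\ref{fact:iota}. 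What your route buys is a shorter, more self-contained argument that never needs the hitting distribution of $b_n$; what the paper's route buys is a structural picture (conditioning changes $\delta\mapsto -(1+\bar\delta)$ at the level of transition kernels) that feeds directly into the analogous continuum statement for $\tau_{\beta,\delta}$. The care you flag for the disintegration at the degenerate point $b_n=1$ is warranted; the paper handles it by the same limiting procedure in the $q\to\infty$ formulation.
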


The proof relies on a special decomposition property of the $\Theta(a+1,\delta)$ distribution and an application of Doob's h-transform. Before presenting the proof, we first introduce a Pearson-type distribution, and a couple of facts about the $\Theta(a+1,\delta)$ distribution. 

\begin{definition}\label{def:Pearson}
	For $m>1/2$ and $\mu\in \R$ we denote by $P_{IV}(m,\mu)$ the distribution of the (unscaled) Pearson type IV distribution on $\R$ that has density function 
	\begin{align}
	\frac{2^{2m-2}|\Gamma(m+\frac{\mu}{2} i)|^2}{\pi \, \Gamma(2m-1)} (1+x^2)^{-m}e^{-\mu \arctan x}.
	\end{align}
\end{definition}

Note that the random variable $\Theta(1,\delta)$ can be connected to the Pearson random variable $P_{IV}(\Re\delta+1,-2\Im\delta)$ via the mapping  $e^{i\theta}\mapsto -\cot (\theta/2)$. 

\begin{fact}[\cite{LV}]\label{fact:factor}
	Suppose that $\gamma\sim \Theta(a+1,\delta)$ with $a\ge 0$ and $\Re \delta>-1/2$. Define $w, v\in \R$ with $\tfrac{2\gamma}{1-\gamma}= w-iv$. Then the joint density of $(v,1+w)\in\R\times \R_+$ is given by	\begin{equation}\label{eq:joint}
	f_{a,\delta}(x,y)=c_{a,\delta}\,y^{\frac{a}{2}-1}(x^2+(1+y)^2)^{-(\frac{a}{2}+\Re\delta+1)}e^{2\Im\delta \arctan \frac{x}{1+y}},
	\end{equation}
	with $c_{a,\delta}=2^{a+2\Re\delta}\frac{\Gamma(a/2+1+\delta)\Gamma(a/2+1+\bar\delta)}{\pi\Gamma(a/2)\Gamma(a/2+1+2\Re\delta)}$.
 
	Moreover, the random variables $w$ and $\frac{v}{2+w}$ are independent.  The distribution of the random variable $\tfrac{v}{2+w}$ is given by $P_{IV}(\tfrac{a}{2}+\Re\delta+1,-2 \Im \delta)$. The distribution of $1+w$ is the same as the distribution of $G_1/G_2$, where $G_1, G_2$ are the  independent (standard) Gamma random variables with parameters $\frac{a}{2}$ and $\frac{a}{2}+2\Re\delta+1$, respectively. 
\end{fact}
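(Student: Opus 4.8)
The plan is to recognize the map $\gamma\mapsto(v,1+w)$ as the inverse Cayley transform and then push the $\Theta(a+1,\delta)$ density forward to $\HH$. Writing $\zeta=x+iy$, the defining relation $\tfrac{2\gamma}{1-\gamma}=w-iv$ multiplied by $i$ gives $\tfrac{2i\gamma}{1-\gamma}=v+iw$, so by \eqref{def:wv} and \eqref{eq:U} we have $\cU^{-1}(\gamma)=v+i(1+w)$; that is, $(x,y)=(v,1+w)$ is exactly $\zeta=\cU^{-1}(\gamma)\in\HH$, with inverse $\gamma=\cU(\zeta)=\tfrac{\zeta-i}{\zeta+i}$. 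First I would record the three elementary Cayley identities obtained by substituting $\gamma=\cU(\zeta)$: the conformal Jacobian $|\cU'(\zeta)|^2=4/|\zeta+i|^4$, the hyperbolic factor $1-|\gamma|^2=4y/|\zeta+i|^2$, and $1-\gamma=2i/(\zeta+i)$.

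The only slightly delicate computation is the factor $(1-\gamma)^{\bar\delta}(1-\bar\gamma)^\delta$. Writing $1-\gamma=\rho e^{i\phi}$ one checks that this product equals the real quantity $\rho^{2\Re\delta}e^{2\Im\delta\,\phi}$, where $\rho=|1-\gamma|=2/|\zeta+i|$ and $\phi=\arg(1-\gamma)$. Since $\zeta+i=x+i(1+y)$ lies in $\HH$, one has $\arg(\zeta+i)=\tfrac\pi2-\arctan\tfrac{x}{1+y}$, hence $\phi=\tfrac\pi2-\arg(\zeta+i)=\arctan\tfrac{x}{1+y}$. Multiplying the $\Theta(a+1,\delta)$ density of Definition \ref{def:Theta_delta} by $|\cU'(\zeta)|^2$ and substituting these identities, the powers of $|\zeta+i|^2=x^2+(1+y)^2$ collect to the exponent $-(\tfrac a2+\Re\delta+1)$, the constants collect to a factor $2^{a+2\Re\delta}$ (so that $2^{a+2\Re\delta}$ times the $\Theta$-constant equals the $c_{a,\delta}$ of the statement, using $\delta+\bar\delta=2\Re\delta$), and one obtains exactly $f_{a,\delta}(x,y)$ in \eqref{eq:joint}.

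For the ``moreover'' part I would change variables once more from $(x,y)$ to $(s,y)$ with $s=\tfrac{x}{1+y}=\tfrac{v}{2+w}$ (noting $2+w=1+y$), whose Jacobian is $1+y$. Using $x^2+(1+y)^2=(1+y)^2(1+s^2)$ and $\arctan\tfrac{x}{1+y}=\arctan s$, the joint density of $(s,y)$ factors as
\[
\Big(c_{a,\delta}\,y^{\frac a2-1}(1+y)^{-(a+2\Re\delta+1)}\Big)\Big((1+s^2)^{-(\frac a2+\Re\delta+1)}e^{2\Im\delta\arctan s}\Big),
\]
a product of a function of $y$ alone and a function of $s$ alone. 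This yields the independence of $w=y-1$ and $\tfrac{v}{2+w}=s$. Matching the $s$-factor against Definition \ref{def:Pearson} identifies the law of $s$ as $P_{IV}(\tfrac a2+\Re\delta+1,-2\Im\delta)$, and the $y$-factor $y^{a/2-1}(1+y)^{-(a+2\Re\delta+1)}$ is the beta-prime density, i.e.\ the law of $G_1/G_2$ for independent Gamma variables with shapes $\tfrac a2$ and $\tfrac a2+2\Re\delta+1$ (since for such a ratio the density is $\propto t^{\alpha-1}(1+t)^{-(\alpha+\beta)}$ with $\alpha=\tfrac a2$, $\alpha+\beta=a+2\Re\delta+1$).

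The main obstacle is bookkeeping rather than conceptual: keeping the branch of $(1-\gamma)^{\bar\delta}(1-\bar\gamma)^\delta$ and of $\arg(\zeta+i)$ consistent so that the exponential tilt appears as $e^{2\Im\delta\arctan\frac{x}{1+y}}$ with the correct sign, and then verifying that all the powers of $2$ and the Gamma factors assemble into precisely the constant $c_{a,\delta}$ claimed. Once the density $f_{a,\delta}$ is in hand, the three distributional assertions drop out immediately from the factorization, with the normalizations automatic since each factor integrates to a probability density.
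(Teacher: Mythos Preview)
Your proof is correct. The paper itself does not supply a proof of this statement; it is quoted as a fact from \cite{LV}, so there is no in-paper argument to compare against. Your approach---recognizing $(v,1+w)=\cU^{-1}(\gamma)$, pushing the $\Theta(a+1,\delta)$ density forward via the Cayley transform, and then performing the further change of variables $(x,y)\mapsto(s,y)$ with $s=x/(1+y)$ to obtain the factorization---is exactly the natural one, and all your bookkeeping (the Jacobian $4/|\zeta+i|^4$, the identity $1-|\gamma|^2=4y/|\zeta+i|^2$, the argument computation giving $\phi=\arctan\tfrac{x}{1+y}$, the collection of powers of $2$ into $2^{a+2\Re\delta}$, and the identification of the $y$-marginal as a beta-prime density) checks out. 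The independence and the marginal identifications follow immediately from the product form, as you say.
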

Using Fact \ref{fact:factor} and the definition of $\gamma^\iota$ in \eqref{eq:gamma_iota_def}, one obtains the following property.

\begin{fact}\label{fact:iota}
    Suppose that $\gamma\sim \Theta(a+1,\delta)$ with $a\ge 0$ and $\Re \delta>-1/2$. 
Then  $\gamma^\iota\sim \Theta(\tl a+1,\tl \delta)$ with $\tl a=a+4\Re\delta+2$, $\tl \delta=-(1+\delta)$.
\end{fact}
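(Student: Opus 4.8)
The plan is to transfer everything to the upper half-plane through the Cayley transform $\cU$ and apply Fact \ref{fact:factor}, which has already rewritten the complex-power density of $\Theta(a+1,\delta)$ as a manifestly real joint density and so removes all branch-cut bookkeeping. Recall from \eqref{def:wv} that $\cU^{-1}(\gamma)=v+i(1+w)$, so the pair $(v,1+w)$ appearing in Fact \ref{fact:factor} is exactly $\big(\Re\,\cU^{-1}(\gamma),\,\Im\,\cU^{-1}(\gamma)\big)\in\R\times\R_+$, with joint density $f_{a,\delta}$. Writing $z_0=\cU^{-1}(\gamma)=x+iy$ with $x=v$ and $y=1+w>0$, I would carry out the whole computation in the real coordinates $(x,y)$.

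First I would determine how $\iota$ acts in these coordinates. Since $\gamma^\iota=\cA_{\gamma,\D}(0)$ by \eqref{eq:gamma_iota_def}, and $\cA_{\gamma,\D}$ is conjugate through $\cU$ to the affine isometry $\cA_{z_0,\HH}$ of \eqref{e:A1def} by \eqref{eq:A_equiv}, using $\cU^{-1}(0)=i$ gives
\[
\cU^{-1}(\gamma^\iota)=\cA_{z_0,\HH}(i)=\frac{i-x}{y}=-\frac{x}{y}+\frac{i}{y}.
\]
Hence, in the coordinates $(x',y')=\big(\Re\,\cU^{-1}(\gamma^\iota),\,\Im\,\cU^{-1}(\gamma^\iota)\big)$, the involution $\iota$ is realised by the explicit map $T\colon(x,y)\mapsto(-x/y,\,1/y)$, which one checks is an involution of $\HH$ with Jacobian $\big|\tfrac{\partial(x,y)}{\partial(x',y')}\big|=(y')^{-3}$; this incidentally re-proves that $\iota$ is an involution.

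Next I would push the density $f_{a,\delta}$ forward through $T$: the density of $(x',y')$ is $f_{a,\delta}(-x'/y',\,1/y')\,(y')^{-3}$. Substituting $x=-x'/y'$, $y=1/y'$ into the three factors of $f_{a,\delta}$ and using $x^2+(1+y)^2=\big((x')^2+(1+y')^2\big)/(y')^2$ and $\arctan\frac{x}{1+y}=-\arctan\frac{x'}{1+y'}$, a short collection of the powers of $y'$ shows that the exponent of $y'$ becomes $\tfrac a2+2\Re\delta=\tfrac{\tilde a}2-1$, the exponent of $(x')^2+(1+y')^2$ becomes $-(\tfrac a2+\Re\delta+1)=-(\tfrac{\tilde a}2+\Re\tilde\delta+1)$, and the exponential becomes $e^{-2\Im\delta\,\arctan\frac{x'}{1+y'}}=e^{2\Im\tilde\delta\,\arctan\frac{x'}{1+y'}}$, with $\tilde a=a+4\Re\delta+2$ and $\tilde\delta=-(1+\delta)$. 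The transformed density therefore has exactly the form $f_{\tilde a,\tilde\delta}$, and transporting back to $\D$ through $\cU$—by the same Cayley change of variables that underlies Fact \ref{fact:factor}—identifies the law of $\gamma^\iota$ as $\Theta(\tilde a+1,\tilde\delta)$.

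The only point needing genuine care is the identification of the nonlinear map $T$ and its Jacobian, where a careless evaluation of $\cA_{\gamma,\D}(0)$ would introduce sign or inversion errors; the algebraic matching above is then routine. The normalizing constant needs no separate work, since the transformed density is the push-forward of a probability density under a bijection and so is automatically normalised; one may nonetheless verify the symmetries $\tfrac{\tilde a}2+1+\tilde\delta=\tfrac a2+1+\bar\delta$ and $\tfrac{\tilde a}2+1+2\Re\tilde\delta=\tfrac a2$ (together with $\tilde a+2\Re\tilde\delta=a+2\Re\delta$ for the powers of $2$) to see that the prefactor agrees with the closed form $c_{\tilde a,\tilde\delta}$. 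Finally, the argument presumes $a>0$, so that $\cU^{-1}(\gamma)$ is interior to $\HH$ and Fact \ref{fact:factor} applies; the boundary case $a=0$ is degenerate, as then $\gamma\in\partial\D$ forces $\gamma^\iota=1$, and it is not needed in the applications, where $\iota$ is applied only to coefficients $\gamma_k$ with $a=\beta(n-k-1)>0$.
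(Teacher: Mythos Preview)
Your proof is correct and follows essentially the same approach as the paper: both pass to the upper half-plane via the Cayley transform, identify how $\iota$ acts on the coordinates $(v,1+w)$ of Fact~\ref{fact:factor}, and then push forward the joint density $f_{a,\delta}$. The only cosmetic difference is that the paper records the map via the pair of identities $1+w^\iota=1/(1+w)$ and $v^\iota/(2+w^\iota)=-v/(2+w)$ before reading off the transformed density, whereas you write it as $T(x,y)=(-x/y,1/y)$ and compute the Jacobian explicitly; these are the same map in slightly different packaging.
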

\begin{proof}
With a bit of abuse of notation, define $w,v,w^\iota,v^\iota \in\R$ with
\begin{equation}\label{eq:wv_iota}
w-iv=\frac{2\gamma}{1-\gamma},\quad \quad w^\iota-iv^\iota=\frac{2\gamma^\iota}{1-\gamma^\iota}.
\end{equation}
They satisfy the identities  $\cU^{-1}(\gamma)=v+i(1+w)$ and $\cU^{-1}(\gamma^\iota)=v^\iota+i(1+w^\iota)$.
By \eqref{eq:wv_iota} and \eqref{eq:gamma_iota_def} we have 
\[
1+w^{\iota}= \frac{1}{1+w},\quad \quad\frac{v^\iota}{2+w^\iota} = -\frac{v}{2+w}.
\] 
From  Fact \ref{fact:factor}, we obtain that the joint density of $(v^{\iota},1+w^{\iota})$ is proportional to 
\[
y^{\frac{a}{2}+2\Re\delta}(x^2+(1+y)^2)^{-(\frac{a}{2}+\Re\delta+1)}e^{-2\Im\delta \arctan \frac{x}{1+y}}.
\]
This shows that if $\gamma\sim\Theta(a+1,\delta)$, then $\gamma^\iota\sim\Theta(\tl a+1,\tl \delta)$, with $\tl a:=a+4\Re\delta+2$ and $\tilde \delta:=-(1+\delta)$. Note that the random variable $\gamma^\iota$ is well-defined since the conditions $\tl a>0$ and $\tl a/2+\Re \tl\delta=a/2+\Re\delta>-1/2$ are still satisfied. 
\end{proof}

The next result shows that the right boundary point of the $\CJop_{n,\beta,\delta}$ operator has the same distribution as a $\Theta(1,\delta)$ random variable. 
\begin{proposition}\label{prop:hitting}
    Let $b_n$ be the right boundary point of the Dirac operator $\CJop_{n,\beta,\delta}$ in $\D$. Then $b_n\ed \Theta(1,\delta)$. 
\end{proposition}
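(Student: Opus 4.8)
The plan is to compute the law of $b_n$ by unfolding the composition that defines it and running an induction in which the $\Theta(1,\delta)$ law is preserved at every step. By \eqref{eq:bk_cA} the right boundary point is
\[
b_n=\cA_{\gamma_0,\D}^{-1}\circ\cdots\circ\cA_{\gamma_{n-1},\D}^{-1}(0),
\]
where $\gamma_0,\dots,\gamma_{n-1}$ are the independent modified Verblunsky coefficients of $\mu_{n,\beta,\delta}^{\CJb}$, with $\gamma_k\sim\Theta(\beta(n-k-1)+1,\delta)$ by Proposition~\ref{prop:CJ_mVerblunsky}. Since $\cA_{\gamma,\D}^{-1}=\cA_{\gamma^\iota,\D}$ and $\iota$ is an involution with $\cA_{\eta,\D}(0)=\eta^\iota$ (see \eqref{eq:gamma_iota_def}), the innermost value is $\cA_{\gamma_{n-1},\D}^{-1}(0)=\gamma_{n-1}$. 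As $\gamma_{n-1}\sim\Theta(1,\delta)$ lives on $\partial\D$ and each $\cA_{\gamma,\D}^{-1}$ is an isometry of the Poincar\'e disk (hence maps $\partial\D$ into $\partial\D$), we conclude $b_n\in\partial\D$, consistent with the claim.

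First I would set up the inside-out chain $\zeta^{(1)}=\gamma_{n-1}$ and $\zeta^{(j)}=\cA_{\gamma_{n-j},\D}^{-1}(\zeta^{(j-1)})$ for $2\le j\le n$, so that $b_n=\zeta^{(n)}$. Because $\zeta^{(j-1)}$ is a function of $\gamma_{n-1},\dots,\gamma_{n-j+1}$ only, it is independent of $\gamma_{n-j}\sim\Theta(\beta(j-1)+1,\delta)$. The whole statement then reduces, by induction on $j$ with base case $\zeta^{(1)}\sim\Theta(1,\delta)$, to the following invariance lemma: \emph{if $\zeta\sim\Theta(1,\delta)$ and $\gamma\sim\Theta(a+1,\delta)$ (with $a>0$, $\Re\delta>-1/2$) are independent, then $\cA_{\gamma,\D}^{-1}(\zeta)\sim\Theta(1,\delta)$.} Applying this lemma with $a=\beta(j-1)>0$ at each step propagates the $\Theta(1,\delta)$ law from $\zeta^{(1)}$ up to $\zeta^{(n)}=b_n$, which finishes the proof.

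The main obstacle is this invariance lemma, which I would prove by a direct density computation. Writing $W=\cA_{\gamma,\D}^{-1}(\zeta)\in\partial\D$ and conditioning on $\gamma$, the pushforward of the $\Theta(1,\delta)$ density on $\partial\D$ under the disk automorphism $\cA_{\gamma,\D}^{-1}$ introduces the Jacobian $|\cA_{\gamma,\D}'(w)|$ (a Poisson-kernel-type factor), so the density of $W$ at $w\in\partial\D$ is the $\gamma$-average
\[
\int_\D p_{\Theta(1,\delta)}\!\big(\cA_{\gamma,\D}(w)\big)\,\big|\cA_{\gamma,\D}'(w)\big|\,p_{\Theta(a+1,\delta)}(\gamma)\,dA(\gamma).
\]
Substituting the explicit densities from Definition~\ref{def:Theta_delta} and simplifying the Blaschke factors $\tfrac{\zeta-\gamma}{1-\bar\gamma\zeta}$ via the standard $1-|\cdot|^2$ identities on $\partial\D$, the integral over $\D$ becomes a Beta-type integral of exactly the form evaluated in \cite{KK} and underlying Fact~\ref{fact:factor}; I expect it to collapse to $p_{\Theta(1,\delta)}(w)$ with the correct normalizing constant. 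Equivalently, one may verify the identity in the half-plane: under the Cayley map \eqref{eq:U} the transformation $\cA_{\gamma,\D}^{-1}$ becomes the affine map $\omega\mapsto Y\omega+V$ with $V+iY=\cU^{-1}(\gamma)$, the $\Theta(1,\delta)$ law becomes $P_{IV}(\Re\delta+1,-2\Im\delta)$, and the factorization in Fact~\ref{fact:factor} (independence of $Y=1+w$ and $V/(1+Y)$, the former a ratio of Gammas and the latter a shifted Pearson variable) reduces the claim to an explicit identity among Pearson-$IV$ and Gamma-ratio variables. The careful bookkeeping of the $\arctan$/phase factors in these densities is where the real work lies.
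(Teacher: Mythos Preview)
Your reduction is correct and matches the paper's: both proofs hinge on the invariance lemma that $\cA_{\gamma,\D}^{-1}(\zeta)\sim\Theta(1,\delta)$ whenever $\zeta\sim\Theta(1,\delta)$ and $\gamma\sim\Theta(a+1,\delta)$ are independent, applied inductively (the paper peels off the first step $\gamma_0$ and invokes affine invariance of the transition kernel, you peel from the inside; the computational content is identical). For the lemma itself the paper bypasses the Beta-integral or half-plane bookkeeping you anticipate: after multiplying the two densities the factors $(1-z)^{\bar\delta}(1-\bar z)^\delta$ cancel and the remaining $\gamma$-integral is $\int_\D (1-|z|^2)^{a/2+\delta+\bar\delta}(1-\bar z\eta)^{-\delta-1}(1-z\bar\eta)^{-\bar\delta-1}\,dz$, which the substitution $z\mapsto z\bar\eta$ (a rotation, since $|\eta|=1$) shows is independent of $\eta$, forcing the result to be a constant multiple of---and hence equal to---the $\Theta(1,\delta)$ density.
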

\begin{proof}
    Fix $n$, and  let $b_k:=b_k^{(n)},0\le k\le n-1$ be the path parameters of $\CJop_{n,\beta,\delta}$ in $\D$, these solve the recursion \eqref{eq:b_rec} with $b_0=0$. We can consider the solution $b_{k,\gamma}, 0\le k\le n$ of this recursion for any initial value $b_{0,\gamma}=\gamma\in \D$. We denote the probability density function of $b_{n,\gamma}$ by $P_n(\gamma,\eta)$.
    
Recall from Proposition \ref{prop:CJ_mVerblunsky} that the  modified Verblunsky coefficients $\gamma_k=\gamma_k^{(n)},0\le k\le n-1$ of $\CJop_{n,\beta,\delta}$ are independent, with $\gamma_k\sim \Theta(\beta(n-k-1)+1,\delta)$. 

    In the case when $n=1$, we get $b_n=b_1=\gamma_0$  from the recursion \eqref{eq:b_rec}, so the statement follows.  We also have 
    \begin{align*}
    P_1(0,\eta) = c_\delta(1-\eta)^{\bar \delta}(1-\bar \eta)^{\delta},\quad c_\delta =  \frac{\Gamma(1+\delta)\Gamma(1+\bar \delta)}{\Gamma(1+\delta+\bar \delta)}.  
    \end{align*}
    Note  that by \eqref{eq:b_rec} the distribution of $b_{k+1}$ given $b_k$ is invariant with respect to isometries of the disk that preserve  $1$. These isometries are just the Poincar\'e disk version of the affine isometries of $\HH$,  which can be parameterized as $\cA_{a,\D}(z)=\frac{z-a}{1-\bar az}\frac{1-\bar a}{1-a}, a\in \D$ according to \eqref{eq:Adef}. Therefore, we have 
    \begin{align*}
    P_1(\gamma,\eta)=c_\delta\left(\frac{(1-|\gamma|^2)(1-\eta)}{(1-\bar\gamma \eta)(1-\gamma)}\right)^{\bar \delta}\left(\frac{(1-|\gamma|^2)(1-\bar\eta)}{(1-\gamma\bar \eta)(1-\bar \gamma)}\right)^{\delta}\frac{1-|\gamma|^2}{|1-\bar\gamma \eta|^2}.
    \end{align*}

    For $n\ge 2$ we will proceed by induction. Assume that $P_{n-1}(\gamma,\eta)=P_1(\gamma,\eta)$ (or equivalently, $P_{n-1}(0,\eta)=P_1(0,\eta)$). The proof will be completed if we can show that $P_n(0,\eta)=P_1(0,\eta)$. Let 
    \begin{align*}
        f_{a,\delta}= c_{a,\delta}(1-|z|^2)^{\frac{a}{2}-1}(1-z)^{\bar\delta}(1-\bar z)^{\delta},\quad a=\beta(n-1)
    \end{align*}
    be the density function of $\gamma_0^{(n)}$ as in Definition \ref{def:Theta_delta}.     
    Note that  $\gamma_0^{(n)}$ is independent of $\gamma_k^{(n)}, 1\le k\le n-1$, which have the same joint distribution as $\gamma_k^{(n-1)}, 0\le k\le n-2$. Hence from \eqref{eq:b_rec} we get 
    \begin{align*}
        P_n(0,\eta) &= \int_{z\in\D} P_{n-1}(z,\eta) f_{a,\delta}(z)dz=\int_{z\in\D} P_{1}(z,\eta) f_{a,\delta}(z)dz\\
        &=       
        c_\delta (1-\eta)^{\bar\delta}(1-\bar\eta)^{\delta}\int_{z\in\D}c_{a,\delta}\frac{(1-|z|^2)^{\frac{a}{2}+\delta+\bar\delta}}{(1-\bar z\eta)^{\delta+1}(1-z\bar \eta)^{\bar\delta+1}} dz.
    \end{align*}
    Using the change of variable $z\mapsto z\bar \eta$ we see that the integral does not depend on $\eta$. Hence $P_n(0,\eta)$ is a constant 
    multiple of $P_1(0,\eta)$ which means that it must be equal to it. This completes the induction step and finishes the proof. 
\end{proof}

Now we turn to the proof of Proposition \ref{prop:CJ_conditioned}.

\begin{proof}[Proof of Proposition \ref{prop:CJ_conditioned}]
We first prove the equivalence between (2) and (3). By the recursion \eqref{eq:bk_cA} and the fact $\cA_{\gamma,\D}^{-1}(0)=\gamma$, we have  
 \begin{align*}
b_k' &= \cA_{\bar \gamma_{0},\D}\circ\cA_{\bar \gamma_{1},\D}\circ\cdots\circ \cA_{\bar  \gamma_{n-2},\D}\big(\cA_{\bar \gamma_{n-2},\D}^{-1}\circ\cdots\circ\cA_{\bar \gamma_{k},\D}^{-1}(0)\big)\\
&=\cA_{\bar \gamma_{0}^{\iota},\D}^{-1}\circ\cdots\circ\cA_{\bar {\gamma}_{k-1}^\iota,\D}^{-1}(0).
 \end{align*}
 This shows that the sequence $b_k',0\le k\le n-1$ are the first $n$ elements of the path produced by $\bar\gamma_0^\iota,\dots,\bar\gamma_{n-2}^{\iota}$. (This fact was also observed in Lemma 50 of \cite{BVBV_palm}.) 
By setting $\tl b_{-1}=1$, we have $b_n'=\cA_{b_{n-1},\D}(1)=1$. Note also, that $b_n'=1$ if and only if the corresponding last modified Verblunsky coefficient is equal to $1$. This proves that the path parameters described in (2) and (3) are equal in law. 

We now prove that the paths described in (1) and (2) have the same distribution. 
Let $z_k=\cU^{-1}(b_k),0\le k\le n$ be the path parameters of $\CJop_{n,\beta,\delta}$ in $\HH$. 
By \eqref{eq:z_rec} this is a (time-inhomogeneous) Markov chain, with transition densities that are invariant under affine transformations. By Proposition \ref{prop:hitting}  it follows that 
$z_n\ed \cU^{-1}(\gamma_{n-1})\sim P_{IV}(\Re\delta+1,-2\Im\delta)$ with density 
\[
    g(q)= \frac{4^{\Re\delta}|\Gamma(1+\delta)|^2}{\pi \, \Gamma(2\Re\delta+1)} (1+q^2)^{-\Re\delta-1}e^{2\Im\delta\arctan q}. 
\]
Moreover, the proof of the same proposition also implies that the conditional distribution of $z_n$ given $z_k=z=x+i y$ for a fixed $0\le k\le n-2$ is the same as that of $y \cU^{-1}(\gamma_{n-1})+x$. This random variable has density 
\[
g_{x,y}(q):=y^{-1}g(\frac{q-x}{y}).
\]
For any fixed $z=x+iy, z'=x'+i y'\in \HH$ we get
\begin{equation}\label{eq:q_limit}
    \lim_{q\to\infty} \frac{g_{x',y'}(q)}{g_{x,y}(q)}=\frac{h(z')}{h(z)}, \qquad h(z):=(\Im z)^{2\Re\delta+1}.
    \end{equation}
Let $f_{a_k,\delta}(x,y)$ be the density function of $\cU^{-1}(\gamma_k)$ defined via \eqref{eq:joint} with parameters $a_k=\beta(n-k-1)$ and $\delta$. 
Then the transition density of the Markov chain $z_k, 0\le k\le n$ is given by 
\[
P\Big((z,k),(z',k+1) \Big)= f_{a_k,\delta}\left(\frac{\Re(z'-z)}{\Im z},\frac{\Im z'}{\Im z}\right), \qquad 0\le k\le n-1. 
\]
Now consider the distribution of $z_k, 0\le k\le n$ conditioned on $z_n=q$ with $q\to \infty$.  By Doob's h-transform,  the conditioned transition density is given by 
\begin{align*}
Q\Big((z,k),(z',k+1)\Big)&=P\Big((z,k),(z',k+1) \Big) \frac{h(z')}{h(z)}\\
&=c_{a_k,\delta}\,v^{\frac{\beta}{2}(n-k-1)+2\Re\delta}(u^2+(1+v^2))^{-\frac{\beta}{2}(n-k-1)-\Re\delta-1}e^{2\Im\delta\arctan (u/(1+v))}
\end{align*}
where $u=(\Re z'-\Re z)/\Im z$ and $v=\Im z'/\Im z$. By Proposition \ref{fact:iota}, $Q((z,k),(z',k+1))$ is exactly the density function of the random variable $\cU^{-1}(\bar\gamma_k^\iota)$. This proves the equivalence between the statements (1) and (2), and hence completes the proof.
\end{proof}

By Proposition \ref{prop:CJ_conditioned} and Fact \ref{fact:iota} we see that the effect of conditioning the path parameters in $\D$ to hit $1$ is equivalent to changing the parameter $\delta\mapsto -(1+\bar \delta)$. This coincides with a similar factorization lemma for the generating path of the $\tau_{\beta,\delta}$ operator, see Theorem 43 of \cite{LV}.

\begin{corollary}\label{cor:CJ_factor}
    Consider the same setup as in Proposition \ref{prop:CJ_conditioned},
    in particular the  path $b_k', 0\le k\le n$ defined in (3).
      Let $\eta\sim\Theta(1,\delta)$ be independent of $\gamma_k,0\le k\le n-2$. Then the rotated path parameters $\check{b}_k:=\eta b_k', 0\le k\le n$ have the same joint distribution as the  path parameters of the $\CJop_{n,\beta,\delta}$ operator.
\end{corollary}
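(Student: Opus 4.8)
The plan is to deduce the corollary from three ingredients: that the right boundary point satisfies $b_n\ed \Theta(1,\delta)$ (Proposition~\ref{prop:hitting}); that the path $b_k'$ of part~(3) has the law of $(b_0,\dots,b_n)$ conditioned on $b_n=1$ (Proposition~\ref{prop:CJ_conditioned}); and a \emph{rotation covariance} of the conditional law of the path given its endpoint. Granting the last of these --- namely that for unimodular $w$ the conditional law of $(b_0,\dots,b_{n-1})$ given $b_n=w$ is the pushforward under $z\mapsto wz$ of the conditional law given $b_n=1$ --- the corollary follows by disintegration. Indeed, writing $U_k:=\bar b_n b_k$ (so $U_k=b_k/b_n$ since $|b_n|=1$), rotation covariance says that the conditional law of $(U_0,\dots,U_n)$ given $b_n=w$ does not depend on $w$; hence $(U_k)$ is independent of $b_n$ and is distributed as the path conditioned on $b_n=1$, i.e. as $(b_k')$, while $U_n=1$ and $U_0=0$. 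Since $b_n\ed\Theta(1,\delta)$, setting $\eta:=b_n$ gives $b_k=\eta U_k\ed \eta b_k'=\check b_k$, which is the claim.

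It remains to establish rotation covariance, and this is where the computation lies. Recall from \eqref{eq:b_rec} and \eqref{eq:Adef} that one step of the path is the pushforward of a $\Theta(a+1,\delta)$ variable (Definition~\ref{def:Theta_delta}) under the disk isometry fixing $1$ that sends $0$ to the current point; denote the resulting transition density by $T_a(\gamma,\eta)$, so that the joint density of $(b_1,\dots,b_n)$ (with $b_0=0$) is $F_n=\prod_{k=0}^{n-1}T_{a_k}(b_k,b_{k+1})$ with $a_k=\beta(n-1-k)$ and the last factor $T_0$ the boundary transition of Proposition~\ref{prop:hitting}. Writing $h(z)=(1-z)^{\bar\delta}(1-\bar z)^\delta$, the key single-step identity is
\begin{equation*}
T_a(w\gamma,w\eta)=\frac{h(w\eta)\,h(\gamma)}{h(\eta)\,h(w\gamma)}\,T_a(\gamma,\eta),\qquad |w|=1,
\end{equation*}
valid for all $a\ge 0$. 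This follows by writing $T_a$ explicitly from the $\Theta(a+1,\delta)$ density together with \eqref{eq:Adef}: when $|w|=1$ every factor of the form $1-|\cdot|^2$ and $1-\bar\gamma\eta$ is invariant under $(\gamma,\eta)\mapsto(w\gamma,w\eta)$, and the only change comes from the tilting factors $(1-\gamma)^{\bar\delta}(1-\bar\gamma)^\delta$ and $(1-\eta)^{\bar\delta}(1-\bar\eta)^\delta$, which produce exactly the displayed ratio of $h$'s. Multiplying over $k$ the $h$-factors telescope, and since $b_0=0$ gives $h(0)=1$ we obtain
\begin{equation*}
F_n(wb_1,\dots,wb_n)=\frac{h(wb_n)}{h(b_n)}\,F_n(b_1,\dots,b_n).
\end{equation*}

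To read off rotation covariance, apply this with $b_n=w_1$ and $w=w_2/w_1$, for $w_1,w_2\in\partial\D\setminus\{1\}$, to get $F_n(wb_1,\dots,wb_{n-1},w_2)=\tfrac{h(w_2)}{h(w_1)}F_n(b_1,\dots,b_{n-1},w_1)$; since the prefactor is independent of $b_1,\dots,b_{n-1}$ and the map $z\mapsto wz$ preserves Lebesgue measure, the conditional law given $b_n=w_2$ is the rotation by $w$ of the conditional law given $b_n=w_1$. Letting $w_1\to 1$ and invoking the identification of the conditioned law in Proposition~\ref{prop:CJ_conditioned} yields the covariance relative to the endpoint $1$, completing the argument. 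The main obstacle is the singular point $z=1$: both the $\Theta(1,\delta)$ density and $h$ degenerate there, so the conditioning on $b_n=1$ must be understood through the $h$-transform/limiting procedure already used in Proposition~\ref{prop:CJ_conditioned}, and one must check that the covariance identity --- established above for generic boundary endpoints --- passes to the limit $w_1\to1$. The boundary transition $T_0$, where $\eta\in\partial\D$ and the underlying coefficient lies on the circle, likewise requires the separate verification of the single-step identity indicated above.
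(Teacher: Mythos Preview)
Your argument is correct and follows a genuinely different route from the paper's proof. The paper proceeds algebraically through the Verblunsky coefficients: it first computes the (ordinary) Verblunsky coefficients $\alpha_k'$ of the path $b_k'$ from the modified coefficients $\bar\gamma_0^\iota,\dots,\bar\gamma_{n-2}^\iota,1$ (via the explicit relation~\eqref{eq:alpha_ratio}), then uses the Aleksandrov-measure identity~\eqref{eq:Aleks_path} to show that rotating the path by $\eta$ multiplies each $\alpha_k'$ by $\bar\eta$, and finally invokes the distributional identity of Claim~\ref{claim:CJ} to match the resulting coefficients with $\cT_n^{-1}(\gamma_0,\dots,\gamma_{n-1})$. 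In contrast, your proof is probabilistic: you establish rotation covariance of the one-step transition densities directly (the $h$-telescoping computation), deduce that $U_k=\bar b_n b_k$ is independent of $b_n$, and identify its law with that of $b_k'$ through the limiting interpretation of ``conditioned on $b_n=1$'' already used in Proposition~\ref{prop:CJ_conditioned}.

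What each buys: your approach bypasses Claim~\ref{claim:CJ} entirely and makes the structural content of the corollary transparent --- the full path factors as an independent rotation times the conditioned path --- at the cost of the limiting step you flag at $b_n=1$. That step is genuine but routine here: since the conditional transition densities given $z_n=q$ converge as $q\to\infty$ to the $h$-transformed densities (this is precisely~\eqref{eq:q_limit} in the proof of Proposition~\ref{prop:CJ_conditioned}), the regular conditional law of $(b_k)$ given $b_n=w$ converges to the law of $(b_k')$ as $w\to 1$, and combining this with the fact that $\bar w\cdot\mathrm{Law}((b_k)\mid b_n=w)$ is constant in $w$ identifies the law of $(U_k)$. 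The paper's approach avoids this limit by working entirely at the level of coefficients, but requires the separate proof of Claim~\ref{claim:CJ}. Your single-step identity for $T_a$ is correct for all $a\ge 0$, including the boundary case $a=0$ (the same cancellations occur in the formula for $P_1(\gamma,\eta)$ from the proof of Proposition~\ref{prop:hitting}), so the verification you defer there goes through without change.
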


\begin{proof}
    By Proposition \ref{prop:CJ_conditioned}, the path parameters $b_k',0\le k\le n$ can be produced by the sequence of modified Verblunsky coefficients $\bar\gamma_0^\iota,\dots,\bar\gamma_{n-2}^\iota,1$.  Let $(\alpha_0',\dots,\alpha_{n-1}'):=\cT_n^{-1}(\bar\gamma_0^\iota,\dots,\bar\gamma_{n-2}^\iota,1)$ be the corresponding sequence of Verblunsky coefficients. By  \eqref{eq:modifiedV}
 and    \eqref{eq:alpha_ratio} 
    we get
\[
\alpha_k' =(-1)^k \frac{\gamma_k^{\iota}\cdots \gamma_0^{\iota}}{\gamma_{k-1}\cdots \gamma_0}, 0\le k\le n-2, \qquad \alpha_{n-1}'=(-1)^{n-1} \frac{\gamma_{n-2}^{\iota}\cdots \gamma_0^{\iota}}{\gamma_{n-2}\cdots \gamma_0}. 
\]
Recall also the (equivalent) description of $b_k',0\le k\le n$ using $\alpha_k',0\le k\le n-1$ in \eqref{eq:b_alpha}. 
    Observe that 
    for $Z=\diag(z,1)$, we have
\begin{align}\label{eq:Aleks_path}
	\mathcal P Z^{-1}\mat{1}{\bar \alpha_0}{\alpha_0}{1}\cdots \mat{1}{\bar \alpha_{k-1}}{\alpha_{k-1}}{1} Z\bin{0}{1}=\bar z b_k.
	\end{align}
This shows that the Verblunsky coefficients $\check{\alpha}_k,0\le k\le n-1$ corresponding to $\check{b}_k,0\le k\le n$ are given by $\check{\alpha}_k= \bar \eta  \alpha_k'$, in particular 
\[
\check{\alpha}_k =(-1)^k \bar \eta \frac{\gamma_k^{\iota}\cdots \gamma_0^{\iota}}{\gamma_{k-1}\cdots \gamma_0}, 0\le k\le n-2, \qquad \check{\alpha}_{n-1}=(-1)^{n-1} \bar \eta \frac{\gamma_{n-2}^{\iota}\cdots \gamma_0^{\iota}}{\gamma_{n-2}\cdots \gamma_0}. 
\]
(The relation between rotating the sequences of path parameters and the corresponding Verblunsky coefficients is related to the so-called Aleksandrov measure. We refer to \cite{OPUC1} for more background.)

Finally, using Claim \ref{claim:CJ} for $\zeta_0=\eta$, $\zeta_j=\gamma_{j-1}, 1\le j\le n$ we get the following distributional identity:
\[
\left(\gamma_0^{\iota} \bar \eta, \frac{\gamma_1^{\iota}}{\gamma_0}, \dots, \frac{\gamma_{n-2}^{\iota}}{\gamma_{n-3}},\frac{1}{\gamma_{n-2}}\right)\ed
\left(
\bar\gamma_0, \frac{\bar \gamma_1}{\bar \gamma_0^{\iota}}, \dots, \frac{\bar \gamma_{n-2}}{\bar \gamma_{n-3}^{\iota}},\frac{\bar \eta}{\bar \gamma_{n-2}^{\iota}}
\right).
\]
This means that $\check\alpha_k, 0\le k\le n-1$ have the same joint distribution as the random variables
\[
\dot{\alpha}_k=(-1)^k \frac{\bar \gamma_k\cdots \bar \gamma_0}{\bar \gamma_{k-1}^{\iota}\cdots \bar \gamma_0^{\iota}}, 0\le k\le n-2, \qquad \dot{\alpha}_{n-1}=(-1)^{n-1} \frac{\bar \eta \bar \gamma_{n-2}\cdots \bar \gamma_0}{\bar \gamma_{n-2}^{\iota}\cdots \bar \gamma_0^{\iota}}.
\]
Since the joint distribution of $\gamma_0,\dots, \gamma_{n-2},\eta$ is the same as the joint distribution of $\gamma_0, \dots, \gamma_{n-1}$,  \eqref{eq:modifiedV}
 and    \eqref{eq:alpha_ratio} now shows that 
 \[
 (\check{\alpha}_0,\check{\alpha}_1,\dots, \check{\alpha}_{n-1})\ed  (\dot{\alpha}_0, \dots, \dot{\alpha}_{n-1})\ed \cT_n^{-1}(\gamma_0,\dots, \gamma_{n-1}),
 \]
which implies that $\check{b}_k, 0\le k\le n$ have the same joint distribution as the path parameters $b_k, 0\le k\le n$ for $\CJop_{n,\beta,\delta}$.
\end{proof}

Recall the path $\aff b_k, 0\le k\le n$, the pulled back version of the reversed path $\rev b_k, 0\le k\le n$ corresponding to the random measure $\mu_{n,\beta,\delta}^{\CJb}$. By Proposition \ref{prop:tCJ_matrix} $\rev b_k, 0\le k\le n-1$ has the same distribution as the path built from the modified Verblunsky coefficients $\gamma_{n-2}, \dots, \gamma_{0}$. This  path is the complex conjugate of the path $\hat b_k, 0\le k\le n-1$ in (3) of Proposition \ref{prop:CJ_conditioned}. This also means that 
$\aff b_k, 0\le k\le n$ is just the time-reversed and complex conjugated version of the path $b_k', 0\le k\le n$ in (3) of Proposition \ref{prop:CJ_conditioned}. Together with Corollary \ref{cor:CJ_factor} this implies that applying an independent random rotation, a complex conjugation, and a time reversal to $\aff b_k, 0\le k\le n$ produces a path that has the same distribution as the driving path of the operator $\CJop_{n,\beta,\delta}$. 
This statement allows us to show that $\aff{\tau}_{n,\beta,\delta}$ and $\CJop_{n,\beta,\delta}$ are orthogonally equivalent.

\begin{proposition}\label{prop:CJ_equiv}
Let $\aff{b}_k,0\le k\le n$ be the path parameters defined in Corollary \ref{cor:CJ_factor}, 
$\aff{\tau}_{n,\beta,\delta}$ the corresponding Dirac-type operator, and let 
\[
Q=\frac{1}{\sqrt{1+q^2}}\begin{pmatrix} q & 1\\ -1 & q\end{pmatrix},\qquad q=\cU^{-1}(\aff{b}_n).
\]
Then the operator
$\rho^{-1}(SQ)\aff{\tau}_{n,\beta,\delta}(SQ)^{-1}\rho$ has the same distribution as $\CJop_{n,\beta,\delta}$. 
\end{proposition}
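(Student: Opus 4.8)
The plan is to reduce the operator identity to the path-level distributional statement assembled in the paragraph preceding the proposition, reading off the effect of the conjugation $\rho^{-1}(SQ)\,\cdot\,(SQ)^{-1}\rho$ on the generating path and the two boundary conditions of $\aff{\tau}_{n,\beta,\delta}$ via Lemmas \ref{lem:rotation} and \ref{lem:timerev}. Since a Dirac operator is determined in law by its generating path together with its boundary conditions, and since both transformation lemmas produce orthogonally equivalent operators, it is enough to check that the transformed path and boundary data agree in distribution with those of $\CJop_{n,\beta,\delta}$. I would break the composite conjugation as: first apply $Q$ (governed by Lemma \ref{lem:rotation}), then apply $\rho^{-1}S\,\cdot\,S\rho$ (governed by Lemma \ref{lem:timerev}), matching the order rotation, then reflection-and-time-reversal.

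For the first step, write $\aff z_k=\cU^{-1}(\aff b_k)$ for the $\HH$-path of $\aff{\tau}_{n,\beta,\delta}$, with left boundary $\uu_0=\binom{1}{0}$ (the point $\infty$) and right boundary corresponding to $q=\aff z_n$. A short computation shows the isometry $\mathcal Q$ attached to $Q$ fixes $i$ and sends $q\mapsto\infty$; since $\aff b_n=\rev\gamma_{n-1}$ is the last modified Verblunsky coefficient we have $|\aff b_n|=1$, so $q$ is real and $Q$ is well defined, and through the Cayley map $\mathcal Q$ is exactly the rotation $b\mapsto\overline{\aff b_n}\,b$ of $\D$ about the origin. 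Thus $Q\aff{\tau}Q^{-1}$ is the Dirac operator with $\D$-path $\overline{\aff b_n}\,\aff b_k$ and boundary points $\mathcal Q(\infty)=-q$ and $\mathcal Q(q)=\infty$. For the second step, Lemma \ref{lem:timerev} realizes $\rho^{-1}S\,\cdot\,S\rho$ as the reflection $\mathfrak r$ composed with the time reversal $\rho$ on the path, together with the swap of the boundary conditions after applying $S$; and because $\overline{\cU(z)}=\cU(\mathfrak r(z))$, the reflection $\mathfrak r$ in $\HH$ is precisely complex conjugation $b\mapsto\bar b$ in $\D$. The resulting operator therefore has left boundary $-\mathcal Q(q)=\infty$, i.e.\ again $\uu_0=\binom{1}{0}$, and right boundary $-\mathcal Q(\infty)=q$, matching the boundary structure of $\CJop_{n,\beta,\delta}$.

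It remains to identify the transformed path with the one appearing in Corollary \ref{cor:CJ_factor}. Using the relation $\aff b_k=\overline{b'_{n-k-1}}$ established just above the proposition (with $b_k'$ the path of Proposition \ref{prop:CJ_conditioned}(3)), the rotated, conjugated and time-reversed path $\aff b_n\,\overline{\aff b_{n-k}}$ simplifies to $\aff b_n\,b'_{k-1}$, that is, the path $b_k'$ rotated by the unit modulus factor $\aff b_n$. Corollary \ref{cor:CJ_factor} says exactly that rotating $b_k'$ by an independent $\Theta(1,\delta)$ variable reproduces the driving path of $\CJop_{n,\beta,\delta}$ in law, so combining the two steps gives the asserted equality in distribution, the orthogonal equivalence being automatic from the two lemmas.

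The main obstacle is precisely this last identification, which is genuinely about joint distributions rather than bookkeeping: in the proposition the rotation $Q$ is not independent of $\aff{\tau}$ but is the deterministic rotation generated by the endpoint $q=\cU^{-1}(\aff b_n)$, whereas Corollary \ref{cor:CJ_factor} rotates by an \emph{independent} $\eta\sim\Theta(1,\delta)$. To close the gap I would verify that $\aff b_n=\rev\gamma_{n-1}$ is $\Theta(1,\delta)$-distributed and independent of the interior path $b'_0,\dots,b'_{n-1}$ (equivalently of $\gamma_0,\dots,\gamma_{n-2}$): the marginal law follows from Proposition \ref{prop:hitting} applied to $\rev\mu_{n,\beta,\delta}^{\CJb}$, while the required independence is exactly the content of the joint law used in the proof of Corollary \ref{cor:CJ_factor} (where $\eta$ plays the role of $\gamma_{n-1}$). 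A secondary point is to track the switch of the underlying interval between $(0,1]$ and $[0,1)$ induced by $\rho$, so that the transformed operator is defined on the interval $[0,1)$ carrying $\CJop_{n,\beta,\delta}$.
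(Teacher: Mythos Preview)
Your argument is correct and follows the same line as the paper's proof: both identify $\mathcal Q$ as the rotation $b\mapsto\overline{\aff b_n}\,b$ in $\D$, track the path and boundary data through Lemmas~\ref{lem:rotation} and~\ref{lem:timerev}, and finish via Corollary~\ref{cor:CJ_factor}, with your version making the independence of $\aff b_n=\rev\gamma_{n-1}$ from the interior path explicit where the paper leaves it implicit. One small correction: the marginal law $\aff b_n\sim\Theta(1,\delta)$ does not come from Proposition~\ref{prop:hitting} applied to $\rev\mu$ (that proposition concerns $\CJop$, not its reversal), but follows directly from $\aff b_n=\rev\gamma_{n-1}$ together with Proposition~\ref{prop:tCJ_matrix}, which also supplies the needed independence.
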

\begin{proof}
Recall the transformation $\mathcal{Q}$ defined in Section \ref{sub:Dirac_secular}. Observe that in the unit-disk model, the transformation $\mathcal{Q}$ behaves exactly as a rotation, i.e.~for $z\in\D$ we have $\mathcal{Q}(z)=\mathcal{U}\circ\mathcal{Q}\circ\mathcal{U}^{-1}(z)=\bar \eta z$, where we denote $\eta:=\aff b_n$.

Together with Propositions \ref{prop:tCJ_matrix} and \ref{prop:CJ_conditioned}, this observation shows that the path parameters of $\rho^{-1}(SQ)\aff \tau_{n,\beta,\delta}(SQ)^{-1}\rho$ have the same joint distribution as $\check{b}_k=\eta b_k',0\le k\le n$ where $b_k',0\le k\le n$ are distributed as in Proposition \ref{prop:CJ_conditioned}.  
Moreover, by the same argument one can check that the left boundary point of $\rho^{-1}(SQ)\aff\tau_{n,\beta,\delta}(SQ)^{-1}\rho$ is equal to $1$ as desired. The proof is now completed by Corollary \ref{cor:CJ_factor}.
\end{proof}

Recall the limiting operator $\HPop$ defined in Definition \ref{def:HPop}, and its reversed and transformed version $\tau_{\beta,\delta}$ defined in Definition \ref{def:HPop_rev}. By Propositions \ref{prop:hitting} and  \ref{prop:CJ_equiv} the 
transformation connecting  $\HPop$ and $\tau_{\beta,\delta}$ has the same distribution as the one connecting $\CJop_{n,\beta,\delta}$ and $\aff \tau_{n,\beta,\delta}$.

The operator level convergence of the $\CJop_{n,\beta,\delta}$ to the limiting $\HPop$ operator was proved in \cite{LV} and can be summarized as follows. This is also the final ingredient to prove the convergence of the normalized characteristic polynomial of the truncated circular Jacobi beta ensemble.

\begin{proposition}[\cite{LV}]\label{prop:HP_limit}
Fix $\beta>0$ and $\Re\delta>-1/2$. There exists a coupling of the operators $\CJop_{n,\beta,\delta}=\Dirop(z_n(\cdot),\uu_0,\uu_1^{(n)}),n\ge 1$ with $\uu_0=\binom{1}{0},\uu_1^{(n)}=\binom{-z_n(1)}{-1}$, and $\HPop=\Dirop(z(\cdot),\uu_0,\uu_1)$ with $\uu_1=\binom{-z(1)}{-1}$ such that as $n\to\infty$ we have almost surely $z_n\to z$ pointwise on $[0,1)$, and 
\[
\|\res \CJop_{n,\beta,\delta}-\res \HPop\|_{\textup{HS}} \to 0,\quad  \quad \ttr_{\CJop_{n,\beta,\delta}}-\ttr_{\HPop}\to 0.
\]
Set $c_\delta=\frac{4}{\beta}(\Re\delta+\frac12)>0$, and for $\eps>0$ small define $c_1=c_\delta-\eps, c_2=c_\delta+\eps$.
Then there exists a sequence of tight random variables $\kappa_n,n\ge 1$ and an a.s.~finite random variable $\kappa>0$ such that for $0\le t<1$
\begin{align}\label{eq:CJ_path}
\kappa_n^{-1}\left(1-\frac{\lfloor nt\rfloor}{n}\right)^{c_2}\le y_n(t)\le \kappa_n\left(1-\frac{\lfloor nt\rfloor}{n}\right)^{c_1},\quad  |x_n(1)-x_n(t)|\le \kappa_n\left(1-\frac{\lfloor nt\rfloor}{n}\right)^{c_1}
\end{align}
and similarly, 
\begin{equation}\label{eq:HP_path}
\kappa^{-1}(1-t)^{c_2}\le y(t)\le \kappa(1-t)^{c_1},\quad  |x(1)-x(t)|\le \kappa(1-t)^{c_1}.
\end{equation}
\end{proposition}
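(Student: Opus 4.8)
The plan is to reduce this operator-level statement to a strong (almost sure) approximation of the generating path of $\CJop_{n,\beta,\delta}$ by the diffusion path of $\HPop$, and then to transfer that path control to the resolvents and integral traces through their explicit kernel representations. First I would realize the discrete generating path $z_n(\cdot)$ as the hyperbolic random walk driven by \eqref{eq:z_rec}, whose increments $(v_k,w_k)$ are determined through \eqref{def:wv} by the independent modified Verblunsky coefficients $\gamma_k\sim\Theta(\beta(n-k-1)+1,\delta)$. The recursion splits into the multiplicative part $y_{k+1}=y_k(1+w_k)$ and the additive part $x_{k+1}=x_k+v_k y_k$, so $\log y_k$ is a sum of the independent increments $\log(1+w_k)$. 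Fact \ref{fact:factor} identifies the laws of these increments exactly: $1+w_k\ed G_1/G_2$ with $G_1,G_2$ independent Gamma variables of parameters $\tfrac{\beta}{2}(n-k-1)$ and $\tfrac{\beta}{2}(n-k-1)+2\Re\delta+1$, while $v_k/(2+w_k)$ is an independent Pearson-type variable $P_{IV}(\tfrac{\beta}{2}(n-k-1)+\Re\delta+1,-2\Im\delta)$.

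Next I would compute the conditional mean and variance of these increments and check that, after the logarithmic time change $v(t)=-\tfrac{4}{\beta}\log(1-t)$, they match the coefficients of the SDE \eqref{eq:HPsde}. Using the digamma asymptotics $\psi(s)=\log s+O(s^{-1})$ and $\psi'(s)=s^{-1}+O(s^{-2})$ one finds $\ev\log(1+w_k)=\psi(\tfrac{\beta}{2}(n-k-1))-\psi(\tfrac{\beta}{2}(n-k-1)+2\Re\delta+1)\sim-\tfrac{2(2\Re\delta+1)}{\beta(n-k-1)}$ and $\Var\log(1+w_k)\sim\tfrac{4}{\beta(n-k-1)}$; summing against $dt=1/n$ and using $dv=\tfrac{4}{\beta(1-t)}\,dt$ converts the accumulated drift into $-(\Re\delta+\tfrac12)v$ and the accumulated variance into $v$, exactly as in $\mathsf{y}_v=e^{B_1(v)-(\Re\delta+\frac12)v}$. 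The analogous computation for $v_k$, using the mean $\Im\delta/(m-1)$ of the Pearson-$IV$ law together with $\ev(2+w_k)\to2$, yields $\ev v_k\sim\tfrac{4\Im\delta}{\beta(n-k-1)}$, reproducing the $\Im\delta$ drift of $\mathsf{x}$ in \eqref{eq:hpsol}. To upgrade this matching to an almost sure coupling I would follow the scheme of \cite{BVBV_19}: embed the martingale parts of $\log y_k$ and of the $x$-increments into the Brownian motions $B_1,B_2$ driving the limit via a Skorokhod/strong-approximation argument on a common probability space, with the error controlled quantitatively by the increment variances. This gives almost sure pointwise convergence $z_n\to z$ on $[0,1)$.

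I would then derive the path bounds \eqref{eq:CJ_path}--\eqref{eq:HP_path}. For the limit, $y(t)=\exp\!\big(B_1(v(t))-(\Re\delta+\tfrac12)v(t)\big)$ is an explicit exponential of Brownian motion with drift, so the law of the iterated logarithm together with a maximal inequality for the Brownian fluctuation gives, for every small $\eps>0$, a finite $\kappa$ with $\kappa^{-1}(1-t)^{c_2}\le y(t)\le\kappa(1-t)^{c_1}$, where $c_{1,2}=c_\delta\mp\eps$ absorb the $\pm\eps\,v(t)$ fluctuation. The bound on $|x(1)-x(t)|$ follows by estimating $\int_t^1\mathsf y\,dB_2+\Im\delta\int_t^1\mathsf y\,ds$ from \eqref{eq:hpsol} via the $y$-bound and a time change of the stochastic integral. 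The discrete bounds \eqref{eq:CJ_path} are obtained identically from the coupled random walk, with the uniform-in-$n$ tightness of $\kappa_n$ coming from the uniform increment estimates of the previous paragraph.

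The main obstacle is the final transfer step, because the two resolvents live on different weighted spaces $L^2_{R_n}$ and $L^2_R$ whose weights degenerate as $t\to1$ (since $y\to0$ at the hard edge). I would pass to the conjugated resolvent $\res\tau$ on the common space $L^2(0,1]$ with the kernel \eqref{Dir:inverse} built from $\mathfrak a_j(s)=X(s)\uu_j/\sqrt{\det X}$, whose entries involve $y(s)^{\pm1/2}$ and $x(s)$; then $\|\res\CJop_{n,\beta,\delta}-\res\HPop\|_{\textup{HS}}^2$ is an explicit double integral over $s<t$ of differences of these functions. Pointwise path convergence handles any compact subinterval of $(0,1)$ by dominated convergence, and the crux is showing that the contribution of a neighbourhood of $t=1$ is uniformly small. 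Here the path bounds are essential: the exponent $c_1>0$ guarantees both the Hilbert--Schmidt integrability condition \eqref{HS_finite} and an integrable envelope $\kappa(1-s)^{c_1}$ (respectively the uniform $\kappa_n$) dominating $\mathfrak a_0,\mathfrak a_1$ near the endpoint, so the tail can be made smaller than $\eps$ uniformly in $n$ before taking $n\to\infty$ on the bulk. The same envelope controls $\ttr_\tau=\int_0^1\uu_0^{\dag}R(s)\uu_1\,ds$ from \eqref{eq:int_tr}, giving the trace convergence. Assembling the bulk convergence with the uniform tail bound completes the proof.
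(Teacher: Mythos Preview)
The paper does not actually prove this proposition: it is quoted verbatim from \cite{LV} and used as a black box. So there is no ``paper's own proof'' to compare against here; the only thing the present paper does with Proposition~\ref{prop:HP_limit} is invoke its conclusions inside the proofs of Theorem~\ref{thm:CJ_main} and Corollary~\ref{cor:CJ_perturb}.

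That said, your sketch is a faithful outline of the argument carried out in \cite{LV}, which in turn follows the template of \cite{BVBV_19}: identify the increment laws via Fact~\ref{fact:factor}, match drift and variance to the SDE \eqref{eq:HPsde} after the logarithmic time change, strongly embed the discrete martingale parts into the Brownian drivers, read off the path envelopes \eqref{eq:CJ_path}--\eqref{eq:HP_path} from the explicit exponential-of-Brownian form, and finally push path convergence through the conjugated kernel \eqref{Dir:inverse} by a bulk/tail split with the tail controlled by the envelopes. Your moment computations are correct (the digamma asymptotics for $\log(1+w_k)$ and the Pearson-IV mean $\Im\delta/(m-1)$ both check out). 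The one place where a reader would want more than a sketch is the strong coupling: in \cite{BVBV_19,LV} this is done not by a generic Skorokhod embedding but by an explicit construction that keeps quantitative control on the hyperbolic distance $d_\HH(z_n(t),z(t))$ uniformly in $t$, which is what ultimately yields the tightness of $\kappa_n$ rather than merely a.s.\ pointwise convergence. As a high-level plan your proposal is sound; as a proof it would need that coupling step made precise.
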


We now have all the ingredients to prove the edge scaling limit of the truncated circular Jacobi beta ensemble.

\begin{theorem}\label{thm:CJ_main}
    For fixed $n\ge 1$, $\beta>0$ and $\Re\delta>-1/2$, let $\lambda_i,1\le i\le n-1$ be size $n$ truncated circular Jacobi beta ensemble nd set 
$p_{n-1,\beta,\delta}(z)=\prod_{i=1}^{n-1} \frac{z-\lambda_i}{1-\lambda_i}$ be the normalized characteristic polynomial. Let $\cE_{\beta,\delta}$ be the structure function of $\tau_{\beta,\delta}$ defined via \eqref{def:structure}.
Then there is a coupling of $p_{n,\beta,\delta},n\ge 1$ and $\cE_{\beta,\delta}$ such that
	\begin{equation*}
	|p_{n-1,\beta}(e^{iz/n})e^{-iz/2}-\cE_{\beta,\delta}(z)|\to 0 \quad \text{ almost surely, uniformly on compacts as $n\to\infty$}
	\end{equation*}
	Consequently, under the edge scaling \eqref{eq:edgescaling}, the truncated circular Jacobi beta ensembles converge weakly to the zeros of the random analytic function $\cE_{\beta,\delta}(\cdot)$.
\end{theorem}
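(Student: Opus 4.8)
The plan is to follow the strategy used for the truncated circular and real orthogonal ensembles: reduce the statement to the verification of the three hypotheses of Proposition \ref{prop:general2} for the sequence of pulled-back operators $\aff\tau_{n,\beta,\delta}$, and then transport the operator-level convergence $\CJop_{n,\beta,\delta}\to\HPop$ of Proposition \ref{prop:HP_limit} through the orthogonal equivalences of Propositions \ref{prop:CJ_equiv} and \ref{prop:HP_equiv}. First I would set $\mu_n=\mu_{n,\beta,\delta}^{\CJb}$, let $\rev\mu_n$ be its reversed measure, and let $\aff\tau_{n,\beta,\delta}$ be the pulled-back operator. By Propositions \ref{prop:CMV1} and \ref{prop:tCJ_matrix}, the normalized characteristic polynomial $p_{n-1,\beta,\delta}$ has the same distribution as the normalized orthogonal polynomial $\rev\Psi_{n-1}$ of degree $n-1$ of $\rev\mu_n$, so it suffices to prove the convergence with $\rev\Psi_{n-1}$ in place of $p_{n-1,\beta,\delta}$. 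Proposition \ref{prop:general1} then writes $\rev\Psi_{n-1}(e^{iz/n})=e^{iz(n-1)/(2n)}\aff H_{n,\beta,\delta}(\tfrac{n-1}{n},z)^{\dag}\binom{1}{-i}$, where $\aff H_{n,\beta,\delta}$ solves the canonical system of $\aff\tau_{n,\beta,\delta}$, while $\cE_{\beta,\delta}=H_{\beta,\delta}(1,\cdot)^{\dag}\binom{1}{-i}$ is the structure function of $\tau_{\beta,\delta}$. Thus the theorem reduces to the uniform-on-compacts convergence $\aff H_{n,\beta,\delta}(\tfrac{n-1}{n},z)\to H_{\beta,\delta}(1,z)$, for which Proposition \ref{prop:general2} asks us to verify the conditions in \eqref{eq:discrete_assumption}.

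Next I would transfer the operator bounds. By Propositions \ref{prop:CJ_equiv} and \ref{prop:HP_equiv} the operators $\aff\tau_{n,\beta,\delta}$ and $\tau_{\beta,\delta}$ are obtained from $\CJop_{n,\beta,\delta}$ and $\HPop$ by the same family of orthogonal transformations $\rho^{-1}(SQ)(\cdot)(SQ)^{-1}\rho$, with $Q$ built from the right boundary point $q=\cU^{-1}(\aff b_n)$. Since orthogonal equivalence preserves the integral trace and the Hilbert--Schmidt norm of the resolvent (Lemmas \ref{lem:timerev} and \ref{lem:rotation}), the convergences $\|\res\CJop_{n,\beta,\delta}-\res\HPop\|_{\textup{HS}}\to0$ and $\ttr_{\CJop_{n,\beta,\delta}}-\ttr_{\HPop}\to0$ of Proposition \ref{prop:HP_limit} yield the first two conditions of \eqref{eq:discrete_assumption}. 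Unlike the rotation-invariant circular case, here the right boundary conditions of $\CJop_{n,\beta,\delta}$ and $\HPop$ are not coupled to be equal, so the transformations $Q_n$ and $Q$ genuinely differ; one must check that the relevant boundary points converge under the coupling (so that $Q_n\to Q$, cf.\ Proposition \ref{prop:hitting}) and that conjugating the fixed limiting resolvent by $Q_n$ rather than $Q$ contributes a vanishing Hilbert--Schmidt error.

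The third condition, $\int_0^1|\mathfrak a_{0,n}(s)-\mathfrak a_{0,\infty}(s)|^2\,ds\to0$ with $\mathfrak a_{0,n}(s)=\binom{\aff y_n(s)^{-1/2}}{0}$ (where $\aff y_n=\Im\aff z_n$ is the $y$-coordinate of the generating path of $\aff\tau_{n,\beta,\delta}$, and $\hat y$ that of $\tau_{\beta,\delta}$), is where the main work lies and is the step I expect to be the hardest. The a.e.\ pointwise convergence $\aff y_n^{-1/2}\to\hat y^{-1/2}$ follows from the pointwise path convergence of Proposition \ref{prop:HP_limit}; the difficulty is integrable domination near the singular endpoint $t=0$ of $\tau_{\beta,\delta}$, where $\hat y(t)\sim t^{-c_\delta}$ with $c_\delta=\tfrac4\beta(\Re\delta+\tfrac12)$ can be arbitrarily large. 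Here I would transport the path bounds \eqref{eq:CJ_path} and \eqref{eq:HP_path} through the hyperbolic isometry $\mathcal Q$ and the time reversal $\rho$: using the identity $\Im\mathcal Q(w)=\Im w/|q-w|^2$ together with the simultaneous control of $\Im z_n$ and of $|x_n(1)-x_n(t)|$ provided by \eqref{eq:CJ_path}, one obtains a lower bound of the form $\aff y_n(s)\gtrsim s^{-(c_\delta-3\eps)}$ near $s=0$ with tight random constants, hence a uniformly (along subsequences) integrable upper bound on $|\mathfrak a_{0,n}|^2$. This transport through the isometry is precisely the new ingredient forced by the loss of rotational invariance when $\delta\neq0$.

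A standard subsequence and dominated-convergence argument, exactly as in the proof of Theorem \ref{thm:RO_main}, then yields the required $L^2$ convergence. Having verified all three conditions of \eqref{eq:discrete_assumption}, Proposition \ref{prop:general2} gives the uniform-on-compacts convergence of the normalized characteristic polynomials $p_{n-1,\beta,\delta}(e^{iz/n})e^{-iz/2}\to\cE_{\beta,\delta}(z)$, and Hurwitz's theorem upgrades this to the weak convergence of the truncated circular Jacobi eigenvalue process to the zero set of $\cE_{\beta,\delta}$ under the edge scaling \eqref{eq:edgescaling}.
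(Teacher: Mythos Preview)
Your proposal is correct and follows essentially the same approach as the paper's proof: reduce via Propositions \ref{prop:CMV1}, \ref{prop:tCJ_matrix}, and \ref{prop:general1} to the verification of the three hypotheses of Proposition \ref{prop:general2}, transport the operator-level convergence of Proposition \ref{prop:HP_limit} through the orthogonal equivalences of Propositions \ref{prop:HP_equiv} and \ref{prop:CJ_equiv}, and handle the $L^2$ condition on $\mathfrak a_0$ via the path bounds \eqref{eq:CJ_path}, \eqref{eq:HP_path} and a subsequence/dominated convergence argument. In fact you are more careful than the paper on two points it treats somewhat loosely---the fact that the rotations $Q_n$ and $Q$ built from the right boundary points need not coincide under the coupling of Proposition \ref{prop:HP_limit} (so that one must use $\uu_1^{(n)}\to\uu_1$, cf.\ the remark preceding Corollary \ref{cor:CJ_perturb}), and the need to transport the path bounds through the isometry $\mathcal Q$ and not merely the time reversal; your outline for both is sound (with the minor correction that $\Im\mathcal Q(w)=(1+q^2)\Im w/|q-w|^2$).
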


\begin{proof}
Let $\rev \mu_{n}$ be the reversed version of the measure $\mu_{n}:=\mu_{n,\beta,\delta}^{\textup{CJ}}$, and $\aff\tau_{n,\beta,\delta}$ be the pulled-back operator. By Propositions \ref{prop:CMV1} and \ref{prop:tCJ_matrix},  $p_{n-1,\beta,\delta}$ has the same distribution as the monic orthogonal polynomial of degree $n-1$ associated to $\rev \mu_{n}$, denoted as $\rev\Psi_{n-1,\beta,\delta}$. By Proposition \ref{prop:general1}, we have 
\[
\rev\Psi_{n-1,\beta,\delta}(e^{iz/n})=e^{iz(n-1)/(2n)}\aff H_{n}((n-1)/n,z)^{\dag}\binom{1}{-i},
\]
where $\aff H_{n}$ solves the ODE \eqref{eq:canonical} of $\aff \tau_{n,\beta,\delta}$. Recall $\cE_{\beta,\delta}=H_{\beta,\delta}(1,\cdot)^{\dag}\cdot\binom{1}{-i}$, where $H_{\beta,\delta}(t,z)$ solves \eqref{eq:canonical} of $\tau_{\beta,\delta}$. It is enough to show the uniform-on-compacts convergence of $\aff H_{n}(1,z)$ to $H_{\beta,\delta}(1,z)$. 

By Propositions \ref{prop:HP_equiv} and \ref{prop:CJ_equiv}, one can obtain the operators $\aff \tau_{n,\beta,\delta},n\ge 1$ and $\tau_{\beta,\delta}$  from $\CJop_{n,\beta,\delta},n\ge 1$ and $\HPop$ under the same orthogonal transformations.
Therefore, applying the coupling of the operators $\CJop_{n,\beta,\delta}$ and $\HPop$ in Proposition \ref{prop:HP_limit} gives
\[
\|\res \tau_{n,\beta,\delta}-\res\tau_{\beta,\delta}\|\to 0,\quad \quad |\ttr_{\tau_{n,\beta,\delta}}-\ttr_{\tau_{\beta,\delta}}|\to 0.
\]
In order to apply Proposition \ref{prop:general2}, we need to show that
\begin{equation}\label{eq:HP_conv1}
    \int_0^1|\mathfrak{a}_{0,n}(s)-\mathfrak{a}_0(s)|^2 ds=\int_0^1\left|\frac{1}{\sqrt{y_n(s)}}-\frac{1}{\sqrt{y(s)}}\right|^2\to 0, 
    \end{equation}
where $y_n=\Im z_n$ and $y=\Im z$ are the imaginary part of the generating paths of $\tau_{n,\beta,\delta}$ and $\tau_{\beta,\delta}$, respectively. 
By a standard subsequence argument, the path bound \eqref{eq:CJ_path} allows one to choose a subsequence so that $\kappa_n$ converges to an a.s.~finite random variable. The convergence \eqref{eq:HP_conv1} now follows from the point-wise convergence of $z_n\to z$, the path bounds \eqref{eq:CJ_path}, \eqref{eq:HP_path} (under the time reversal), and an application of the triangle inequality. This verifies \eqref{eq:discrete_assumption} and completes the proof. 
\end{proof}

Note that by Proposition \ref{prop:HP_equiv}, the structure function $\cE_{\beta,\delta}$ can also be characterized as $\cE_{\beta,\delta}=\cH_{\beta,\delta}(0,z)^{\dag}\binom{1}{-i}$, where $\cH_{\beta,\delta}(u,z)$ solves the SDE \eqref{eq:HP_cH}.

Note that in the coupling of Proposition \ref{prop:HP_limit}, the path bounds \eqref{eq:CJ_path}, \eqref{eq:HP_path} and the point-wise convergence of the generating paths $z_n\to z$ imply that $\uu_1^{(n)}\to \uu_1$ a.s.~(see also Proposition \ref{prop:hitting}). Then using Theorem \ref{thm:CJ_main} and Proposition \ref{prop:general3} one obtains the following result on the (edge) scaling limit of the multiplicative perturbed circular Jacobi beta ensemble.
\begin{corollary}\label{cor:CJ_perturb}
For fixed $r\in[0,1]$ let $\Lambda_{n}^{[r]}=\{\lambda_1,\dots,\lambda_n\}$ be the set of eigenvalues of the perturbed matrix $\rCJ_{n,\beta,\delta}$. Set
$
p_{n,\beta,\delta}^{[r]}(z) = \prod_{i=1}^{n} \frac{z-\lambda_i}{1-\lambda_i}$ 
to be the normalized characteristic polynomial, and let $H_{\beta,\delta}$ be the solution to the ODE \eqref{eq:canonical} of $\tau_{\beta,\delta}$.  Let $q\sim\Theta(1,\delta)$ be independent of $H_{\beta,\delta}$. Then under the coupling of Proposition \ref{prop:HP_limit}, we have almost surely  as $n\to\infty$
\begin{align*}
\left|p_{n,\beta,\delta}^{[r]}(e^{iz/n})e^{-iz/(2n)}-\cE_{\beta,\delta}^{[r]}(z)\right|\to 0,\quad \text{ uniformly on compacts in $\C$,}
\end{align*}
where $\cE_{\beta,\delta}^{[r]}(z)=H_{\beta,\delta}(1,z)\cdot\binom{1}{-c_r}$ with $c_r=\frac{q+i \frac{1-r}{1+r}}{1-i q \frac{1-r}{1+r}}$. In particular, this implies the weak convergence of $\Lambda_{n}^{[r]}$ under the edge scaling \eqref{eq:edgescaling} to the zero set of the random analytic function $\cE_{\beta,\delta}^{[r]}$ as $n\to\infty$. 
\end{corollary}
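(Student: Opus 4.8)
The plan is to treat this as the circular-Jacobi analogue of Theorem~\ref{thm:trunc_circ} and to obtain it as a consequence of Theorem~\ref{thm:CJ_main} and the general perturbation result Proposition~\ref{prop:general3}, in exactly the same way that Theorem~\ref{thm:trunc_circ} follows from Proposition~\ref{prop:tcirc_func} together with Proposition~\ref{prop:general3}. First I would invoke Proposition~\ref{prop:tCJ_matrix} to identify the eigenvalues of the perturbed matrix $\CJ_{n,\beta,\delta}^{[r]}$ with those of the perturbed reversed CMV matrix $\cC_n^{[r]}=\cC(\rev\alpha_0,\dots,\rev\alpha_{n-2},r\rev\alpha_{n-1})$ built from $\mu_n=\mu_{n,\beta,\delta}^{\CJb}$, so that $p_{n,\beta,\delta}^{[r]}$ has the same law as the polynomial $\Psi_n^{[r]}$ appearing in Proposition~\ref{prop:general3}. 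Theorem~\ref{thm:CJ_main} already verifies the hypothesis \eqref{eq:Psi_conv_general} of that proposition, with limiting operator $\tau_{\beta,\delta}$, canonical solution $H_{\beta,\delta}$, and structure function $\cE_{\beta,\delta}$; this supplies the backbone of the argument.

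The remaining hypotheses of Proposition~\ref{prop:general3} are the almost sure convergence $\rev\gamma_{n-1}\to\gamma$ with $\gamma\neq 1$, together with the independence of the limiting boundary parameter from $H_{\beta,\delta}$. Recalling from the discussion after Definition~\ref{def:aff_Dirac} that $\aff b_n=\rev\gamma_{n-1}$, I would use the coupling of Proposition~\ref{prop:HP_limit} together with the remark preceding the corollary (the pointwise path convergence $z_n\to z$ and the path bounds \eqref{eq:CJ_path}, \eqref{eq:HP_path} force $\uu_1^{(n)}\to\uu_1$ almost surely) and Proposition~\ref{prop:CJ_equiv} to conclude that the right boundary point of $\aff\tau_{n,\beta,\delta}$, namely $\rev\gamma_{n-1}$, converges almost surely to a limit $\gamma$. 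By Proposition~\ref{prop:hitting} this limit has law $\Theta(1,\delta)$, a continuous distribution on $\partial\D$, so $\gamma\neq 1$ and $1+r\gamma\neq 0$ almost surely; and since $\gamma$ arises from the last (reversed) modified Verblunsky coefficient, it is independent of the bulk path, hence of $H_{\beta,\delta}$, by Proposition~\ref{prop:CJ_mVerblunsky}.

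With these inputs Proposition~\ref{prop:general3} yields the uniform-on-compacts convergence of $p_{n,\beta,\delta}^{[r]}(e^{iz/n})e^{-iz/2}$ to $H_{\beta,\delta}(1,z)^{\dag}\binom{1}{-i\frac{1-r\gamma}{1+r\gamma}}$. It then remains to show that this agrees with $\cE_{\beta,\delta}^{[r]}(z)=H_{\beta,\delta}(1,z)^{\dag}\binom{1}{-c_r}$. I would carry this out by passing to half-plane coordinates through the Cayley transform $\cU$, expressing $\gamma$ in terms of the variable $q\sim\Theta(1,\delta)$ from Definition~\ref{def:HPop_rev}, and simplifying the linear-fractional expression $i\frac{1-r\gamma}{1+r\gamma}$ with $s:=\tfrac{1-r}{1+r}$ into $c_r=\frac{q+is}{1-iqs}$. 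As a consistency check the endpoints must come out correctly: $r=1$ (so $s=0$) gives $c_1=q$, recovering the unperturbed secular function $\zeta_{\beta,\delta}=\cH_{\beta,\delta}(0,z)^{\dag}\binom{1}{-q}$, while $r=0$ (so $s=1$) gives $c_0=i$ for every $q$, recovering the truncated structure function $\cE_{\beta,\delta}$ of Theorem~\ref{thm:CJ_main}. Finally, weak convergence of $\Lambda_n^{[r]}$ to the zero set of $\cE_{\beta,\delta}^{[r]}$ under the edge scaling \eqref{eq:edgescaling} follows from the uniform-on-compacts convergence and Hurwitz's theorem.

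The main obstacle I anticipate is this last identification: the careful bookkeeping of the boundary parameter through the Cayley transform and through the reflection, rotation, and time-reversal maps $S$, $Q$, $\rho$ that relate $\aff\tau_{n,\beta,\delta}$ to $\CJop_{n,\beta,\delta}$ in Proposition~\ref{prop:CJ_equiv}. Pinning down the exact relation between $\gamma=\lim\rev\gamma_{n-1}$ and the half-plane coordinate $q$, including its sign, is delicate, since the operator-level boundary convergence and the target Möbius form $c_r$ must be matched precisely; verifying the two endpoint cases is the safest way to fix this. By contrast, the almost sure convergence of $\rev\gamma_{n-1}$ and its independence from $H_{\beta,\delta}$ are conceptually identical to the circular case and should transfer with only notational changes.
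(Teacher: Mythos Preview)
Your proposal is correct and follows essentially the same route as the paper. The paper's own argument is the single sentence preceding the corollary: under the coupling of Proposition~\ref{prop:HP_limit}, the path bounds and pointwise convergence force $\uu_1^{(n)}\to\uu_1$ almost surely, and then Theorem~\ref{thm:CJ_main} together with Proposition~\ref{prop:general3} yield the result; you have simply unpacked this in more detail, correctly flagging the algebraic identification of $i\frac{1-r\gamma}{1+r\gamma}$ with $c_r$ via the Cayley transform as the one step requiring care.
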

Similar to the statement of Theorem \ref{thm:trunc_circ}, we have $c_r=q$ when $r=1$ and $c_r=i$ when $r=0$, hence this result shows the connection between the scaling limits of the truncated and the unperturbed circular Jacobi beta ensemble.

\section{Appendix}\label{sec:app}

\subsection{Overview of some results  for $\beta=2$}\label{app:det}

Let $\Lambda$ be a locally compact Polish space (in this section it will always be $\C$). 
A simple point process on $\Lambda$ is called \emph{determinantal} with respect to a reference measure $\mu$ with kernel function $K:\Lambda\times \Lambda \to \R$ if for any $k\ge 1$ the $k$th joint intensity function of the process with respect to $\mu$ is given by 
\begin{align}\label{eq:det}
    \rho_{k}(x_1,\dots,x_k)=\det (K(x_i,x_j))_{1\le i,j\le k}.
\end{align}
(See \cite{HKPV} for more on determinantal point processes.)
Determinantal processes appear naturally from  joint probability densities containing the square of the Vandermonde. In this section we will always assume that all absolute moments of $\mu$ are finite.

\begin{proposition}[\cite{mehta,Konig}]\label{prop:det}
    Suppose that the $X_1,X_2,\dots,X_n$ are  complex valued  random variables with joint density given by
    \begin{align}\label{eq:vdm}
       \frac{1}{Z_{n,\mu}} \prod_{1\le i<j\le n} |z_i-z_j|^2
    \end{align}
with respect to a product  measure $\mu^{\otimes n}$ (on $\R^n$ of $\C^n$). (Here $Z_{n,\mu}$ is a finite constant.) Let $\varphi_k(z)$ be the degree $k$  orthonormal polynomials with respect to $\mu$. 
Then $\sum_{k=1}^n \delta_{X_k}$ is a determinantal point process with respect to $\mu$ with kernel given by
\begin{align}
    K(z,w)=\sum_{k=0}^{n-1} \varphi_k(z) \bar \varphi_k(w).
\end{align}   
\end{proposition}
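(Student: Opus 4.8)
The plan is to reduce the statement to the classical Dyson--Mehta machinery for determinantal correlation functions. Write $\varphi_k(z)=c_k z^k+(\text{lower order})$ with leading coefficient $c_k\neq 0$ (these polynomials exist for $0\le k\le n-1$ since $\mu$ has finite moments and support of at least $n$ points), and let $\Phi$ be the $n\times n$ matrix with entries $\Phi_{ik}=\varphi_{k-1}(x_i)$, $1\le i,k\le n$. Since each $\varphi_{k-1}$ is obtained from the monomials $1,z,\dots,z^{k-1}$ by column operations, we have $\det\Phi=\big(\prod_{j=0}^{n-1}c_j\big)\det(x_i^{k-1})_{i,k}=\big(\prod_{j=0}^{n-1}c_j\big)\prod_{i<j}(x_j-x_i)$. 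Taking squared modulus and using $(\Phi\Phi^{\dag})_{ij}=\sum_{k=0}^{n-1}\varphi_k(x_i)\overline{\varphi_k(x_j)}=K(x_i,x_j)$ gives
\[
\prod_{1\le i<j\le n}|x_i-x_j|^2=\frac{|\det\Phi|^2}{\prod_{j=0}^{n-1}|c_j|^2}=\frac{\det(\Phi\Phi^{\dag})}{\prod_{j=0}^{n-1}|c_j|^2}=\frac{1}{\prod_{j=0}^{n-1}|c_j|^2}\det\big(K(x_i,x_j)\big)_{i,j=1}^n.
\]
Thus the joint density \eqref{eq:vdm} is a constant multiple of $\det(K(x_i,x_j))_{i,j=1}^n$, and it remains to integrate out variables.

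The key structural facts about the kernel follow directly from orthonormality $\int \varphi_k\overline{\varphi_l}\,d\mu=\delta_{kl}$: the kernel $K$ is a projection kernel, satisfying the reproducing identity $\int_{\Lambda} K(x,w)K(w,y)\,d\mu(w)=K(x,y)$ and the trace normalization $\int_\Lambda K(w,w)\,d\mu(w)=\sum_{k=0}^{n-1}\int|\varphi_k|^2\,d\mu=n$.

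With these two identities in hand, the heart of the argument is the Dyson--Mehta integration lemma: for any $1\le m\le n$,
\[
\int_\Lambda \det\big(K(x_i,x_j)\big)_{i,j=1}^{m}\,d\mu(x_m)=(n-m+1)\,\det\big(K(x_i,x_j)\big)_{i,j=1}^{m-1}.
\]
I would prove this by expanding the determinant over the symmetric group $S_m$, integrating in $x_m$, and using the reproducing identity to merge the two factors adjacent to $x_m$ when $m$ is not a fixed point of the permutation, and the trace identity when it is; the combinatorial bookkeeping then produces the factor $(n-m+1)$. Applying the lemma successively for $m=n,n-1,\dots,k+1$ collapses the integral of the $n\times n$ determinant over $x_{k+1},\dots,x_n$ to $(n-k)!\,\det(K(x_i,x_j))_{i,j=1}^k$; the case $k=0$ yields $\int\det(K(x_i,x_j))_{i,j=1}^n\,d\mu^{\otimes n}=n!$, which identifies the normalizing constant via $Z_{n,\mu}\prod_{j=0}^{n-1}|c_j|^2=n!$. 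The $k$-th joint intensity is then
\[
\rho_k(x_1,\dots,x_k)=\frac{n!}{(n-k)!}\cdot\frac{(n-k)!\,\det(K(x_i,x_j))_{i,j=1}^k}{Z_{n,\mu}\prod_{j=0}^{n-1}|c_j|^2}=\det\big(K(x_i,x_j)\big)_{i,j=1}^k,
\]
which is exactly \eqref{eq:det}. The only nonroutine step is the integration lemma, specifically getting its combinatorial constant correct and organizing the permutation sum so that each merge is justified by the reproducing property; the remaining ingredients are elementary linear algebra and the orthonormality relations.
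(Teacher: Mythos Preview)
Your argument is correct and is precisely the classical Dyson--Mehta proof that the cited references contain. Note, however, that the paper does not give its own proof of this proposition: it is stated as a known result with attribution to \cite{mehta,Konig} and then used as a black box, so there is no ``paper's proof'' to compare against beyond observing that your write-up matches the standard treatment in those sources.
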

Proposition \ref{prop:det} together with \eqref{eq:cue} and \eqref{eq:tcue} immediately imply that both the circular unitary ensemble and the truncated circular unitary ensemble are determinantal. 
The size $n$ circular unitary ensemble has kernel 
 function 
\begin{align}
    K_{\Circ_{2,n}}(z,w)=\sum_{k=0}^{n-1} z^k \bar w^k,
\end{align}
with respect to the uniform measure on the unit circle. The size $n$ truncated circular unitary ensemble has a similar kernel function 
\begin{align}\label{eq:det_tcue}
    K_{{\Circ}_{2,n}^{\ulcorner}}(z,w)=\sum_{k=0}^{n-1} (k+1) z^k \bar w^k,
\end{align}
with respect to the uniform measure on the unit disk. Note that \cite{ZS} also treats more general truncations of Haar unitary matrices. They showed that if we delete the first $m$ rows and columns of a size $n+m$  Haar unitary matrix then the eigenvalues of the resulting submatrix have joint eigenvalue density given by 
\begin{equation}\label{eq:tcuek}
\frac{1}{Z_{n,m}}\prod_{1\le j<k\le n}|z_j-z_k|^2 \prod_{k=1}^n (1-|z_k|^2)^{m-1} , \qquad z_j\in \D,
\end{equation}
with respect to the Lebesgue measure on the unit disk. By Proposition \ref{prop:det} this is also a determinantal point process, with respect to the measure with density $(1-|z|^2)$ on the unit disk.

Determinantal processes have a number of  nice analytic features. The following proposition shows that if we understand the scaling limit of the kernels of a sequence of determinantal processes then we can derive the scaling limit and the limit is also determinantal. 

\begin{proposition}[\cite{Soshnikov,ShiraiTakahashi}]
    Suppose that $\mathcal{X}, \mathcal{X}_1, \mathcal{X}_2, \dots$ are determinantal processes on $\C$ with respect to the common reference measure $\mu$, with kernel functions $K, K_1, K_2, \dots$. Assume that 
    \begin{align}
        K_n(z,w)\to K(z,w)
    \end{align}
uniformly on compacts in $\C^2$. Then $\mathcal{X}_n$ converges in distribution to $\mathcal{X}$.  
\end{proposition}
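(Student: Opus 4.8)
The plan is to reduce the statement to convergence of correlation functions and then invoke the classical principle that, for point processes whose correlation functions obey a Hadamard-type growth bound, convergence of all correlation functions (uniformly on compacts) forces convergence in distribution. By Kallenberg's theorem it suffices to show that for finitely many bounded Borel sets $B_1,\dots,B_m$ with $\mu(\partial B_j)=0$ the joint law of the counting variables $(\mathcal X_n(B_1),\dots,\mathcal X_n(B_m))$ converges to that of $(\mathcal X(B_1),\dots,\mathcal X(B_m))$; the sets may be taken to range over a convergence-determining class such as balls, whose boundaries carry no $\mu$-mass and hence (since $\mathbb E[\mathcal X(\partial B_j)]=\int_{\partial B_j}K(x,x)\,d\mu=0$) no points almost surely, so each such $B_j$ is a.s. a continuity set for $\mathcal X$.

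First I would record the factorial-moment identity for a determinantal process. Writing $N_n(B):=\mathcal X_n(B)$, for disjoint bounded sets $B_1,\dots,B_m$ and integers $k_1,\dots,k_m$ one has
\[
\mathbb E\Big[\prod_{j=1}^m \frac{N_n(B_j)!}{(N_n(B_j)-k_j)!}\Big]=\int_{\prod_j B_j^{k_j}} \det\big(K_n(x_i,x_\ell)\big)_{i,\ell=1}^k\, d\mu^{\otimes k},\qquad k=\textstyle\sum_j k_j,
\]
the determinant being built from the pooled variables. Since the $B_j$ are bounded, their closures are compact and $K_n\to K$ uniformly there; as the determinant is a polynomial in its entries, the integrands converge uniformly, and because $\mu$ is finite on compacts the integrals converge to the analogous expressions for $\mathcal X$. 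Thus all (joint) factorial moments of the $N_n(B_j)$ converge to those of the $N(B_j)$.

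Next I would establish the uniform moment bound that lets these moments determine the limiting law. Uniform convergence on the compact $\overline{B}$ gives $M:=\sup_n\sup_{\overline B\times\overline B}|K_n|<\infty$, so Hadamard's inequality yields $\big|\det(K_n(x_i,x_\ell))_{i,\ell=1}^k\big|\le k^{k/2}M^k$, whence the $k$-th factorial moment of $N_n(B)$ is at most $k^{k/2}\big(M\,\mu(B)\big)^k$ uniformly in $n$. Because $k^{k/2}/k!\to 0$ superexponentially by Stirling, the probability generating function $\mathbb E[z^{N(B)}]=\sum_k \tfrac{(z-1)^k}{k!}\,\mathbb E[N(B)!/(N(B)-k)!]$ is entire, and the same holds for the multivariate generating function of $(N(B_1),\dots,N(B_m))$. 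An entire generating function means the integer-valued law is uniquely determined by its factorial moments, so the moment convergence of the previous paragraph upgrades to convergence in distribution of $(N_n(B_1),\dots,N_n(B_m))$.

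Combining the two steps gives convergence of the finite-dimensional distributions of the counting measures over the convergence-determining class of balls with $\mu$-null boundary, and Kallenberg's theorem then yields $\mathcal X_n\Rightarrow\mathcal X$. The main obstacle is precisely the determinacy step: convergence of moments does not in general imply convergence in distribution, and the crucial input is the uniform-in-$n$ Hadamard bound $k^{k/2}M^k$, which is just slow enough (via Carleman's condition, equivalently the entirety of the generating function) to force uniqueness of the limiting counting law. A secondary technical point is the treatment of set boundaries, which is handled by restricting to balls so that the relevant sets are almost surely continuity sets; an equivalent but slightly heavier alternative would replace the moment argument by showing convergence of the Laplace functionals through trace-norm continuity of the Fredholm determinants $\det(I-\sqrt{g}\,K_n\sqrt{g})$ for compactly supported test functions $g$.
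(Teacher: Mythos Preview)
The paper does not prove this proposition at all: it is stated with the citation \cite{Soshnikov,ShiraiTakahashi} and then used as a black box, with the text moving immediately to applications. So there is no proof in the paper to compare your attempt against.

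That said, your sketch is essentially the standard argument one finds in the cited references (particularly Soshnikov's survey): pass to joint factorial moments via the determinantal formula, get convergence of integrands from uniform-on-compacts convergence of the kernels, and use the Hadamard bound $|\det(K_n(x_i,x_\ell))|\le k^{k/2}M^k$ to show the limiting counting variables are moment-determined. One small point worth tightening: you assert that balls have $\mu$-null boundary, but this is not automatic for an arbitrary reference measure $\mu$; the usual fix is that for each center all but countably many radii give spheres of $\mu$-measure zero, so one still obtains a convergence-determining class. With that adjustment the argument is sound and matches the approach in the cited literature.
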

Using some simple transformations one can rewrite the determinantal kernel of the circular unitary ensemble as 
\[
   \tilde K_{\Circ_{2,n}}(e^{i t},e^{i s})=D_n(t-s), \qquad D_n(u)=\frac{\sin(nu/2)}{\sin(u/2)}. 
\]
By taking the limit of this kernel as $n\to \infty$ we get the celebrated result of 
Gaudin, Mehta, Dyson regarding the 
the scaling limit of the circular ensemble. 
\begin{theorem}[\cite{AGZ, mehta}]
Let $\Lambda_n$ be the angles in the size $n$ circular unitary ensemble parametrized in $(-\pi,\pi]$. Then $n\Lambda_n\Rightarrow \Lambda$ where $\Lambda$ is a determinantal point process on $\R$ with kernel 
\[
K_{\textup{Sine}_2}(s,t)=\frac{ \sin((s-t)/2)}{\pi(s-t)}
\]
with respect to the Lebesgue measure.
\end{theorem}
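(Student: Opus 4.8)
The plan is to combine the explicit determinantal kernel of the circular unitary ensemble recorded above with the convergence criterion for determinantal processes of Soshnikov and Shirai–Takahashi. By Proposition \ref{prop:det} applied to the density \eqref{eq:cue}, the size $n$ circular unitary ensemble is determinantal with respect to the uniform probability measure on $\partial \D$, with kernel $K_{\Circ_{2,n}}(z,w)=\sum_{k=0}^{n-1} z^k \bar w^k$ (the monomials $z^k$ being orthonormal for this measure). Writing $z=e^{it}$, $w=e^{is}$ and summing the geometric series gives $\sum_{k=0}^{n-1} e^{ik(t-s)} = e^{i(n-1)(t-s)/2}\,D_n(t-s)$ with $D_n(u)=\sin(nu/2)/\sin(u/2)$. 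The unimodular prefactor $e^{i(n-1)(t-s)/2}=g(t)\overline{g(s)}$ with $g(t)=e^{i(n-1)t/2}$ is a gauge factor: replacing a kernel $K(x,y)$ by $g(x)K(x,y)\overline{g(y)}$ with $|g|\equiv 1$ pulls a factor $|g(x_i)|^2=1$ out of each row and column and hence leaves all the determinants \eqref{eq:det} unchanged. The ensemble is therefore equally described by the Hermitian Dirichlet kernel $\tilde K_{\Circ_{2,n}}(e^{it},e^{is})=D_n(t-s)$ recorded above.

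Next I would pass to the scaled process $n\Lambda_n$. Parametrizing the eigenangles in $(-\pi,\pi]$ and setting $x=nt$, the push-forward of the uniform probability measure $dt/(2\pi)$ under $t\mapsto nt$ is $dx/(2\pi n)$ on $(-n\pi,n\pi]$, and the scaled ensemble is determinantal with kernel $D_n((x-y)/n)$ with respect to this measure. Since the density $1/(2\pi n)$ is constant, I can rewrite the process against Lebesgue measure by absorbing $\sqrt{1/(2\pi n)}$ into each argument, obtaining the Hermitian kernel
\[
K_n(x,y)=\frac{1}{2\pi n}\,D_n\left(\frac{x-y}{n}\right)=\frac{1}{2\pi n}\cdot\frac{\sin((x-y)/2)}{\sin((x-y)/(2n))}
\]
with respect to $dx$ on $\R$. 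For any fixed compact window, $(-n\pi,n\pi]$ eventually contains it, so this description is valid for all large $n$ against the single reference measure $dx$.

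Finally I would compute the limit. Using $\sin((x-y)/(2n))=(x-y)/(2n)+O(n^{-3})$ uniformly for $x-y$ in a bounded set, one has $n\sin((x-y)/(2n))\to (x-y)/2$, whence
\[
K_n(x,y)\longrightarrow \frac{\sin((x-y)/2)}{\pi(x-y)}=K_{\textup{Sine}_2}(x,y),
\]
the right-hand side being extended by its limiting value $1/(2\pi)$ on the diagonal $x=y$, which is also the value $K_n(x,x)=\tfrac{1}{2\pi n}D_n(0)=\tfrac{1}{2\pi}$ of the prelimit. The convergence is uniform on compact subsets of $\R^2$ since both $K_n$ and the limit are smooth and the Taylor remainder for $\sin$ is controlled uniformly on bounded sets. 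Both families are Hermitian kernels of determinantal processes against the common reference measure $dx$, so the convergence criterion of Soshnikov and Shirai–Takahashi stated above gives $n\Lambda_n\Rightarrow\Lambda$, with $\Lambda$ the determinantal process with kernel $K_{\textup{Sine}_2}$. The genuinely delicate points are organizational rather than conceptual: one must track the gauge phase so as to land on a truly Hermitian kernel, handle the change of reference measure under the $n$-scaling so that all processes are compared against $dx$, and confirm the uniform-on-compacts estimate near the diagonal — each of which follows from the elementary structure of $D_n$ and the standard sine Taylor bound.
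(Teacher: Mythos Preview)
Your proposal is correct and follows exactly the approach the paper sketches: the paper does not give a detailed proof of this classical result but simply records the Dirichlet-kernel form $\tilde K_{\Circ_{2,n}}(e^{it},e^{is})=D_n(t-s)$ and states that ``by taking the limit of this kernel as $n\to\infty$'' one obtains the theorem, citing \cite{AGZ,mehta}. Your argument is a faithful and correct elaboration of that sketch, including the gauge-factor removal, the change of reference measure under scaling, and the appeal to the kernel-convergence criterion of Soshnikov and Shirai--Takahashi.
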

Taking the limit of the kernel \eqref{eq:det_tcue} (without any additional scaling) we obtain the point process studied by Peres and Vir\'ag in \cite{PV}. 
\begin{theorem}[\cite{PV}]\label{thm:PeresVirag}
Let $\Lambda_n$ be the eigenvalues of $\trunc{\Circ_{n+1}}$. Then $\Lambda_n$ converges to a determinantal point process on the unit disk with kernel
\begin{align}\label{eq:Bergman_kernel}
    K_{\textup{Bergman}}(z,w)=\frac{1}{(1-z \bar w)^2}
\end{align}
with respect to the uniform measure on the unit disk. The resulting point process has the same distribution as the zero set of the Gaussian analytic function 
\begin{align}\label{eq:GAF}
    f_{GAF}(z)=\sum_{n=0}^\infty \xi_n z^n,
\end{align}
where $\xi_n, n\ge 0$ are i.i.d.~standard complex normals. 
\end{theorem}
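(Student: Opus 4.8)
The plan is to establish the two assertions of the theorem separately---convergence of $\Lambda_n$ to the determinantal process with kernel \eqref{eq:Bergman_kernel}, and the identification of that process with the zero set of $f_{GAF}$---and then to combine them using the fact that a determinantal point process is determined by its kernel, so that matching kernels forces equality of laws.

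For the convergence assertion I would argue exactly as for the circular ensemble. Writing the Lebesgue measure on $\D$ as $\pi$ times the uniform probability measure $\mu$, the joint density \eqref{eq:tcue} of the eigenvalues of $\trunc{\Circ_{n+1}}$ becomes a constant multiple of $\prod_{j<k}|z_j-z_k|^2$ against $\mu^{\otimes n}$, i.e.\ it is of the form \eqref{eq:vdm}. Proposition \ref{prop:det} then applies: since $\int_\D|z|^{2k}\,d\mu=\tfrac{1}{k+1}$, the orthonormal polynomials of $\mu$ are $\varphi_k(z)=\sqrt{k+1}\,z^k$, and the correlation kernel is $K_n(z,w)=\sum_{k=0}^{n-1}(k+1)z^k\bar w^k$, as recorded in \eqref{eq:det_tcue}. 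Summing the series gives $K_n(z,w)\to\sum_{k\ge 0}(k+1)(z\bar w)^k=\frac{1}{(1-z\bar w)^2}=K_{\textup{Bergman}}(z,w)$, uniformly on compact subsets of $\D\times\D$, and the convergence criterion of \cite{Soshnikov,ShiraiTakahashi} upgrades this to convergence in distribution of $\Lambda_n$.

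For the GAF assertion I would show that the zero set of $f_{GAF}$ is itself determinantal with kernel $K_{\textup{Bergman}}$. The covariance of $f_{GAF}$ is the Szeg\H{o} kernel $\mathbb{E}[f_{GAF}(z)\overline{f_{GAF}(w)}]=\sum_{k\ge0}(z\bar w)^k=\frac{1}{1-z\bar w}$, so $f_{GAF}$ is a.s.\ analytic on $\D$. The first intensity of its zeros is given by the Edelman--Kostlan formula $\rho_1(z)=\frac{1}{\pi}\partial_z\partial_{\bar z}\log\mathbb{E}|f_{GAF}(z)|^2=\frac{1}{\pi(1-|z|^2)^2}$, which, re-expressed as an intensity against the uniform probability measure, equals the diagonal $K_{\textup{Bergman}}(z,z)$. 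For the general $k$-point intensity I would use a Kac--Rice type formula, expressing $\rho_k(z_1,\dots,z_k)$ as $(\pi^k\det C)^{-1}$ times the conditional second moment $\mathbb{E}\bigl[\prod_{i}|f_{GAF}'(z_i)|^2\mid f_{GAF}(z_1)=\cdots=f_{GAF}(z_k)=0\bigr]$, where $C=\bigl(\tfrac{1}{1-z_i\bar z_j}\bigr)_{i,j}$ is the covariance matrix of $(f_{GAF}(z_i))_i$.

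The main obstacle is this last computation, which is the heart of Peres--Vir\'ag \cite{PV}: one must collapse the conditional second moment---after conditioning, a permanent built from the conditional covariances of the derivatives $f_{GAF}'(z_i)$---together with the Gaussian normalization $(\pi^k\det C)^{-1}$ into the single determinant $\det\bigl(K_{\textup{Bergman}}(z_i,z_j)\bigr)_{i,j}$. The mechanism that makes this work is the M\"obius invariance of the hyperbolic GAF: the zero set is invariant in law under the isometries of $\D$, so at each step one can send one of the $z_i$ to the origin, where the conditional covariances simplify dramatically, and then verify the determinantal identity inductively. I would follow this route, treating the conformal invariance as the key structural input that is special to the i.i.d.\ power series. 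Once both the limit of $\Lambda_n$ and the zero set of $f_{GAF}$ are seen to be determinantal with the common kernel $K_{\textup{Bergman}}$, their joint intensities coincide and the two point processes have the same distribution, completing the proof.
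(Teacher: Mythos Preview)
Your proposal is correct and matches what the paper does: the paper does not prove this theorem in full but treats it as a cited result from \cite{PV}, only indicating (in the sentence preceding the statement) that the convergence of $\Lambda_n$ follows by taking the $n\to\infty$ limit of the kernel \eqref{eq:det_tcue}---exactly the argument you spell out via Proposition~\ref{prop:det} and the convergence criterion of \cite{Soshnikov,ShiraiTakahashi}. For the identification of the limit with the zero set of $f_{GAF}$ the paper simply defers to \cite{PV}, and your sketch of the Edelman--Kostlan / Kac--Rice computation exploiting M\"obius invariance is precisely the Peres--Vir\'ag argument being cited.
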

Note that \cite{Krishnapur} provides a generalization of this result by connecting the limit of the eigenvalues of rank $m$ truncation of Haar unitary matrices with the singular points of a matrix valued Gaussian analytic function that generalizes \eqref{eq:GAF}.

The circular Jacobi beta ensemble \eqref{eq:CJ_pdf} for $\beta=2$ is also determinantal, this is also called the Hua-Pickrell distribution. 
The kernel function can be expressed in terms of the orthogonal polynomials with respect to the probability measure $\mu_{\delta}$ that has probability density function proportional to 
\[
(1-\bar z)^{ \delta}(1-z)^{ \bar\delta} 
\]
on the unit circle. The Hua-Pickrell distribution can be realized as the joint eigenvalue distribution of the random unitary matrices that have density proportional to $|\det(1-U)^\delta|^2$ with respect to the Haar measure. 

In \cite{LinQiuWang} the authors studied the truncations of these random matrices. They showed that the eigenvalues of the  rank-$m$ truncated matrices form a determinantal point process on the unit disk, and derived their kernel function. Moreover, they showed that the scaling limit of the joint eigenvalue distribution (without any additional scaling) leads to the same limits as in the Haar unitary case (as described in \cite{PV} and \cite{Krishnapur}). 

If we are interested in the  (hard) edge scaling limit of the eigenvalues of the truncated matrix $\trunc{\Circ_{2,n+1}}$ then one needs to transform the kernel \eqref{eq:det_tcue} according to \eqref{eq:edgescaling}, and take the limit. This was considered in \cite{ABKN} where the following result was shown. 
\begin{theorem}[\cite{ABKN}]
   Let $\Lambda_n$ be the eigenvalues of  $\trunc{\Circ_{2,n+1}}$. Then the sequence $-n i \log \Lambda_n$ converges to a determinantal point process $\mathcal{X}_2$ supported on the upper half plane $\HH$ with kernel function
   \begin{align}
      \label{eq:edge_det_kernel}
K_{\textup{edge}}(z,w)=f(z-\bar w),\quad f(u)=\frac{1}{\pi}\int_0^1te^{itu}dt,\quad z,w\in\HH,
\end{align}
with respect to the Lebesgue measure on $\HH$.
\end{theorem}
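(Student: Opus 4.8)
The plan is to exploit the determinantal structure of the truncated circular unitary ensemble at $\beta=2$ and to track its correlation kernel through the edge scaling \eqref{eq:edgescaling}. By the joint eigenvalue density \eqref{eq:tcue} and Proposition \ref{prop:det}, the eigenvalues $\Lambda_n$ of $\trunc{\Circ_{2,n+1}}$ form a determinantal point process on $\D$ whose kernel with respect to the uniform measure $\tfrac1\pi dA$ is $K_n(z,w)=\sum_{k=0}^{n-1}(k+1)z^k\bar w^k$, as recorded in \eqref{eq:det_tcue} (here the orthonormal polynomials for the uniform disk measure are $\varphi_k(z)=\sqrt{k+1}\,z^k$, since $\|z^k\|^2=1/(k+1)$). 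The strategy is then to compute the kernel of the pushforward process $-ni\log\Lambda_n$ with respect to Lebesgue measure on $\HH$, to show it converges uniformly on compact subsets of $\HH\times\HH$ to $K_{\textup{edge}}$, and finally to invoke the determinantal convergence result of \cite{Soshnikov,ShiraiTakahashi} stated above.

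First I would set up the change of variables. Writing $\zeta=-ni\log z$, the inverse map is $z=e^{i\zeta/n}$, which is holomorphic in $\zeta$ and sends any compact subset of $\HH$ into $\D$ for all large $n$ (indeed $|e^{i\zeta/n}|=e^{-\Im\zeta/n}<1$ when $\Im\zeta>0$, and the relevant branch of $\log$ is the principal one since $e^{i\zeta/n}\to 1$). The area Jacobian of $\zeta\mapsto z$ equals $|dz/d\zeta|^2=\tfrac1{n^2}e^{-2\Im\zeta/n}$. Pushing the $k$-point correlation functions forward and absorbing this Jacobian symmetrically into the rows and columns of the determinant, the process $-ni\log\Lambda_n$ is determinantal on $\HH$ with respect to Lebesgue measure, with kernel
\begin{equation*}
\tilde K_n(\zeta,\zeta')=\frac{1}{\pi n^2}\,e^{-(\Im\zeta+\Im\zeta')/n}\,K_n\!\left(e^{i\zeta/n},e^{i\zeta'/n}\right).
\end{equation*}
Using $z^k\bar w^k=e^{ik(\zeta-\bar\zeta')/n}$ for $z=e^{i\zeta/n}$, $w=e^{i\zeta'/n}$, and setting $u=\zeta-\bar\zeta'$ (which lies in $\HH$ since $\Im u=\Im\zeta+\Im\zeta'>0$), this becomes
\begin{equation*}
\tilde K_n(\zeta,\zeta')=\frac{1}{\pi}\,e^{-(\Im\zeta+\Im\zeta')/n}\cdot\frac{1}{n^2}\sum_{k=0}^{n-1}(k+1)\,e^{iku/n}.
\end{equation*}

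Next I would identify the limit. The sum $\tfrac1{n^2}\sum_{k=0}^{n-1}(k+1)e^{iku/n}=\tfrac1n\sum_{k=0}^{n-1}\tfrac{k+1}{n}e^{i(k/n)u}$ is a Riemann sum for $\int_0^1 t\,e^{itu}\,dt$, the extra $\tfrac1n$ in $\tfrac{k+1}{n}$ contributing only an $O(1/n)$ error, while the prefactor $e^{-(\Im\zeta+\Im\zeta')/n}\to 1$. Since $|e^{itu}|=e^{-t\Im u}\le 1$ and the integrand varies smoothly, the convergence $\tilde K_n(\zeta,\zeta')\to \tfrac1\pi\int_0^1 t e^{itu}\,dt=f(\zeta-\bar\zeta')=K_{\textup{edge}}(\zeta,\zeta')$ holds uniformly on compact subsets of $\HH\times\HH$. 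Applying the cited convergence proposition of \cite{Soshnikov,ShiraiTakahashi} then yields convergence in distribution of $-ni\log\Lambda_n$ to the determinantal process $\mathcal X_2$ with kernel $K_{\textup{edge}}$, completing the proof.

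I expect the only genuine obstacle to be the bookkeeping in the change of variables: one must carefully convert from the disk reference measure $\tfrac1\pi dA$ to Lebesgue measure on $\HH$ and distribute the Jacobian $\tfrac1{n^2}e^{-2\Im\zeta/n}$ as a product $a(\zeta)a(\zeta')$ across the determinant, since it is precisely the resulting $1/n^2$ normalization that turns the weighted sum $\sum(k+1)e^{iku/n}$ into the Riemann approximation of $\int_0^1 t e^{itu}\,dt$ and produces the correct $1/\pi$ constant. A secondary point needing care is confirming that the kernel convergence is genuinely uniform on compacts (so that the cited theorem applies) and that, for large $n$, the image of $\D$ under $\zeta=-ni\log z$ exhausts any fixed compact subset of $\HH$.
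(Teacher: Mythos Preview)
Your proof is correct and follows precisely the approach the paper sketches in the sentence preceding the theorem: transform the determinantal kernel \eqref{eq:det_tcue} under the edge scaling \eqref{eq:edgescaling} and take the limit, then appeal to the kernel-convergence criterion of \cite{Soshnikov,ShiraiTakahashi}. The paper itself does not supply a detailed proof (the result is quoted from \cite{ABKN}), but your computation of the pushed-forward kernel, the Riemann-sum identification of $\tfrac{1}{n^2}\sum_{k=0}^{n-1}(k+1)e^{iku/n}\to\int_0^1 t e^{itu}\,dt$, and the uniform-on-compacts convergence are exactly what that sketch calls for.
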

In fact, this is a special case of a more general problem that \cite{ABKN} considers: the product of independent copies of rank-$m$ truncations of Haar unitary matrices. The point process $\mathcal{X}_2$ also appears in \cite{FyodorovKhoruzhenko}  as the scaling limit of the rank-one additive anti-Hermitian perturbation for the Gaussian unitary ensemble. 

Note that our Theorem \ref{thm:main} for $\beta=2$ provides a new characterization for the determinantal point process $\mathcal{X}_2$. It would be interesting to see whether the determinantal structure could be proved directly from that result.

\subsection{Open problems}
\label{app:open}

We end  with a couple of open problems. \medskip

\noindent \textbf{Problem 1.} Our results for general $\beta>0$ consider the models with the edge scaling \eqref{eq:edgescaling}. It would be interesting to explore the limiting behavior under the bulk scaling (i.e.~when we do not scale at all). In other words, it would be interesting to see whether one could extend the results of \cite{PV} (see Theorem \ref{thm:PeresVirag}) and  \cite{LinQiuWang} for general $\beta$. \medskip

 \noindent \textbf{Problem 2.} In the $\beta=2$ case Theorem  \ref{thm:PeresVirag} provides a description of the bulk scaling limit of the truncated circular unitary ensemble as the zero set of a Gaussian analytic function. It would be interesting to see if one could connect this result to  some sort of a scaling limit of the characteristic polynomial of the truncated circular ensemble under the bulk scaling. \smallskip

\noindent \textbf{Problem 3.} 
A simple calculation shows that under the scaling $z\mapsto c^{-1} z, c\to \infty$ the kernel $\eqref{eq:edge_det_kernel}$ converges to a transformed version of the kernel  \eqref{eq:Bergman_kernel}, where the transformation is the Cayley transform mapping the unit disk $\D$ to the upper half plane $\HH$.  This implies that as $c\to \infty$ the scaled edge limit process  $c^{-1} \mathcal{X}_2$ converges to the image of the bulk limit process of Peres-Vir\'ag under the Cayley transform. Note that similar `edge-to-bulk' limits are known for other random matrix ensembles, see e.g.~\cite{BVBV} for a similar transition involving the point process limits of the Gaussian beta ensemble. It would be interesting to see if a similar limit exists for $\mathcal X_\beta$ for general $\beta>0$. Presumably, this could also provide a way to prove the `edge-to-bulk' transition in the $\beta=2$ case without using the determinantal process framework. \bigskip


  {{  
 		\bigskip
 		\footnotesize

 		\noindent Yun Li, \textsc{Yau Mathematical Sciences Center, Tsinghua University, Beijing 100084, China.}  \texttt{liyun1723@mail.tsinghua.edu.cn}
 		
 		\medskip
 		
 		\noindent Benedek Valk\'o, \textsc{Department of Mathematics, University of Wisconsin -- Madison,
 			Madison, WI 53706, USA.} \texttt{valko@math.wisc.edu}
 		
 }}
\end{document}